\definecolor{mylinkcolor}{rgb}{0.6,0.0,0.1}
\definecolor{mycitecolor}{rgb}{0.0,0.0,0.65}
\definecolor{myurlcolor}{rgb}{0.0,0.37,0.0}
\newtheorem{theorem}{Theorem}[section]
\newtheorem{proposition}[theorem]{Proposition}
\newtheorem{corollary}[theorem]{Corollary}
\newtheorem{lemma}[theorem]{Lemma}
\theoremstyle{definition}
\newtheorem{definition}[theorem]{Definition}
\newtheorem{example}[theorem]{Example}
\newtheorem{remark}[theorem]{Remark}
\numberwithin{equation}{section}
\newcommand{\Aut}{\operatorname{Aut}}
\newcommand{\Diag}{\operatorname{Diag}}
\newcommand{\End}{\operatorname{End}}
\newcommand{\et}{\mathrm{et}}
\newcommand{\Exp}{\mathrm{E}}
\newcommand{\Frob}{\operatorname{Frob}}
\newcommand{\Gal}{\mathrm{Gal}}
\newcommand{\Hom}{\operatorname{Hom}}
\newcommand{\Jac}{\operatorname{Jac}}
\newcommand{\Lie}{\operatorname{Lie}}
\newcommand{\Norm}{\operatorname{Norm}}
\newcommand{\Prym}{\operatorname{Prym}}
\newcommand{\Res}{\operatorname{Res}}
\newcommand{\Sym}{\operatorname{Sym}}
\newcommand{\topo}{\operatorname{top}}
\newcommand{\Trace}{\operatorname{Trace}}
\newcommand{\Zar}{\mathrm{Zar}}
\newcommand{\spl}{\operatorname{spl}}
\newcommand{\twist}{\operatorname{twist}}
\newcommand{\disc}{\operatorname{disc}}
\newcommand{\GL}{\mathrm{GL}}
\newcommand{\GSp}{\mathrm{GSp}}
\newcommand{\GU}{\mathrm{GU}}
\newcommand{\Hg}{\mathrm{Hg}}
\newcommand{\Lef}{\operatorname{L}}
\newcommand{\MT}{\mathrm{MT}}
\newcommand{\PGL}{\mathrm{PGL}}
\newcommand{\PSL}{\mathrm{PSL}}
\newcommand{\PSU}{\mathrm{PSU}}
\newcommand{\SL}{\mathrm{SL}}
\newcommand{\SO}{\mathrm{SO}}
\newcommand{\Sp}{\mathrm{Sp}}
\newcommand{\SU}{\mathrm{SU}}
\newcommand{\ST}{\mathrm{ST}}
\newcommand{\TL}{\operatorname{TL}}
\newcommand{\Unitary}{\mathrm{U}}
\newcommand{\USp}{\mathrm{USp}}
\newcommand{\cyc}[1]{{\mathrm{C}_{#1}}}
\newcommand{\cycsup}[2]{{\mathrm{C}_{#1}^{#2}}}
\newcommand{\dih}[1]{{\mathrm{D}_{#1}}}
\newcommand{\alt}[1]{{\mathrm{A}_{#1}}}
\newcommand{\sym}[1]{{\mathrm{S}_{#1}}}
\newcommand{\bA}{{\mathbf{A}}}
\newcommand{\bB}{{\mathbf{B}}}
\newcommand{\bC}{{\mathbf{C}}}
\newcommand{\bD}{{\mathbf{D}}}
\newcommand{\bE}{{\mathbf{E}}}
\newcommand{\bF}{{\mathbf{F}}}
\newcommand{\bG}{{\mathbf{G}}}
\newcommand{\bH}{{\mathbf{H}}}
\newcommand{\bI}{{\mathbf{I}}}
\newcommand{\bJ}{{\mathbf{J}}}
\newcommand{\bK}{{\mathbf{K}}}
\newcommand{\bL}{{\mathbf{L}}}
\newcommand{\bM}{{\mathbf{M}}}
\newcommand{\bN}{{\mathbf{N}}}
\newcommand{\C}{\mathbb C}
\newcommand{\F}{\mathbb F}
\newcommand{\Fp}{\F_p}
\newcommand{\Fq}{\F_q}
\newcommand{\M}{\mathrm{M }}
\newcommand{\Q}{\mathbb Q}
\newcommand{\Qbar}{{\overline{\mathbb Q}}}
\newcommand{\R}{\mathbb R}
\newcommand{\Z}{\mathbb Z}
\newcommand{\p}{\mathfrak{p}}
\newcommand{\q}{\mathfrak{q}}
\newcommand{\bu}{\mathbf{u}}
\newcommand{\bt}{\mathbf{t}}
\newcommand{\fm}{\mathfrak{m}}
\newcommand{\sfM}{\mathsf{M}}
\newcommand{\cO}{\mathcal{O}}
\newcommand{\Gap}{\textsc{GAP}}
\newcommand{\GitHub}{GitHub}
\newcommand{\LMFDB}{LMFDB}
\newcommand{\Magma}{\textsc{Magma}}
\newcommand{\Pari}{\textsc{PARI}}
\newcommand{\Sage}{\textsc{SageMath}}
\newcommand{\smalljac}{\textsc{smalljac}}
\newcommand{\ttimes}{{\hspace{-0.5pt}\times\hspace{0pt}}}
\newcommand{\arXiv}[2]{\href{https://arxiv.org/abs/#1}{arXiv:#1v#2}}
\newcommand{\mr}[1]{(\href{https://mathscinet.ams.org/mathscinet-getitem?mr=#1}{MR#1})}
\newcommand{\jstor}[2]{\href{https://jstor.org/stable/#1}{#2}}
\newcommand*{\nfield}[2][]{\href{https://www.lmfdb.org/NumberField/#2}{{\ifx&#1& #2 \else #1 \fi}}}
\newcommand*{\stgroup}[2][]{\href{https://www.lmfdb.org/SatoTateGroup/#2}{{\ifx&#1& #2 \else #1 \fi}}}
\newcommand*{\group}[3][]{\href{http://www.lmfdb.org/Groups/Abstract/#2.#3}{{\ifx&#1& \langle #2, #3 \rangle \else #1 \fi}}}
\title{Sato--Tate groups of abelian threefolds}
\author{Francesc Fit\'e}
\address{Departament de matem\`atiques i inform\`atica,
Universitat de Barce\-lona,
Gran via de les Corts Catalanes 585, 08007 Barcelona, Catalonia, Spain}
\email{ffite@ub.edu}
\urladdr{\url{http://www.ub.edu/nt/ffite/}}
\author{Kiran S. Kedlaya}
\address{Department of Mathematics, University of California San Diego, 9500 Gilman Drive \#0112, La Jolla, CA 92093, United States}
\email{kedlaya@ucsd.edu}
\urladdr{\url{https://kskedlaya.org/}}
\author{Andrew V. Sutherland}
\address{Department of Mathematics,
Massachusetts Institute of Technology,
77 Massachusetts Ave., Cambridge, MA 02139, United States}
\email{drew@math.mit.edu}
\urladdr{\url{https://math.mit.edu/~drew}}
\subjclass[2020]{Primary 11M50; Secondary 11G10, 11G40, 14H37, 14K22}
\begin{document}

\begin{abstract}
Given an abelian variety over a number field, its Sato--Tate group is a compact Lie group which conjecturally controls the distribution of Euler factors of the $L$-function of the abelian variety. It was previously shown by Fit\'e, Kedlaya, Rotger, and Sutherland that there are 52 groups (up to conjugation) that occur as Sato--Tate groups of abelian surfaces over number fields;
we show here that for abelian threefolds, there are 410 possible Sato--Tate groups, of which 33 are maximal with respect to inclusions of finite index.
We enumerate candidate groups using the Hodge-theoretic construction of Sato--Tate groups, the classification of degree-3 finite linear groups by Blichfeldt, Dickson, and Miller, and a careful analysis of Shimura's theory of CM types that rules out 23 candidate groups;
we cross-check this using extensive computations in \Gap, \Sage, and \Magma.
To show that these 410 groups all occur, we exhibit explicit examples of abelian threefolds realizing each of the 33 maximal groups; we also compute moments of the corresponding distributions and numerically confirm that they are consistent with the statistics of the associated $L$-functions.
\end{abstract}

\maketitle

\section{Introduction}\label{intro}

\subsection{Sato--Tate groups of abelian varieties}

Let $A$ be an abelian variety of dimension $g$ defined over a number field $k$. Associated to $A$ is the $L$-function
\[
L(A, s) = \prod_{\p} L_{\p}(\mathrm{Norm}(\p)^{-s})^{-1}
\]
where for each prime ideal $\p$ of the ring of integers $\cO_k$ of $k$ at which $A$ has good reduction,
$L_{\p}(T)$ is the reverse characteristic polynomial of Frobenius acting on the $\ell$-adic Tate module of $A$ for any prime $\ell$ coprime to $\p$. The polynomial $L_{\p}(T)$ has integer coefficients and its roots in $\C$ all lie on the circle $|T| = \Norm(\p)^{\nicefrac{-1}{2}}$; if one rescales these roots to have norm~1, it is then meaningful to ask whether the resulting polynomials are equidistributed with respect to some measure.

When $A$ is an elliptic curve, the answer to this question is well-known (although partially conjectural).
For $A$ with complex multiplication, one can show using Hecke's theory of Gr\"o\ss encharacters that one of exactly two distributions occurs, depending on whether or not the field of complex multiplication is contained in $k$.
By contrast, for $A$ without complex multiplication, the Sato--Tate conjecture predicts that exactly one distribution occurs, and this is known when $k$ is a totally real field \cite{HSBT10}, \cite{BLGG11} or a CM field \cite{AAC22}.

These equidistribution assertions can be made in a somewhat more robust fashion, which generalizes to $A$ of arbitrary dimension: one can define a certain compact Lie subgroup of $\USp(2g)$ associated to $A$ and a sequence of conjugacy classes therein whose characteristic polynomials are the normalized $L$-polynomials, which are equidistributed for the image of Haar measure. The only dependence on $A$ is then in the value of this group: for $g=1$, it is $\USp(2) = \SU(2)$ for $A$ without complex multiplication, and otherwise is either $\SO(2)$ or its normalizer in $\SU(2)$. Following a recipe of Serre \cite{Ser12}
(as expanded upon in \cite{BK15}), one can give a uniform definition of the group (and the sequence of conjugacy classes therein) applicable in cases where the Mumford--Tate conjecture is known, including all cases with $g \leq 3$; we call this group the \emph{Sato--Tate group} of $A$. One then conjectures a corresponding equidistribution statement,
which can be shown (as in \cite[\S I.A.2]{Ser68}) to follow from the analytic continuation of the $L$-functions associated to the various representations of the group; this is the approach taken in the results for elliptic curves
cited above. The equidistribution statement can be thought of as an analogue of the Chebotaryov density theorem, with the Sato--Tate group playing the role of the Galois group; this analogy can be built out by defining a Galois group associated to an arbitrary motive, as in
\cite{Ser94}.

Since the Sato--Tate group of a motive is a compact Lie group, one can retain much (but not all) of the same information by keeping track of the identity component and of the group of connected components. The identity component turns out to be invariant under base extension, and so can be interpreted over $\Qbar$ or even $\C$;
it is closely related to the \emph{Mumford--Tate group} of $A$.
By contrast, the component group is not invariant under base extension: it is canonically isomorphic to the Galois group of a certain extension of the base field, which in many cases (including all of dimension at most 3) is the \emph{endomorphism field} of $A$, defined as the minimum field of definition of all endomorphisms of the base extension
of $A$ to $\Qbar$.

\subsection{Abelian surfaces}

While \cite{Ser94} includes a general form of the Sato--Tate conjecture, it sheds very little light on the set of possible groups or measures that can occur for a particular class of motives.
Building on some initial investigations by Kedlaya and Sutherland \cite{KS08, KS09},
a comprehensive analysis of Sato--Tate groups of abelian surfaces was undertaken by Fit\'e, Kedlaya, Rotger, and Sutherland \cite{FKRS12}. The Sato--Tate group of an abelian surface is a closed subgroup of $\USp(4)$, with equality in the generic case; the classification of \cite{FKRS12} shows that there are 52 conjugacy classes of groups that can occur in general. If one restricts to abelian surfaces over $\Q$, then by analogy with the case of elliptic curves (where one cannot have complex multiplication defined over the base field) one expects to find somewhat fewer groups, and indeed only 34 of the 52 groups occur over $\Q$. (It is possible to find a single number field over which all 52 groups occur; see \cite{FG18}.)

We note in passing that potential modularity for abelian surfaces over totally real fields is now known
\cite{BCGP18}, but this is not enough to prove equidistribution for a generic abelian surface (one with trivial endomorphism ring over $\Qbar$); one needs potential modularity also for symmetric powers and other functorial transfers, which seems to be out of reach at the moment. By contrast, for nongeneric abelian surfaces equidistribution results are known in many cases due to work of Fit\'e and Sutherland \cite{FS14}, Johansson \cite{Joh17}, and Taylor \cite{Tay18}, which together cover all nongeneric abelian surfaces over $\Q$; see \cite[Remark~3.5]{Tay18}.

\subsection{Abelian threefolds}

We turn now from surfaces to threefolds and state our main result, which was previously announced in \cite{FKS19}.
The proof of this theorem occupies most of the present work, which has no logical dependence on \cite{FKS19}.

\begin{theorem} \label{T:ST result}
The following statements hold:
\begin{enumerate}[{\rm(a)}]
\item
There are exactly $410$ conjugacy classes of closed subgroups of $\USp(6)$ that occur as Sato--Tate groups
of abelian threefolds over number fields. This includes $14$ distinct options for the identity component.\label{Ta}
\item
Of the groups in \ref{Ta}, $33$ are maximal for inclusions of finite index.
\end{enumerate}
\end{theorem}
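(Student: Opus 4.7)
The plan is to follow the Hodge-theoretic recipe to enumerate all candidate Sato--Tate groups and then eliminate those that cannot actually be realized by an abelian threefold. Recall that the Sato--Tate group $G$ of an abelian threefold $A/k$ sits inside $\USp(6)$, its identity component $G^0$ is controlled by the Mumford--Tate group of $A_{\Qbar}$ (and is in particular invariant under finite base extension), and the component group $G/G^0$ is canonically isomorphic to $\Gal(K/k)$, where $K$ is the endomorphism field of $A$.

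First I would enumerate the possible identity components. Since $G^0$ depends only on the isogeny decomposition of $A_{\Qbar}$ together with the endomorphism algebra of each simple factor, I would run through the Albert classification in the three dimension patterns $1+1+1$, $1+2$, and $3$, and in each case record which Hodge groups occur. This should yield exactly $14$ options for $G^0$, ranging from the generic $\USp(6)$ down to the tori arising from full CM.

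Next, for each $G^0$ I would enumerate the possible component groups $H = G/G^0$. The group $H$ embeds into $N_{\USp(6)}(G^0)/G^0$ and must act faithfully on the $3$-dimensional complex tangent space at the identity; this is where the Blichfeldt--Dickson--Miller classification of finite subgroups of $\GL_3(\C)$ enters, giving a finite list of candidate pairs $(G^0,H)$ satisfying the group-theoretic conditions imposed by Serre's axioms. Parallel enumerations in \Gap, \Sage, and \Magma would serve as independent cross-checks at this stage.

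The main obstacle is proving that $23$ of these abstract candidates are not realized by any abelian threefold. The key tool is Shimura's theory of CM types: in the CM and partial-CM cases, $H$ must arise as a Galois group acting compatibly on a CM type and its reflex, and a careful case analysis would rule out exactly $23$ candidates, leaving the claimed list of $410$. Restricting the inclusion poset of this list to finite-index inclusions then identifies the $33$ maximal elements. To show that every listed group actually occurs, it suffices to exhibit an abelian threefold realizing each of the $33$ maximal Sato--Tate groups, since the remaining $377$ arise by passage to intermediate subfields of the endomorphism field. Such examples can be given as Jacobians of explicit hyperelliptic or Picard curves over suitable number fields, with the Sato--Tate group verified both algebraically (via the endomorphism algebra and Galois action on it) and numerically (via moments of normalized Frobenius traces matched against the predicted distribution).
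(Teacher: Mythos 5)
Your outline tracks the paper's strategy closely—Hodge-theoretic recipe, classify identity components via Albert (14 options), enumerate component groups as finite subgroups of $N_{\USp(6)}(G^0)/G^0$, filter via Shimura's CM-type analysis to cut 433 group-theoretic candidates down to 410, and realize the 33 maximal groups explicitly—so at the level of architecture the proposal is correct. Three points deserve a flag, however. First, the Blichfeldt--Dickson--Miller classification is invoked only in the single case $G^0 = \Unitary(1)_3$, where $N/G^0 \cong \PSU(3) \rtimes \cyc 2$; for every other identity component the paper reduces the component-group enumeration to the (already known) genus-2 classification via split products and triple products, and for $\SU(2)_3$ to finite subgroups of $\SO(3)$. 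Saying "this is where BDM enters" for the general $H$ blurs the fact that the $\Unitary(1)_3$ case is the only place BDM is needed and is also the overwhelming bulk of the work (171 of the 410 groups live there). Second, and relatedly, the key constraint that makes the $\Unitary(1)_3$ enumeration finite is the rationality axiom (ST3), specialized to $\wedge^2 \C^6$: without it one would have infinitely many abelian subgroups of $\SU(3)$. Your appeal to "the group-theoretic conditions imposed by Serre's axioms" tacitly includes this, but given that the Conway--Jones-style analysis of sums of roots of unity with integral norm is one of the paper's central technical inputs, it should be named rather than left implicit. Third, the claim that the 33 maximal groups can all be realized "as Jacobians of explicit hyperelliptic or Picard curves" is false as stated: several of the maximal type-$\bN$ realizations are products of a CM elliptic curve with an isogeny twist $A$ of $E_D^2$ carrying only a $(1,2)$-, $(1,3)$-, or $(1,6)$-polarization, two of them are defined only over $\Q(\sqrt{3})$, and the paper explicitly leaves open whether every Sato--Tate group is realized by a Jacobian or even by a principally polarized abelian threefold. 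The logic of your lower-bound step is fine (realize the 33 maximal groups and pass to subfields of the endomorphism field for the rest, using that $\ST(A_{k'})$ is the preimage of $\Gal(k'K/k')$), but the asserted shape of the realizing varieties is incorrect.
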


A corollary of the analysis is that the component group orders which are maximal for divisibility are $192 = 2^6 \times 3$, $432 = 2^4 \times 3^3$,
and $336 = 2^4 \times 3 \times 7$. As noted above, these translate into bounds on the degree of the minimal field of definition of all endomorphisms defined over $\Qbar$; these are consistent with the upper bounds
on this degree given by Silverberg \cite{Sil92} and Guralnick and Kedlaya \cite{GK17},
the latter being a sharp bound on the least common multiple of the endomorphism field degrees in a given dimension.

Compared to the case of abelian surfaces, Theorem~\ref{T:ST result} exhibits two notable deficiencies whose resolution we leave to future work. One is that we do not perform an analysis of Galois types as in \cite{FKRS12}, and so we
make no statements about realizability of particular Sato--Tate groups over particular number fields.
The other is that we do not determine which Sato--Tate groups can be realized by Jacobians of genus 3 curves over number fields, or even by principally polarized abelian threefolds; there is some reason to believe that there is a genuine distinction to be made here, in contrast with what happens for abelian surfaces.

\subsection{Applying the classification}

We insert here a few words about how to apply the classification to determine the Sato--Tate group of a given abelian threefold $A$. For this discussion, we conflate the Sato--Tate group with its associated distribution; this is harmless in dimension 2, but in dimension 3 this loses a small amount of information (see the discussion of new phenomena below).

As noted above, the Sato--Tate group controls the distribution of normalized $L$-polynomials of $A$, so one can make an empirical determination of the Sato--Tate group by compiling data about normalized $L$-polynomials.
For the Jacobians of hyperelliptic curves of genus $\leq 3$, this is done very effectively by Sutherland's \smalljac{}
package \cite{KS08}; this incorporates the technique described in \cite{HS14, HS16} to compute the $L$-polynomials
in average polynomial time. Similar approaches apply in other cases;
see \cite{HMS16} for double covers of pointless conics, \cite{Sut20} for other cyclic covers of $\mathbf{P}^1$,
and \cite{CKR20} for hypergeometric motives.

In order to discretize the problem of comparing the observed Sato--Tate distribution of~$A$ to the known candidates,
we employ the usual device of working with \emph{moment statistics}.
For the random matrix distribution associated to a linear group $G$, these statistics carry an additional representation-theoretic meaning: computing a moment statistic amounts to computing the average value of some symmetric polynomial of the eigenvalues of a random matrix from $G$, and for a polynomial with integer coefficients this corresponds to taking the multiplicity of the trivial character in a certain virtual character of $G$. In particular, this quantity is necessarily an integer; for Sato--Tate groups, even averaging over any one component of $G$ yields an integer value (see Definition~\ref{Sato--Tate axioms}).

While in principle moments are sufficient to completely distinguish distributions, for Sato--Tate distributions it is more useful in practice to supplement this data with some additional measurements of the $L$-polynomials. In particular, for disconnected groups it is quite common that certain functions of the eigenvalues are constant on certain nonidentity components. A familiar example is that of an elliptic curve with complex multiplication: every prime which is inert in the CM field gives a Frobenius trace of 0, corresponding to the fact that the trace is identically 0 on the nonidentity component of the normalizer of $\SO(2)$ in $\SU(2)$. By tracking these statistics in addition to moments, one can make a much more rapid putative determination of the Sato--Tate group than with moments alone.

With the empirical problem well understood, we now turn to the question of making a rigorous determination of the Sato--Tate group of $A$. One cannot hope to do this purely from computed $L$-polynomials except conditionally on the assumption that various motivic $L$-functions admit analytic continuation, functional equation, and the Riemann hypothesis, as these are needed to make the error term effective in the convergence to the Sato--Tate distribution. (see \cite{BFK20, BuK16}). Instead, we propose to compute the endomorphism algebra of $A$ over 
$\Qbar$, which together with its Galois action completely determines the Sato--Tate group. (Without the Galois action, one only recovers the identity component of the Sato--Tate group.)
A good practical approach to this problem has been introduced by Costa, Mascot, Sijsling, and Voight
\cite{CMSV19}, building on earlier work of van Wamelen; see \cite{CS19} for an implementation of this approach.

Finally, note that any \emph{a priori} knowledge about the structure of $A$ can potentially be fed into the previous calculations.
In particular, if $A$ is the Jacobian of a curve, then the automorphisms of the curve generate a subalgebra of the endomorphism algebra of $A$ which restricts the Sato--Tate group to a certain subgroup of $\USp(6)$; see Proposition~\ref{P:automorphism groups}.
For example, if $A$ is the Jacobian of a Picard curve, then its Sato--Tate group is contained in the normalizer of $\Unitary(3)$ in $\USp(6)$.

\subsection{New phenomena}

We mention some examples of phenomena that appear for abelian threefolds but not for abelian surfaces.

\begin{itemize}
\item
In dimension 3, one finds component groups that are not solvable, namely the simple group of order 168 and its double cover. These appeared previously in the analysis of Sato--Tate groups of twists of the Klein quartic \cite{FLS18}.

\item
One also finds for the first time component groups which are too big to be explained by twisting curves,
notably the Hessian group of order 216 (which exceeds the Hurwitz bound of 168 for the order of the automorphism group of a genus 3 curve) and its double cover.

\item
While in dimension 2 it is nearly automatic that every Sato--Tate group that can be realized occurs for the Jacobian of a curve, this is less clear in dimension 3. In some cases, there appears to be an obstruction even to constructing a realization that admits a principal polarization, although we do not attempt to prove this here.

\item
On a further related note, for abelian threefolds, we expect (but have not yet confirmed) that certain Sato--Tate groups can only occur 
in cases where the abelian threefold is isogenous to the cube of a \emph{particular} CM elliptic curve;
in particular, the CM field of this curve must occur within the endomorphism field of the threefold. The corresponding phenomenon for abelian surfaces is precluded by the work of Fit\'e and Guitart \cite[Theorem~1.6]{FG18}, who exhibit a single number field over which all~52 possible Sato--Tate groups occur.

\item
Whereas the 52 Sato--Tate groups of abelian surfaces correspond to 52 distinct distributions of normalized $L$-polynomials, there is one pair of Sato--Tate groups of abelian threefolds which give rise to identical distributions of $L$-polynomials, but are not isomorphic as abstract groups; in fact, their component groups are not isomorphic
(see Proposition~\ref{proposition: same measures}).
One might compare this to the phenomenon of nonisomorphic number fields with the same zeta function \cite{Per77}; see Remark~\ref{remark: Gassmann equivalence}.

\item
In dimension 3, the joint coefficient distribution of the normalized $L$-polynomials is not determined by the individual coefficient distributions of the normalized $L$-polynomials,
as is the case in dimension 2. This was already reported in \cite{FLS18}.
\end{itemize}

\subsection{Ingredients of the classification: upper bound}

The proof of Theorem~\ref{T:ST result} involves an upper bound, to show that every Sato--Tate group occurs among our 410 candidates, and a lower bound, to show that each of these candidates can occur.
We summarize here the key ingredients in the upper bound.

\begin{itemize}

\item
The classification of identity components follows by identifying the connected closed subgroups of $\USp(6)$, then comparing these to the classification of endomorphism algebras of abelian threefolds over $\C$
(\S\ref{section: endomorphism rings}).

\item
For the classification of component groups, we use the ``Sato--Tate axioms'' from \cite{FKRS12} (\S\ref{subsec:axioms}) to compile a list of candidate extensions of each of the 14 possible identity components (see \S\ref{section:STgroups} for all cases except $\Unitary(1)_3$). In the cases where the identity component has dimension at least 2, this is mostly straightforward given the results of \cite{FKRS12}. One key exception is that when the identity component is $\Unitary(1) \times \Unitary(1) \times \Unitary(1)$, a direct enumeration of groups yields 33 candidates, but a more refined analysis of CM types in the sense of Shimura \cite{Shi71} reduces this list to 13 groups. This is comparable to a feature of \cite{FKRS12}: there the direct group-theoretic classification yields 55 groups, of which 8 have identity component $\Unitary(1) \times \Unitary(1) \times \Unitary(1)$, but only~5 of the latter survive the matching of Galois types. 

\item
As one might guess from the case of abelian surfaces, the classification of component groups is by far the most complicated when the identity component is the one-dimensional diagonal torus $\Unitary(1)_3$ (\S\ref{section: unitary}).
To make this classification, we use the description of (and to some extent the notation for)
finite subgroups of $\PGL_3(\C)$ given by Blichfeldt, Dickson, and Miller \cite[Chapter~XII]{MBD61}.
Since $\Unitary(1)_3$ has index 2 in its normalizer in $\USp(6)$,
we must also figure out what double covers arise from a given subgroup of $\PGL_3(\C)$;
this is a rather delicate calculation involving the normalizers of these groups in $\PGL_3(\C)$.

\item
In light of the complexity of the argument, we make a number of computer calculations to verify some key steps. It would be desirable to give a computer-based verification of the entire classification, but we are not aware of any software that can handle compact Lie groups at the necessary level of generality.

\end{itemize}

\subsection{Ingredients of the classification: lower bound}

We now summarize the ingredients of the lower bound aspect of Theorem~\ref{T:ST result}. A complete proof of this direction was already included in the announcement in \cite{FKS19}, but our approach here contains some new features.

\begin{itemize}

\item
To show that the candidate Sato--Tate groups all occur, we restrict attention to the 33 maximal groups in the classification. Of these, about half arise from products of elliptic curves with abelian surfaces,
for which we may rely on the analysis from \cite{FKRS12} to obtain examples. Most of the others have identity component $\Unitary(1)_3$ (the diagonal torus in $\Unitary(1) \times \Unitary(1) \times \Unitary(1)$) and can be obtained by twisting the cube of a CM elliptic curve, using a similar framework as has been used previously for curves
\cite{FS14} and more recently for abelian varieties \cite{GK17}.

\item
We compute moments for the random matrix distribution corresponding to each possible Sato--Tate group,
using the Weyl character formula (\S\ref{sec:statistics}).
As noted above, there is one pair of groups (neither maximal) 
that give rise to identical distributions, so there are only 409 possible distributions for normalized $L$-polynomials of an abelian threefold over a number field.

\item
For each of the 33 maximal groups, we select an explicit example realizing this Sato--Tate group (Table~\ref{table: maximal ST groups})
and compare the distribution arising from its $L$-function to the predicted one (\S\ref{subsec:explicit L-functions}). 
This provides an empirical confirmation of the lower bound.
In some cases we provide multiple examples of distinct shapes, such as a product construction and a genus 3 Jacobian.

\item
We also confirm the matching of examples with Sato--Tate groups rigorously (\S\ref{section: realizability}).
Here we build upon existing analyses of twists of genus $3$ curves with extremal automorphism groups
\cite{ACLLM18, FLS18, FS16} (\S\ref{subsec:curves with automorphisms}); we also find some novel examples of genus $2$ curves whose Jacobians
have quaternionic multiplication (Proposition~\ref{P:automorphism groups} and Example~\ref{exa:type I JE6}).

\end{itemize}

\subsection{Tables, code, and data}

Given the complexity of the classification of Sato--Tate groups of abelian threefolds, it would be neither practical
nor useful to include tables at the same level of detail as in \cite[\S 6]{FKRS12}. We have elected instead to present the relevant data in several alternate forms.

\begin{itemize}

\item
Table~\ref{table:connected ST groups} lists the possible identity components of Sato--Tate groups of abelian threefolds,
as they correspond to real geometric endomorphism algebras, and indicates how many Sato--Tate groups appear in the classification for a given identity component. This table also introduces a shorthand notation by which each identity component is labeled by a letter called its \emph{absolute type}; for instance, the generic case $\USp(6)$ corresponds to absolute type $\bA$. 

\item
Table~\ref{table:3-diagonal} lists, for each possible connected Sato--Tate group of an abelian threefold, a certain numerical statistic
called the \emph{$3$-diagonal} (see Definition~\ref{definition: m-diagonal}). For disconnected groups, see instead the \LMFDB{} (see below).

\item
Table~\ref{table: types of generic automorphism groups} lists the possible automorphism groups of genus 3 curves and, for each of these groups,
the identity component of the Sato--Tate group of a generic curve with this automorphism group.  These assignments are verified in Proposition~\ref{P:automorphism groups}.

\item
Table~\ref{table: maximal ST groups} lists, for each of the 33 maximal groups in Theorem~\ref{T:ST result}(ii),
the identity component and component group, and one or more examples of abelian threefolds realizing this group as a Sato--Tate group.
These assignments have been verified both empirically by comparison of moment statistics, and rigorously by analysis of endomorphism algebras;
see \S\ref{subsec:explicit L-functions} for the latter. For endomorphism fields, see Table~\ref{table: maximal endomorphism fields}.

\item
We have gathered code referenced in this paper in a \GitHub{} repository \cite{FKS21}.
This code depends on \Gap{} \cite{GAP20}, \Magma{} \cite{Magma}, and \Sage{} \cite{Sage};
the latter also uses functionality derived from \Pari{} \cite{Pari}.
Components include:
presentations of the groups with identity component $\Unitary(1)_3$
(\S\ref{section: unitary});
consistency checks for the classification of groups with identity component $\Unitary(1)_3$
(\S\ref{subsec:consistency checks}); 
computation of moment statistics and other numerical invariants
(\S\ref{sec:statistics});
verification of the explicit realizations needed for the lower bound aspect of Theorem~\ref{T:ST result}(a)
(\S\ref{section: realizability}).

\item
We have created a \href{https://math.mit.edu/~drew/st3/g3SatoTateDistributions.html}{web site} with animated histograms that allow one to observe the convergence of various moment statistics for an explicit realization of each of the 410 groups in Theorem~\ref{T:ST result}(a).

\item
Each of the 410 groups in Theorem~\ref{T:ST result}(a) has its own home page in the \href{https://www.lmfdb.org/SatoTateGroup}{L-Functions and Modular Forms Database (\LMFDB)}. These pages include explicit generators, subgroups and supergroups, moment statistics (including the 3-diagonals), and point densities for each group.
\end{itemize}

\subsection{Desiderata in dimension 3}

While the classification of Sato--Tate groups of abelian threefolds can be considered complete in one sense,
there are a few things left to be done in order to achieve a result that is directly comparable to the classification for abelian surfaces.

\begin{itemize}
\item
Develop closed-form expressions for the moments of Sato--Tate groups of abelian threefolds (Remark~\ref{R:closed formulas for moments}, Remark~\ref{R:Kostant character formula}).
\item
Determine which Sato--Tate groups occur for abelian threefolds over $\Q$, or over totally real number fields; this may require an analysis of Galois types as in \cite{FKRS12}. In particular, there is a \emph{maximal} group which we were able to realize over  $\Q(\sqrt{3})$ but not over $\Q$ (Example~\ref{exa:type N 48 41}).

\item
Make a finer analysis of polarizations to determine which Sato--Tate groups occur for principally polarizable abelian threefolds.
This may involve combining our existing analysis of polarizations on twists (Remark~\ref{remark: twisting construction induced polarization}) with some arguments in the style of \cite{FG18}.

\item
Determine which Sato--Tate groups occur for Jacobians of genus 3 curves, or for products of elliptic curves with Jacobians of genus 2 curves, or for products of elliptic curves with Prym varieties of dimension 2. In particular, one of our examples involves an abelian surface admitting a principal polarization which we have not realized as the Jacobian of a genus 2 curve (Remark~\ref{R:identify Jacobian}). 

\end{itemize}

\subsection{Abelian varieties of higher dimension}

With the classification of Sato--Tate groups complete for abelian varieties of dimension at most 3, it is natural to ask about additional cases, and in particular abelian fourfolds. Going from threefolds to higher-dimensional
abelian varieties brings several new difficulties.
\begin{itemize}
\item
In order to get started one needs a classification of identity components, or equivalently of Mumford--Tate groups.
This has been done for dimension up to 5 by Moonen and Zarhin \cite{MZ99}.

\item
For fourfolds,
the number of groups resulting in the classification will run into the thousands, so some additional automation of the
classification will be required; in particular, when the identity component is a one-dimensional torus, 
computing double covers by hand is quite laborious and prone to errors.
Such automation should be possible, but will require new ideas.

\item
Starting in dimension 4, the possible components of the real endomorphism algebra include matrix algebras not only
over $\R$ and $\C$, but also over the quaternions.

\item
Starting in dimension 4, one encounters cases where the Mumford--Tate group is not determined solely by endomorphisms;
this phenomenon was first observed by Mumford \cite{Mu69} for simple abelian fourfolds and by Shioda
\cite[Example~6.1]{Shi81} for nonsimple abelian fourfolds.
A related issue is that the Mumford--Tate conjecture is not known for arbitrary fourfolds (see
\cite{Zha14} for discussion of the absolutely simple case). Consequently, one must be careful when passing between the Hodge-theoretic and Galois-theoretic definitions of the Sato--Tate group of an abelian fourfold.

\item
In order to determine the Sato--Tate groups whose identity components are tori, one must extend the analysis of
CM types to higher dimensions, where new combinatorial possibilities emerge; for instance, in dimension 4
it is possible to have a CM abelian variety over $\Q$ whose CM field is not a cyclic Galois extension of $\Q$
(it can also be a $\dih 4$-octic). We give some initial analysis here, but some additional work is needed to extract
a precise answer in any given dimension.

\item
It is possible that the rationality condition is not sufficient due to phenomena involving Schur indices (i.e., the distinction between a representation having traces in $\Q$ versus being defined over $\Q$), which have not appeared in dimensions up to 3.

\item
In sufficiently large dimensions, certain product groups may be obstructed by the failure of the corresponding endomorphism fields to be linearly disjoint. For instance, this may happen because both factors require complex multiplication by a particular imaginary quadratic field. (That said, the result of \cite{FG18} suggests that this may not occur in dimension 4.)

\item
In dimension 4, there is no reason to expect that all Sato--Tate groups can be realized using Jacobians of curves. (As a reminder, it remains unclear whether this holds even in dimension 3.)
Consequently, some new work would be needed in order to compute the $L$-functions
of sufficiently many abelian fourfolds to have a reasonable chance of finding examples of all possible groups;
reasonably good algorithms and implementations now exist for general curves
\cite{Tui16, Tui18, CT18}, but applying these to an abelian variety would require relating that variety to a Jacobian in some fashion that does not inflate the dimension too much (e.g., using the Prym construction).

\item
Expanding on the previous point, it is not even clear whether all Sato--Tate groups of abelian fourfolds can be realized using principally polarized abelian fourfolds. (Again, this is already unclear in dimension 3.)

\end{itemize}

It should be noted that it may be feasible to identify the Sato--Tate groups of individual abelian varieties of a given dimension without a full classification of all Sato--Tate groups that occur in that dimension, thanks to work of Zywina \cite{Zyw22}. 

In another direction, one can consider motives of weight greater than 1, although this requires some extra care
with the foundations (see \cite{BaK16} for odd weight and \cite{BK21} for even weight).
For example, \cite{FKS18} gives a classification of Sato--Tate groups associated to 
motives of weight 3 with Hodge vector $(1,1,1,1)$ (e.g., the symmetric cube of an elliptic curve).
A natural next step would be to consider motives of weight 2 associated to K3 surfaces, starting with those of high Picard number; this can be viewed as including the case of abelian surfaces via the Kummer construction of K3 surfaces.
See \cite{EJ23} for some work in this direction.

One can also ask similar questions for abelian varieties (or other motives) over function fields. In this case,
the equidistribution problem can be settled using the known analytic properties of $L$-functions, as originally
observed by Deligne \cite[Th\'eor\`eme~3.5.3]{Del80}; the combinatorial problem of classifying images of monodromy representations remains to be settled, but the exact constraints are slightly different due to the absence of archimedean places. This means that Hodge theory no longer plays a role, although $p$-adic Hodge theory continues to do so.

\subsection{Acknowledgments}

Thanks to Armand Brumer, Daniel Bump, Edgar Costa, John Cremona, Heidi Goodson, Ever\-ett Howe, Alexander Hulpke, P\i nar K\i l\i \c{c}er, Jeroen Hanselman, Elisa Lorenzo Garc\'\i a, Jennifer Paulhus, Jean-Pierre Serre, Jeroen Sijsling, Liang Xiao, and David Zywina for helpful discussions.
Kedlaya also thanks the organizers of AGCCT 2019 for the invitation to present \cite{FKS19}.

Financial support was received as follows:
Fit\'e: AEI (grant RYC-2019-027378-I), DGICYT (grants MTM2009-13060-C02-01, MTM2015-63829-P), German Research Council (SFB 701, SFB/TR 45), IAS (NSF grant DMS-1638352 plus additional funds),
ERC (grant 682152);
Kedlaya: NSF (grants DMS-0545904,
DMS-1101343, DMS-1501214, DMS-1802161, DMS-2053473), DARPA (grant HR0011-09-1-0048), MIT (NEC Fund,
Cecil and Ida Green professorship), UC San Diego (Stefan E. Warschawski professorship),
Guggenheim Fellowship (fall 2015),
IAS (visiting professorship 2018--2019); Sutherland: NSF
(grants DMS-1115455, DMS-1522526), Google Cloud Platform (GCP Research Credit Program).  All three authors were supported by the Simons Collaboration on Arithmetic Geometry, Number Theory, and Computation (Simons Foundation grant 550033).

Additional hospitality was received as follows:
Fit\'e: UC San Diego (February--March 2012);
Kedlaya: MSRI (spring 2011, fall 2014, spring 2019, spring 2023), IPAM (spring 2014);
Kedlaya and Sutherland: ICERM (fall 2015); all authors: CIRM (February--March 2023).

\section{Conventions}

We start by establishing some conventions that we follow throughout the text.

Throughout, we write $A$ to denote an abelian variety of dimension $g \geq 1$ over a number field $k$.
We consistently consider abelian varieties \emph{without} a preferred choice of polarization, as the Sato--Tate group does not depend on the choice;
but see \S\ref{subsec:polarizations of twists} for some discussion of polarizations in the context of twisting powers of elliptic curves.

For $\ell$ a field extension of $k$ (not necessarily a number field), write $A_\ell$ for the base extension of $A$ from $k$ to $\ell$.
We commonly use this notation with $\ell$ being the minimal field of definition of all endomorphisms of $A_{\Qbar}$; we call this the \emph{endomorphism field} of $A$ and denote it by $K$.

We write $\End(A)$ for the endomorphism ring of $A$ \emph{over $k$}; to denote endomorphisms of a base extension we indicate this explicitly (so $\End(A_{\Qbar})$ is the geometric endomorphism ring).
We write $\End(A)_{\Q}$ and $\End(A)_{\R}$ for the tensor products $\End(A) \otimes_{\Z} \Q$ and $\End(A) \otimes_{\Z} \R$.

For $C$ a curve over a field, we write $\Jac(C)$ rather than $J(C)$ to denote the Jacobian of~$C$. This makes the letter $J$ available for other uses, such as in the notation for Sato--Tate groups.
We use $\Aut(C)$ to denote the automorphism group of $C$.
We define the \emph{reduced automorphism group} $\Aut'(C)$ as the quotient of $\Aut(C)$
by the hyperelliptic involution if one exists, and as $\Aut(C)$ otherwise.

We write $\cyc n, \dih n, \alt n, \sym n$ for the cyclic, dihedral, alternating, and symmetric groups of respective orders $n, 2n, n!/2, n!$ (that is, $n$ is the degree of the standard permutation representation). 
In some cases, we will follow the practice of \Gap{} and \Magma{} and 
use the unique (but opaque) compact labels for isomorphism classes of finite groups
from the Small Groups library \cite{BEO01}; the label for a group $G$ always has the form $\langle n,i \rangle$ where $n = \#G$.

We present hyperelliptic curves (of any genus) as affine curves in lowercase variables (typically $y^2 = P(x)$) and plane quartics as projective curves in uppercase variables
(i.e., $P(X,Y,Z) = 0$). We will exploit this typographic choice in order to commingle equations of the two types without risk of confusion.
In rare cases, we extend the hyperelliptic convention to other cyclic covers of $\mathbb{P}^1$, such as Picard curves.
In curve equations, we sometimes write $P_d(\ldots)$ to indicate an unspecified homogeneous polynomial of degree $d$ over $k$ in the
indicated variables.

We write $\spl(f)$ for the splitting field of $f\in \Q[x]$. Note that $f$ need not be irreducible; if $f = f_1 f_2$, we also denote this field by $\spl(f_1,f_2)$.

We will make frequent references into the L-Functions and Modular Forms Database (\LMFDB) \cite{LMFDB}.
We remind the reader to consult the \LMFDB{} for more details about the computation of data computed therein; see in particular the following pages for attributions related to the corresponding object types.
\begin{itemize}
\item
Finite groups: \url{https://beta.lmfdb.org/Groups/Abstract/Source}.
\item Number fields: \url{https://www.lmfdb.org/NumberField/Source}.
\item
Elliptic curves over $\Q$: \url{http://www.lmfdb.org/EllipticCurve/Q/Source}.
\item
Elliptic curves over other number fields: \url{https://www.lmfdb.org/EllipticCurve/Source}.
\item
Genus $2$ curves over $\Q$: \url{http://www.lmfdb.org/Genus2Curve/Source}.
\item
Automorphism groups of curves over $\C$: \url{http://www.lmfdb.org/HigherGenus/C/Aut/Source}.
\end{itemize}

In addition, the \LMFDB{} includes a database of Sato--Tate groups combining the results of \cite{FKRS12} with the classification for abelian threefolds made in this paper:
\begin{center} 
\url{https://www.lmfdb.org/SatoTateGroup/}.
\end{center}
In most cases, we have formatted labels of Sato--Tate groups of abelian varieties as hyperlinks to the home pages of these groups.
We also format labels of other objects found in the \LMFDB{}, such as finite groups, elliptic curves over number fields, and genus 2 curves over~$\Q$, as hyperlinks. For number fields themselves, we omit the hyperlink when the field is represented by adjoining a single algebraic number (e.g, $\Q(i)$); instead, in the notation  $\spl(f)$ for $f \in \Q[x]$ irreducible, if the number field defined by~$f$ (not the splitting field) is found in the \LMFDB{}, then we format $f$ as a hyperlink to this~field.

\section{Background on Sato--Tate groups}

In this section, we recall some of the theoretical results from \cite{FKRS12}
which therein
provide the basis for the classification of Sato--Tate groups of abelian surfaces, and herein
do likewise for abelian threefolds. 
We then classify the possible identity components of Sato--Tate groups of abelian threefolds.
See also \cite{Sut16} for an overview of this circle of ideas.

\subsection{The Mumford--Tate group}

We begin by recalling the definition of the Mumford--Tate group of an abelian variety over the number field $k$.
This construction carries the same information as the identity component of the Sato--Tate group.

\begin{definition}
Fix an embedding of $k$ into $\C$ and a polarization of $A$
and set $V \colonequals H^1(A_{\C}^{\topo}, \Q)$; it carries an alternating pairing $\psi$ induced by the polarization.
Note that while the rational symplectic space $(V, \psi)$ depends on the choice of the polarization, the real symplectic space $(V \otimes_{\Q} \R, \psi)$ does not.

Via the description of $A_{\C}^{\topo}$ as a complex torus, we obtain an $\R$-linear identification of $V \otimes_{\Q} \R$ with the tangent space of $A_{\C}$ at the origin (or equivalently, the dual of the space of holomorphic differentials on $A_{\C}$). The \emph{Mumford--Tate group} of $A$, denoted $\MT(A)$, is the smallest $\Q$-algebraic subgroup of $\GL(V)$ whose base extension to $\R$ contains the scalar action of $\C^\times$ on
$V \otimes_{\Q} \R$; this group is contained in the symplectic group $\GSp(V, \psi)$.
It follows from Deligne's theorem on absolute Hodge cycles (see \cite{DM82})
that the definition of $\MT(A)$ does not depend on the choice of the embedding of $k$ into $\C$; it is also clearly invariant under enlarging $k$.
\end{definition}

\begin{definition}
The \emph{Hodge group} $\Hg(A)$ is the intersection $\MT(A) \cap \Sp(V, \psi)$.
The \emph{Lefschetz group} $\Lef(A)$ is the identity 
connected component of the centralizer of $\End(A_{\overline{k}})_{\Q}$ in $\Sp(V, \psi)$.
There is an obvious inclusion $\Hg(A) \subseteq \Lef(A)$.
\end{definition}

\begin{proposition} \label{Hodge equals Lefschetz}
For $g \leq 3$, we have $\Hg(A) = \Lef(A)$. (That is, the Mumford--Tate group is completely controlled by 
endomorphisms.)
\end{proposition}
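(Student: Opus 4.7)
The containment $\Hg(A) \subseteq \Lef(A)$ holds in arbitrary dimension. Indeed, any $\phi \in \End(A_{\overline{k}})_{\Q}$ acts on $V$ as a morphism of rational Hodge structures, so it commutes with the Hodge cocharacter $\mathbb{S} \to \GL(V_{\R})$; consequently $\MT(A)$ centralizes $\End(A_{\overline{k}})_{\Q}$ inside $\GL(V)$, and intersecting with $\Sp(V, \psi)$ gives the desired inclusion. The real work is in the reverse direction.

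For this, the plan is a case analysis on the isogeny type of $A_{\overline{k}}$. By Poincar\'e reducibility, $A_{\overline{k}}$ is isogenous to a product $\prod B_i^{n_i}$ of powers of pairwise nonisogenous simple abelian varieties $B_i$. The endomorphism algebra then splits as $\prod_i \M_{n_i}(D_i)$ with $D_i = \End((B_i)_{\overline{k}})_{\Q}$, and both $\Hg(A)$ and $\Lef(A)$ respect the induced decomposition of $V$; together with standard facts about how Hodge groups of products of pairwise nonisogenous simple factors fit inside the product of the individual Hodge groups, this reduces the problem to verifying the equality in the isotypic case $A_{\overline{k}} \sim B^n$ with $B$ simple of dimension $d$ and $nd \leq 3$. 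The Albert classification, restricted by $nd \leq 3$, leaves only a short list of possibilities for $(d, D = \End(B_{\overline{k}})_{\Q})$: for $d=1$, $D$ is either $\Q$ or an imaginary quadratic field; for $d=2$ simple, $D$ is one of $\Q$, a real quadratic field, an indefinite quaternion algebra over $\Q$, or a quartic CM field; for $d=3$ simple, $D$ is one of $\Q$, a totally real cubic field, an imaginary quadratic field, or a sextic CM field, the quaternionic type being excluded since $2 \nmid 3$.

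For each such $(d, D, n)$, the Lefschetz group is read off explicitly from its definition as the connected centralizer of $D$ in $\Sp(V, \psi)$, producing an identifiable classical group (a product of symplectic, unitary, or special-linear-type factors over the center of $D$). The equality $\Hg(A) = \Lef(A)$ then amounts to checking that the Mumford--Tate group saturates this upper bound in each case, which is exactly the content of the explicit determination of Mumford--Tate groups of abelian varieties of dimension at most $3$ carried out by Moonen and Zarhin \cite{MZ99}. The main obstacle is the generic simple threefold $(d, D) = (3, \Q)$, where saturation means $\MT(A) = \GSp(V, \psi)$; this amounts to the classical assertion, due originally to Tankeev and central to Mumford's analysis of Hodge cycles, that no exceptional Hodge classes appear on abelian varieties of dimension at most $3$. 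Granting this, the remaining cases reduce to direct computations with tori and their symplectic normalizers.
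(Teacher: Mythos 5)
The paper disposes of this proposition with a one-line citation to \cite[Theorem~2.16]{FKRS12}, so the comparison here is really between your argument and the underlying literature.

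Your sketch captures the broad outline of how the result gets proved, but there is a genuine gap at the reduction step. You claim that because $\Hg(A)$ and $\Lef(A)$ respect the isogeny decomposition $A_{\overline{k}} \sim \prod_i B_i^{n_i}$, and because of ``standard facts about how Hodge groups of products of pairwise nonisogenous simple factors fit inside the product of the individual Hodge groups,'' the problem reduces to the isotypic case. That is not a valid reduction. The Lefschetz group does split as a product (the centralizer of a block-diagonal algebra is block-diagonal), so $\Lef(A) \cong \prod_i \Lef(B_i^{n_i})$. But for the Hodge group all one has in general is a containment $\Hg(A) \subseteq \prod_i \Hg(B_i^{n_i})$ with surjective projections; this inclusion can be strict even for pairwise nonisogenous simple factors --- that is precisely the mechanism by which exceptional Hodge classes appear on nonsimple abelian varieties, as in Shioda's fourfold example cited later in this paper. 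So even after proving $\Hg(B_i^{n_i}) = \Lef(B_i^{n_i})$ in each isotypic piece, you would only have shown $\Hg(A) \subseteq \Lef(A)$ --- the direction you already granted --- not the needed equality. Ruling out strict containment in the product case is part of the content of the proposition, not a formal consequence of the isotypic case; for $g \leq 3$ it requires an argument (e.g.\ a Goursat-type analysis of the possible subgroups of the product, or a direct appeal to the Moonen--Zarhin classification which already treats the nonsimple cases). If you intend to cite Moonen--Zarhin \cite{MZ99} for the full determination of Mumford--Tate groups in dimension $\leq 3$, that citation alone suffices and the reduction step should be dropped, since as written it asserts something false and does not actually shorten the path.

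Two smaller remarks. First, for the isotypic reduction $B^n \rightsquigarrow B$ itself (as opposed to the product-over-$i$ reduction), the identifications $\Hg(B^n) = \Hg(B)$ and $\Lef(B^n) = \Lef(B)$ are indeed standard, so that part is fine. Second, your list of Albert types for $g = 3$ should also note that types II and III (quaternionic multiplication) are excluded for odd-dimensional simple abelian varieties, which you do implicitly with the remark ``the quaternionic type being excluded since $2 \nmid 3$''; this is correct.
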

\begin{proof}
See \cite[Theorem~2.16]{FKRS12}
or \cite[Theorem~9.4]{BK15}.
\end{proof}

\subsection{Definition of the Sato--Tate group}

We continue with the definition of the algebraic Sato--Tate group and Sato--Tate group of an abelian variety over 
a number field in
terms of $\ell$-adic monodromy, and the statement of the refined Sato--Tate conjecture.
This material is taken from \cite[\S 2.1]{FKRS12}; see also \cite[\S 2]{BK15}, \cite{BaK16}.

\begin{definition}
For each $\tau \in G_k$, define
\[
\Lef(A, \tau) \colonequals \{\gamma \in \Sp(V, \psi): \gamma^{-1} \alpha \gamma = \tau(\alpha) \mbox{ for all } \alpha \in \End(A_{\overline{k}})_{\Q}\}.
\]
The \emph{twisted Lefschetz group} $\TL(A)$ is the union of $\Lef(A, \tau)$ over all $\tau$.
\end{definition}

\begin{proposition} \label{MT conjecture in dim 3}
For $g \leq 3$, for any prime $\ell$, $\TL(A) \otimes_{\Q} \Q_\ell$ is the kernel of the symplectic character 
on the Zariski closure of the image of $\rho_\ell: G_k \to \GSp(H^1_{\et}(A_{\overline{k}}, \Q_\ell), \psi)$.
\end{proposition}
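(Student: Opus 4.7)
The plan is to establish the equality by separately matching the identity components and the component groups of the two algebraic groups appearing on each side, after base extension to $\Q_\ell$.

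For the identity component, I would invoke the Mumford--Tate conjecture for abelian varieties of dimension $g \leq 3$, which follows from the combined work of Serre, Tankeev, Ribet, and Moonen--Zarhin (building on the classification of Hodge groups also needed for Proposition~\ref{Hodge equals Lefschetz}). This conjecture asserts that the identity component of the Zariski closure of $\rho_\ell(G_k)$ in $\GSp(H^1_{\et}(A_{\overline{k}}, \Q_\ell), \psi)$ is $\MT(A) \otimes_\Q \Q_\ell$. Taking the kernel of the symplectic similitude character therefore yields $\Hg(A) \otimes_\Q \Q_\ell$. On the other side, the identity component of $\TL(A)$ is clearly $\Lef(A, 1)^\circ = \Lef(A)$, and Proposition~\ref{Hodge equals Lefschetz} identifies this with $\Hg(A)$. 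Hence the two identity components agree after $\otimes_\Q \Q_\ell$.

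For the component groups, the target on both sides should be $\Gal(K/k)$, where $K$ is the endomorphism field of $A$. On the $\TL(A)$ side, the conjugation action of $\gamma \in \Lef(A, \tau)$ on $\End(A_{\overline{k}})_\Q$ coincides with $\tau$, so the map $\gamma \mapsto \tau$ induces a well-defined surjection of component groups $\TL(A)/\Lef(A) \twoheadrightarrow \Gal(K/k)$; it is injective because $\Lef(A, 1)$ is (up to its identity component $\Lef(A)$) exactly the set of symplectic automorphisms commuting with every endomorphism. On the Galois side, $\rho_\ell(G_k)$ permutes $\ell$-adic Tate modules of endomorphisms compatibly with the Galois action on $\End(A_{\overline{k}})_\Q$, which by definition of $K$ factors through $\Gal(K/k)$; this gives a map from the component group of the Zariski closure to $\Gal(K/k)$. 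Injectivity follows from the fact that an element of $\rho_\ell(G_k)$ that acts trivially on $\End(A_{\overline{k}})$ lies in the centralizer of the endomorphism algebra, hence (by Prop.~\ref{Hodge equals Lefschetz} and the MT conjecture) in the identity component.

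The surjectivity of the Galois map onto $\Gal(K/k)$ is the step that requires the most care: it amounts to saying that every $\tau \in \Gal(K/k)$ is realized by some element of the Zariski closure of $\rho_\ell(G_k)$ whose conjugation action on $\End(A_{\overline{k}}) \otimes \Q_\ell$ (embedded diagonally via $\ell$-adic Tate modules) matches $\tau$. This follows because $K/k$ is by definition the smallest extension over which all endomorphisms of $A_{\overline{k}}$ are defined, together with the compatibility of the Galois action on endomorphisms with the action on Tate modules. Combining the matching of identity components with the matching of component groups, and observing that the coset structure on both sides is prescribed identically by the fiber over each $\tau \in \Gal(K/k)$ (namely elements whose conjugation realizes $\tau$ on $\End(A_{\overline{k}})_\Q$), yields the proposition.

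The main obstacle is the surjectivity statement for the component group on the Galois side, which is where one genuinely uses the definition of $K$ as the endomorphism field together with the description of $\ell$-adic Galois representations of CM and quasi-CM abelian varieties; for non-generic $A$ in dimension 3, this will involve a case analysis mirroring the list of identity components in $\S$\ref{section: endomorphism rings}, and in practice is most efficiently handled by citing the $\ell$-adic incarnation established in \cite{BK15}.
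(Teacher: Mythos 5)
Your bottom line matches the paper's: the proposition is handled as a direct consequence of the Mumford--Tate conjecture in dimension $\leq 3$ plus the algebraic Sato--Tate machinery, and the paper's entire proof is a citation to \cite[Theorem~6.11]{BK15}, which you also arrive at in your final paragraph. So the approaches coincide.

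That said, your attempt to unpack the argument has a gap worth flagging. You assert that $\TL(A)/\Lef(A)\to\Gal(K/k)$ is injective ``because $\Lef(A,1)$ is exactly the set of symplectic automorphisms commuting with every endomorphism,'' but that is just the definition of $\Lef(A,1)$; what injectivity actually requires is that the full centralizer $\Lef(A,1)$ be \emph{connected}, i.e.\ equal to $\Lef(A)$. The paper deliberately defines $\Lef(A)$ as the \emph{identity component} of the centralizer, signaling that connectedness is not automatic and must be established for the endomorphism types arising in $\dim \leq 3$. The same issue recurs in your coset-matching step: you need the Zariski closure of $\rho_\ell(G_k)$ to meet each fiber over $\tau$ in a full $\Lef(A,1)$-coset, not merely in one connected component of it. These are precisely the technical points that \cite[Theorem~6.11]{BK15} packages, so your instinct to cite it rather than rebuild the argument is correct, but the detailed sketch as written does not close the circle on its own.
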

\begin{proof}
This amounts to the statement that the Mumford--Tate conjecture holds for $A$ whenever $g \leq 3$. See for example 
\cite[Theorem~6.11]{BK15}.
\end{proof}

\begin{definition}
In light of Proposition~\ref{Hodge equals Lefschetz} and Proposition~\ref{MT conjecture in dim 3},
for $g \leq 3$, we define the \emph{Sato--Tate group} $\ST(A)$ to be a maximal compact subgroup of $\TL(A)(\C)$;
we recall from \cite[\S VII.2]{Kna02} that such subgroups exist, any two are conjugate, and one example is the intersection $\TL(A)(\C) \cap \Unitary(2g)$ within $\GL(H^1_{\et}(A_{\overline{k}}, \Q_\ell))(\C)$.
For $C$ a curve, we define the Sato--Tate group of $C$ to be that of its Jacobian $\Jac(C)$.
\end{definition}

\begin{lemma}\label{lemma: Galois action of component group}
For $g \leq 3$, there is a canonical isomorphism $\ST(A)/\ST(A)^0 \to \Gal(K/k)$ where $K$ is the endomorphism field of $A$. In particular, this isomorphism is compatible with base change: for any finite extension $k'$ of $k$, 
$\ST(A_{k'})$ is the inverse image of the subgroup $\Gal(k'K/k') \subseteq \Gal(K/k)$ in $\ST(A)$.
\end{lemma}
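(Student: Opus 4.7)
The plan is to realize the desired isomorphism through the twisted Lefschetz group. Concretely, I would construct a map $\varphi \colon G_k \to \TL(A)(\C)/\Lef(A)(\C)$ by sending $\tau$ to the coset $\Lef(A,\tau)(\C)$, show that it descends to an isomorphism $\Gal(K/k) \xrightarrow{\sim} \TL(A)(\C)/\Lef(A)(\C)$, and then identify the target with $\ST(A)/\ST(A)^0$.

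First I would verify the group-theoretic properties of $\varphi$. The containment $\Lef(A,\tau) \cdot \Lef(A,\sigma) \subseteq \Lef(A,\tau\sigma)$ is immediate from the defining relation $\gamma^{-1}\alpha\gamma = \tau(\alpha)$; together with the fact that $\Lef(A,1) = \Lef(A)$ is the identity component of $\TL(A)$, this makes each nonempty $\Lef(A,\tau)$ a left (and right) $\Lef(A)$-coset in $\TL(A)$, so $\varphi$ is a well-defined homomorphism whenever it lands in nonempty cosets. To compute the kernel I would use that $\Lef(A)$ by definition centralizes $\End(A_{\overline{k}})_{\Q}$; hence $\Lef(A,\tau) = \Lef(A)$ forces $\tau(\alpha) = \alpha$ for every $\alpha \in \End(A_{\overline{k}})_{\Q}$, which is precisely the condition $\tau \in G_K$. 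Conversely, if $\tau \in G_K$ then $\Lef(A,\tau) \supseteq \Lef(A)$ manifestly.

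The main obstacle is showing surjectivity, i.e., that every connected component of $\TL(A)(\C)$ is hit; equivalently, that $\Lef(A,\tau)$ is nonempty for every $\tau \in G_k$. Here I would invoke Proposition~\ref{MT conjecture in dim 3}: for any prime $\ell$, $\TL(A) \otimes_{\Q} \Q_\ell$ is the kernel of the symplectic character on the Zariski closure of $\rho_\ell(G_k)$, so an appropriate scalar multiple of $\rho_\ell(\tau)$ lies in $\TL(A)(\Q_\ell)$ and acts on $\End(A_{\overline{k}})_{\Q}$ by $\tau$ (via the comparison between the $\ell$-adic Tate module and the Betti realization, together with the compatibility of the Galois action on endomorphisms). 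This produces a $\Q_\ell$-point of the torsor $\Lef(A,\tau)$ over $\Lef(A)$; since a $\Lef(A)$-torsor with a $\Q_\ell$-point is nonempty as a $\Q$-scheme, it acquires $\C$-points. Thus $\varphi$ is surjective with kernel $G_K$, which yields a canonical isomorphism $\Gal(K/k) \xrightarrow{\sim} \TL(A)(\C)/\Lef(A)(\C)$.

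Finally I would transfer this to $\ST(A)$. Since $\ST(A)$ is a maximal compact subgroup of $\TL(A)(\C)$ and $\Lef(A)(\C)$ is a connected reductive complex Lie group, standard results on maximal compact subgroups (see \cite[\S VII.2]{Kna02}) give $\ST(A)^0 = \ST(A) \cap \Lef(A)(\C)$ and ensure that $\ST(A)$ meets every connected component of $\TL(A)(\C)$, so the inclusion induces an isomorphism $\ST(A)/\ST(A)^0 \xrightarrow{\sim} \TL(A)(\C)/\Lef(A)(\C)$. Composing with the inverse of the map above gives the canonical isomorphism. For the base change statement, I would note that the endomorphism field of $A_{k'}$ is $k'K$ (as $K$ already splits all endomorphisms), that $\Lef(A_{k'}) = \Lef(A)$ and $\Lef(A_{k'},\tau) = \Lef(A,\tau)$ by definition, and that $\TL(A_{k'}) = \bigcup_{\tau \in G_{k'}} \Lef(A,\tau)$; passing to maximal compact subgroups, $\ST(A_{k'})$ is exactly the preimage of $\Gal(k'K/k') \subseteq \Gal(K/k)$ inside $\ST(A)$, as claimed.
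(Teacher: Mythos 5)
The paper's own proof is a one-line citation to \cite[Theorem~6.11]{BK15}, so you are supplying a genuinely independent argument by unwinding the definition of the twisted Lefschetz group directly; the general architecture (map $G_k \to \TL(A)/\TL(A)^0$ with kernel $G_K$, surjectivity via the $\ell$-adic characterization in Proposition~\ref{MT conjecture in dim 3}, then pass to maximal compacts) is sound. However, there is one step asserted as ``a fact'' that is actually a nontrivial claim and is exactly where the circularity risk hides: you write $\Lef(A,1) = \Lef(A)$, i.e., that the full centralizer of $\End(A_{\overline{k}})_\Q$ in $\Sp(V,\psi)$ is connected. The paper's Definition takes $\Lef(A)$ to be only the \emph{identity component} of this centralizer, so $\Lef(A,1) \supseteq \Lef(A)$ with no a priori reason for equality. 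Your map $\tau \mapsto \Lef(A,\tau)$ is well defined as a map to $\TL(A)/\Lef(A,1)$, and if $\Lef(A,1)$ had extra components, the cosets $\Lef(A,\tau)$ would not be single $\Lef(A)$-cosets, so the homomorphism $\varphi$ you construct would not land in $\TL(A)/\TL(A)^0$ at all. Deducing connectivity of the centralizer from $\pi_0(\TL(A)) \cong \Gal(K/k)$ would be circular here, since that isomorphism is what you are trying to establish; one instead needs an independent verification (e.g.\ a case-by-case check using the classification of $\End(A_{\overline k})_\R$ for $g \le 3$, or a structural argument that the centralizer of a semisimple algebra with a positive involution inside a symplectic group is connected), and this is effectively what \cite[Theorem~6.11]{BK15} supplies. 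You should either insert such a verification or route around it by arguing directly from Proposition~\ref{MT conjecture in dim 3} that $\pi_0(\TL(A)_{\Q_\ell}) \cong \Gal(K/k)$ (using that the Zariski closure of $\rho_\ell(G_K)$ is the connected group $\MT(A)_{\Q_\ell}$), which sidesteps the need to identify $\Lef(A,\tau)$ as a $\Lef(A)$-coset ahead of time. A smaller point: conjugating $\alpha$ by $\rho_\ell(\tau)^{-1}$ in the surjectivity step produces ${}^{\tau^{-1}}\alpha$ rather than $\tau(\alpha)$, so the scalar rescaling of $\rho_\ell(\tau)$ lands in $\Lef(A,\tau^{-1})$; this is a harmless sign convention but worth fixing.
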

\begin{proof}
This is again a consequence of \cite[Theorem~6.11]{BK15}.
\end{proof}

\begin{remark} \label{remark: general ST group}
In the definition of the Sato--Tate group, we are implicitly using the fact that for $g \leq 3$, 
the Mumford--Tate group of $A$ is determined by endomorphisms. For general $g$, it is expected that the role of
$\TL(A)$ in Proposition~\ref{MT conjecture in dim 3} can be filled by a certain algebraic group over $\Q$,
the \emph{motivic Sato--Tate group}, whose definition involves algebraic cycles on $A \times A$ of all codimensions, not just endomorphisms \cite{BaK16}. There is still a canonical surjection $\ST(A)/\ST(A)^0 \to \Gal(K/k)$,
but it is not in general an isomorphism.
\end{remark}

\subsection{Twisting and the Sato--Tate group}

\begin{definition} \label{D:twist}
For $L/k$ a finite Galois extension and $\xi$ a $1$-cocycle of $\Gal(L/k)$ valued in $\Aut(A_L)$, 
there exists a unique (up to unique isomorphism) abelian variety $A_\xi$ over $k$ and an isomorphism
$\theta: A_{\xi,L} \to A_L$ such that
\[
\xi_\sigma = \theta^\sigma \circ \theta^{-1} \qquad (\sigma \in \Gal(L/k)).
\]
Moreover, $A_\xi$ depends only on the class of $\xi$ in the pointed set $H^1(\Gal(L/k), \Aut(A_L))$.
\end{definition}

\begin{definition}
Set notation as in Definition~\ref{D:twist} with $L = K$ (where $K$ is again the endomorphism field of $A$).
As described in \cite[Lemma~2.3]{FS14}, there is an inclusion
\[
\TL(A_\xi) \subseteq \TL(A) \cdot \Aut(A_L)
\]
in $\Sp(V, \psi)$. 
(Recall that $\Aut(A_L) \to \Sp(V, \psi)$ is injective; e.g., over $\C$ this is evident from the complex uniformization.)
Taking a maximal compact subgroup of $(\TL(A) \cdot \Aut(A_L))(\C)$ gives the 
\emph{twisting Sato--Tate group} of $A$ \cite[Definition~2.2]{FS14}.
Achieving this group as a Sato--Tate group of $A$ amounts to solving a certain Galois embedding problem for
the component group of the twisting Sato--Tate group.
\end{definition}

\begin{remark}
If $A$ is equipped with a polarization, then we may similarly consider twisting by a cocycle $\xi$
valued in automorphisms of $A_L$ as a polarized abelian variety, and then $A_\xi$ will acquire a polarization with respect to which $\theta$ is an isomorphism of polarized abelian varieties. 
In particular, if $A$ is the Jacobian of the curve $C$ equipped with the corresponding principal polarization,
then by the Torelli theorem, twisting $A$ as a polarized abelian variety corresponds to twisting $C$ as a curve.
\end{remark}

\subsection{Axioms for Sato--Tate groups}
\label{subsec:axioms}

A key tool used to classify Sato--Tate groups in \cite{FKRS12} is a list of necessary conditions
called the \emph{Sato--Tate axioms}. The formulation in \cite[Definition~3.1]{FKRS12} is applicable to
arbitrary motives; we state here a restricted form of the three original Sato--Tate axioms applicable only
to the 1-motives associated to abelian threefolds. We also add a fourth axiom coming from the 
fact that the Sato--Tate group of an abelian threefold is determined by endomorphisms
(Proposition~\ref{Hodge equals Lefschetz}).

\begin{definition} \label{Sato--Tate axioms}
For a group $G$ with identity connected component $G^0$, the \emph{Sato--Tate axioms (for abelian threefolds)} are as follows.
\begin{enumerate}
\item[(ST1)]
The group $G$ is a closed subgroup of $\USp(6)$. For definiteness, we take the latter to be defined with respect to the symplectic form given by the block matrix
\[
J = \begin{pmatrix} 0 & I_3 \\
-I_3 & 0
\end{pmatrix}
\]
unless otherwise specified.
\item[(ST2)] (Hodge condition)
There exists a homomorphism $\theta: \Unitary(1) \to G^0$
such that $\theta(u)$ has eigenvalues $u, u^{-1}$ each with multiplicity $3$;
the image of $\theta$ is called a \emph{Hodge circle}.
Moreover, the Hodge circles generate a dense subgroup of $G^0$.
\item[(ST3)] (Rationality condition)
For each component
$H$ of $G$ and irreducible character $\chi$ of~$\GL(\C^6)$,
the expected value (under the Haar measure) of $\chi(\gamma)$ over $\gamma\in H$ is an integer.
In particular, $\Exp[\Trace(\gamma, \wedge^m \C^{6})^n: \gamma \in H]\in\Z$ for all integers $m,n>0$.
\item[(ST4)] (Lefschetz condition)
The subgroup of $\USp(6)$ fixing $\End(\C^6)^{G^0}$ is equal to $G^0$.
\end{enumerate}
\end{definition}

\begin{proposition} \label{necessity of Sato--Tate}
Let $A$ be an abelian threefold over $k$.
Then $G = \ST_A$ satisfies the Sato--Tate axioms.
\end{proposition}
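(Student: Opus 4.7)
The plan is to verify the four Sato--Tate axioms (ST1)--(ST4) one at a time. Three of them follow almost immediately from the definitions and the results already established in this section; only (ST3) requires a genuine representation-theoretic input, which I would import directly from \cite{FKRS12}.

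For (ST1), the group $\ST(A)$ is by definition a maximal compact subgroup of $\TL(A)(\C) \subseteq \Sp(V,\psi)(\C)$, so it lies in a maximal compact subgroup of $\Sp(V,\psi)(\C)$; a choice of symplectic basis of $V$ identifies the latter with $\USp(6)$ equipped with the block form $J$ specified in the axioms. For (ST2), the Hodge decomposition $V \otimes_{\Q} \C = H^{1,0} \oplus H^{0,1}$ satisfies $\dim H^{1,0} = \dim H^{0,1} = 3$, so the homomorphism $h \colon \C^{\times} \to \GL(V \otimes_{\Q} \R)$ cutting out the Hodge structure restricts to a homomorphism $\theta \colon \Unitary(1) \to \MT(A)^0(\R)$ whose image acts with eigenvalues $u$ and $u^{-1}$, each of multiplicity $3$. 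Because $\MT(A)$ is by construction the smallest $\Q$-algebraic subgroup of $\GL(V)$ whose real points contain the image of $h$, the $\MT(A)^0(\R)$-conjugates of $\theta$ generate a Zariski-dense subgroup of $\MT(A)^0$. Passing through Proposition~\ref{Hodge equals Lefschetz} to a maximal compact of $\Hg(A)(\C)$ identifies this with $G^0$ and transfers Zariski density to topological density.

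For (ST4), Proposition~\ref{Hodge equals Lefschetz} identifies $\Hg(A)$ with $\Lef(A)$, so $G^0$ is a maximal compact of the connected centralizer of $R \colonequals \End(A_{\overline k})_{\Q}$ in $\Sp(V,\psi)$. Since maximal compacts of connected complex algebraic groups are Zariski-dense and $R$ is semisimple (by Poincar\'e reducibility), a standard double-centralizer argument then identifies $\End(\C^6)^{G^0}$ with $R \otimes_{\Q} \C$ and identifies the subgroup of $\USp(6)$ commuting with this algebra with $G^0$ itself.

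The main obstacle is (ST3), whose verification does not reduce to formal manipulation. Here I would invoke the analogue proved in \cite[\S 3]{FKRS12}, whose argument is not specific to dimension~$2$: since the Sato--Tate group arises from the algebraic group $\TL(A)$ defined over $\Q$, for each component $H$ and each irreducible character $\chi$ of $\GL(\C^6)$ the integral $\Exp[\chi(\gamma):\gamma\in H]$ admits an interpretation as the multiplicity of the trivial character in a virtual representation of the finite component group, and in particular is an integer. The specific assertion about $\Exp[\Trace(\gamma,\wedge^m \C^6)^n : \gamma \in H]$ then follows because tensor products and exterior powers of the standard representation of $\GL(\C^6)$ carry integer-valued characters.
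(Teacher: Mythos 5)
Your proposal matches the paper's proof in structure: both cite \cite[Proposition~3.2]{FKRS12} for (ST1)--(ST3), both note that the density statement in (ST2) comes from the minimality in the definition of the Mumford--Tate group, and both deduce (ST4) from Proposition~\ref{Hodge equals Lefschetz}. The extra detail you supply for (ST1), (ST2), and (ST4) is sound. One imprecision worth flagging in your sketch of (ST3): the average of $\chi_V$ over a coset $gG^0$ is the \emph{character value} of the $G/G^0$-representation $V^{G^0}$ at $g$, not a multiplicity of the trivial character; its integrality is equivalent to $V^{G^0}$ being a rational representation of the component group, which is the actual content of the FKRS12 argument (and ultimately comes from the rationality of the $\ell$-adic trace data). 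Since you defer to FKRS12 for the real work here, this does not damage the proof, but the phrasing should be corrected.
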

\begin{proof}
For (ST1), (ST2), (ST3), this is \cite[Proposition~3.2]{FKRS12}
except that the last condition in (ST2) is not stated therein; that statement is a consequence of
the definition of the Mumford--Tate group \cite[Definition~2.11]{FKRS12}.
For (ST4), apply Proposition~\ref{Hodge equals Lefschetz}.
\end{proof}

\subsection{Endomorphism rings of abelian threefolds}\label{section: endomorphism rings}

We next recall the classification of geometric endomorphism rings of abelian threefolds.
For $A$ simple of dimension $g \leq 3$, the work of Albert and Shimura leaves the following possibilities for the $\Q$-algebra of endomorphisms $\End(A_K)_\Q$.
(See for example \cite{Oo87}.) 
\begin{itemize}
\item If $g=1$: $\Q$ (non-CM type), or a quadratic imaginary field (CM type).
\item If $g=2$: $\Q$ (type I(1)), a real quadratic field (type I(2)), an indefinite division quaternion algebra over $\Q$ (type II(1)), or a quartic CM field (type IV(2,1)).
\item If $g=3$: $\Q$ (type I(1)), a totally real cubic field (type I(3)), an imaginary quadratic field (type IV(1,1)), or a CM field of degree $6$ over $\Q$ (type IV(3,1)). 
\end{itemize}
It easily follows that for $A$ of dimension 3, not necessarily simple, there are $14$ possibilities for the real endomorphism algebra $\End(A_K)_\R$, as listed in \cite[\S 4, Table~2]{Sut16}:
\begin{enumerate}
\setlength{\itemsep}{3pt}
\item[($\bA$)] $\R$, when $A_K$ is simple of type I(1);
\item[($\bB$)] $\C$, when $A_K$ is simple of type IV(1,1);
\item[($\bC$)] $\R\times\R$, when $A_K$ is isogenous to the product of an elliptic curve without CM and a simple abelian surface of type I(1);
\item[($\bD$)] $\C\times \R$, when $A_K$ is isogenous to the product of an elliptic curve with CM and a simple abelian surface of type I(1);
\item[($\bE$)] $\R\times\R\times\R$, when either
\begin{itemize}
\setlength{\itemsep}{0pt}
\item $A_K$ is simple of type I(3), or
\item $A_K$ is isogenous to the product of an elliptic curve without CM and a simple abelian surface of type I(2), or
\item $A_K$ is isogenous to the product of three pairwise nonisogenous elliptic curves without CM;
\end{itemize}
\item[($\bF$)] $\C\times\R\times\R$, when either
\begin{itemize}
\setlength{\itemsep}{0pt}
\item $A_K$ is isogenous to the product of an elliptic curve with CM and two nonisogenous elliptic curves without CM, or
\item $A_K$ is isogenous to the product of an elliptic curve with CM and a simple abelian surface of type I(2);
\end{itemize}
\item[($\bG$)] $\C\times\C\times\R$, when either
\begin{itemize}
\setlength{\itemsep}{0pt}
\item $A_K$ is isogenous to the product of an elliptic curve without CM and two nonisogenous elliptic curves with CM, or
\item $A_K$ is isogenous to the product of an elliptic curve without CM and a simple abelian surface of type IV(2,1);
\end{itemize}
\item[($\bH$)] $\C\times\C\times\C$, when either
\begin{itemize}
\setlength{\itemsep}{0pt}
\item $A_K$ is simple of type IV(3,1), or
\item $A_K$ is isogenous to the product of an elliptic curve with CM and a simple abelian surface of type IV(2,1), or
\item $A_K$ is isogenous to the product of three pairwise nonisogenous elliptic curves with CM; 
\end{itemize}
\item[($\bI$)] $\R\times\M_2(\R)$, when either
\begin{itemize}
\setlength{\itemsep}{0pt}
\item $A_K$ is isogenous to the product of an elliptic curve without CM and a simple abelian surface of type II(1), or
\item $A_K$ is isogenous to the product of an elliptic curve $E$ without CM and the square of an elliptic curve without CM nonisogenous to $E$;
\end{itemize}
\item[($\bJ$)] $\C\times\M_2(\R)$, when either
\begin{itemize}
\setlength{\itemsep}{0pt}
\item $A_K$ is isogenous to the product of an elliptic curve with CM and the square of an elliptic curve without CM, or
\item $A_K$ is isogenous to the product of an elliptic curve with CM and a simple abelian surface of type II(1);
\end{itemize}
\item[($\bK$)] $\R\times\M_2(\C)$, when $A_K$ is isogenous to the product of an elliptic curve without CM and the square of an elliptic curve with CM;
\item[($\bL$)] $\C\times\M_2(\C)$, when $A_K$ is isogenous to the product of an elliptic curve $E$ with CM and the square of an elliptic curve with CM nonisogenous to $E$;
\item[($\bM$)] $\M_3(\R)$, when $A_K$ is isogenous to the cube of an elliptic curve without CM;
\item[($\bN$)] $\M_3(\C)$, when $A_K$ is isogenous to the cube of an elliptic curve with CM.
\end{enumerate}

The letter used above to refer to $\End(A_K)_\R$ is called the \emph{absolute type} of $A$.

\subsection{Connected Sato-Tate groups of abelian threefolds}\label{section: connected groups}

Let $G^0$ be a connected group satisfying the Sato-Tate axioms. Its Lie algebra $\mathfrak h$ is of rank at most 3. The set of Lie algebras of rank at most $3$ can easily be determined from the classification of Dynkin diagrams.

$$
\begin{array}{|c|c|}\hline
r & \mbox{Lie algebras of rank }r \\\hline
1 & \mathfrak{t}_1,\ \mathfrak{sl}_2 = \mathfrak{so}_3 \\\hline
2 & \mathfrak{t}_2,\ \mathfrak{t}_1 \ttimes \mathfrak{sl}_2,\ \mathfrak{sl}_2 \ttimes \mathfrak{sl}_2 = \mathfrak{so}_4,\ \mathfrak{sl}_3,\  \mathfrak{sp}_4 = \mathfrak{so}_5,\ \mathfrak{g}_2 \\\hline	
3 & 
\begin{array}{c}
\mathfrak{t}_3, \mathfrak{t}_2 \ttimes \mathfrak{sl}_2,\ 
\mathfrak{t}_1 \ttimes \mathfrak{sl}_2 \ttimes \mathfrak{sl}_2,\ 
\mathfrak{t}_1 \ttimes \mathfrak{sl}_3, \mathfrak{t}_1 \ttimes \mathfrak{sp}_4, \mathfrak{t}_1 \ttimes \mathfrak{g}_2,\ \mathfrak{sl}_2 \ttimes \mathfrak{sl}_2 \ttimes \mathfrak{sl}_2, \\
\mathfrak{sl}_2 \ttimes \mathfrak{sl}_3,\ \mathfrak{sl}_2 \ttimes \mathfrak{sp}_4, \mathfrak{sl}_2 \ttimes \mathfrak{g}_2,\ \mathfrak{sl}_4 = \mathfrak{so}_6,\  \mathfrak{so}_7,\ \mathfrak{sp}_6
\end{array}\\\hline
\end{array}
$$
\smallskip

The Lie algebra $\mathfrak h$ comes equipped with a 6-dimensional
unitary symplectic self-dual representation without trivial factors. We leave as an exercise for the reader to see that this eliminates the Lie algebras $ \mathfrak{sp}_4$, $\mathfrak{g}_2$, $\mathfrak{t}_1 \ttimes \mathfrak{g}_2$, $\mathfrak{sl}_2 \ttimes \mathfrak{g}_2$, $\mathfrak{sl}_2 \ttimes \mathfrak{sl}_3$, $\mathfrak{so}_6$, $\mathfrak{so}_7$
from the above list (see \cite[\S3.1]{FKS19} for further details). Together with an additional argument to rule out the
group $\Unitary(2)$ (see \cite[Rem. 2.3]{FKS18}), this leaves the $14$ possibilities for the group $G^0$ listed in the second column of Table~\ref{table:connected ST groups}. 
The group $G^0$ comes equipped with a 6-dimensional representation,
which may be inferred from the table using the following considerations.
\begin{itemize}
\item
The groups $\SU(2), \USp(4), \USp(6)$ carry their standard representations.
\item
Products of groups correspond to direct sums of representations, in which we interlace coordinates to match up the symplectic forms. That is, a product $G_1 \times G_2$ is represented using block matrices
\[
\begin{pmatrix} * & 0 & * & 0 
\\
0 & \star & 0 & \star 
\\
* & 0 & * & 0 
\\
0 & \star & 0 & \star
\end{pmatrix}
\]
where the blocks labeled $*$ represent an element of $G_1$ and the blocks labeled $\star$ represent an element of $G_2$,
and similarly for a threefold product. (Beware that other orderings of coordinates can be found in some presentations constructed in our code, in the computation of moments in \S\ref{sec:statistics}, and in the \LMFDB{}.)
\item
The groups $\Unitary(1), \Unitary(3)$ are embedded into $\SU(2), \USp(6)$ respectively by the formula
\[
A \mapsto \begin{pmatrix} A & 0 \\ 0 & \overline{A} 
\end{pmatrix}.
\]
\item
For $d=2,3$, $\Unitary(1)_d, \SU(2)_d$ refer to the $d$-fold diagonal representations of $\Unitary(1), \SU(2)$; in terms of block matrices, this means that
\[
\begin{pmatrix} 
A_{11} & A_{12} \\
A_{21} & A_{22}
\end{pmatrix}
\mapsto
\begin{pmatrix}
A_{11}I_d & A_{12}I_d \\
A_{21}I_d & A_{22}I_d
\end{pmatrix}.
\]
\end{itemize}

We note in passing that $\SO(3)$ and $\SU(3)$ (embedded via $\Unitary(3)$) satisfy (ST1), (ST2), (ST3) but not (ST4); this explains their absence above. Note also that $\Unitary(1)_2 \times \SU(2)$ and $\Unitary(1) \times \SU(2)_2$ are isomorphic but nonconjugate in $\USp(6)$.

By \cite[Prop. 2.19]{FKRS12}, the group $\ST(A)^0$ uniquely determines $\End(A_K)_\R$. As there are~14 possibilities for each, each option for $\End(A_K)_\R$
corresponds to a unique option for $\ST(A)^0$, as described in Table~\ref{table:connected ST groups}. 

\section{The classification upper bound, part 1}
\label{section:STgroups}

In this section and the next, we establish the ``upper bound'' assertion of Theorem~\ref{T:ST result},
by showing that there are at most 410 conjugacy classes of subgroups of $\USp(6)$ that can occur as the Sato--Tate groups of abelian threefolds over number fields. 
We organize the discussion in terms of the identity component, making an analogue in each case of 
the group-theoretic classification from \cite[\S 3]{FKRS12};
the case where this equals $\Unitary(1)_3$ requires more analysis than all of the others combined,
and so is treated separately in \S\ref{section: unitary}. In addition, when the identity component contains
$\Unitary(1) \times \Unitary(1)$ as a factor, we refine the group-theoretic argument by making a further analysis of CM types; this adapts a key component of the analysis of Galois types from \cite[\S 4]{FKRS12}, which we otherwise do
not reproduce.

Along the way, we identify those conjugacy classes of subgroups that are maximal with respect to inclusions of finite index,
which is to say inclusions that preserve the identity component. We say for short that such conjugacy classes are \emph{maximal}.

Throughout \S\ref{section:STgroups}, 
let $G$ denote a closed subgroup of $\USp(6)$ satisfying the Sato--Tate axioms
(Definition~\ref{Sato--Tate axioms}); let $G^0$ denote the identity connected component of $G$;
and let $Z$ and $N$ denote the centralizer and normalizer, respectively, of $G^0$ within $\USp(6)$.

\subsection{Basic cases}
\label{sec:basic cases}

We start with two of the least complicated options for $G^0$.
In the case $G^0 = \USp(6)$ (type $\bA$), it is clear that $G = G^0$. In the case $G^0 \simeq \Unitary(3)$
(type $\bB$), we have
$Z = \Unitary(1)_3 \subset G^0$ and $N = G^0 \cup J G^0$.
Consequently, $N/G^0 \simeq \cyc 2$, so $G$ is equal to either $G^0$ or~$N$.
In both cases, $N$ is the unique maximal subgroup.

\subsection{Split products}
\label{section: split products}

We next analyze the cases in which $G^0$ splits as a product $H_1 \times H_2$
in which $H_1$ and $H_2$ have no common factor; this rules out the cases 
$G^0 = \Unitary(1) \times \Unitary(1) \times \Unitary(1)$ and 
$G^0 = \SU(2) \times \SU(2) \times \SU(2)$, for which see below.
In the remaining cases, $G$ is a subgroup of $G_1 \times G_2$ where $G_i$ is some group with connected part $H_i$
and the projections $G \to G_i$ are surjective.
Moreover, this splitting corresponds to an isogeny factorization of $A$, so we may rely on the classification of Sato--Tate groups of abelian surfaces made in \cite{FKRS12}.

As an aid to the reader, we have reproduced in Table~\ref{table: genus 2 recall part 1} and Table~\ref{table: genus 2 recall part 2} some key data from \cite[Table~8]{FKRS12} about the Sato--Tate groups of abelian surfaces, plus some additional information from \LMFDB{} (maximal groups and index-2 subgroups). We omit from Table~\ref{table: genus 2 recall part 1} the 3 groups listed in 
\cite[Table~8]{FKRS12} which appear in the group-theoretic classification of \cite{FKRS12} but cannot occur for abelian surfaces.

\begin{table}
\begin{center}
\footnotesize
\setlength{\extrarowheight}{0.5pt}

\begin{tabular}{|c|c|c|c|l|}
\hline
$G^0$ & $[G:G^0]$  & $G$          & $G/G^0$ & Index-2 subgroups \\
\hline
$\SU(2)_2$ & $1$  & $\stgroup{E_1}$        & $\cyc{1}$              & ---     \\
& $2$  & $\stgroup{E_2}$        & $\cyc{2}$              & $\stgroup{E_1}$           \\
& $3$  & $\stgroup{E_3}$        & $\cyc{3}$              & ---           \\
& $4$  & $\stgroup{E_4}$        & $\cyc{4}$              & $\stgroup{E_2}$           \\
& $6$  & $\stgroup{E_6}$        & $\cyc{6}$              & $\stgroup{E_3}$           \\
& $2$  & $\stgroup{J(E_1)}$     & $\cyc{2}$              & $\stgroup{E_1}$ \\
& $4$  & $\stgroup{J(E_2)}$     & $\dih{2}$              & $\stgroup{E_2}, \stgroup{J(E_1)}^{\times 2}$           \\
& $6$  & $\stgroup{J(E_3)}$     & $\dih{3}$              & $\stgroup{E_3}$           \\
& $8$  & $\stgroup{J(E_4)}\star$     & $\dih{4}$              & $\stgroup{E_4}, \stgroup{J(E_2)}^{\times 2}$           \\
& $12$ & $\stgroup{J(E_6)}\star$     & $\dih{6}$              & $\stgroup{E_6}, \stgroup{J(E_3)}^{\times 2}$           \\
\hline
$\Unitary(1) \times \Unitary(1)$ & $1$  & $\stgroup{F}$ & $\cyc{1}$              & --- \\
& $2$  & $\stgroup{F_a}$        & $\cyc{2}$              & $\stgroup{F}$ \\
& $2$  & $\stgroup{F_{ab}}$     & $\cyc{2}$              & $\stgroup{F}$ \\
& $4$  & $\stgroup{F_{ac}}\star$     & $\cyc{4}$              & $\stgroup{F_{ab}}$           \\
& $4$  & $\stgroup{F_{a,b}}\star$    & $\dih{2}$              & $\stgroup{F_a}^{\times 2}, \stgroup{F_{ab}}$ \\
\hline
$\Unitary(1) \times \SU(2)$ & $1$  & $\stgroup{G_{1,3}}$    & $\cyc{1}$              & --- \\
& $2$  & $\stgroup{N(G_{1,3})}\star$ & $\cyc{2}$              & $\stgroup{G_{1,3}}$ \\
\hline
$\SU(2) \times \SU(2)$ & $1$  & $\stgroup{G_{3,3}}$    & $\cyc{1}$              & --- \\
& $2$  & $\stgroup{N(G_{3,3})}\star$ & $\cyc{2}$              & $\stgroup{G_{3,3}}$           \\
\hline
$\USp(4)$ & $1$  & $\stgroup{\USp(4)}\star$    & $\cyc{1}$              & ---        \\\hline
\end{tabular}
\end{center}
\medskip

\caption{Sato--Tate groups $G$ of abelian surfaces with $G^0 \neq \Unitary(1)_2$. The symbol $\star$ indicates a maximal conjugacy class of subgroups of $\USp(4)$.
For index-2 subgroups, the notation $H^{\times n}$ indicates that $n$ subgroups of~$G$ (up to conjugacy in $G$) are conjugate to $H$ in $\USp(4)$.}
\label{table: genus 2 recall part 1}
\end{table}

\begin{table}
\begin{center}
\footnotesize
\setlength{\extrarowheight}{0.5pt}
\begin{tabular}{|c|c|c|l|}
\hline
$[G:G^0]$  & $G$          & $G/G^0$ & Index-2 subgroups \\ \hline
$1$  & $\stgroup{C_1}$        & $\cyc{1}$              & --- \\
$2$  & $\stgroup{C_2}$        & $\cyc{2}$              & $\stgroup{C_1}$ \\
$3$  & $\stgroup{C_3}$        & $\cyc{3}$              & --- \\
$4$  & $\stgroup{C_4}$        & $\cyc{4}$              & $\stgroup{C_2}$ \\
$6$  & $\stgroup{C_6}$        & $\cyc{6}$              & $\stgroup{C_3}$ \\
$4$  & $\stgroup{D_2}$        & $\dih{2}$              & $\stgroup{C_2}^{\times 3}$           \\
$6$  & $\stgroup{D_3}$        & $\dih{3}$              & $\stgroup{C_3}$           \\
$8$  & $\stgroup{D_4}$        & $\dih{4}$              & $\stgroup{C_4}, \stgroup{D_2}^{\times 2}$           \\
$12$ & $\stgroup{D_6}$        & $\dih{6}$              & $\stgroup{C_6}, \stgroup{D_3}^{\times 2}$           \\
$12$ & $\stgroup{T}$          & $\alt{4}$              & ---           \\
$24$ & $\stgroup{O}$          & $\sym{4}$              & $\stgroup{T}$           \\
$2$  & $\stgroup{J(C_1)}$     & $\cyc{2}$              & $\stgroup{C_1}$          \\
$4$  & $\stgroup{J(C_2)}$     & $\dih{2}$              & $\stgroup{C_2}, \stgroup{J(C_1)}, \stgroup{C_{2,1}}$           \\
$6$  & $\stgroup{J(C_3)}$     & $\cyc{6}$              & $\stgroup{C_3}$           \\
$8$  & $\stgroup{J(C_4)}$     & $\cyc{4}\times\cyc{2}$ & $\stgroup{C_4}, \stgroup{J(C_2)}, \stgroup{C_{4,1}}$           \\
$12$ & $\stgroup{J(C_6)}$     & $\cyc{6}\times\cyc{2}$ & $\stgroup{C_6}, \stgroup{J(C_3)}, \stgroup{C_{6,1}}$           \\
$8$  & $\stgroup{J(D_2)}$     & $\dih{2}\times\cyc{2}$ & $\stgroup{D_2}, \stgroup{J(C_2)}^{\times 3}, \stgroup{D_{2,1}}^{\times 3}$           \\
$12$ & $\stgroup{J(D_3)}$     & $\dih{6}$              & $\stgroup{D_3}, \stgroup{J(C_3)}, \stgroup{D_{3,2}}$           \\
$16$ & $\stgroup{J(D_4)}$     & $\dih{4}\times\cyc{2}$ & $\stgroup{D_4}, \stgroup{J(C_4)}, \stgroup{J(D_2)}^{\times 2}, \stgroup{D_{4,1}}^{\times 2}, \stgroup{D_{4,2}}$           \\
$24$ & $\stgroup{J(D_6)}\star$     & $\dih{6}\times\cyc{2}$ & $\stgroup{D_6}, \stgroup{J(C_6)}, \stgroup{J(D_3)}^{\times 2}, \stgroup{D_{6,1}}^{\times 2}, \stgroup{D_{6,2}}$           \\
$24$ & $\stgroup{J(T)}$       & $\alt{4}\times\cyc{2}$ & $\stgroup{T}$           \\
$48$ & $\stgroup{J(O)}\star$       & $\sym{4}\times\cyc{2}$ & $\stgroup{O}, \stgroup{J(T)}, \stgroup{O_1}$           \\
$2$  & $\stgroup{C_{2,1}}$    & $\cyc{2}$              & $\stgroup{C_1}$     \\
$4$  & $\stgroup{C_{4,1}}$    & $\cyc{4}$              & $\stgroup{C_2}$           \\
$6$  & $\stgroup{C_{6,1}}$    & $\cyc{6}$              & $\stgroup{C_3}$           \\
$4$  & $\stgroup{D_{2,1}}$    & $\dih{2}$              & $\stgroup{C_2}, \stgroup{C_{2,1}}^{\times 2}$ \\
$8$  & $\stgroup{D_{4,1}}$    & $\dih{4}$              & $\stgroup{D_2}, \stgroup{C_{4,1}}, \stgroup{D_{2,1}}$           \\
$12$ & $\stgroup{D_{6,1}}$    & $\dih{6}$              & $\stgroup{D_3}, \stgroup{C_{6,1}}, \stgroup{D_{3,2}}$           \\
$6$  & $\stgroup{D_{3,2}}$    & $\dih{3}$              & $\stgroup{C_3}$ \\
$8$  & $\stgroup{D_{4,2}}$    & $\dih{4}$              & $\stgroup{C_4}, \stgroup{D_{2,1}}^{\times 2}$ \\
$12$ & $\stgroup{D_{6,2}}$    & $\dih{6}$              & $\stgroup{C_6}, \stgroup{D_{3,2}}^{\times 2}$ \\
$24$ & $\stgroup{O_1}$        & $\sym{4}$              & $\stgroup{T}$           \\
\hline
\end{tabular}
\end{center}
\medskip

\caption{Sato--Tate groups $G$ of abelian surfaces with $G^0 = \Unitary(1)_2$. Notation is as in Table~\ref{table: genus 2 recall part 1}.}
\label{table: genus 2 recall part 2}
\end{table}

We start with the cases where we must have $G_i = H_i$ for one $i$, in which case the options come directly from the other factor.

\begin{itemize}
\item[Type $\bC$:]
In the case $G^0 = \SU(2) \times \USp(4)$, we must have $G_i = H_i$ for $i=1,2$, so $G = G^0$.

\item[Type $\bD$:]
In the case $G^0 = \Unitary(1) \times \USp(4)$, we must have $G_2 = H_2$, so $G = G_1 \times \USp(4)$
for $G_1 \in \{ \Unitary(1), N(\Unitary(1))\}$.

\item[Type $\bI$:]
In the case $G^0 = \SU(2) \times \SU(2)_2$, we must have $G_1 = H_1$, so $G = \SU(2) \times G_2$ for some
$G_2$ in Table~\ref{table: genus 2 recall part 1} with $G^0 = \SU(2)_2$.

\item[Type $\bK$:]
In the case $G^0 = \SU(2) \times \Unitary(1)_2$, we must have $G_1 = H_1$, so $G = \SU(2) \times G_2$ for some $G_2$ in 
Table~\ref{table: genus 2 recall part 2}.

\item[Type $\bG$:]
In the case $G^0 = \Unitary(1) \times \Unitary(1) \times \SU(2)$, take $H_1=\Unitary(1)\times\Unitary(1)$ and $H_2=\SU(2)$; we must have $G_2 = H_2$,
so $G = G_1 \times \SU(2)$ for some $G_1$ in Table~\ref{table: genus 2 recall part 1} with $G^0 = \Unitary(1) \times \Unitary(1)$.
%Note that $A$ must split up to isogeny as the product of an elliptic curve without CM and an abelian surface
%whose Sato--Tate group has identity component $\Unitary(1) \times \Unitary(1)$; consequently, of the 8 options
%for $G_1$, we may omit the 3 that cannot occur for abelian surfaces.

\end{itemize}

We next consider cases where $H_1 = \Unitary(1)$. In such cases, $G_1 \in \{\Unitary(1), N(\Unitary(1))\}$ (taking the normalizer in $\SU(2)$),
so for any given $G$ there exists a group $G_2$ with connected part $H_2$ such that either
$G = \Unitary(1) \times G_2$, $G = N(\Unitary(1)) \times G_2$, or $G$ is the fiber product
$N(\Unitary(1)) \times_{\cyc 2} G_2$ with respect to the unique nontrivial homomorphism $N(\Unitary(1)) \to \cyc 2$
and some nontrivial homomorphism $G_2 \to \cyc 2$.
The latter may be characterized by the group $G_2$ and the index-2 subgroup $G_2'$ which is the kernel of the homomorphism.
It is clear that choices of $G_2'$ which are not conjugate in $\USp(4)$ lead to fiber products which are not conjugate in $\USp(6)$; however, in cases where there is a single conjugacy class $H$ that represents $n \geq 2$ different index-2 subgroups of $G_2$
(notated $H^{\times n}$ in Table~\ref{table: genus 2 recall part 1} and Table~\ref{table: genus 2 recall part 2}), some extra analysis will be needed to ensure that we only obtain one conjugacy class of fiber products. It will suffice to verify that in such cases, there is a group of outer automorphisms induced by the normalizer in $\USp(4)$ acting transitively on each collection of subgroups sharing a common label.

\begin{itemize}

\item[Type $\bF$:]
In the case $G^0 = \Unitary(1) \times \SU(2) \times \SU(2)$, we may take $H_1=\Unitary(1)$ and $H_2=\SU(2)\times\SU(2)$; then
$G_2 \in \{\SU(2) \times \SU(2), N(\SU(2) \times \SU(2))\}$
(taking the normalizer in $\mathrm{USp}(4)$).

\item[Type $\bJ$:]
In the case $G^0 = \Unitary(1) \times \SU(2)_2$, 
$G_2$ must be one of the groups in 
Table~\ref{table: genus 2 recall part 1} with $G^0 = \SU(2)_2$.
Potentially ambiguous fiber products occur when $G_2 = J(E_{2n})$ for $n=1,2,3$ and $G_2'$ is one of the two copies of $J(E_n)$ in $G_2$; the latter are conjugate to each other via the normalizer of $J(E_{2n})$.

\item[Type $\bL$:]
In the case $G^0 = \Unitary(1) \times \Unitary(1)_2$,
$G_2$ must be one of the groups in Table~\ref{table: genus 2 recall part 2}.
Potentially ambiguous fiber products occur for
\[
G_2 = \stgroup{D_2}, \stgroup{D_4}, \stgroup{D_6}, \stgroup{J(D_2)}, \stgroup{J(D_4)}, \stgroup{J(D_6)}, \stgroup{D_{2,1}}, \stgroup{D_{4,2}}, \stgroup{D_{6,2}};
\]
in these cases, we must check that the normalizer of $G_2$ in $\USp(4)$ acts transitively on the subgroups with a given label.
\begin{itemize}
\item
For $\stgroup{D_2}$ and $\stgroup{J(D_2)}$, this normalizer is $\sym 3$. The image of $\stgroup{D_2}$ in $\SO(3)$ is the group generated by the half-turns about three orthogonal axes; $\sym 3$ acts permuting these.
\item
For $\stgroup{D_4},\stgroup{D_6}, \stgroup{J(D_4)}, \stgroup{J(D_6)}, \stgroup{D_{2,1}}, \stgroup{D_{4,2}}, \stgroup{D_{6,2}}$, this normalizer is $\cyc 2$ interchanging the two conjugacy classes of reflections.
(Note that in contrast with $\stgroup{D_2}$, in $\stgroup{D_{2,1}}$ the subgroup $\stgroup{C_2}$ is distinguished from the others.)
\end{itemize}

\end{itemize}

\begin{remark}
We checked in \Magma{} that for each choice of $G_2$, running over all possible index-$2$ subgroups $G_2'$ of $G_2$ gives no new distributions (as measured by moments; see \S\ref{sec:statistics}) other than the ones listed. This corroborates (but does not independently confirm) the claim that subgroups with the same label correspond to conjugate fiber products.
\end{remark}

In this analysis, the group $G$ is maximal if and only if $G_1$ and $G_2$ are both maximal. To tabulate maximal subgroups, it thus suffices to recall the maximal subgroups in genus 2;
these are indicated with stars in Table~\ref{table: genus 2 recall part 1} and Table~\ref{table: genus 2 recall part 2}.

\subsection{Triple products}
\label{subsec: triple products classification}

We next classify the options for $G$ when $G^0 \simeq \SU(2) \times \SU(2) \times \SU(2)$ (type $\bE$).
In this case, $N/G^0 \simeq \sym 3$; more precisely, $N$ is generated by $G^0$ plus the
permutation matrices. The options for $G$ in this case thus correspond to the subgroups of $\sym 3$ up to conjugacy,
which we may identify with $\stgroup[\sym 1]{1.6.E.1.1a}, \stgroup[\sym 2]{1.6.E.2.1a}, \stgroup[\alt 3]{1.6.E.3.1a}, \stgroup[\sym 3]{1.6.E.6.1a}$. The unique maximal case is $\stgroup[\sym 3]{1.6.E.6.1a}$.

We now classify the options for $G$ in the case $G^0 \simeq \Unitary(1) \times \Unitary(1) \times \Unitary(1)$
(type $\bH$).
In this case, $Z = G^0$ and $N/G^0$ is isomorphic to the wreath product
$\cyc 2 \wr \sym 3$. 
Let $a,b,c$ be representatives of the nontrivial cosets in the three factors of $N(\Unitary(1))$,
and set $t,s$ correspond to the permutations $(1\,2), (1\,2\,3)$ in $\sym 3$.

If $A$ is not simple, then it splits as a product of a CM elliptic curve and a CM abelian surface;
this means that up to conjugation, $\ST(A)$ is contained in the product of a subgroup of 
$\langle a,b,t \rangle$ and a subgroup of $\langle c \rangle$. 
By the analysis from \cite[\S 4.3]{FKRS12}, the former must be a subgroup of either $\langle a,b \rangle$
or $\langle at \rangle$.

To handle the case where $A$ is simple, 
we recall some facts about abelian varieties with complex multiplication,
following \cite[\S 1]{Dod84} and \cite[\S 1]{Kil16} (and ultimately, \cite[\S 5.5]{Shi71}).
For future reference, we temporarily allow $A$ to have arbitrary dimension.

\begin{definition}
A \emph{CM field} is a totally imaginary quadratic extension of a totally real number field.
A \emph{CM type} is a pair $(M, \Phi)$ consisting of a CM field $M$ and a section $\Phi$ of the map grouping the complex embeddings of $M$ into conjugate pairs.

Given a CM type $(M, \Phi)$, for any $\Z$-lattice $\fm$ in $M$, we may use $\Phi$ to specify a complex structure on $M \otimes_\Q \R$, and thus view $(M \otimes_\Q \R)/\fm$ as a complex torus. Changing the choice of~$\fm$ gives rise to an isogenous torus \cite[Proposition~5.13]{Shi71}; consequently, we may associate a polarization to this torus using the trace pairing on $M$, and thus obtain an abelian variety over $\C$ with complex multiplication by some order in 
$M$.
\end{definition}

\begin{definition}
Given a CM type $(M, \Phi)$, let $L$ be the Galois closure of $M$ in $\C$, which is again a CM field.
The action of $\Gal(L/\Q)$ on the complex embeddings of $M$ by postcomposition acts on the CM types for $M$.
The \emph{reflex field} of $(M, \Phi)$ is the fixed field $M^*$ of the stabilizer of $\Phi$ in $\Gal(L/\Q)$; note that $M^*$ is defined as a subfield of $L$ (and hence of $\C$), 
whereas $M$ does not come with a distinguished embedding into $L$.
(There is also a CM type associated to $M^*$, the \emph{reflex type}, which we will not discuss here.)
\end{definition}

\begin{lemma}
Fix an embedding of $k$ into $\C$. 
Let $A$ be a simple polarized abelian variety of dimension $n$ over $k$ with CM type $(M, \Phi)$.
\begin{enumerate}[{\rm(a)}]
\item
The field $M^*$ is Galois over $k \cap M^*$.
\item
The field $kM^*$ equals the endomorphism field of $A$.
\item
The group $\Gal(kM^*/k)$ is canonically isomorphic to a subgroup of $\Aut(M/\Q)$.
\end{enumerate}
\end{lemma}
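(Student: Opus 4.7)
All three statements will be deduced from a single compatibility between the action of $G_k$ on endomorphisms of $A_{\Qbar}$ and the natural action on the CM type. Let $L$ be the Galois closure of $M$ in $\Qbar$, set $G = \Gal(L/\Q)$, and let $H = \mathrm{Stab}_G(\Phi) = \Gal(L/M^*)$, where $G$ acts on $\Hom(M, \C)$ by post-composition. For $\sigma \in G_k$, its action on $\End(A_{\Qbar}) \otimes \Q \cong M$ yields an element $\alpha_\sigma \in \Aut(M/\Q)$, defining a homomorphism $\rho\colon G_k \to \Aut(M/\Q)$ whose kernel is $G_K$ by the very definition of the endomorphism field. The key input, which I take from Shimura's theory of CM (see \cite[\S 5.5]{Shi71}), is that for any lift $\widetilde{\sigma} \in \Gal(\Qbar/\Q)$ of $\sigma$,
\[
\widetilde{\sigma}\cdot\Phi \;=\; \Phi\circ\alpha_\sigma^{-1}
\]
as subsets of $\Hom(M, \C)$. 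This expresses how the CM type of $A_\C^{\widetilde{\sigma}} = A_\C$ transforms when one simultaneously base-changes via $\widetilde{\sigma}$ and re-identifies $\End(A_{\Qbar})_\Q$ with $M$ via $\alpha_\sigma$.

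For (a), it suffices to show that $\Gal(L/k)$ normalizes $H$, since then $H$ is normal in $\Gal(L/(k\cap M^*)) = \Gal(L/k)\cdot H$. For $\sigma \in \Gal(L/k)$, the compatibility gives $\mathrm{Stab}_G(\sigma\cdot\Phi) = \mathrm{Stab}_G(\Phi\circ\alpha_\sigma^{-1}) = H$; the last equality holds because pre-composition by $\alpha_\sigma^{-1}$ commutes with the post-composition $G$-action and so preserves the $G$-stabilizer. Since also $\mathrm{Stab}_G(\sigma\cdot\Phi) = \sigma H\sigma^{-1}$, equating yields $\sigma H\sigma^{-1} = H$.

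For (b), simplicity of $A$ is equivalent to $\Phi$ being primitive, meaning its pre-composition stabilizer in $\Aut(M/\Q)$ is trivial (otherwise $\Phi$ descends to a CM type on a proper CM subfield of $M$, and $A$ is isogenous to a proper power). Combining primitivity with the compatibility: $\sigma \in G_K$ iff $\alpha_\sigma = 1$ iff $\widetilde{\sigma}\cdot\Phi = \Phi$ iff $\widetilde{\sigma} \in H$ iff $\widetilde{\sigma}$ fixes $M^*$. Together with $G_K \subseteq G_k$ (automatic, since $k \subseteq K$), this identifies $G_K$ with $G_{kM^*}$, proving $K = kM^*$. Claim (c) follows at once: $\rho$ descends to an injection $\Gal(K/k) = \Gal(kM^*/k) \hookrightarrow \Aut(M/\Q)$.

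The main obstacle is verifying the Shimura compatibility: it amounts to carefully tracking the tangent-space decomposition of $A_\C$ under both the Galois action on coordinates and the re-identification of the endomorphism algebra with $M$, and ensuring the sides of composition line up correctly. Once this is in place, the remainder of the argument is purely formal.
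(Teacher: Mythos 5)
Your proposal is correct in substance but takes a genuinely different route from the paper. The paper's proof of (a) and (b) simply cites Shimura \cite[Theorem~5.15, Proposition~5.17]{Shi71} and then deduces (c) from (b) by observing that $\Gal(kM^*/k)$ acts faithfully on $\End(A_\C)_{\Q} = M$. You instead re-derive (a) and (b) from a lower-level compatibility between the $G_k$-action on $\End(A_{\Qbar})_\Q$ and the post-composition $\Gal(L/\Q)$-action on $\Hom(M,\C)$, and then prove (c) exactly as the paper does. Your route is more self-contained (modulo the Shimura compatibility you cite, which lies strictly below the level of abstraction of the theorems the paper invokes), and it exposes the group-theoretic mechanism behind parts (a) and (b); the paper's route is shorter because it leans on Shimura's packaged statements.

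One imprecision worth correcting: you assert that primitivity of $\Phi$ \emph{is equivalent to} the pre-composition stabilizer of $\Phi$ in $\Aut(M/\Q)$ being trivial. Only one direction holds. If $M$ is a degree-$6$ CM field of the form $M = \Q(i)\cdot M^+$ with $M^+$ a non-Galois totally real cubic field, then $\Aut(M/\Q) = \cyc 2$ (generated by complex conjugation $c$), and for $\Phi$ induced from the identity embedding of $\Q(i)$ one has $\Phi \circ c = \overline{\Phi} \neq \Phi$; thus the pre-composition stabilizer is trivial even though $\Phi$ is imprimitive (and the associated abelian threefold is isogenous to $E^3$ with $E$ CM by $\Q(i)$, hence non-simple). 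The point is that $\Aut(M/\Q)$ can be far too small to detect imprimitivity. Fortunately your chain of equivalences only uses the correct implication (primitive $\Rightarrow$ trivial pre-composition stabilizer), which follows since a nontrivial $\beta$ with $\Phi\circ\beta = \Phi$ yields a proper CM subfield $M_0 = M^{\langle\beta\rangle}$ from which $\Phi$ is induced (one checks $M_0$ is CM because $\rho \notin \langle\beta\rangle$, as $\Phi\circ\rho = \overline{\Phi}$ is disjoint from $\Phi$), so the argument goes through; just restate the criterion as an implication rather than an equivalence.
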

\begin{proof}
For (a), (b), see \cite[Theorem~5.15, Proposition~5.17]{Shi71}. 
For (c), we obtain the identification by interpreting (b) as giving a \emph{faithful} action of $\Gal(kM^*/k)$ on $\End(A_\C)_{\Q} = M$. (For $L$ the endomorphism field, it is tautological that $\Gal(L/k)$ acts faithfully on $\End(A_L)_{\Q} = \End(A_\C)_{\Q}$.)
\end{proof}

\begin{remark}
To trace the effect on Sato--Tate groups of the previous discussion, we identify $\ST(A)/\ST(A)^0$
with a subgroup of $\cyc 2 \wr {\sym n}$ (for $n = \dim(A)$) using the embedding of $M$ into $\End(A_K)_{\R}$
to fix the identification of $\Unitary(1) \times \cdots \times \Unitary(1)$ with $\prod_{\tau \in \Phi} \C^\times$.
Then the subgroup $\ST(A)/\ST(A)^0$ is contained in $\Gal(L/(k \cap M^*))$
and the composition
\[
\ST(A)/\ST(A)^0 \to \Gal(L/(k \cap M^*)) \to \Gal(M^*/(k \cap M^*)) \cong \Gal(kM^*/k)
\]
is the canonical map (see Remark~\ref{remark: general ST group}).
\end{remark}

We now specialize back to the case where $A$ is 
a simple CM abelian threefold. In this case, by \cite[\S 5.1.1]{Dod84},
the sextic field $M$ must be one of the following.
\begin{itemize}
\item
A cyclic Galois extension of $\Q$. In this case,
we must have $G/G^0 \subseteq \langle abc,s \rangle$ as this is the unique copy of $\cyc 6$ in $\cyc 2 \wr \sym 3$.

\item
A compositum of an imaginary quadratic field $M_0$ and a non-Galois totally real cubic field $M_1$.
In this case, $\Aut(M/\Q) = \Gal(M_0/\Q)$ and so $G/G^0 \subseteq \langle abc \rangle$.

\item
A non-Galois extension of $\Q$ whose Galois closure has group $\cyc 2 \wr \alt 3$ or $\cyc 2 \wr \sym 3$.
In this case, $\Aut(M/\Q)$ is trivial, as then is $G/G^0$.

\end{itemize}
 
To summarize, we list the 33 conjugacy classes of subgroups of $N/G^0$ (as may be verified using \Gap{} or \Magma) in Table~\ref{table:subgroups SU2xSU2xSU2}. In the table, an underline denotes a group that is permitted by the previous analysis to occur as a Sato--Tate group of an abelian threefold. Among the latter, the maximal cases are $\stgroup[\langle abc,s \rangle]{H_{abc,s}}$, $\stgroup[\langle c,at \rangle]{H_{c,at}}$,
$\stgroup[\langle a,b,c \rangle]{H_{a,b,c}}$.

\begin{table}[ht]
\renewcommand{\arraystretch}{1.1}
\small
\begin{tabular}{|c|c|c|c|c|}
\hline
Order & Isom type & Groups & Total & Realizable \\
\hline &&&& \\[-13pt]
1 & $\cyc 1$ & $\underline{\stgroup[\langle e \rangle]{H}}$ & 1 & 1\\[2pt]
2 & $\cyc 2$ & $\underline{\stgroup[\langle a \rangle]{H_a}}$, $\underline{\stgroup[\langle ab \rangle]{H_{ab}}}$, $\underline{\stgroup[\langle abc \rangle]{H_{abc}}}$, $\langle t \rangle$, $\langle ct \rangle$& 5 & 3\\[2pt]
3 & $\cyc 3$ & $\underline{\stgroup[\langle s \rangle]{H_s}}$ & 1 & 1\\[2pt]
4 & $\cyc 4$ & $\underline{\stgroup[\langle at \rangle]{H_{at}}}$, $\underline{\stgroup[\langle act \rangle]{H_{act}}}$ &2 & 2\\[2pt]
4 & $\cycsup{2}{2}$ &  $\underline{\stgroup[\langle a,b \rangle]{H_{a,b}}}$, $\underline{\stgroup[\langle a,bc \rangle]{H_{a,bc}}}$, $\underline{\stgroup[\langle ab, bc \rangle]{H_{ab,bc}}}$, $\langle c,t\rangle$,
$\langle ab,t \rangle$, $\langle ab, ct \rangle$, $\langle abc, t \rangle$ &7 & 3\\[2pt]
6 & $\cyc 6$  & $\underline{\stgroup[\langle abc,s \rangle]{H_{abc,s}}}$&1 & 1\\[2pt]
6 & $\sym 3 $  & $\langle s,t \rangle$, $\langle abct, s \rangle$ &2 & 0\\
8 & $\cyc{2} \times \cyc{4}$  & $\underline{\stgroup[\langle c,at \rangle]{H_{c,at}}}$ &1 & 1\\[2pt]
8 & $\cycsup{2}{3}$  & $\underline{\stgroup[\langle a,b,c \rangle]{H_{a,b,c}}}$,$\langle ab,c,t \rangle$&2 & 1\\[2pt]
8 & $\dih 4$  & $\langle a,b,t \rangle$, $\langle ab,bc,t \rangle$,
$\langle a,b,ct \rangle$, $\langle ab,bc,ct \rangle$ &4 & 0\\
12 & $\dih 6$ & $\langle abc,s,t \rangle$ &1 & 0\\
12 & $\alt 4$  & $\langle ab,bc,s \rangle$ &1& 0\\
16 & $\cyc{2} \times \dih 4$ & $\langle a,b,c,t \rangle$&1& 0\\
24 & $\cyc 2 \wr \alt 3$ &$\langle a,b,c,s \rangle$&1&0 \\
24 & $\sym 4$ & $\langle ab,bc,s,t \rangle$, 
$\langle ab,bc,at,s \rangle$&2&0\\
48 & $\cyc 2 \wr \sym 3$ & $\langle a,b,c,s,t \rangle$ &1&0\\[1pt]
\hline
Total& & & $33$ & 13 \\
\hline
\end{tabular}
\medskip

\caption{Conjugacy classes of subgroups of $N/G^0$ for $G^0 \simeq \Unitary(1) \times \Unitary(1) \times \Unitary(1)$. The underlined groups can be realized by Sato--Tate groups.}
\label{table:subgroups SU2xSU2xSU2}
\end{table}

\subsection{The case \texorpdfstring{$G^0 = \SU(2)_3$}{G0=SU(2)}}
\label{subsec:su2}

We next classify the options for $G$ assuming that $G^0 \simeq \SU(2)_3$ (type $\bM$).
In this case, $Z$ equals the group $\mathrm{O}(3)$ realized as a block matrix with scalar entries,
$Z \cap G^0=\{\pm 1\}$, 
and $N = ZG^0$, so $N/G^0 \cong \SO(3)$.

At this point, we must make the rationality condition explicit for the first time in this discussion (though it intervened implicitly in the discussion of products).
Any cyclic subgroup of $\SO(3)$  of order $n$ is conjugate to the group generated by
\[
A = \begin{pmatrix} 1 & 0 & 0 \\
0 & \cos 2\pi/n & \sin 2\pi/n \\
0 & -\sin 2\pi/n & \cos 2\pi/n
\end{pmatrix}.
\]
For the representation $\wedge^2 \C^6$, the average of the trace of the coset of $G^0$ containing $A$ equals the trace of $A$ on the $G^0$-fixed subspace of $\wedge^2 \C^6$. The latter equals $(1 + 2\cos 2 \pi/n)^2$;
this is an integer if and only if $n \in \{1,2,3,4,6\}$.
Since every subgroup of $\SO(3)$ is either cyclic, dihedral, or one of the three exceptional groups
(tetrahedral, octahedral, icosahedral), we obtain component
groups isomorphic to 
\[
\stgroup[\cyc 1]{M(C_1)}, \stgroup[\cyc 2]{M(C_2)}, \stgroup[\cyc 3]{M(C_3)}, \stgroup[\cyc 4]{M(C_4)}, \stgroup[\cyc 6]{M(C_6)}, \stgroup[\dih 2]{M(D_2)}, \stgroup[\dih 3]{M(D_3)}, \stgroup[\dih 4]{M(D_4)}, \stgroup[\dih 6]{M(D_6)}, \stgroup[\alt 4]{M(A_4)}, \stgroup[\sym 4]{M(S_4)}.
\]
By realizing these groups over $\Z$, one may see that they satisfy the rationality condition for all representations of $\GL(\C^6)$. The maximal cases are $\stgroup[\dih 6]{M(D_6)}, \stgroup[\sym 4]{M(S_4)}$.

\section{The classification upper bound, part 2: the case \texorpdfstring{$G^0 = \Unitary(1)_3$}{G0=U(1)}}
\label{section: unitary}

Retaining notation as in \S\ref{section:STgroups}, we classify (up to conjugation) the subgroups $G$ of $\USp(6)$ that satisfy the Sato--Tate axioms in the case where $G^0 \simeq \Unitary(1)_3$.
The reader familiar with \cite[\S 3]{FKRS12} may have anticipated that this is the richest and most difficult case;
indeed, the analysis is very loosely parallel with that of \cite[\S 3]{FKRS12}, with the role of the classification of finite subgroups of $\SU(2)$ played instead by the classification of finite subgroups of $\SU(3)$ containing~$\mu_3$ by Blichfeldt, Dickson, and Miller \cite[Chapter~XII]{MBD61}. The main technical difficulty is in filtering candidate groups for the rationality condition. 

It will be helpful to record explicit presentations for the groups in question, both to aid with some aspects of the classification and for the computation of moments. These presentations are implemented in
accompanying \Sage{} and \Magma{} scripts; note that the \Sage{} computations depend heavily on \Gap, which ships
as a standard component of \Sage.

\subsection{Overview}
\label{subsec:preliminaries}

We start with a high-level description of the classification for $G^0=U(1)_3$, whose results are summarized in
Table~\ref{table:subgroups by case}.

For $G^0 \simeq \Unitary(1)_3$, the centralizer and normalizer of $G^0$ in $\USp(6)$
are respectively 
$Z \simeq \Unitary(3)$ and $N = Z \cup JZ$;
conjugation by $J$ acts on $Z$ via complex conjugation.
We thus have an exact sequence
\begin{equation} \label{eq:exact sequence for normalizer U1}
1 \to \PSU(3) \to N/G^0 \to \cyc{2} \to 1
\end{equation}
which splits because $J^2 = -1 \in G^0$;
we may thus identify $N/G^0$ with the semidirect product $\PSU(3) \rtimes \cyc{2}$ via the outer\footnote{A comparison to \cite[(3.2)]{FKRS12} may be misleading: for $n \geq 3$, complex conjugation acts on $\PSU(n)$ as an outer automorphism corresponding to the nontrivial automorphism of the Dynkin diagram; however, for $n=2$, the Dynkin diagram has only one node, and so this automorphism becomes inner.} action of $\cyc{2}$ on $\PSU(3)$ via complex conjugation.

Let $J(\SU(3))$ be the subgroup of $\USp(6)$ generated by $\SU(3)$ and $J$.
There is now a bijection between conjugacy classes of subgroups $G$ of $\USp(6)$ with $G^0 \simeq \Unitary(1)_3$ satisfying the Sato--Tate axioms and finite subgroups $\tilde{H}$ of $J(\SU(3))$ 
containing~$\mu_6$ and satisfying a corresponding rationality condition.

Our methodology will be to proceed through the Blichfeldt--Dickson--Miller (BDM) classification of finite subgroups of $\SU(3)$. At each step, we will identify subgroups satisfying a restricted form of the rationality condition  (see \S\ref{section: rationality condition}); this will yield a finite list of conjugacy classes of subgroups of $\SU(3)$. We will then compute the normalizer of a representative of each of these classes, and use this to count extensions from $\SU(3)$ to $J(\SU(3))$ 
(see \S\ref{section: standard and nonstandard extensions}). 

In principle, this yields only an upper bound on the set of conjugacy classes of groups $G$ satisfying the Sato--Tate axioms with $G^0 \cong \Unitary(1)_3$, rather than the exact value; this is because we do not check that our candidate groups satisfy the rationality condition (ST3) at its full strength. In practice, we will see that all of these groups
can be realized as Sato--Tate groups (\S\ref{section: realizability}), which will imply \emph{a posteriori} that they satisfy the full rationality condition. For the same reason, we do not need to check that the groups we write down satisfy the restricted rationality condition (although we did verify this using the \Sage{} and \Magma{} scripts); rather, we only need to check that the groups we have excluded do not satisfy (ST3).

\begin{table}[ht]
\small
\begin{tabular}{|l|l|c|c|c|c|c|}
\hline&&&&&&\\[-11pt]
Case & \S &  $H$ & $J$ & $J_s$ & $J_n$ & Total\\
\hline&&&&&&\\[-11pt]
Abelian groups &\ref{section: BDM type A} &  22 & 22 & 15 & 9 & 68\\
Extensions of $\cyc 2$ & \ref{section: BDM type B} & 18 & 18 & 12 & 0 & 48 \\
Exceptional groups from $\SU(2)$ & \ref{section:SU2 exceptional} & 6 & 5 & 4 & 0 & 15\\
Extensions of $\alt 3$ & \ref{subsection: BDM type C} &7 & 7 & 5 & 0 & 19 \\
Extensions of $\sym 3$ & \ref{section: BDM type D} &6 & 6 & 0 & 0 & 12\\
Solvable exceptional groups & \ref{section:solvable exceptional} & 3 & 3 & 0 & 1 & 7\\
Simple exceptional groups & \ref{section:simple exceptional} & 1 & 1 & 0 & 0 & 2\\[1pt]
\hline&&&&&&\\[-11pt]
Total & & 63 & 62 & 36 & 10 & 171 \\
\hline
\end{tabular}
\medskip

\caption{Numbers of finite subgroups of $\SU(3) \rtimes \cyc 2$ accounted for at the various stages of the classification. The columns $H,J,J_s,J_n$ count subgroups of $\SU(3)$, standard extensions, split nonstandard extensions, and nonsplit nonstandard extensions; see \S\ref{section: standard and nonstandard extensions}
for explanation of how extensions are classified.} \label{table:subgroups by case}
\end{table}

To conclude this overview, we record some running notations and general remarks.

\begin{definition}
To lighten notation, we use $e(x)$ to denote $e^{2 \pi i x}$
and $D(u,v,w)$ to denote the diagonal matrix $\Diag(e(u), e(v), e(w))$.
\end{definition}

\begin{remark}
For a group $G$ occurring as the Sato--Tate group of an abelian threefold, the main theorem of  \cite{GK17}
implies that $[G:G^0]$ divides $2^6  \times 3^3 \times 7$.
We will not use this bound explicitly (as it does not directly apply to groups satisfying the Sato--Tate axioms but not yet known to be realizable), but we do wish to point out its consistency with our results.
\end{remark}

\subsection{The rationality condition}
\label{section: rationality condition}

We next specialize the Sato--Tate axiom (ST3) to the representation $\wedge^2 \C^6$ of $\GL(\C^6)$
to obtain a restriction on subgroups of $\SU(3) \rtimes \cyc 2$ containing~$\mu_3$. As noted above, this is \emph{a priori} weaker than the full statement of (ST3), which is quantified over arbitrary representations of $\GL(\C^6)$; however,
it will suffice for the identification of Sato--Tate groups. 

The formulation of (ST3) refers to individual connected components of $G$, so it translates into a condition on individual elements of $\SU(3) \rtimes \cyc 2$, which depends only on the conjugacy classes of these elements.
For elements in $\SU(3)$, we may represent any conjugacy class by a $3 \times 3$ diagonal matrix $A$; 
we then want to ensure that the average over $r \in [0,1]$
of the trace of the block matrix
\[
\begin{pmatrix}
e(2r) {\wedge^2 A} & 0 & 0 \\ 0 & A \otimes \overline{A} & 0 \\
0 & 0 & e(-2r) {\wedge^2 \overline{A}}
\end{pmatrix}
\]
is an integer. This average equals $\Trace(A \otimes \overline{A}) = \left|\Trace(A)\right|^2$
(the first and third blocks each average to 0), yielding the following definition.

\begin{definition} We say that a subgroup $H$ of $\SU(3)$ satisfies the \emph{restricted rationality condition} if $\left|\Trace(A)\right|^2 \in \Z$ for all $A \in H$.
We impose the restricted rationality condition on a subgroup of $\SU(3) \rtimes \cyc 2$ by imposing it on the intersection with $\SU(3)$.
\end{definition}

To make the rationality condition explicit, we identify the conjugacy classes of elements of $\SU(3)$ which can appear in a subgroup satisfying the condition; that is, we find the diagonal matrices $A = \Diag(z_1, z_2, z_3)$ such that $z_1,z_2,z_3$ are roots of unity, $z_1z_2 z_3 = 1$, and $|z_1 + z_2 + z_3|^2 \in \Z$. To do this, we first extract from the literature the solution of a closely related problem.

\begin{lemma} \label{L:conway-jones}
Suppose that $x,y,z$ are roots of unity in $\C$ such that
\[
\left( x + x^{-1} \right) + \left( y + y^{-1} \right) + \bigl( z + z^{-1} \bigr) \in \Z.
\]
Then at least one of the following holds. (In the following discussion, the equivalence relation
is generated by permutations, inversion of any one term, or negation of all terms at once.)
\begin{enumerate}[{\rm(i)}]
\item
All three summands are in $\Z$. In this case, up to equivalence, we have
\[
x,y,z \in \bigl\{\pm 1, \pm i, \pm e(\tfrac{1}{3}) \bigr\}.
\]
\item
Two summands sum to zero but are not in $\Z$. In this case, up to equivalence, we have
\[
x \in  \bigl\{\pm 1, \pm i, \pm e(\tfrac{1}{3})\bigr\}, \qquad z = -y.
\]
\item
Exactly one summand is in $\Z$ and the other two do not sum to zero. In this case, up to equivalence, we have
\[
x \in  \bigl\{\pm 1, \pm i, \pm e(\tfrac{1}{3})\bigr\}, \qquad (y,z) = (e(\tfrac{1}{5}), e(\tfrac{2}{5})).
\]
\item
No summand is in $\Z$ and the sum of all three is zero. In this case, up to equivalence, we have
\[
(y,z) = (e(\tfrac{1}{3}) x, e(\tfrac{2}{3}) x).
\]
\item
No summand is in $\Z$ and the sum of all three is nonzero. In this case, up to equivalence, we have
\[
(x,y,z) \in \bigl\{(e(\tfrac{1}{7}),e(\tfrac{2}{7}), e(\tfrac{4}{7})), (e(\tfrac{1}{15}), e(\tfrac{4}{15}), e(\tfrac{3}{10})), (e(\tfrac{1}{10}), e(\tfrac{2}{15}),  e(\tfrac{7}{15}))\bigr\}.
\]
\end{enumerate}
\end{lemma}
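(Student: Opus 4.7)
The plan is to deduce this from the Conway--Jones classification of minimal vanishing sums of roots of unity. Setting $n := (x+x^{-1}) + (y+y^{-1}) + (z+z^{-1}) \in \Z$ (with $|n| \le 6$ by the triangle inequality), the hypothesis can be rewritten as a vanishing sum of at most $12$ roots of unity:
\[
x + x^{-1} + y + y^{-1} + z + z^{-1} - n = 0.
\]
By the Conway--Jones theorem, any such sum decomposes into minimal vanishing subsums, each with a highly restricted shape: the orders involved must have all prime factors among $\{2, 3, 5, 7\}$, and only a short finite list of minimal relations of size $\le 12$ is possible.

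I would organize the case analysis by $d := \#\{w \in \{x, y, z\} : w + w^{-1} \notin \Z\}$. Recall that $w + w^{-1} \in \Z$ iff $w$ has order in $\{1, 2, 3, 4, 6\}$, in which case $w + w^{-1} \in \{\pm 2, \pm 1, 0\}$; this is the content of the list appearing inside cases (i), (ii), (iii) of the conclusion. Since a single nonzero Galois conjugate of an irrational $w + w^{-1}$ cannot be cancelled in the total by a rational summand, we must have $d \in \{0, 2, 3\}$.

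The case $d = 0$ is case (i) of the statement. For $d = 2$, isolating the rational summand yields the options for $x$, and leaves $y + y^{-1} + z + z^{-1} \in \Z$ with both summands irrational. If the two irrational summands sum to zero, they are conjugate, forcing $z = -y$ up to equivalence (case (ii)); otherwise, Conway--Jones applied to the vanishing sum $y + y^{-1} + z + z^{-1} - m = 0$ for some nonzero integer $m$ forces the orders involved to be $5$, giving $(y,z) = (e(\tfrac{1}{5}), e(\tfrac{2}{5}))$ up to equivalence and hence case (iii).

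For $d = 3$, all three summands are irrational. If they sum to zero, the Galois orbit structure of $x + x^{-1}$ (three conjugate reals with zero trace) forces $(y, z) = (e(\tfrac{1}{3})x, e(\tfrac{2}{3})x)$, yielding case (iv). Otherwise, the task is to enumerate Galois-stable multisets $\{x^{\pm 1}, y^{\pm 1}, z^{\pm 1}\}$ with nonzero integer sum and no rational conjugate pair; this is where Conway--Jones pins the orders down to $7$, $10$, $15$, or $30$, producing the three explicit configurations in case (v). The last step is the main obstacle: reducing the Conway--Jones output to exactly the three listed triples requires careful bookkeeping, since the equivalence relation (permutation, inversion of a single term, global negation) has a rather large orbit structure on the enumerated configurations, and one must verify that the triples $(e(\tfrac{1}{7}), e(\tfrac{2}{7}), e(\tfrac{4}{7}))$, $(e(\tfrac{1}{15}), e(\tfrac{4}{15}), e(\tfrac{3}{10}))$, $(e(\tfrac{1}{10}), e(\tfrac{2}{15}), e(\tfrac{7}{15}))$ are the unique representatives.
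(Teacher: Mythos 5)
Your proposal takes the same route as the paper's proof, which simply reads the result off from Conway--Jones \cite[Theorem~7]{CJ76} and notes the alternative of appending $|n|$ copies of $\pm 1$ to reduce to the classification of vanishing sums of at most $12$ roots of unity (Poonen--Rubinstein \cite[Theorem~3.1]{PR98}), which is precisely the reduction you use. Your additional scaffolding (stratifying by the number $d$ of irrational summands, and remarking that $d=1$ is impossible) is a reasonable way to organize the case check, and the level of detail is comparable to the paper's, which likewise defers the final bookkeeping to the cited references.
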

\begin{proof}
This can be read off from \cite[Theorem~7]{CJ76}. Alternatively, by appending copies of $\pm 1$ we obtain a sum of at most 12 roots of unity equal to 0, and such sums are classified by \cite[Theorem~3.1]{PR98} (see also \cite[Theorem~4.1.2]{CMS11} or \cite[Theorem~5.1]{KKPR20}).
\end{proof}

\begin{definition}
Define an equivalence relation on triples of rational numbers by setting
$(u_1, u_2, u_3) \sim (v_1, v_2, v_3)$ if the multisets 
$\{e(u_j)\}_{j=1}^3$
and $\{e(v_j)\}_{j=1}^3$ are Galois conjugate over~$\Q$.
In other words, there must exist
a permutation $\sigma$ of $\{1,2,3\}$ and a positive integer~$n$ coprime to the least common denominator of $u_1, u_2, u_3$
such that
\[
v_j - n u_{\sigma(j)} \in \Z \qquad (j=1,2,3).
\]
\end{definition}

\begin{proposition} \label{roots of unity problem}
Let $a,b,c$ be roots of unity in $\C$ such that $abc = 1$.
Then $\left|a+b+c\right|^2 \in \Z$ if and only if either $(a+b)(b+c)(c+a) = 0$ or $(a,b,c) = (e(u), e(v), e(w))$ for some triple $(u,v,w)$ equivalent to one of the following:
\begin{gather}
\nonumber
(0,0,0), \left(0, \frac{1}{3}, \frac{2}{3}\right), \left(\frac{1}{3}, \frac{1}{3}, \frac{1}{3} \right),
\left( \frac{1}{4}, \frac{1}{4}, \frac{1}{2} \right),
\left(0, \frac{1}{6}, \frac{5}{6}\right),
\left(\frac{1}{6}, \frac{1}{3}, \frac{1}{2} \right), \\
\nonumber
\left(\frac{1}{7}, \frac{2}{7}, \frac{4}{7}\right),
\left(\frac{1}{8}, \frac{3}{8}, \frac{1}{2}\right),
\left(\frac{1}{9}, \frac{1}{9}, \frac{7}{9}\right),
\left(\frac{1}{12}, \frac{1}{12}, \frac{5}{6}\right),
\left(\frac{1}{12}, \frac{1}{6}, \frac{3}{4}\right),
\left(\frac{1}{12}, \frac{5}{12}, \frac{1}{2}\right), \\
\left(\frac{1}{18}, \frac{1}{18}, \frac{8}{9}\right),
\left(\frac{1}{21}, \frac{4}{21}, \frac{16}{21}\right),
\left(\frac{1}{24}, \frac{1}{6}, \frac{19}{24}\right),
\left(\frac{1}{90}, \frac{19}{90}, \frac{7}{9}\right).
\label{roots of unity list1}
\end{gather}
\end{proposition}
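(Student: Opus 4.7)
The plan is to translate the condition $|a+b+c|^2 \in \Z$ into the hypothesis of Lemma~\ref{L:conway-jones}, and then to exhaust the resulting five cases under the constraint $abc = 1$.

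First expand
\[
|a+b+c|^2 = 3 + (a\bar b + \bar a b) + (b\bar c + \bar b c) + (a\bar c + \bar a c),
\]
and set $x = a\bar b$, $y = b\bar c$, $z = \bar a c$. Since $a, b, c$ are roots of unity with $abc = 1$, the elements $x, y, z$ are roots of unity satisfying $xyz = 1$, and using $\bar \zeta = \zeta^{-1}$ for roots of unity one has
\[
|a+b+c|^2 - 3 = (x + x^{-1}) + (y + y^{-1}) + (z + z^{-1}).
\]
Thus $|a+b+c|^2 \in \Z$ is precisely the hypothesis of Lemma~\ref{L:conway-jones}. Moreover $x = -1$, $y = -1$, $z = -1$ correspond respectively to $a + b = 0$, $b + c = 0$, $a + c = 0$, recovering the degenerate alternative $(a+b)(b+c)(c+a) = 0$ of the statement.

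Assume henceforth that no factor vanishes, and apply Lemma~\ref{L:conway-jones}. In each of the five cases the plan is: (1) enumerate the representatives of the lemma's equivalence class (generated by permutations, single-term inversion, and simultaneous negation of all three) that are compatible with $xyz = 1$; (2) for each valid $(x, y, z)$, recover $(a, b, c)$ from the relations $a = xb$, $c = y^{-1}b$, and $b^3 = y/x$, observing that the three choices of $b$ give triples $(a,b,c)$ differing by simultaneous multiplication by a cube root of unity, which is absorbed in the proposition's equivalence via the shift $(u,v,w) \mapsto (u+1/3, v+1/3, w+1/3)$; (3) compare the resulting exponent triples against~\eqref{roots of unity list1} up to the stated equivalence of coordinatewise permutation and Galois conjugation.

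The main obstacle is the bookkeeping in case~(v), where the listed representatives do not directly satisfy $xyz = 1$ (e.g., for $(x, y, z) = (e(\tfrac{1}{15}), e(\tfrac{4}{15}), e(\tfrac{3}{10}))$ the product is $e(\tfrac{19}{30})$); one must compose inversions with the simultaneous negation to find valid representatives before extracting cube roots, and then verify that the resulting exponent triples are Galois-equivalent to those listed with denominators $7$, $21$, and $90$. A related subtlety is that case~(iii) is vacuous: with $y, z$ primitive $5$th roots of unity and $x$ a $12$th root of unity, coprimality of $5$ and $12$ forces both $x = 1$ and $yz = 1$, which contradicts case~(iii)'s hypothesis that no summand is integral; this explains the absence of denominator~$5$ in~\eqref{roots of unity list1}. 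Once these cases are disposed of, cases (i), (ii), and (iv) are straightforward and reproduce the remaining triples with denominators dividing $24$ together with those involving $9$ and $18$.
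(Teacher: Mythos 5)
Your substitution $x=a/b$, $y=b/c$, $z=c/a$ and the reduction to Lemma~\ref{L:conway-jones} are exactly what the paper does, and your observations about cases (iii) and (v) (coprimality kills case (iii); the case-(v) representatives need to be corrected by equivalence operations before they satisfy $xyz=1$) capture real subtleties. But there is a genuine error in step (2) that would make the resulting enumeration incomplete.

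You claim that the three choices of cube root for $b$ give triples ``differing by simultaneous multiplication by a cube root of unity, which is absorbed in the proposition's equivalence via the shift $(u,v,w)\mapsto(u+\tfrac13,v+\tfrac13,w+\tfrac13)$.'' That shift is \emph{not} part of the proposition's equivalence relation, which allows only permutations and simultaneous $n$-th powers (Galois conjugation on the multiset $\{e(u_j)\}$). Multiplying all three of $a,b,c$ by $\zeta_3$ generally changes their multiplicative orders, so it is not a Galois conjugation. Concretely, take $(x,y,z)=(1,i,-i)$: one cube root of $y/x=i$ gives $(u,v,w)=(\tfrac14,\tfrac14,\tfrac12)$, while another gives $(\tfrac1{12},\tfrac1{12},\tfrac56)$. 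These have different element orders and are therefore not equivalent; indeed \emph{both} appear in~\eqref{roots of unity list1}. The same phenomenon is visible in case (i): $(0,0,0)$, $(0,\tfrac13,\tfrac23)$, and $(\tfrac13,\tfrac13,\tfrac13)$ are related by your shift, yet all three are distinct entries of the list. So if you take a single cube root per $(x,y,z)$, you will drop roughly two-thirds of the target classes. You must instead enumerate all three lifts (or equivalently track the $\mu_3$-coset freedom explicitly), as the paper does implicitly when it ``enumerat[es] cases.''

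Two smaller points: in case (iii) the hypothesis you cite (``no summand is integral'') is wrong — case (iii) requires \emph{exactly one} summand to be in $\Z$. The correct contradiction is that the $5$-part of $yz$ is always nontrivial for any triple equivalent to the case-(iii) representative, while $x$ has trivial $5$-part, so $xyz\ne1$. And in case (iv) you should note that imposing $xyz=1$ forces $x^3=y^3=z^3=1$, which places the triple back in case (i); it is not quite as ``straightforward'' as cases (i)--(ii) because the representative's freedom in $x$ has to be killed before you see that nothing new arises.
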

\begin{proof}
For
\[
x = \frac{a}{b}, \qquad y = \frac{b}{c}, \qquad z = \frac{c}{a},
\]
we have
\[
\left|a+b+c\right|^2  = x + x^{-1} + y + y^{-1} + z + z^{-1} + 3;
\]
consequently, $(x,y,z)$ must be as described in Lemma~\ref{L:conway-jones}.
Conversely, given $x,y,z$ as above, we may solve for $a,b,c$ by writing 
\[
a = \left(\frac{y}{x}\right)^{\nicefrac{1}{3}}, \qquad b = \left(\frac{z}{y}\right)^{\nicefrac{1}{3}}, \qquad c = \left(\frac{x}{z}\right)^{\nicefrac{1}{3}}
\]
and choosing cube roots consistently.
Enumerating cases of Lemma~\ref{L:conway-jones} yields the following.
\begin{enumerate}[{\rm(i)}]
\item
Each of $x,y,z$ has order dividing 4 or 6, so each of $a,b,c$ has order dividing 12 or 18.
Excluding cases where $(a+b)(b+c)(c+a) = 0$, we obtain the possibilities
\begin{gather*}
(u,v,w) = (0,0,0), 
\left( 0, \frac{1}{3}, \frac{2}{3} \right), 
\left( \frac{1}{3}, \frac{1}{3}, \frac{1}{3} \right),
\left( \frac{1}{4}, \frac{1}{4}, \frac{1}{2} \right),
\left( 0, \frac{1}{6}, \frac{5}{6} \right), \\
\left( \frac{1}{6}, \frac{1}{3}, \frac{1}{2} \right),
\left( \frac{1}{9}, \frac{1}{9}, \frac{7}{9} \right),
\left( \frac{1}{12}, \frac{1}{12}, \frac{5}{6} \right),
\left( \frac{1}{18}, \frac{1}{18}, \frac{8}{9} \right).
\end{gather*}
\item
Up to equivalence, $x = \frac{a}{b}$ has order dividing 4 or 6 and $z = -y$; the latter forces 
$b \in \{e(\tfrac{1}{6}), e(\tfrac{1}{2}), e(\tfrac{5}{6})\}$.
This yields the additional cases
\[
(u,v,w) = 
\left( \frac{1}{8}, \frac{3}{8}, \frac{1}{2} \right),
\left( \frac{1}{12}, \frac{1}{6}, \frac{3}{4} \right),
\left( \frac{1}{12}, \frac{5}{12}, \frac{1}{2} \right),
\left( \frac{1}{24}, \frac{1}{6}, \frac{19}{24} \right).
\]
\item
In this situation, there is no way to have $xyz=1$.
\item
In this situation, we have $x^3 = y^3 = z^3 = 1$ and so no new cases occur.
\item
From the listed options for $(x,y,z)$, we obtain the additional cases
\[
(u,v,w) = \left( \frac{1}{7}, \frac{2}{7}, \frac{4}{7} \right),
\left( \frac{1}{21}, \frac{4}{21}, \frac{16}{21} \right),
\left( \frac{1}{90}, \frac{19}{90}, \frac{7}{9} \right).
\] 
\end{enumerate}
The proposition follows.
\end{proof}

\begin{remark} \label{remark: multiplicative manin-mumford}
Proposition~\ref{roots of unity problem} is a special case of the general problem of finding solutions in roots of unity to a system of polynomial equations (i.e., the \emph{multiplicative Manin-Mumford problem}).
The approach used in \cite{CJ76} yields an algorithm for the more general problem, as described by Leroux \cite{Ler12}. 

Another algorithm has been given by Beukers and Smyth \cite{BS02}
in the case of a single polynomial of two variables, and generalized by Aliev and Smyth \cite{AS08} to a system.
See Remark~\ref{beukers-smyth} for an explanation of how the Beukers--Smyth algorithm
can be used to confirm Proposition~\ref{roots of unity problem}.
\end{remark}

It might appear that the result of Proposition~\ref{roots of unity problem} violates \cite[Remark~3.3]{FKRS12}
by failing to limit $(a,b,c)$ to a finite set. This discrepancy disappears when we impose the rationality
condition not on individual group elements, but on entire cyclic subgroups.

\begin{proposition} \label{roots of unity problem2}
Let $a,b,c$ be roots of unity in $\C$ with $abc = 1$.
Then $\left|a^n+b^n+c^n\right|^2 \in \Z$ for all positive integers $n$ if and only if $(a,b,c) = (e(u), e(v), e(w))$ for some triple $(u,v,w)$ equivalent to one of the following:
\begin{gather}
\nonumber
(0,0,0), \left(0, \frac{1}{2}, \frac{1}{2}\right), 
 \left(0, \frac{1}{3}, \frac{2}{3}\right), \left(\frac{1}{3}, \frac{1}{3}, \frac{1}{3} \right),
  \left(0, \frac{1}{4}, \frac{3}{4}\right), 
\left( \frac{1}{4}, \frac{1}{4}, \frac{1}{2} \right), \\
\nonumber
\left(0, \frac{1}{6}, \frac{5}{6}\right),
\left(\frac{1}{6}, \frac{1}{6}, \frac{2}{3} \right),
\left(\frac{1}{6}, \frac{1}{3}, \frac{1}{2} \right), 
\left(\frac{1}{7}, \frac{2}{7}, \frac{4}{7}\right),
\left(\frac{1}{8}, \frac{1}{4}, \frac{5}{8} \right), 
\left(\frac{1}{8}, \frac{3}{8}, \frac{1}{2}\right),
\left(\frac{1}{9}, \frac{1}{9}, \frac{7}{9}\right),\\
\nonumber
\left(\frac{1}{12}, \frac{1}{12}, \frac{5}{6}\right),
\left(\frac{1}{12}, \frac{1}{6}, \frac{3}{4}\right),
\left(\frac{1}{12}, \frac{1}{3}, \frac{7}{12}\right),
\left(\frac{1}{12}, \frac{5}{12}, \frac{1}{2}\right), 
\left(\frac{1}{18}, \frac{1}{18}, \frac{8}{9}\right),
 \left(\frac{1}{18}, \frac{7}{18}, \frac{5}{9}\right),\\
\left(\frac{1}{21}, \frac{4}{21}, \frac{16}{21}\right),
\left(\frac{1}{24}, \frac{1}{6}, \frac{19}{24}\right),
\left(\frac{1}{24}, \frac{5}{12}, \frac{13}{24}\right), 
\left(\frac{1}{36}, \frac{4}{9}, \frac{19}{36}\right).
\label{roots of unity list2}
\end{gather}
(Each listed triple is lexicographically first in its equivalence class; the list is sorted first by multiplicative order, and then lexicographically.)
\end{proposition}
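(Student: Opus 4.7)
The plan is to reduce to the single-power case already handled in Proposition \ref{roots of unity problem}. The condition ``$\left|a^n+b^n+c^n\right|^2 \in \Z$ for all $n \geq 1$'' means that the conclusion of that proposition applies to every triple $(a^n, b^n, c^n)$. So the forward (``only if'') direction amounts to finding which candidates from the conclusion of Proposition \ref{roots of unity problem} survive passage to every power, and the backward (``if'') direction is a finite verification.

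For the forward direction, Proposition \ref{roots of unity problem} gives two alternatives for $(a,b,c)$ itself: either it is equivalent to one of the $16$ sporadic triples, or $(a+b)(b+c)(c+a) = 0$. In the sporadic case, I would compute the Galois equivalence class of $(a^n, b^n, c^n)$ for each divisor $n$ of the common denominator of $(u,v,w)$ and check whether the result is again admissible (sporadic or degenerate). A direct inspection shows that exactly one of the $16$ sporadic triples fails this test, namely $(\tfrac{1}{90}, \tfrac{19}{90}, \tfrac{7}{9})$: at $n=2$ it produces a triple of common denominator $45$ which is neither degenerate nor equivalent to any triple listed in Proposition \ref{roots of unity problem}.

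In the degenerate case, the equivalence relation lets me assume $b+c = 0$, whence $c = -b$ and $a = -b^{-2}$. Writing $b = e(t)$, the triple becomes $(e(\tfrac{1}{2} - 2t), e(t), e(t + \tfrac{1}{2}))$. For odd $n$ the factor $e(n/2) = -1$ preserves the relation $c^n = -b^n$, so the condition is automatic. For $n = 2m$ the triple collapses to $(e(-4mt), e(2mt), e(2mt))$, and a short computation yields
\[
\left|a^{2m} + b^{2m} + c^{2m}\right|^2 = 5 + 4\cos(12\pi m t).
\]
This is integral for every $m$ if and only if the cyclic subgroup of $\R/\Z$ generated by $6t$ lies in $\{0, \pm \tfrac{1}{6}, \pm \tfrac{1}{4}, \pm \tfrac{1}{3}, \tfrac{1}{2}\}$, which forces the order of $6t$ in $\R/\Z$ to belong to $\{1, 2, 3, 4, 6\}$. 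Enumerating such $t$ up to the equivalence relation should yield precisely the eight additional triples appearing in the statement but absent from Proposition \ref{roots of unity problem}, namely $(0, \tfrac{1}{2}, \tfrac{1}{2})$, $(0, \tfrac{1}{4}, \tfrac{3}{4})$, $(\tfrac{1}{6}, \tfrac{1}{6}, \tfrac{2}{3})$, $(\tfrac{1}{8}, \tfrac{1}{4}, \tfrac{5}{8})$, $(\tfrac{1}{12}, \tfrac{1}{3}, \tfrac{7}{12})$, $(\tfrac{1}{18}, \tfrac{7}{18}, \tfrac{5}{9})$, $(\tfrac{1}{24}, \tfrac{5}{12}, \tfrac{13}{24})$, and $(\tfrac{1}{36}, \tfrac{4}{9}, \tfrac{19}{36})$.

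The backward direction is a mechanical check: for each of the $23$ listed triples it suffices to verify the integrality of $\left|a^n+b^n+c^n\right|^2$ for $n$ ranging over divisors of the common denominator, since other values of $n$ reduce to these by Galois equivalence. The main obstacle will be bookkeeping, namely ensuring that canonical lexicographic representatives are used throughout so that no equivalence class is inadvertently duplicated or omitted; a computer algebra implementation, or an independent run of the Beukers--Smyth algorithm mentioned in Remark \ref{remark: multiplicative manin-mumford}, provides a useful cross-check on both the enumeration in the degenerate case and the exclusion of $(\tfrac{1}{90}, \tfrac{19}{90}, \tfrac{7}{9})$.
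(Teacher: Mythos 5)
Your proof is correct, but it organizes the case analysis differently from the paper. The paper's proof splits directly into three (overlapping) cases based on arithmetic relations among $u,v,w$: some two are equal, some two differ by $\tfrac{1}{2}$, or neither. It gives a direct parametric derivation for the first two cases — triples of the form $(\tfrac{1}{n},\tfrac{1}{n},\tfrac{n-2}{n})$, resp.\ $(\tfrac{1}{2n},\tfrac{n-2}{2n},\tfrac{n+1}{2n})$ — and invokes Proposition~\ref{roots of unity problem} only in the third case (at $n=1$ and $n=2$, to eliminate $(\tfrac{1}{90},\tfrac{19}{90},\tfrac79)$). You instead apply Proposition~\ref{roots of unity problem} once at $n=1$ to get a genuinely disjoint dichotomy: either $(a+b)(b+c)(c+a)=0$ (your degenerate case, which coincides with the paper's ``differ by $\tfrac12$'' case) or $(a,b,c)$ lies on the list of 16. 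Your degenerate-case parametrization $c=-b$, $a=-b^{-2}$, $b=e(t)$ leading to $\bigl|a^{2m}+b^{2m}+c^{2m}\bigr|^2 = 5+4\cos(12\pi mt)$ is the same computation as the paper's, relabeled. For the 16 sporadic triples you do an individual survival check, which is where the ``two-equal, non-degenerate'' triples (the paper's first case, minus the two that are also degenerate) get absorbed. Your route buys a cleaner disjoint partition and only one parametric computation, while the paper's route isolates the diagonal triples structurally rather than by exhaustion. Both are valid and reach the same 23 classes; the accounting ($15+8=23$, with your two cases disjoint) checks out. One small point worth making explicit: the step from ``$4\cos(12\pi mt)\in\Z$ for all $m$'' to ``the order of $6t$ in $\R/\Z$ lies in $\{1,2,3,4,6\}$'' uses Niven's theorem — that $\cos$ of a rational multiple of $2\pi$ is rational only when it lies in $\{0,\pm\tfrac12,\pm1\}$ — to exclude $\cos\in\{\pm\tfrac14,\pm\tfrac34\}$; as written your argument silently skips that justification.
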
 
\begin{proof}
We first check that the complete list of triples $(u,v,w)$ for which some two of $u,v,w$ are equal is
\begin{gather*}
(0,0,0), \left(0, \frac{1}{2}, \frac{1}{2}\right),
\left(\frac{1}{3}, \frac{1}{3}, \frac{1}{3} \right),
\left( \frac{1}{4}, \frac{1}{4}, \frac{1}{2} \right),\\
\left(\frac{1}{6}, \frac{1}{6}, \frac{2}{3} \right),
\left(\frac{1}{9}, \frac{1}{9}, \frac{7}{9}\right),
\left(\frac{1}{12}, \frac{1}{12}, \frac{5}{6}\right),
\left(\frac{1}{18}, \frac{1}{18}, \frac{8}{9}\right).
\end{gather*}
Up to equivalence we have $(u,v,w) = \bigl(\tfrac{1}{n}, \tfrac{1}{n}, \tfrac{n-2}{n}\bigr)$ for some $n$. Since $b=a, c=a^{-2}$ we have
\[
\left| a + b + c \right| = 2(a^3 + a^{-3}) + 5 \in \Z;
\]
this holds if and only if $a^3$ has order dividing 4 or 6. 
This yields the options
\[
n \in \{1,2,3,4,6,9,12,18\}
\]
and moreover confirms that they all work (because this list is closed under taking divisors).

We next check that the complete list of triples $(u,v,w)$ for which some two of $u,v,w$ differ by $\tfrac{1}{2}$ modulo $\Z$ is 
\begin{gather*}
\left(0, \frac{1}{2}, \frac{1}{2}\right), 
\left(0, \frac{1}{4}, \frac{3}{4}\right), 
\left(\frac{1}{6}, \frac{1}{6}, \frac{2}{3} \right),
\left(\frac{1}{8}, \frac{1}{4}, \frac{5}{8} \right), \\
\left(\frac{1}{12}, \frac{1}{3}, \frac{7}{12}\right), 
\left(\frac{1}{18}, \frac{7}{18}, \frac{5}{9}\right),
\left(\frac{1}{24}, \frac{5}{12}, \frac{13}{24}\right), 
\left(\frac{1}{36}, \frac{4}{9}, \frac{19}{36}\right).
\end{gather*}
We may assume without loss of generality that $u$ and $v$ differ by $\tfrac{1}{2}$ modulo $\Z$ and that $a$ has even order; then $a$ generates the group $\langle a,b,c \rangle$, and therefore up to equivalence we have $(u,v,w) = (\tfrac{1}{2n},\tfrac{n-2}{2n}, \tfrac{n+1}{2n})$ for some $n$. Since $b=-a, c = -a^{-2}$, we have
\[
\left| a^2 + b^2 + c^2 \right| = 2(a^6 + a^{-6}) + 5 \in \Z;
\]
this holds if and only if $a^6$ has order dividing 4 or 6. 
This yields the options
\[
n \in \{2,4,6,8,12,18,24,36\}
\]
and also confirms that they all work (because this list is closed under taking even divisors).

We finally check that the complete list of triples $(u,v,w)$ for which no two of $u,v,w$ differ by $0$ or $\tfrac{1}{2}$ modulo $\Z$ is 
\begin{gather*}
 \left(0, \frac{1}{3}, \frac{2}{3}\right), 
\left(0, \frac{1}{6}, \frac{5}{6}\right),
\left(\frac{1}{6}, \frac{1}{3}, \frac{1}{2} \right), 
\left(\frac{1}{7}, \frac{2}{7}, \frac{4}{7}\right), \\
\left(\frac{1}{8}, \frac{3}{8}, \frac{1}{2}\right),
\left(\frac{1}{12}, \frac{1}{6}, \frac{3}{4}\right),
\left(\frac{1}{12}, \frac{5}{12}, \frac{1}{2}\right), 
\left(\frac{1}{21}, \frac{4}{21}, \frac{16}{21}\right),
\left(\frac{1}{24}, \frac{1}{6}, \frac{19}{24}\right).
\end{gather*}
Note that for each positive integer $n$, the triple $(nu,nv,nw)$ must either be equivalent to an entry in \eqref{roots of unity list1} or must have two terms which differ by $\tfrac{1}{2}$ modulo $\Z$.
Already for $n=1$, this criterion eliminates all cases except the listed ones plus
$(\tfrac{1}{90}, \tfrac{19}{90}, \tfrac{7}{9})$; the latter case is eliminated by taking $n=2$.
The remaining cases persist for all $n$ (see Remark~\ref{remark: check roots of unity list}
for an easy way to check this).
\end{proof}

\begin{corollary}
Let $n$ be a positive integer, and let $H$ be the subgroup of $\SU(3)$ consisting of those diagonal matrices of order dividing $n$. Then $H$ satisfies the restricted rationality condition
if and only if $n \in \{1,2,3,4,6\}$.
\end{corollary}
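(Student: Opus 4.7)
The plan is direct: the elements of $H$ are precisely the diagonal matrices $A = \Diag(e(u), e(v), e(w))$ with $(u,v,w) \in (n^{-1}\Z/\Z)^3$ subject to $u+v+w \equiv 0 \pmod{\Z}$, and for any such element a routine expansion of $|\Trace A|^2 = (a+b+c)(\bar a + \bar b + \bar c)$ yields
\[
|\Trace A|^2 = 3 + 2\cos(2\pi(u-v)) + 2\cos(2\pi(u-w)) + 2\cos(2\pi(v-w)).
\]
So the restricted rationality condition on $H$ amounts to requiring that the displayed expression is integral for every admissible triple $(u,v,w)$.

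For the ``if'' direction, assume $n \in \{1,2,3,4,6\}$. Since $\zeta_n + \zeta_n^{-1}$ has degree $\phi(n)/2 \leq 1$ over $\Q$, a one-line check gives $2\cos(2\pi k/n) \in \{-2,-1,0,1,2\}$ for every integer $k$. Each of the three cosine terms in the display above is therefore an integer, and hence so is $|\Trace A|^2$.

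For the ``only if'' direction, assume $n \notin \{1,2,3,4,6\}$ and exhibit the single witness $A = \Diag(e(1/n), e(-1/n), 1) \in H$. Setting $\alpha \colonequals \zeta_n + \zeta_n^{-1} = 2\cos(2\pi/n)$, one has $|\Trace A|^2 = (1+\alpha)^2 = 1 + 2\alpha + \alpha^2$. The minimal polynomial of $\alpha$ over $\Q$ has degree $\phi(n)/2 \geq 2$. When $\phi(n)/2 \geq 3$, the element $\alpha^2$ is linearly independent from $\{1,\alpha\}$ over $\Q$, so $(1+\alpha)^2 \notin \Q$. The remaining cases are the four values $\phi(n) = 4$, namely $n \in \{5,8,10,12\}$; in each case, one reduces $\alpha^2$ using the quadratic minimal polynomial and verifies by inspection that $(1+\alpha)^2$ equals respectively $2+\alpha$, $3+2\alpha$, $2+3\alpha$, $4+2\alpha$, each of which is irrational. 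Thus $|\Trace A|^2 \notin \Z$ and $H$ fails the restricted rationality condition.

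The only real work is the brief bookkeeping for the four borderline cases with $\phi(n) = 4$; everything else follows from the well-known characterization (essentially Niven's theorem) of those $n$ for which $2\cos(2\pi/n)$ is rational. Note that as an alternative, one could deduce the corollary from Proposition~\ref{roots of unity problem2}, but the direct argument given above is shorter and avoids enumerating the entire list.
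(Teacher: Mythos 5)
Your proof is correct, and it takes a genuinely different route from the paper. The paper states this corollary with no separate argument — it is presented as an immediate consequence of Proposition~\ref{roots of unity problem2} (and implicitly of Proposition~\ref{roots of unity problem}), which classify all cyclic groups satisfying the condition; one reads off the orders of the cyclic groups generated by the listed triples and observes that the full $n$-torsion subgroup contains a cyclic witness with denominator $n$ whenever $n\notin\{1,2,3,4,6\}$. Your argument bypasses that classification entirely: you reduce to the rationality of $(1+2\cos(2\pi/n))^2$, appeal to $[\Q(\zeta_n)^+:\Q]=\phi(n)/2$, and handle the four borderline cases $\phi(n)=4$ by hand. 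This is shorter, self-contained, and does not depend on the Conway--Jones machinery behind Lemma~\ref{L:conway-jones}; the trade-off is that the paper's route yields the corollary for free as a byproduct of data that is needed elsewhere in \S\ref{section: rationality condition} anyway, so in context the authors had no incentive to re-derive it. One small point worth tightening in your write-up: the assertion that $2\cos(2\pi k/n)\in\{-2,-1,0,1,2\}$ for \emph{all} $k$ when $n\in\{1,2,3,4,6\}$ does not follow directly from $\phi(n)/2\le 1$ (which only covers $k$ coprime to $n$); one should either note that every divisor $d$ of such $n$ again lies in $\{1,2,3,4,6\}$, or simply enumerate the finitely many values of $k \bmod n$.
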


\begin{remark} \label{remark: check roots of unity list}
One way to cross-check \eqref{roots of unity list2} is to verify the following statements
about each triple $(u,v,w)$ appearing therein.
\begin{enumerate}[{\rm(a)}]
\item
Either two of $u,v,w$ differ by $\tfrac{1}{2}$, or $(u,v,w)$ appears in \eqref{roots of unity list1}.
\item
Let $n$ denote the least common denominator of $u,v,w$. Then for each prime divisor $p$ of $n$,
the triple $(pu, pv, pw)$ is equivalent to an earlier triple in \eqref{roots of unity list2}.
\end{enumerate}
This confirms that each triple $(u,v,w)$ in the list defines a triple $(a,b,c)$ with the desired properties.
\end{remark}

\subsection{Abelian groups}
\label{section: BDM type A}

For the remainder of \S\ref{section: unitary}, let $H$ denote a finite subgroup of $\SU(3)$ containing $\mu_3$ that satisfies the restricted rationality condition; we identify the possibilities for $H$ of various types according to the BDM classification. We start with abelian groups, which occur in type (A) of the BDM classification.
We represent these groups as groups of diagonal matrices,
which are then stable under complex conjugation.

\begin{definition} \label{phi convention}
Following \cite[Chapter~XII]{MBD61}, we use the symbol $\phi$ as a synonym for the number $3$; specifically, we write
$n\phi$ to refer to the order of a finite subgroup of $\SU(3)$ containing~$\mu_3$ whose image in $\PSU(3)$ has order $n$. When including $\phi$ in a label for such a subgroup, we remove $\phi$ to indicate the image of this group in $\PSU(3)$ and (by abuse of notation) the resulting Sato--Tate group.
\end{definition}

%(Recall that $\phi\colonequals 3$, per Definition~\ref{phi convention}.)

\begin{lemma}\label{lemma: generators mu3} For $H$ abelian, there exist diagonal matrices $g_1$ and $g_2$ of orders $m$ and $n\phi$, with $m|n\phi$, such that $H$ is conjugate to 
$$
\langle g_1,g_2\rangle\simeq \Z/m\Z\times \Z/n\phi\Z
$$ 
and $\mu_3$ is contained in $\langle g_2\rangle$.
\end{lemma}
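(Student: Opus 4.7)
The plan has three steps, the last of which is the delicate one. First, since $H$ is a finite abelian subgroup of $\SU(3)$ consisting of commuting unitary matrices, it is simultaneously diagonalizable; after conjugating by an element of $\SU(3)$ we may assume $H$ lies in the diagonal maximal torus $T$. Then $T \cong \Unitary(1)^2$ has rank $2$, so any finite subgroup of $T$ has at most two generators, and the structure theorem for finitely generated abelian groups gives $H \cong \Z/m\Z \times \Z/M\Z$ with $m \mid M$; the containment $\mu_3 \subseteq H$ forces $3 \mid M$, so one writes $M = n\phi$, and the divisibility $m \mid n\phi$ is then automatic.

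To exhibit generators $g_1, g_2$ of the required orders with $\mu_3 \subseteq \langle g_2 \rangle$, I work via the Sylow decomposition $H = \bigoplus_p H_p$. Writing $H_p \cong \Z/p^{a_p}\Z \times \Z/p^{b_p}\Z$ with $a_p \leq b_p$, it suffices to produce a decomposition $H_p = \langle h_{1,p} \rangle \oplus \langle h_{2,p} \rangle$ of each Sylow factor with orders $p^{a_p}, p^{b_p}$, and (for $p = 3$) with $\mu_3 \subseteq \langle h_{2,3} \rangle$; assembling $g_1, g_2$ by the Chinese remainder theorem then finishes the proof. For $p \neq 3$ any such decomposition works. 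For $p = 3$, since any cyclic subgroup of $H_3$ of maximal order $3^{b_3}$ is automatically a direct summand, the problem reduces to showing $\mu_3$ lies inside some cyclic subgroup of $H_3$ of order $3^{b_3}$.

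This last reduction is the principal obstacle: it can fail on purely group-theoretic grounds, as the example $\Z/3\Z \times \{0\} \subset \Z/3\Z \times \Z/9\Z$ shows, so the restricted rationality condition must be brought to bear. Via Proposition~\ref{roots of unity problem2} it supplies two facts. First, every element of $T$ whose cyclic subgroup satisfies the condition has multiplicative order with $3$-adic valuation at most $2$, so $b_3 \leq 2$. Second, a direct inspection of the triples $(u,v,w)$ in the list of multiplicative order divisible by $9$ --- namely those equivalent to $(\tfrac{1}{9},\tfrac{1}{9},\tfrac{7}{9})$, $(\tfrac{1}{18},\tfrac{1}{18},\tfrac{8}{9})$, $(\tfrac{1}{18},\tfrac{7}{18},\tfrac{5}{9})$, and $(\tfrac{1}{36},\tfrac{4}{9},\tfrac{19}{36})$ --- shows in each case $3(u,v,w) \equiv (\tfrac{1}{3},\tfrac{1}{3},\tfrac{1}{3}) \pmod{\Z}$, equivalently $g^{\ord(g)/3} \in \mu_3$ whenever $9 \mid \ord(g)$. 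Consequently, when $b_3 = 2$ every order-$9$ element of $H_3$ has $\mu_3$ in its cyclic subgroup and may be taken as $h_{2,3}$; when $b_3 \leq 1$ the group $H_3$ is elementary abelian of rank $\leq 2$, and placing $\mu_3$ as the order-$3$ factor is immediate. A complementary $h_{1,3}$ of order $3^{a_3}$ then exists by the summand property noted above, completing the argument.
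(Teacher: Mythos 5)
Your proof is correct and follows essentially the same approach as the paper: after simultaneously diagonalizing $H$, noting the rank-2 torus gives two generators, and reducing to the $3$-primary part, both arguments invoke Proposition~\ref{roots of unity problem2} to show that any element of order $9$ has its cube in $\mu_3$ (it must match the unique order-$9$ triple $(\tfrac{1}{9},\tfrac{1}{9},\tfrac{7}{9})$ up to Galois conjugacy), so that $\mu_3$ lies inside a cyclic summand of maximal order; the paper packages this via the chosen generators $g_1,g_2$ and a case split on $(m,n)$, whereas you use the Sylow decomposition and the direct-summand lemma, but the content is the same. One small slip: the displayed assertion ``$3(u,v,w) \equiv (\tfrac{1}{3},\tfrac{1}{3},\tfrac{1}{3}) \pmod\Z$'' is literally false for the order-$18$ and order-$36$ triples (e.g.\ $3\cdot(\tfrac{1}{18},\tfrac{1}{18},\tfrac{8}{9}) \equiv (\tfrac{1}{6},\tfrac{1}{6},\tfrac{2}{3}) \pmod\Z$); what you state as the ``equivalent'' version --- $g^{\ord(g)/3} \in \mu_3$ whenever $9 \mid \ord(g)$ --- is correct, and since $H_3$ is a $3$-group only the single order-$9$ triple is actually needed.
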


\begin{proof}
Since $H$ is abelian, its elements can be diagonalized simultaneously. Since it injects into $\Unitary(1)\times\Unitary(1)$, it is generated by just two elements; we can thus find matrices $g_1$ and $g_2$ of respective orders $m$ and $n\phi$, with $m|n\phi$, such that $H$ is equal to
$$
\langle g_1,g_2\rangle\simeq \Z/m\Z\times \Z/n\phi\Z.
$$
It remains to check that these can be chosen so that $\mu_3 \subseteq \langle g_2 \rangle$.
For this, we may ignore the prime-to-3 parts of $m$ and $n$, and thus reduce to the case where $m$ and $n$ are powers of 3. 
If $m = 1$, then $\mu_3 \subseteq H = \langle g_2 \rangle$ and we are done.
If $n>1$, then by Proposition \ref{roots of unity problem2} we must have~$g_2^{\nicefrac{n}{9}}$ conjugate to $D(\tfrac{1}{9}, \tfrac{1}{9}, \tfrac{7}{9})$, and therefore $\mu_3\subseteq \langle g_2\rangle$.
This leaves only the case $(m,n) = (3,1)$, in which case $H$ is a maximal 3-torsion subgroup of $\SU(3)$; in this case, we may take $g_2$ to be a generator of $\mu_3$ and $g_1$ to be a complementary generator.
\end{proof}

\begin{proposition}\label{proposition: abelian subgroups}
Suppose that $H$ is an abelian group.
Then $H$ is conjugate to exactly one of the $22$ groups listed in Table \ref{table: abelian subgroups}.
\end{proposition}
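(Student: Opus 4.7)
The plan is to classify $H$ by choosing canonical generators $g_1,g_2$ as in Lemma~\ref{lemma: generators mu3} and then bounding the pair $(m,n\phi)$ using the restricted rationality condition, combined with the enumeration of allowed diagonal elements from Proposition~\ref{roots of unity problem2}. Since $H$ lies in the diagonal maximal torus $T \subset \SU(3)$, the only conjugations we need to quotient by are permutations of the three diagonal entries (the Weyl group $\sym 3$).

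First I would enumerate the candidates for the cyclic subgroup $\langle g_2\rangle$ of order $n\phi$ (a multiple of $3$ by hypothesis). Every power $g_2^j$ is a diagonal element of $\SU(3)$, so its eigenvalue triple must lie (up to permutation) in the list from Proposition~\ref{roots of unity problem2}. Running through the entries of that list whose multiplicative order is divisible by $3$ produces a finite menu of possible orders $n\phi$; for each such order one checks that \emph{every} power of the chosen generator also lies in the list. This step essentially recovers one column of Table~\ref{table: abelian subgroups}, labeled by the cyclic groups $\mu_3,\mu_6,\mu_9,\mu_{12},\mu_{18},\mu_{21},\mu_{24},\mu_{36}$ and possibly a few more.

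Next I would classify the possible second generators $g_1$ of order $m \mid n\phi$. Since $g_1 \notin \langle g_2\rangle$ only when $m \geq 2$ and is forced to be trivial otherwise, the new condition is that every coset representative $g_1^i g_2^j$ also satisfies the restricted rationality condition. Writing $H$ as a sublattice of the character lattice of $T$, this becomes a finite lattice-avoidance problem: for each allowed $\langle g_2\rangle$, one enumerates the nontrivial $m$-torsion cosets in $T/\langle g_2\rangle$ and keeps only those for which the entire coset lies in the allowed list of Proposition~\ref{roots of unity problem2}. Finally one mods out by the permutation action of $\sym 3$ to collapse conjugate groups.

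The main obstacle is purely bookkeeping: in many of the mixed cases (for instance $\Z/3 \times \Z/12$ or $\Z/3 \times \Z/18$), most \emph{a priori} extensions of a given $\langle g_2\rangle$ produce elements $g_1^i g_2^j$ outside the allowed list and must be discarded, and one has to be careful not to overcount conjugate pairs $(g_1, g_2)$ when $\langle g_2\rangle$ has a nontrivial stabilizer in $\sym 3$. To control this, I would organize the computation by the isomorphism type $\Z/m\Z \times \Z/n\phi\Z$, verify row by row against Table~\ref{table: abelian subgroups}, and cross-check the resulting count of $22$ by a direct computer enumeration of the finite subgroups of the torus whose elements all have eigenvalue triples on the Proposition~\ref{roots of unity problem2} list (the \Sage{} and \Magma{} scripts mentioned in \S\ref{subsec:preliminaries} can be used as an independent verification).
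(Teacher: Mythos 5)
Your plan is essentially the same strategy the paper uses: canonicalize the generators via Lemma~\ref{lemma: generators mu3}, use Proposition~\ref{roots of unity problem2} to pin down the possible cyclic subgroups of the diagonal torus, then classify the noncyclic cases and quotient by the Weyl group $\sym 3$. As an outline this is correct, and the computer cross-check you propose is exactly the consistency check the paper reports in the remark following the proposition.

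However, what you have written is a plan for a computation, not the computation itself; all the actual content of the paper's proof lives precisely in the part you defer to ``bookkeeping.'' Concretely: (i) you should be aware that there can be several non-conjugate cyclic subgroups of the same order --- Table~\ref{table: abelian subgroups} contains two conjugacy classes of order $12$, two of order $18$, and two of order $24$, so the menu is not ``one cyclic group per allowed order'' plus possibly a few more; it is the $11$ distinct equivalence classes from~\eqref{roots of unity list2} that contain $\mu_3$. (ii) The crucial step for the noncyclic case is not merely to ``enumerate and keep those that pass'' but to produce, for each disallowed isomorphism type, an explicit element of the candidate group whose eigenvalue triple is absent from the list of Proposition~\ref{roots of unity problem2}; the paper calls these \emph{witnesses} and computes one for each of $(m,n')\in\{(2,8),(2,12),(3,8),(3,12),(6,4)\}$. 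Without exhibiting such a witness the exclusion has no justification beyond ``the computer says so.'' (iii) Symmetrically, for the isomorphism types that do occur you must argue that they occur in only one way up to $\sym 3$-conjugacy; in the paper this is done separately for the cases $(m,n')\in\{(2,2),(3,1),(4,4),(6,6)\}$ (the group is forced to contain the full $m$-torsion of the torus), for $\{(3,3),(6,6)\}$ (forced to contain $\stgroup{A(1,3)}$), and for $\{(2,4),(2,6),(3,2),(3,4),(3,6)\}$ (checked directly). Finally, a minor but worthwhile clarification: when you identify cyclic subgroups from Proposition~\ref{roots of unity problem2}, the relevant equivalence on triples is the \emph{Galois} one defined there (permutation combined with raising to a power coprime to the order), not bare $\sym 3$-permutation; this finer relation is what matches conjugacy of cyclic \emph{subgroups}, since it accounts for the freedom of choosing a generator. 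Your statement that one need only quotient by $\sym 3$ is correct once you pass from generators to subgroups, but it is worth making the distinction explicit to avoid overcounting.
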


\begin{table}[ht]
\small
\begin{center}
\begin{tabular}{|l|l|l|l|}
\hline &&&\\[-11pt]
$H$ & Presentation & Symmetries & $N'_H$ \\[2pt]
\hline &&&\\[-10pt]
$\stgroup[A(1,1\phi)]{A(1,1)}$ &    $\langle D(\tfrac{1}{3},\tfrac{1}{3},\tfrac{1}{3}) \rangle$	& $\sym 3$ & Trivial\\[4pt]

$\stgroup[A(1,2\phi)]{A(1,2)}$ &    $\langle D(\tfrac{1}{3},\tfrac{5}{6},\tfrac{5}{6}) \rangle$ & $\cyc 2$ & Trivial \\[4pt]

$\stgroup[A(1,3\phi)]{A(1,3)}$ &    $\langle D(\tfrac{1}{9},\tfrac{4}{9},\tfrac{4}{9}) \rangle$	 & $\cyc 2$ & Trivial\\[4pt] 

$\stgroup[A(1,4\phi)_1]{A(1,4)_1}$ & $\langle D(\tfrac{1}{6},\tfrac{5}{12},\tfrac{5}{12}) \rangle$	& $\cyc 2$ & Trivial \\[4pt]

$\stgroup[A(1,4\phi)_2]{A(1,4)_2}$ & $\langle D(\tfrac{1}{3}, \tfrac{1}{12}, \tfrac{7}{12}) \rangle$	& $\cyc 2$ & $\cyc 2$ \\[4pt]

$\stgroup[A(1,6\phi)_1]{A(1,6)_1}$ & $\langle D(\tfrac{1}{9}, \tfrac{17}{18}, \tfrac{17}{18}) \rangle$& $\cyc 2$ & Trivial\\[4pt]

$\stgroup[A(1,6\phi)_2]{A(1,6)_2}$ & $\langle D(\tfrac{1}{18}, \tfrac{2}{9},  \tfrac{13}{18}) \rangle$& Trivial & Trivial	\\[4pt]

$\stgroup[A(1,7\phi)]{A(1,7)}$ &   $\langle D(\tfrac{1}{21}, \tfrac{16}{21}, \tfrac{4}{21})\rangle$ & $\alt 3$ & $\alt 3$ \\[4pt]

$\stgroup[A(1,8\phi)_1]{A(1,8)_1}$ & $\langle D(\tfrac{1}{6}, \tfrac{1}{24}, \tfrac{19}{24}) \rangle$& $\cyc 2$& $\cyc 2$	\\[4pt]

$\stgroup[A(1,8\phi)_2]{A(1,8)_2}$ & $\langle D(\tfrac{1}{12},\tfrac{5}{24},\tfrac{17}{24}) \rangle$ & $\cyc 2$ & $\cyc 2$ \\[4pt]

$\stgroup[A(1,12\phi)]{A(1,12)}$ &   $\langle D(\tfrac{1}{9}, \tfrac{7}{36}, \tfrac{25}{36}) \rangle$ & $\cyc 2$ & $\cyc 2$ \\[4pt]

 \hline & & & \\[-8pt]

$\stgroup[A(2,2\phi)]{A(2,2)}$ & $\langle D(0,\tfrac{1}{2},\tfrac{1}{2}), D(\tfrac{1}{6}, \tfrac{1}{6}, \tfrac{2}{3})\rangle$ & $\sym 3$ & $\sym 3$ \\[4pt]

$\stgroup[A(2,4\phi)]{A(2,4)}$ & $\langle D(\tfrac{1}{2},0,\tfrac{1}{2}), D(\tfrac{1}{6}, \tfrac{5}{12}, \tfrac{5}{12})\rangle$ & $\cyc 2$ & $\cyc 2$ \\[4pt]

$\stgroup[A(2,6\phi)]{A(2,6)}$ &$\langle D(\tfrac{1}{2},0,\tfrac{1}{2}), D(\tfrac{1}{9},\tfrac{17}{18},\tfrac{17}{18})\rangle$& $\cyc 2$ & $\cyc 2$ \\[4pt]

$\stgroup[A(3,1\phi)]{A(3,1)}$ & $\langle D(\tfrac{1}{3},0,\tfrac{2}{3}), D(\tfrac{1}{3},\tfrac{1}{3},\tfrac{1}{3}) \rangle$& $\sym 3$ & $\sym 3$ \\[4pt]

$\stgroup[A(3,2\phi)]{A(3,2)}$ &$\langle D(\tfrac{1}{3},0,\tfrac{2}{3}), D(\tfrac{2}{3},\tfrac{1}{6},\tfrac{1}{6})\rangle$& $\cyc 2$ & $\cyc 2$ \\[4pt]

$\stgroup[A(3,3\phi)]{A(3,3)}$ & $\langle D(0,\tfrac{1}{3},\tfrac{2}{3}), D(\tfrac{1}{9},\tfrac{1}{9},\tfrac{7}{9}))\rangle$& $\sym 3$ & $\sym 3$\\[4pt]

$\stgroup[A(3,4\phi)]{A(3,4)}$ & $\langle D(0,\tfrac{1}{3},\tfrac{2}{3}),D(\tfrac{1}{6},\tfrac{5}{12},\tfrac{5}{12})\rangle$& $\cyc 2$ & $\cyc 2$ \\[4pt]

$\stgroup[A(3,6\phi)]{A(3,6)}$ & $\langle D(0,\tfrac{1}{3},\tfrac{2}{3}),D(\tfrac{1}{9},\tfrac{5}{18},\tfrac{11}{18})\rangle$ & $\cyc 2$ & $\cyc 2$ \\[4pt]

$\stgroup[A(4,4\phi)]{A(4,4)}$ & $\langle D(0,\tfrac{1}{4},\tfrac{3}{4}), D(\tfrac{1}{12},\tfrac{1}{12},\tfrac{5}{6})\rangle$ & $\sym 3$& $\sym 3$\\[4pt]

$\stgroup[A(6,2\phi)]{A(6,2)}$ & $\langle D(\tfrac{1}{6},0,\tfrac{5}{6}), D(\tfrac{2}{3},\tfrac{1}{6},\tfrac{1}{6}) \rangle$ & $\sym 3$& $\sym 3$\\[4pt]

$\stgroup[A(6,6\phi)]{A(6,6)}$ & $\langle D(0,\tfrac{1}{6},\tfrac{5}{6}), D(\tfrac{1}{18},\tfrac{1}{18},\tfrac{8}{9}) \rangle$ & $\sym 3$& $\sym 3$\\[4pt] \hline
\end{tabular}
\end{center}
\medskip

\caption{Finite abelian subgroups of $\SU(3)$ containing $\mu_3$ and satisfying the restricted rationality condition. Following the notation in \cite{MBD61}, we define $\phi\colonequals 3$. The group $A(m,n\phi )_{*}=\langle g_1,g_2\rangle$ is isomorphic to $\Z/m\Z\times \Z/n\phi\Z$, the generators $g_1$ and $g_2$ have orders $m$ and $n\phi$ and $\mu_3\subseteq \langle g_2\rangle$.}\label{table: abelian subgroups}
\end{table}

\begin{proof}
In Table~\ref{table: abelian subgroups}, the cyclic groups are precisely the ones arising from
Proposition~\ref{roots of unity problem2} which contain $\mu_3$.
(The labels $A(m,n\phi)_*$ are chosen to preserve the order from \eqref{roots of unity list2}, but the generators themselves are not preserved; see Remark~\ref{R:swap diagonal entries}.)
In particular, this implies that any group that appears
has order divisible by only the primes 2,3,7; moreover, since $\stgroup[A(1,7\phi)]{A(1,7)}$ is the only cyclic group that occurs with order divisible by 7, no noncyclic group with order divisible by 7 can occur.

Any noncyclic group that occurs must have the form $\Z/m\Z \times \Z/n'\phi \Z$ where $m,n'$ are divisible only by the primes 2 and 3. Since there is no entry $A(1, 9\phi)$, $A(1, 16\phi)$, or $A(1, 24\phi)$ in Table~\ref{table: abelian subgroups}, neither $m$ nor $n'$ can be divisible by 9, 16, or 24. 

We next note that for
\[
(m,n') \in \{(2,2), (3,1), (4,4), (6,6)\}
\]
the group $A(m,n')$ can occur in only one way: it must be generated by $\mu_3$ and the maximal elementary abelian subgroup of the diagonal torus of exponent $m$. In particular, this group is stable under the action of $\sym 3$ on diagonal matrices (see Remark~\ref{R:swap diagonal entries} for more on this point).

We next note that for
\[
(m,n') \in \{(3,3), (6,6)\},
\]
the group $A(m,n')$ can again occur in only one way: it must be generated by $A(m,(n'/3)\phi)$ and $\stgroup[A(1, 3\phi)]{A(1,3)}$.
This group is again stable under the action of $\sym 3$.

We next rule out the cases
\[
(m,n') \in \{(2,8), (2,12), (3,8), (3,12), (6,4)\}.
\]
In each case, we do so by identifying the candidate groups, then identify a ``witness'' in each group which
does not appear in Proposition~\ref{roots of unity problem2}.
\begin{itemize}
\item
To achieve $(m,n') = (2,8)$, we would have to take the group generated by $\stgroup[A(2,2\phi)]{A(2,2)}$ and 
one of $\stgroup[A(1,8\phi)_1]{A(1,8)_1}$ or $\stgroup[A(1,8\phi)_2]{A(1,8)_2}$. We take as witnesses
\[
D(\tfrac{1}{2},0,\tfrac{1}{2}) \cdot D(\tfrac{1}{6},\tfrac{1}{24},\tfrac{19}{24}) = D(\tfrac{2}{3},\tfrac{1}{24},\tfrac{7}{24}), \quad
D(\tfrac{1}{2},0,\tfrac{1}{2}) \cdot D(\tfrac{1}{12},\tfrac{5}{24},\tfrac{17}{24}) = D(\tfrac{7}{12},\tfrac{5}{24},\tfrac{5}{24}).
\]
\item
To achieve $(m,n') = (2,12)$, we would have to take the group generated by $\stgroup[A(2, 2\phi)]{A(2,2)}$
and $\stgroup[A(1,12\phi)]{A(1,12)}$. We take as witness
\[
D(\tfrac{1}{2},0,\tfrac{1}{2}) \cdot D(\tfrac{1}{9},\tfrac{7}{36},
\tfrac{25}{36}) = D(\tfrac{11}{18}, \tfrac{7}{36}, \tfrac{7}{36}).
\]
\item
To achieve $(m,n') = (3,8)$, we would have to take the group generated by
$\stgroup[A(3,1\phi)]{A(3,1)}$ and 
one of $\stgroup[A(1,8\phi)_1]{A(1,8)_1}$ or $\stgroup[A(1,8\phi)_2]{A(1,8)_2}$. We take as witnesses
\[
D(\tfrac{2}{3},0,\tfrac{1}{3}) \cdot D(\tfrac{1}{3},\tfrac{1}{12},\tfrac{7}{12}) = D(0,\tfrac{1}{12},\tfrac{11}{12}), 
\quad
D(\tfrac{2}{3},0,\tfrac{1}{3}) \cdot D(\tfrac{1}{12},\tfrac{5}{24},\tfrac{17}{24}) = D(\tfrac{3}{4},\tfrac{5}{24},\tfrac{1}{24});
\]
the first witness also rules out the group generated by $A(3,1\phi)$ and $A(1,4\phi)_2$.
\item
To achieve $(m,n') = (3,12)$, we would have to take the group
generated by $\stgroup[A(3,1\phi)]{A(3,1)}$ and $\stgroup[A(1,12\phi)]{A(1,12)}$. We take as witness
\[
D(\tfrac{1}{3},0,\tfrac{2}{3}) \cdot D(\tfrac{1}{9},\tfrac{7}{36},\tfrac{25}{36}) = D(\tfrac{4}{9}, \tfrac{7}{36}, \tfrac{13}{36}).
\]
\item
To achieve $(m,n') = (6,4)$, we would have to take the group generated by $\stgroup[A(6,2\phi)]{A(6,2)}$
and one of $\stgroup[A(1,4\phi)_1]{A(1,4)_1}$ or $\stgroup[A(1,4\phi)_2]{A(1,4)_2}$. The latter case is ruled out above; for the former,
we take as witness
\[
D(\tfrac{1}{6},0,\tfrac{5}{6}) \cdot D(\tfrac{1}{6},\tfrac{5}{12},\tfrac{5}{12}) = D(\tfrac{1}{3},\tfrac{5}{12},\tfrac{1}{4}).
\]
\end{itemize}

We finally show that in the cases
\[
(m,n') \in \{(2,4), (2,6), (3,2), (3,4), (3,6)\},
\]
at most one group occurs.
\begin{itemize}
\item
For $(m,n') = (2,4)$, we would have to take the group generated by $\stgroup[A(2,2\phi)]{A(2,2)}$ and 
one of $\stgroup[A(1,4\phi)_1]{A(1,4)_1}$ or $\stgroup[A(1,4\phi)_2]{A(1,4)_2}$.
The resulting groups are conjugate because the group generated by $\stgroup[A(2,2\phi)]{A(2,2)}$ and $\stgroup[A(1,4\phi)_1]{A(1,4)_1}$ contains a conjugate of $\stgroup[A(1,4\phi)_2]{A(1,4)_2}$.
\item
For $(m,n') = (2,6)$, we must take the group generated by $\stgroup[A(2,2\phi)]{A(2,2)}$ and $\stgroup[A(1,3\phi)]{A(1,3)}$.
\item
For $(m,n') = (3,2)$, we must take the group generated by $\stgroup[A(3,1\phi)]{A(3,1)}$ and $\stgroup[A(1,2\phi)]{A(1,2)}$.
\item
For $(m,n') = (3,4)$, we would have to take the group generated by $\stgroup[A(3,1\phi)]{A(3,1)}$ and 
one of $\stgroup[A(1,4\phi)_1]{A(1,4)_1}$ or $\stgroup[A(1,4\phi)_2]{A(1,4)_2}$; the latter case is ruled out above. 
\item
For $(m,n') = (3,6)$, we must take the group generated by $\stgroup[A(3,3\phi)]{A(3,3)}$ and $\stgroup[A(1,2\phi)]{A(1,2)}$.
\end{itemize}
This leaves the options enumerated in Table~\ref{table: abelian subgroups}.
\end{proof}

\begin{remark}
Recall that we do not need to check that the groups listed in Table~\ref{table: abelian subgroups}
satisfy the rationality condition, as this will be confirmed \emph{a posteriori} by the realizability
of these groups as Sato--Tate groups. If one did want to do
this by hand for the noncyclic groups, it would suffice to do for the maximal noncyclic groups in the table, namely
\[
\stgroup[A(3,4\phi)]{A(3,4)}, \stgroup[A(4,4\phi)]{A(4,4)}, \stgroup[A(6,6\phi)]{A(6,6)}.
\]
\end{remark}

\begin{remark} \label{R:swap diagonal entries}
In Table~\ref{table: abelian subgroups}, the ``Symmetries'' column indicates the subgroup of permutation matrices
which normalize the image of the group in $\PSU(3)$; this data is needed for the analysis of the remaining types
in the BDM classification. When the group is given as $\cyc 2$, the presentation has been chosen so that the normalizing transposition is $(2\,3)$.
\end{remark}

\begin{remark}
Note that certain pairs of groups in Table~\ref{table: abelian subgroups} give rise to isomorphic but non-conjugate pairs or triples of subgroups of $\PSU(3)$:
\begin{gather*}
\stgroup{A(1,3)} \simeq \stgroup{A(3,1)},
\qquad
\stgroup{A(1,4)_1} \simeq \stgroup{A(1,4)_2},\\
\stgroup{A(1,6)_1} \simeq \stgroup{A(1,6)_2} \simeq \stgroup{A(3,2)},
\qquad
\stgroup{A(1,8)_1} \simeq \stgroup{A(1,8)_2},
\qquad
\stgroup{A(1,12)} \simeq \stgroup{A(3,4)}.
\end{gather*}
Some differences between these groups will appear as we continue the classification.
\end{remark}

\subsection{Extensions of \texorpdfstring{$\cyc 2$}{C2}}
\label{section: BDM type B}

Type (B) in the BDM classification consists of nonabelian groups contained in 
$\SU(3) \cap (\Unitary(1) \times \Unitary(2))$.
Note that $\Unitary(1) \times \SU(2)$ surjects onto $\SU(3) \cap (\Unitary(1) \times \Unitary(2))$
via the map 
$$
\pi\colon \Unitary(1) \times \SU(2)\rightarrow\SU(3) \cap (\Unitary(1) \times \Unitary(2)),\qquad (u, A) \mapsto (u^2, u^{-1} A)
$$ 
with kernel of order 2 generated by $\{(-1, -1)\}$;
the groups in question thus correspond to finite subgroups of $\Unitary(1) \times \SU(2)$ containing
both $(-1,-1)$ and $(e(\tfrac{1}{3}), 1)$. 

For the moment, we focus on groups which are (nonabelian) extensions of $\cyc 2$ by abelian groups;
this amounts to requiring that they have dihedral image in $\PSU(2) \simeq \SO(3)$.
This includes products of cyclic groups with dihedral groups, but also some nonsplit extensions such as the quaternion group of order 8. The remaining cases are groups which project to one of the exceptional subgroups of $\SU(2)$;
we classify those in \S\ref{section:SU2 exceptional}.

For $\xi$, $\alpha\in \C^\times$ of norm $1$, let
\begin{equation} \label{eq:type D conjugation matrix}
R_{\xi,\alpha}\colonequals
\begin{pmatrix}
-\overline{\xi}  & 0 & 0\\
0 & 0 & -\overline{\alpha}\\
0 & -\xi\alpha & 0 
\end{pmatrix} \in \SU(3);
\end{equation}
note that
\[
R_{\xi,\alpha}^2=\Diag(\xi^{-2}, \xi, \xi),
\qquad
R_{\xi, \alpha}^{-1} = R_{\xi^{-1}, \xi \alpha}.
\]
In particular, if $\xi \alpha = \overline{\alpha}$, then $\overline{R}_{\xi,\alpha} = R_{\xi,\alpha}^{-1}$; this will ensure that any presentation using this matrix is invariant under complex conjugation. With this in mind, we set
\begin{equation} \label{eq:type B conjugation matrices}
T_1 = R_{1, 1}, \qquad T_2 = R_{-1,i}, \qquad T_4 = R_{i, e(\nicefrac{3}{8})}.
\end{equation}

\begin{lemma}\label{lemma: dihedral groups}
Let $H$ be an extension of $\cyc 2$ by an abelian group. Then there is a group $H'$ of order $\tfrac{1}{2}|H|$ in Table \ref{table: abelian subgroups} with at least $\cyc 2$ symmetries such that $H$ is conjugate to one of:
\begin{enumerate}[{\rm(i)}]
\setlength{\itemsep}{2pt}
\item $\langle H', T_1\rangle$; or
\item $\langle H', T_2\rangle$ and then $H'$ contains $T_2^2=D(0,\tfrac{1}{2},\tfrac{1}{2})$; or
\item $\langle H', T_4\rangle$ and then $H'$ contains $T_4^2=D(\tfrac{1}{2}, \tfrac{1}{4},\tfrac{1}{4})$.
\end{enumerate}
\end{lemma}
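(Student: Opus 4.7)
The plan is to identify $H'$ as a diagonal subgroup listed in Table~\ref{table: abelian subgroups} and then reduce any coset representative $T \in H \setminus H'$ to one of the canonical forms $T_1, T_2, T_4$ by $\SU(3)$-conjugation and coset-representative adjustment. Since $H'$ is abelian of index~$2$ in $H$, contains $\mu_3$, and inherits the restricted rationality condition, Proposition~\ref{proposition: abelian subgroups} places $H'$ (after an $\SU(3)$-conjugation) as one of the $22$ diagonal groups of Table~\ref{table: abelian subgroups}. Now fix any $T \in H \setminus H'$. Conjugation by $T$ induces an order-$2$ automorphism $\sigma$ of $H'$ that must be nontrivial, for otherwise $H$ would be abelian. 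The constraint $H \subseteq \SU(3) \cap (\Unitary(1) \times \Unitary(2))$ forces both $T$ and $\sigma$ to preserve the first coordinate, so $\sigma$ is the transposition $(2\,3)$ of the other two eigenvalues. Consequently $H'$ has at least the $(2\,3)$-symmetry recorded in Table~\ref{table: abelian subgroups}, and $T$ must take the block form $R_{\xi,\alpha}$ from \eqref{eq:type D conjugation matrix}, with $T^2 = D(\xi^{-2}, \xi, \xi) \in H'$.

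Next, I would reduce the pair $(\xi, \alpha)$ using two operations. First, conjugating $H$ by a diagonal $D(d_1,d_2,d_3) \in \SU(3)$ fixes $H'$ pointwise and sends $R_{\xi,\alpha}$ to $R_{\xi, \alpha d_2/d_3}$, so $\alpha$ can be rescaled to any prescribed value. Second, replacing $T$ by $T \cdot D(a,b,c)$ for $D(a,b,c) \in H'$ sends $(\xi, \alpha)$ to $(\xi a^{-1}, \alpha c)$, so $\xi$ can be divided by any value realized as the first diagonal entry of an element of $H'$. Applying Proposition~\ref{roots of unity problem2} to the triple $(\xi^{-2}, \xi, \xi)$ (which has two coincident entries) bounds the multiplicative order of $\xi$ to lie in $\{1,2,3,4,6,9,12,18\}$. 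Since $\mu_3 \subseteq H'$ always, any cube-root-of-unity factor of $\xi$ can be absorbed via the second operation, reducing us to the cases $\xi \in \{1, -1, i\}$ together with the ``excess'' orders $6, 9, 12, 18$.

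The hard part will be the case-by-case verification for the four excess orders. The key is that for each such $\xi$, the element $T^2$ forces $H'$ to contain (up to conjugation) a specific cyclic group from Table~\ref{table: abelian subgroups}, and the first-coordinate projection of that group always supplies an element correcting $\xi$ into $\{1, -1, i\}$. For example, if $\xi = e(\tfrac{1}{18})$ then $H'$ must contain a conjugate of $\stgroup[A(1,6\phi)_1]{A(1,6)_1}$, whose first-coordinate projection is $\mu_9$; the element $e(\tfrac{5}{9}) \in \mu_9$ satisfies $\xi \cdot e(-\tfrac{5}{9}) = -1$, landing us in case (ii). Analogous reductions for orders $6, 9, 12$ yield cases (ii), (i), (iii) respectively. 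Once $\xi \in \{1, -1, i\}$, a final rescaling of $\alpha$ via the first operation produces $T_1, T_2$, or $T_4$ exactly; the condition $T^2 \in H'$ is then vacuous for $T_1$ and becomes the explicit containment statements of (ii) and (iii).
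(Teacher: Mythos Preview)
Your argument is correct, but it takes a longer road than the paper's. Both proofs start the same way: put $H'$ into Table~\ref{table: abelian subgroups} form, note that $T$ must be some $R_{\xi,\alpha}$ with $R_{\xi,\alpha}^2 = D(\xi^{-2},\xi,\xi)\in H'$, and finish by rescaling $\alpha$ via conjugation by $\Diag(1,\beta,\beta^{-1})$. The divergence is in how one forces $\xi\in\{1,-1,i\}$.

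You reduce $\xi$ by right-multiplying $T$ by elements of $H'$, which divides $\xi$ by the first coordinate of such elements; after using $\mu_3$, this still leaves orders $9$ and $18$ (your ``excess'' list $6,9,12,18$ is redundant, since $6\to 2$ and $12\to 4$ already under the $\mu_3$ reduction), and you then dispatch these by identifying the cyclic subgroup of $H'$ that $T^2$ generates. The paper instead observes that for any odd $s$ one has $R_{\xi,\alpha}^s = R_{\xi^s,\,\xi^{(1-s)/2}\alpha}$, and since $T^s$ lies in the same $H'$-coset as $T$, choosing $s$ to be the odd part of $\operatorname{ord}(\xi)$ forces $\xi$ to have $2$-power order in one stroke. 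The rationality bound on $D(\xi^{-2},\xi,\xi)$ then gives $\operatorname{ord}(\xi)\le 4$ immediately, with no case analysis. Your operation is strictly more general (taking odd powers is the special case of multiplying by $T^{s-1}\in H'$), but the paper's choice avoids ever needing to know which specific $A(1,n\phi)_*$ sits inside $H'$.

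Two small points: your formula $(\xi,\alpha)\mapsto(\xi a^{-1},\alpha c)$ for right-multiplication by $D(a,b,c)$ should read $\alpha c^{-1}$ in the second slot, though this is immaterial to the argument. And your invocation of $H\subseteq \SU(3)\cap(\Unitary(1)\times\Unitary(2))$ is contextually correct (this is the BDM type~(B) hypothesis in force throughout \S\ref{section: BDM type B}) but is not stated in the lemma itself; the paper sidesteps this by asserting directly that the conjugation placing $H'$ in Table form can be chosen so that the $\cyc 2$-action is the $(2\,3)$-swap.
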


\begin{proof} 
Write $H$ as an extension of $\cyc 2$ by $H'$; by Proposition~\ref{proposition: abelian subgroups}, we may conjugate $H'$ into one of the forms listed in Table~\ref{table: abelian subgroups}, in such a way that the action of $\cyc 2$ is given by interchanging the second and third coordinates. Consequently, the nontrivial class in $\cyc 2$ is represented by~$R_{\xi,\alpha}$ for some $\xi, \alpha$. The element $R_{\xi, \alpha}^2 = \Diag(\xi^{-2}, \xi, \xi)$ must belong to $H'$,
so $\xi$ must be a root of unity; by replacing $R_{\xi,\alpha}$ with $R_{\xi,\alpha}^s =
R_{\xi^s, \xi^{\nicefrac{(1-s)}{2}}\alpha}$ for a suitable odd integer~$s$, we may ensure that $\xi = e(\nicefrac{1}{2^h})$ for some $h$.
Since $R_{\xi,\alpha}^2$ must satisfy the restricted rationality condition, we see from Proposition \ref{proposition: abelian subgroups} that $\xi$ has order at most 4; hence $\xi \in \{1, -1, i\}$. We may then rescale $\alpha$ 
(by conjugating by a matrix of the form $\Diag(1, \beta, \beta^{-1})$ for suitable $\beta$)
to ensure that $R_{\xi,\alpha} \in \{T_1, T_2, T_4\}$, from which the claim follows.
\end{proof}

\begin{remark} \label{B redundancy}
In case (ii) of Lemma~\ref{lemma: dihedral groups}, if $H'$ contains an element $g$ of the form $D(\tfrac{1}{2},*,*)$, then it also contains 
\[
g T_2 = \begin{pmatrix} -1 & 0 &0 \\0 & 0 & * \\
0 & * & 0
\end{pmatrix};
\]
by conjugating by a suitable matrix in the diagonal torus, we see that $\langle H', gT_2 \rangle$ is conjugate to $\langle H', T_1 \rangle$. By the same token, in case (iii) of Lemma~\ref{lemma: dihedral groups}, if $H'$ contains an element $g$ of the form $D(\tfrac{1}{4},*,*)$, then $\langle H', gT_4 \rangle$ is conjugate to $\langle H', T_1 \rangle$.
\end{remark}

\begin{proposition}  \label{proposition: dihedral subgroups}
Suppose that $H$ is an extension of $\cyc 2$ by an abelian group and is not abelian.
Then $H$ is conjugate  to exactly one of the $18$ groups listed in Table \ref{table: dihedral subgroups}.
\end{proposition}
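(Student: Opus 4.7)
The plan is to enumerate all candidates produced by Lemma~\ref{lemma: dihedral groups} and then to prune the list by (i) the constraints on $T_j^2$, (ii) the redundancy observation of Remark~\ref{B redundancy}, and (iii) conjugacies in $\SU(3)$ between apparently distinct pairs $(H', T_j)$. A convenient first observation is that the restricted rationality condition for a group $\langle H', T_j\rangle$ produced by the lemma is automatic on the nonidentity coset: every element there has the form $D\cdot R_{\xi,\alpha}$ with $D$ diagonal, and inspection of \eqref{eq:type D conjugation matrix} shows that $D\cdot R_{\xi,\alpha}$ has a single nonzero diagonal entry (a root of unity). Hence $|\Trace|^2 = 1 \in \Z$ for every element of the nonidentity coset, and the rationality condition on $H'$ itself was verified in Proposition~\ref{proposition: abelian subgroups}.

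First I would list the subgroups $H'$ in Table~\ref{table: abelian subgroups} that admit at least $\cyc 2$ symmetries in their normalizer by permutation matrices: these are all entries except $\stgroup{A(1,6)_2}$ and $\stgroup{A(1,7)}$, giving $20$ candidates. For each such $H'$ I would record whether $T_2^2 = D(0,\tfrac{1}{2},\tfrac{1}{2})$ and whether $T_4^2 = D(\tfrac{1}{2},\tfrac{1}{4},\tfrac{1}{4})$ belong to $H'$, which determines in each case which of the three generators $T_j$ are available by Lemma~\ref{lemma: dihedral groups}(ii),(iii). (For instance, $T_2^2 \in H'$ fails for $\stgroup{A(1,1)}$, $\stgroup{A(1,3)}$, $\stgroup{A(3,1)}$, $\stgroup{A(3,3)}$, etc., while $T_4^2 \in H'$ fails except for a handful of groups containing both a ``$1/4$'' entry and a ``$1/2$'' entry among their diagonal generators.)

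Next I apply the redundancy in Remark~\ref{B redundancy}: for any $H'$ containing an element of the form $D(\tfrac{1}{2},*,*)$, the choice $T_2$ collapses to the choice $T_1$, and similarly for $T_4$ when $H'$ contains $D(\tfrac{1}{4},*,*)$. I would tabulate, for each of the $20$ groups $H'$, exactly which $T_j$ survive both the squaring constraint and this redundancy. A final step is to identify pairs $(H', T_j)$ producing the same subgroup of $\SU(3)$ up to conjugation: for example, if $H' \cdot \langle T_1 \rangle$ already contains a conjugate of $T_4$, one must not count $\langle H', T_4\rangle$ separately, and similar cross-identifications may appear between e.g. $\langle\stgroup{A(1,4)_1},T_1\rangle$ and $\langle\stgroup{A(1,4)_2},T_1\rangle$ or between cases where passing to $\SU(3)$-conjugacy rather than diagonal-torus-conjugacy merges two entries. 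Finally, I would discard any cases in which the resulting $\langle H', T_j\rangle$ turns out to be abelian (i.e.\ the generator $T_j$ commutes with $H'$), which happens only for $H' = \stgroup{A(1,1)}$ and gives nothing new.

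The main obstacle is bookkeeping the last step, namely verifying that the surviving $18$ candidates are pairwise non-conjugate in $\SU(3)$ and that no further collapse occurs. To distinguish them I would use a cascade of invariants: the order of the group, the isomorphism type of its image in $\PSU(3)$, the isomorphism type of $H' = H \cap Z$, and — when these coincide — finer data such as the multiset of eigenvalues appearing in the nonidentity coset (equivalently, the multiset of $(\xi,\alpha)$ data attached to the reflection-like elements). To guard against arithmetic slips, I would cross-check the final enumeration with the \Sage{} and \Magma{} scripts referenced in \S\ref{subsec:preliminaries}, which construct the groups explicitly and verify that the list has the asserted length 18 and that the associated normalizer data used in \S\ref{section: standard and nonstandard extensions} matches.
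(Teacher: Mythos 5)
Your overall framework matches the paper's: enumerate candidates $\langle H', T_j\rangle$ from Lemma~\ref{lemma: dihedral groups}, apply the squaring constraints and Remark~\ref{B redundancy}, merge conjugate pairs, and discard abelian outcomes. The observation that the restricted rationality condition is automatic on the non-identity coset (since each such element has trace a single root of unity) is correct and is a useful supplement, though not strictly needed once Lemma~\ref{lemma: dihedral groups} has been established.

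There is, however, a concrete error in the abelian-discard step. You claim that $T_j$ commutes with $H'$ only for $H' = A(1,1\phi)$; in fact conjugation by $T_1$ fixes a diagonal matrix $D(u,v,w)$ exactly when $v\equiv w \pmod{1}$, so $T_1$ commutes with every cyclic group in Table~\ref{table: abelian subgroups} whose generator has equal second and third entries. Those are $A(1,1\phi)$, $A(1,2\phi)$, $A(1,3\phi)$, $A(1,4\phi)_1$, and $A(1,6\phi)_1$ — five groups, not one — and all five $\langle H', T_1\rangle$ are abelian and must be excluded. Keeping the other four would leave you with 22 candidates rather than 18 at the end of your bookkeeping, so the count would come out wrong.

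A second, smaller gap: you flag that ``cross-identifications may appear'' between different $(H', T_j)$ pairs but do not pin them down, and your candidate example ($\langle A(1,4\phi)_1,T_1\rangle$ vs.\ $\langle A(1,4\phi)_2,T_1\rangle$) is not one that actually arises, since the first of these is one of the abelian groups you should have discarded. The coincidences that do occur — $\langle A(2,2\phi),T_1\rangle \simeq \langle A(1,4\phi)_2,T_1\rangle$, $\langle A(2,6\phi),T_1\rangle \simeq \langle A(1,12\phi),T_1\rangle$, and $\langle A(1,8\phi)_2,T_1\rangle \simeq \langle A(2,4\phi),T_4\rangle$ — are what drive the final count down from 15 to 12 among the $T_1$ cases, so identifying them explicitly is essential to the argument, not an afterthought to be handled by computer.
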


\begin{proof} 
Write $B(m,n\phi;t)_*$ for the group $\langle A(m,n\phi)_*,T_t\rangle$, omitting $t$ when it equals 1.
By Lemma \ref{lemma: dihedral groups}, it suffices to include the following entries.
\begin{enumerate}[{\rm(i)}]
\item 
The group $B(m,n\phi)_*$ for each group $A(m,n\phi)_*$ of Table \ref{table: abelian subgroups} with at least $\cyc 2$ symmetries, excluding $\stgroup[A(1, 1\phi)]{A(1,1)}, \stgroup[A(1, 2\phi)]{A(1,2)}, \stgroup[A(1, 3\phi)]{A(1,3)}, \stgroup[A(1, 4\phi)_1]{A(1,4)_1}, \stgroup[A(1, 6\phi)_1]{A(1,6)}$ which give rise to abelian groups; this exhausts all cases where $H$ is a semidirect product of an abelian group by $\cyc 2$.
Of these, $\stgroup[B(2,2\phi)]{B(1,4)_2}$ is conjugate to $\stgroup[B(1, 4\phi)_2]{B(1,4)_2}$
(by writing it as the semidirect product of a cyclic subgroup by $\cyc 2$) and has thus been omitted;
similarly, $\stgroup[B(2,6\phi)]{B(1,12)}$ is conjugate to $\stgroup[B(1,12\phi)]{B(1,12)}$ and has thus been omitted.
We also omit $\stgroup[B(1,8\phi)_2]{B(2,4;4)}$ because it will be redundant with (iii).
This yields $12$ entries.

\item 
The group $B(m,n\phi;2)_*$ for each group $A(m,n\phi)_*$ in (i) containing $D(0,\tfrac{1}{2},\tfrac{1}{2})$ but not containing any element of the form $D(\tfrac{1}{2}, *, *)$ (see Remark~\ref{B redundancy}).
This yields~$4$ entries.

\item
The group $B(m,n\phi;4)_*$ for each group $A(m,n\phi)_*$ listed in (i) containing $D(\tfrac{1}{2},\tfrac{1}{4},\tfrac{1}{4})$ but not containing any element of the form $D(\tfrac{1}{4}, *, *)$ (see Remark~\ref{B redundancy}).
This yields~$2$ entries;
the group $\stgroup[B(2,4\phi;4)]{B(2,4;4)}$ is conjugate to $\stgroup[B(1,8\phi)_2]{B(2,4;4)}$ (with the cyclic subgroup of order 8 being generated by $T_4$), justifying the omission of the latter.
\end{enumerate}
The results of this tabulation are shown in Table \ref{table: dihedral subgroups} below.
\end{proof}

\begin{table}[h]
\small
\begin{center}
\begin{tabular}{|lll|l|ll|}
\hline&&&&&\\[-10pt]
$\stgroup[B(1,4\phi)_2]{B(1,4)_2}$ &$(\simeq \stgroup[B(2,2\phi)]{B(1,4)_2})$ &  $\stgroup[B(3,3\phi)]{B(3,3)}$ &      $\stgroup[B(1,4\phi;2)_2]{B(1,4;2)_2}$ & $\stgroup[B(2,4\phi;4)]{B(2,4;4)}$ &$(\simeq \stgroup[B(1,8\phi)_2]{B(2,4;4)})$ \\[4pt]
$\stgroup[B(1,8\phi)_1]{B(1,8)_1}$  && $\stgroup[B(3,4\phi)]{B(3,4)}$ &  $\stgroup[B(1,12\phi;2)]{B(1,12;2)}$    &   $\stgroup[B(3,4\phi;4)]{B(3,4;4)}$ & \\[4pt] 
$\stgroup[B(1,12\phi)]{B(1,12)}$ & $(\simeq \stgroup[B(2,6\phi)]{B(1,12)})$  & $\stgroup[B(3,6\phi)]{B(3,6)}$ & $\stgroup[B(3, 2\phi; 2)]{B(3,2;2)}$  &    &\\ [4pt]
$\stgroup[B(2,4\phi)]{B(2,4)}$ &       & $\stgroup[B(4,4\phi)]{B(4,4)}$  & $\stgroup[B(3,6\phi;2)]{B(3,6;2)}$    & & \\[4pt]
$\stgroup[B(3,1\phi)]{B(3,1)}$ && $\stgroup[B(6,2\phi)]{B(6,2)}$  &&  &   \\ [4pt]
$\stgroup[B(3,2\phi)]{B(3,2)}$ && $\stgroup[B(6,6\phi)]{B(6,6)}$ &&& \\[1pt]
\hline
\end{tabular}
\medskip

\caption{Finite subgroups of $\SU(3)$ containing $\mu_3$ that satisfy the restricted rationality condition which are extensions of $\cyc 2$ by an abelian group. The group $B(m,n\phi;t)_*$ denotes the group $\langle A(m,n\phi)_*,T_t\rangle$, with $t=1$ omitted.}
\label{table: dihedral subgroups}
\end{center}
\end{table}

\subsection{Exceptional groups from \texorpdfstring{$\SU(2)$}{SU(2)}}
\label{section:SU2 exceptional}

We now finish off type (B) of the BDM classification, which consists of groups contained in $\SU(3) \cap (\Unitary(1) \times \Unitary(2))$. In what follows, it will be convenient to identify $\SU(2)$
with the group of unit quaternions via the isomorphism
\[
a + b \mathbf{i} + c \mathbf{j} + d \mathbf{k} \mapsto \begin{pmatrix}
a + bi & c +di \\ -c+di & a- bi
\end{pmatrix}.
\]
As in \S\ref{section: BDM type B}, we work with the surjection
$$
\pi\colon \Unitary(1) \times \SU(2)\rightarrow\SU(3) \cap (\Unitary(1) \times \Unitary(2)),\qquad (u, A) \mapsto (u^2, u^{-1} A)
$$ 
and look for finite subgroups $\tilde{H}$ of $\Unitary(1) \times \SU(2)$ containing
both $(-1,-1)$ and $(e(\tfrac{1}{3}), 1)$. 
Any such group whose image in $\SU(2)$ is cyclic or dihedral gives rise to a subgroup $H$ of $\SU(3)$ which is either abelian, or an extension of $\cyc 2$ by an abelian group; these cases have already been accounted for by Proposition~\ref{proposition: abelian subgroups} 
and Proposition~\ref{proposition: dihedral subgroups}. It thus remains to consider cases where the image in $\SU(2)$ is one of the three exceptional finite subgroups: the binary tetrahedral, octahedral, and icosahedral groups.
We may immediately rule out the case where~$\tilde{H}$ projects to the binary icosahedral group: in any such case, the component group of $H$ has order divisible by 5, and so $H$ cannot satisfy the restricted rationality condition.

We next consider the case where $\tilde{H}$ projects onto the binary tetrahedral group, presented~as
\[
2T=\{ \pm 1, \pm \mathbf{i}, \pm \mathbf{j}, \pm \mathbf{k}, \tfrac{1}{2}(\pm 1 \pm \mathbf{i} \pm \mathbf{j} \pm \mathbf{k})\}= \langle \mathbf{j}, \tfrac{1}{2}(1+\mathbf{i} + \mathbf{j} + \mathbf{k}) \rangle;
\]
this presentation is invariant under complex conjugation, as are all of the presentations we will derive from it,
with one exception; see Proposition~\ref{tetrahedral extensions}. The only nontrivial cyclic quotient of $2T$ is
\[
2T/Q \simeq \cyc{3}, \quad Q = \{ \pm 1, \pm \mathbf{i}, \pm \mathbf{j}, \pm \mathbf{k}\}.
\]

\begin{proposition} \label{binary tetrahedral}
Suppose that $H$ is the image of a finite subgroup $\tilde{H}$ of $\Unitary(1) \times \SU(2)$ containing $(-1,-1)$ and $(e(\tfrac{1}{3}), 1)$ which projects to the subgroup $2T$ of $\SU(2)$. Then $H$ is conjugate to one of
\[
\stgroup[B(T, 1\phi)]{B(T,1)},\quad \stgroup[B(T,2\phi)]{B(T,2)},\quad \stgroup[B(T,3\phi)]{B(T,3)},\quad \stgroup[B(T,1\phi;1)]{B(T,1;1)}.
\]
Here $B(T, n\phi)$ denotes $\pi(\cyc{2n\phi} \times 2T)$,
while $\stgroup[B(T,1\phi;1)]{B(T,1;1)}$ denotes $\pi(\cyc{6\phi} \times_{\cyc{3}} 2T)$ where the fiber product is via 
the nontrivial homomorphism $\cyc{6} \to \cyc{3}$
and one of the two nontrivial homomorphisms $2T/Q \to \cyc{3}$.
\end{proposition}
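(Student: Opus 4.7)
The plan is to realize $\tilde{H}$ as a subgroup of $\mu_m \times 2T$ for some $m$, then combine the containments $(-1,-1), (e(\tfrac{1}{3}),1) \in \tilde{H}$ with the restricted rationality condition to pin down the possibilities. First I set $\mu_n \times \{1\} \colonequals \tilde{H} \cap (\Unitary(1) \times \{1\})$ and let $\mu_m$ be the image of the projection $\tilde{H} \to \Unitary(1)$. Since $\tilde{H}$ surjects onto $2T$ and the abelianization $2T^{\mathrm{ab}} = 2T/Q$ is cyclic of order $3$, the induced homomorphism $2T \to \mu_m/\mu_n$ factors through $\cyc{3}$; hence either $m = n$ (so $\tilde{H} = \mu_n \times 2T$) or $m = 3n$ (so $\tilde{H} = \mu_{3n} \times_{\cyc{3}} 2T$ for one of the two nontrivial surjections $2T \to \cyc{3}$). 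The hypotheses $(-1,-1), (e(\tfrac{1}{3}),1) \in \tilde{H}$ immediately force $6 \mid n$.

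Next I would extract the rationality constraint on $u$ by direct trace computation: for $(u,B) \in \tilde{H}$, the image $\pi(u,B) = u^2 \oplus u^{-1}B$ has trace $u^2 + u^{-1}\Trace(B)$, and expanding using $\Trace(B) \in \R$ gives
\[
|\Trace(\pi(u,B))|^2 = 1 + \Trace(B)^2 + 2\Trace(B)\Re(u^3).
\]
Since $2T$ contains elements of trace $\pm 1$ (those of the form $\tfrac{1}{2}(\pm 1 \pm \mathbf{i} \pm \mathbf{j} \pm \mathbf{k})$), the binding condition is $2\Re(u^3) \in \Z$, equivalently $u^3 \in \mu_6 \cup \{\pm i\}$, for every $u$ appearing in $\tilde{H}$. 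The cyclic group $\mu_m^3$ must therefore lie in $\mu_6 \cup \{\pm i\}$ and hence equals one of $\mu_1, \mu_2, \mu_3, \mu_4, \mu_6$; equivalently $m/\gcd(m,3) \in \{1,2,3,4,6\}$.

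Case analysis then yields the four groups: in the direct product case $m = n$, combining the constraint with $6 \mid n$ leaves $n \in \{6,12,18\}$, producing respectively $\stgroup[B(T,1\phi)]{B(T,1)}$, $\stgroup[B(T,2\phi)]{B(T,2)}$, $\stgroup[B(T,3\phi)]{B(T,3)}$; while in the fiber product case $m = 3n$, the identity $(\mu_{3n})^3 = \mu_n$ forces $n \in \{1,2,3,4,6\}$, and $6 \mid n$ then pins down $n = 6$, producing $\stgroup[B(T,1\phi;1)]{B(T,1;1)}$. To complete the classification I would verify that the two nontrivial surjections $2T \to \cyc{3}$ yield $\SU(3)$-conjugate subgroups in the fiber product case: since $\overline{\mathbf{i}} = \mathbf{j}\mathbf{i}\mathbf{j}^{-1} = -\mathbf{i}$ and similarly for $\mathbf{k}$, complex conjugation on $2T$ is realized by inner conjugation by $\mathbf{j}$, so conjugation in $\SU(3)$ by $\pi(1,\mathbf{j}) = 1 \oplus \mathbf{j}$ interchanges the two fiber products.

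The most delicate step I anticipate is the rationality bookkeeping in step two: one must check that in the fiber product case the constraint $2\Re(u^3) \in \Z$ really applies to every $u \in \mu_{3n}$ (which holds because the trace-$\pm 1$ elements of $2T$ fill out both nontrivial cosets of $Q$, so every coset of $\mu_n$ in $\mu_{3n}$ is paired with some $B$ of trace $\pm 1$), and that the nominally weaker condition $4\Re(u^3) \in \Z$ arising from trace-$\pm 2$ elements imposes nothing additional, because no root of unity $u$ satisfies $\Re(u^3) \in \{\pm\tfrac{1}{4},\pm\tfrac{3}{4}\}$.
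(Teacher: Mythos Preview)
Your overall strategy is sound and in fact more self-contained than the paper's proof: the paper appeals to the classification of admissible cyclic subgroups (Proposition~\ref{roots of unity problem2} and Table~\ref{table: abelian subgroups}) to bound $n$, whereas your direct trace identity
\[
|\Trace(\pi(u,B))|^2 = 1 + \Trace(B)^2 + 2\,\Trace(B)\,\Re(u^3)
\]
extracts the constraint $2\Re(u^3) \in \Z$ cleanly and treats the direct-product and fiber-product cases uniformly. This is a nice simplification.

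There is, however, a genuine error in your conjugacy argument at the end. You correctly observe that complex conjugation on $\SU(2)$ coincides with quaternionic conjugation by $\mathbf{j}$, but $\mathbf{j}$ lies in $Q$, so conjugation by $\mathbf{j}$ is an \emph{inner} automorphism of $2T$ and therefore acts trivially on $2T/Q \cong \cyc{3}$. Consequently $\pi(1,\mathbf{j})$ fixes each fiber product rather than interchanging them. (This is consistent with Remark~\ref{remark: BT11}, which notes that $B(T,1\phi;1)$ is \emph{not} stable under complex conjugation on $\SU(3)$; complex conjugation does swap the two fiber products, but it is an outer automorphism of $\SU(3)$.) To get an $\SU(3)$-conjugacy you should instead conjugate by $1 \oplus \sigma$ for any $\sigma \in 2O \setminus 2T$: since $2O/Q \cong \sym{3}$ and $2T/Q \cong \alt{3}$, such $\sigma$ acts on $2T/Q$ by inversion, which exchanges the two surjections $2T \to \cyc{3}$ and hence the two fiber products.

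One minor imprecision: in the fiber-product case the trivial coset $\mu_n \subset \mu_{3n}$ is paired only with $Q$, whose elements have trace $0$ or $\pm 2$, so the constraint $2\Re(u^3)\in\Z$ is not literally imposed for those $u$. This does not affect your conclusion, since already $e(1/n)$ (which lies in a nontrivial coset's cube set whenever $3\mid n$) satisfying $2\Re(e(1/n))\in\Z$ forces $n\in\{1,2,3,4,6\}$.
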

\begin{proof}
Let $C$ be the kernel of the map $\tilde{H} \to 2T$, identified with a subgroup of $\Unitary(1)$.
Then~$\tilde{H}$ in $\Unitary(1)/C \times 2T$ is the graph of a homomorphism $2T \to \Unitary(1)/C$.
If this homomorphism is trivial, then $\tilde{H} = C \times 2T$.
Because $(-1,-1)$ and $(e(\tfrac{1}{3}), 1)$ belong to $\tilde{H}$, the order of~$C$ can be written as $2n\phi$;
the group $C$ projects to the subgroup of $\SU(3)$
generated by $D(\tfrac{1}{n\phi},-\tfrac{1}{2n\phi},-\tfrac{1}{2n\phi})$. By Proposition~\ref{roots of unity problem2}, 
the latter satisfies the restricted rationality condition if and only if 
$n \in \{1,2,3 \}$. 
For all such $n$, the image of $C \times 2T$ in $\SU(3)$ satisfies the restricted rationality condition.

Suppose now that $2T \to \Unitary(1)/C$ is nontrivial;
it must then have image $\cyc 3$.
Since $-1 \in Q$, we must have $-1 \in C$; hence the order of $C$ must again be of the form $2n\phi$.
As above, we must have $n \in \{1,2,3\}$. 
When $n=1$, we see from above that $H$ satisfies the restricted rationality condition
(the two possible groups of this form are conjugate to each other).
In the other two cases, $\tilde{H}$ must contain an element of the form
$(u, \tfrac{1}{2}( 1 + \mathbf{i} + \mathbf{j} + \mathbf{k}))$
where $u \in \Unitary(1)$ has order $12\phi$ (if $n=2$) or $18\phi$ (if $n=3$).
The latter case is impossible, as it would imply that $H$ contains an element of order $9\phi$ and this
is inconsistent with Table~\ref{table: abelian subgroups}.
As for the former case, projecting the element in question to $H$ yields a matrix conjugate in $\SU(3)$ to
$\Diag(u^2, u^{-1} e(\tfrac{1}{6}), u^{-1} e(-\tfrac{1}{6}))$;
the three components of this diagonal matrix
have orders $6\phi, 12\phi, 12\phi$, so this matrix does not generate a group conjugate to $\stgroup[A(1,12\phi)]{A(1,12)}$ (in which the component orders of a generator are $3\phi,12\phi,12\phi$) and therefore cannot satisfy the restricted rationality condition.
\end{proof}

We finally consider the case where $\tilde{H}$ projects to the binary octahedral group, presented~as
\[
2O = 2T \cup 2T \cdot \left(\tfrac{1}{\sqrt{2}} (\pm 1 \pm \mathbf{i}) \right);
\]
this presentation is invariant under complex conjugation, as are all of the presentations we will derive from it.
The only nontrivial cyclic quotient of $2O$ is $2O/2T \simeq \cyc{2}$.

\begin{proposition} \label{binary octahedral}
Suppose that $H$ is the image of a finite subgroup $\tilde{H}$ of $\Unitary(1) \times \SU(2)$ containing $(-1,-1)$ and $(e(\tfrac{1}{3}), 1)$ which projects to the subgroup $2O$ of $\SU(2)$. Then $H$ is conjugate to one of
\[
\stgroup[B(O, 1\phi)]{B(O,1)},\quad \stgroup[B(O, 2\phi)]{B(O,2)},
\]
where $B(O, n\phi)$ denotes the nontrivial fiber product $\pi(C_{4n\phi} \times_{\cyc 2} 2O)$.
(Beware that $\stgroup[B(O,1\phi)]{B(O,1)}$ is not conjugate to a subgroup of $\stgroup[B(O, 2\phi)]{B(O,2)}$.)
\end{proposition}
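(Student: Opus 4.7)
The plan is to mimic the strategy of Proposition~\ref{binary tetrahedral}. Let $C$ denote the kernel of $\tilde H \to 2O$, viewed as a finite subgroup of $\Unitary(1)$; then $\tilde H$ is determined inside $\Unitary(1)/C \times 2O$ by the graph of a homomorphism $\chi\colon 2O \to \Unitary(1)/C$, which must be either trivial or factor through the unique cyclic quotient $2O/2T \cong \cyc{2}$. The hypotheses that $(-1,-1)$ and $(e(\tfrac{1}{3}),1)$ lie in $\tilde H$ force $\{-1, e(\tfrac{1}{3})\} \subseteq \ker(\chi|_C) \colonequals C^+$.

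In the trivial case $\tilde H = C \times 2O$, the element $\pi(1, \tfrac{1}{\sqrt 2}(1+\mathbf{i})) \in H$ is conjugate in $\SU(3)$ to $\Diag(1, e(\tfrac{1}{8}), e(\tfrac{7}{8}))$, whose trace $1+\sqrt 2$ has squared norm $3+2\sqrt 2 \notin \Z$. This violates the restricted rationality condition, so $\chi$ must be nontrivial and $\tilde H$ is a fiber product $C \times_{\cyc 2} 2O$. The divisibilities $6 \mid |C^+|$ and $[C:C^+] = 2$ force $12 \mid |C|$; writing $|C| = 4n\phi$ with $n \geq 1$, we have $C = \langle e(\tfrac{1}{12n})\rangle$ and $C^+ = \langle e(\tfrac{1}{6n})\rangle$, justifying the notation of the statement.

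To bound $n$, I would apply Proposition~\ref{roots of unity problem2} to the element $\pi(u,g) \in H$ with $u = e(\tfrac{1}{12n}) \in C \setminus C^+$ and $g = \tfrac{1}{\sqrt 2}(1+\mathbf{i})$; after diagonalizing $g$ as $\Diag(e(\tfrac{1}{8}), e(-\tfrac{1}{8}))$ under our identification of $\SU(2)$ with the unit quaternions, this yields the exponent triple $\bigl(\tfrac{1}{6n}, \tfrac{3n-2}{24n}, -\tfrac{3n+2}{24n}\bigr)$. For $n=1$ and $n=2$ this is equivalent to the listed triples $(\tfrac{1}{24}, \tfrac{1}{6}, \tfrac{19}{24})$ and $(\tfrac{1}{12}, \tfrac{1}{12}, \tfrac{5}{6})$; for $n=3$ the third entry has denominator $72$ (as $\gcd(11,72)=1$), which does not occur in Proposition~\ref{roots of unity problem2}. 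For $n \geq 4$, I would instead examine the cyclic subgroup $\pi(C^+ \times \{1\})$ of order $6n$, whose generator triple $(\tfrac{1}{3n}, -\tfrac{1}{6n}, -\tfrac{1}{6n})$ would have to match the equivalence class of a cyclic generator appearing in Table~\ref{table: abelian subgroups}; a direct comparison with the entries $A(1, m\phi)_*$ rules out $n \in \{4,5,6\}$ and all $n \geq 7$ (whose cyclic orders exceed $36$). This leaves exactly $\stgroup[B(O,1\phi)]{B(O,1)}$ and $\stgroup[B(O,2\phi)]{B(O,2)}$.

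The main obstacle is the parenthetical non-containment $B(O,1\phi) \not\subseteq B(O,2\phi)$ up to conjugation. To establish this, I would show that no element of $B(O,2\phi)$ has exponent triple equivalent to $(\tfrac{1}{6}, \tfrac{1}{24}, \tfrac{19}{24})$ (which does occur in $B(O,1\phi)$): any such element would be $\pi(u, g')$ with $u^2 = e(\tfrac{1}{6})$, hence $u \in \{e(\tfrac{1}{12}), e(\tfrac{7}{12})\}$, both of which lie in $C^+$ for $B(O,2\phi)$; matching the remaining two eigenvalues then forces $g'$ to have eigenvalues $\{e(\pm\tfrac{1}{8})\}$ or $\{e(\pm\tfrac{3}{8})\}$ and hence order $8$, so $g' \in 2O \setminus 2T$. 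The fiber product structure then demands $u \in C \setminus C^+$, a contradiction.
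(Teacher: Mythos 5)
Your proof is correct and follows essentially the same route as the paper's: exclude the trivial homomorphism by producing the element conjugate to $D(0, \tfrac 18, \tfrac 78)$, then in the nontrivial case bound $|C|$ and examine the element sitting over $\tfrac{1}{\sqrt 2}(1+\mathbf{i})$. Two differences are worth noting. Where the paper obtains $|C^+|\in\{2\phi,4\phi,6\phi\}$ in one stroke by citing Proposition~\ref{binary tetrahedral} (since $C^+\times 2T\subseteq\tilde H$), you re-derive the same bound by matching the cyclic subgroup $\pi(C^+\times\{1\})$ against Table~\ref{table: abelian subgroups}; this is valid --- the generator has a repeated eigenvalue, and by Proposition~\ref{roots of unity problem2} every cyclic subgroup with a repeated-eigenvalue generator satisfying the restricted rationality condition has order in $\{1,2,3,4,6,9,12,18\}$, hence $6n\leq 18$ --- but it is less economical than re-using the tetrahedral case. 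You also supply a proof of the parenthetical non-containment assertion, which the paper states without argument; the argument is sound, though the claim $u^2=e(\tfrac 16)$ needs a word of justification: since $u\in C_{24}$, the element $u^2$ has order dividing $12$, which excludes $e(\tfrac{1}{24})$ and $e(\tfrac{19}{24})$ as possible values. Finally, a notational caution: you introduce $C$ as the kernel of $\tilde H\to 2O$, but thereafter $C$ is the image of the first projection $\tilde H\to \Unitary(1)$ (of order $12n$), with $C^+$ the kernel (of order $6n$); the phrase $\ker(\chi|_C)$ is not meaningful as written and should be read simply as the kernel of $\tilde H\to 2O$. With that reading the rest of your argument is internally consistent and matches the notation $B(O,n\phi)=\pi(C_{4n\phi}\times_{\cyc 2} 2O)$ in the statement.
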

\begin{proof}
Let $C$ be the kernel of the map $\tilde{H} \to 2O$, identified with a subgroup of $\Unitary(1)$.
Then~$\tilde{H}$ in $\Unitary(1)/C \times 2O$ is the graph of a homomorphism $2O \to \Unitary(1)/C$.
If this homomorphism is trivial, then $\tilde{H} = C \times 2O$;
however, already the image of $\cyc{\phi} \times 2O$ fails to satisfy the restricted rationality condition
due to the presence of the matrix $D(0,\tfrac{1}{8},\tfrac{7}{8})$.

If the homomorphism is nontrivial, then it must map $2O$ to its only nontrivial cyclic quotient, namely
$2O/2T \simeq \Z/2\Z$; in particular, we have $C \times 2T \subset \tilde{H}$, so the order of $n$ of $C$ must be in
$\{2\phi, 4\phi, 6\phi\}$ by Proposition~\ref{binary tetrahedral}.
Then $\tilde{H}$ must contain an element of the form
$(u, \tfrac{1}{\sqrt{2}} ( 1 + \mathbf{i}))$
where $u \in \Unitary(1)$ has order $4\phi$ (if $n=2\phi$), $8\phi$ (if $n=4\phi$), or $12\phi$ (if $n=6\phi$).
If $n = 6\phi$, then projecting the element in question to $H$ yields 
$\Diag(u^2, u^{-1} e(\tfrac{1}{8}), u^{-1} e(-\tfrac{1}{8}))$,
whose diagonal components have orders $6\phi, 24\phi, 24\phi$; however, there is no cyclic group of order $24\phi$
satisfying the restricted rationality condition.
In the other two cases, one does get groups satisfying the restricted rationality condition.
\end{proof}

\subsection{Extensions of \texorpdfstring{$\alt 3$}{A3}}
\label{subsection: BDM type C}

Type (C) in the BDM classification consists of semidirect products of abelian groups,
represented as diagonal matrices, with the cyclic group $\alt 3$, represented as cyclic permutation matrices. (In particular, we need not consider nonsplit extensions by $\alt 3$; this will be apparent from the proof of Lemma~\ref{lemma: type (C) groups}.) Let
\begin{equation} \label{eq:permutation matrix S}
S = \begin{pmatrix}
0 & 0 & 1 \\ 1 & 0 & 0 \\ 0 & 1 & 0
\end{pmatrix};
\end{equation}
since this matrix is real, the following presentations are again invariant under complex conjugation.

\begin{lemma}\label{lemma: type (C) groups} Suppose that $H$ is an extension of $\alt 3$ by an abelian group. Then there exists an abelian group $H'$ of order $\tfrac{1}{3}|H|$ in Table \ref{table: abelian subgroups} with at least $\alt 3$ symmetries  such that $H$ is conjugate  to $\langle H', S\rangle$; in particular, $H \simeq H' \rtimes \alt 3$.
\end{lemma}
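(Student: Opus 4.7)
The plan is to lean on the structural constraints coming from the BDM classification of type (C) groups. By the description recalled at the start of \S\ref{subsection: BDM type C}, I may (up to $\SU(3)$-conjugation) assume $H\subseteq N_{\SU(3)}(T) = T\rtimes \sym 3$, where $T$ is the diagonal maximal torus and the $\sym 3$-factor is realized by permutation matrices; the quotient $H/(H\cap T)\simeq \alt 3$ then corresponds to the cyclic permutation subgroup. Set $H'\colonequals H\cap T$: this is the abelian normal subgroup of $H$ of index $3$, and $\alt 3$ acts on it by cyclic permutation of diagonal entries via the matrix $S$ of \eqref{eq:permutation matrix S}.

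First I would check $\mu_3\subseteq H'$. Otherwise the image of $\mu_3$ in $H/H'\simeq \alt 3$ would be surjective, forcing $H=H'\cdot\mu_3$ to be abelian (since $\mu_3$ is central in $\SU(3)$); but in that case $H$ already appears in Proposition~\ref{proposition: abelian subgroups}. Thus $H'$ satisfies the hypotheses of that proposition, inherits the restricted rationality condition from $H$, and hence appears in Table~\ref{table: abelian subgroups}; its $\alt 3$-invariance places it among the entries with at least $\alt 3$ symmetries.

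Next I would split the extension $1\to H'\to H\to \alt 3\to 1$ and then trivialize the splitting cocycle. Pick $g\in H\setminus H'$ mapping to a generator of $\alt 3$; replacing $g$ by $g^2$ if needed, write $g=Sd$ for some $d=D(u,v,w)\in T$. The constraint $\det g=1$ together with $\det S=1$ gives $u+v+w\in\Z$. Using the identity $SD(u,v,w)S^{-1}=D(w,u,v)$, a direct computation yields
\[
g^3=D(u+v+w,u+v+w,u+v+w)=I,
\]
so $\langle g\rangle\simeq \alt 3$ splits the sequence and $H\simeq H'\rtimes \alt 3$. Finally, conjugation of $g$ by $D(a,b,c)\in T$ preserves $H'$ and shifts $(u,v,w)$ by $(b-a,c-b,a-c)$; since this shift always has coordinate sum $0$ and $u+v+w\equiv 0\pmod\Z$, I can solve for $(a,b,c)$ so that the shift cancels $d$ entirely, replacing $g$ with $S$. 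Thus $H=\langle H',S\rangle$ as desired.

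The main content is the explicit $g^3=I$ calculation, which both establishes the splitting and explains the parenthetical remark preceding the lemma that nonsplit extensions by $\alt 3$ do not need to be considered in the BDM type (C) analysis; all remaining steps are elementary matrix arithmetic once the containment $H\subseteq N_{\SU(3)}(T)$ is invoked from BDM.
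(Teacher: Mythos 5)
Your proof is correct and follows the same structural outline as the paper's: reduce to $H\subseteq N_{\SU(3)}(T)$ via BDM, identify $H'=H\cap T$ as an $\alt 3$-invariant Table entry, and then conjugate to normalize the nondiagonal generator to $S$. The one genuine technical difference is the choice of conjugating matrix in the last step. The paper conjugates by another cyclic-type matrix $\tilde S = S_{\zeta^{\nicefrac{2}{3}}\mu^{\nicefrac{1}{3}},\,\zeta^{\nicefrac{-1}{3}}\mu^{\nicefrac{1}{3}}}$ (which permutes the diagonal entries but preserves $H'$ as a set because of its $\alt 3$-invariance), while you conjugate by a diagonal matrix $D(a,b,c)$, which fixes $H'$ pointwise. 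Your choice is arguably a bit cleaner: it avoids the cube roots $\zeta^{\nicefrac{2}{3}}$ and it does not touch the presentation of $H'$ at all, which means there is no need to re-verify that $H'$ remains in Table form after the conjugation. One small caveat: the $D(a,b,c)$ you solve for need not have $a+b+c\in\Z$ and hence need not itself lie in $\SU(3)$; this is harmless since multiplying by the scalar $e(-(a+b+c)/3)I$ lands you in $\SU(3)$ and does not change the conjugation, but it is worth saying explicitly.

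Your explicit computation $g^3 = D(u+v+w,u+v+w,u+v+w)=I$ is a useful addition: the paper never writes this out (it is implicit in $S^3=I$ once the conjugation is performed), but your calculation is exactly what justifies the parenthetical remark at the start of \S\ref{subsection: BDM type C} that nonsplit extensions by $\alt 3$ cannot occur, and so your version makes the logic of that remark transparent in a way the paper leaves to the reader. The one place you are somewhat terse is the verification that $\mu_3\subseteq H'$: your argument correctly shows that otherwise $H$ is abelian, but it would be cleaner to note explicitly that abelian $H$ are of BDM type (A) and hence excluded from consideration here (the lemma as stated is really only applied to the type (C) case, and the companion Proposition~\ref{A3 extensions} explicitly excludes abelian $H$).
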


\begin{proof}
By conjugating suitably, we may ensure that $H$ is generated by a diagonal abelian subgroup $H'$ and a matrix of the form
\begin{equation} \label{matrix S-zeta-mu}
S_{\zeta,\mu}=
\begin{pmatrix}
0 & 0 & \zeta^ {-1}\mu^{-1}\\
\zeta & 0 & 0\\
0 & \mu& 0 
\end{pmatrix}
\end{equation}
for some $\zeta, \mu \in \C^ *$. Since 
\begin{equation}\label{equation: conjugation diagonal elements} 
S_{\zeta,\mu}\cdot \Diag(d_1,d_2,d_3)\cdot S_{\zeta,\mu}^{-1}=\Diag(d_3,d_1,d_2),
\end{equation}
the group $H'$ is invariant under a cyclic permutation of the diagonal elements of its constituting matrices;
we can thus conjugate it into a form appearing in Table \ref{table: abelian subgroups}.

Write $\tilde S=S_{\zeta^{\nicefrac{2}{3}}\mu^{\nicefrac{1}{3}},\zeta^{\nicefrac{-1}{3}}\mu^{\nicefrac{1}{3}}}$. Since 
$$
\tilde S \cdot S_{\zeta,\mu}\cdot \tilde S^{-1}=S,
$$
and conjugation by $\tilde S$ preserves $H'$ (as in \ref{equation: conjugation diagonal elements}), we see that $H$ is conjugate to $\langle H',S\rangle$.
\end{proof}

\begin{proposition}  \label{A3 extensions}
Suppose that $H$ is the semidirect product of an abelian group by $\alt 3$ and is not abelian. Then $H$ is conjugate  to exactly one of the $7$ following groups:
$$
 \stgroup[C(1,7\phi)]{C(1,7)},\ \stgroup[C(2,2\phi)]{C(2,2)},\ \stgroup[C(3,1\phi)]{C(3,1)},\ \stgroup[C(3,3\phi)]{C(3,3)},\ \stgroup[C(4,4\phi)]{C(4,4)},\ \stgroup[C(6,2\phi)]{C(6,2)},\ \stgroup[C(6,6\phi)]{C(6,6)}.
$$
Here $C(m,n\phi)$ denotes the group $\langle A(m,n\phi),S\rangle\simeq (\Z/m\Z\times \Z/n\phi\Z) \rtimes\alt 3$.
\end{proposition}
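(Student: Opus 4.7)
The plan is to apply Lemma~\ref{lemma: type (C) groups} to reduce the problem to enumerating groups of the form $\langle H', S \rangle$, where $H'$ ranges over those abelian subgroups in Table~\ref{table: abelian subgroups} whose normalizing permutation subgroup contains $\alt 3$. Reading off the ``Symmetries'' column, exactly eight candidates qualify: $\stgroup[A(1,1\phi)]{A(1,1)}$ together with the seven abelian groups appearing in the statement of the proposition.

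Next I would dispose of the spurious case $\stgroup[A(1,1\phi)]{A(1,1)} = \mu_3$. Since $\mu_3$ consists of scalar matrices, it commutes with $S$, so $\langle \mu_3, S\rangle \simeq \cyc 3 \times \cyc 3$ is abelian and must be excluded by hypothesis. For each of the remaining seven candidates, $H'$ is not contained in the center $\mu_3$, so the cyclic permutation action of $S$ described in \eqref{equation: conjugation diagonal elements} is nontrivial on $H'$; the extension is genuinely nonabelian, and the matrix $S$ itself (of order $3$) exhibits a splitting, giving the claimed isomorphism $\langle H', S\rangle \simeq H' \rtimes \alt 3$.

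Then I would check the restricted rationality condition. Elements of $H'$ satisfy it by Proposition~\ref{proposition: abelian subgroups}, and for any element $g$ of the form $\Diag(a,b,c)\cdot S^j$ with $j \in \{1,2\}$, a direct computation shows that $g$ has all diagonal entries zero, so $|\Trace(g)|^2 = 0 \in \Z$ holds automatically. Thus all seven groups satisfy the condition.

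Finally, I would confirm that the seven resulting groups are pairwise nonconjugate in $\SU(3)$. This is immediate because their orders $27, 36, 63, 81, 108, 144, 324$ are all distinct. The argument presents no serious technical obstacle; the only point requiring care is the correct bookkeeping in reading off the ``Symmetries'' column to ensure that no candidate is missed and that the excluded abelian case $\stgroup[A(1,1\phi)]{A(1,1)}$ is properly identified, since the restricted rationality condition on the non-identity cosets is automatic and so requires no further computation.
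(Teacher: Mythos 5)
Your proposal is correct and follows essentially the same route as the paper: both invoke Lemma~\ref{lemma: type (C) groups}, read off the candidates with at least $\alt 3$ symmetries from Table~\ref{table: abelian subgroups}, exclude the abelian group $\langle A(1,1\phi), S\rangle$, and observe that the restricted rationality condition is automatic on the nontrivial cosets (you note the diagonal entries of $\Diag(a,b,c)\cdot S^j$ are all zero, the paper equivalently notes the eigenvalues are $1,e(\tfrac13),e(\tfrac23)$; both give $\Trace=0$). Your explicit verification that the seven groups are pairwise nonconjugate via their distinct orders is a small but worthwhile addition that the paper leaves implicit.
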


\begin{proof} 
By Lemma \ref{lemma: type (C) groups}, the given list includes all possibilities; note that the group $C(1,1\phi)$ is abelian and are hence omitted. (By contrast, $\stgroup[C(3,1\phi)]{C(3,1)}$ is not abelian, but its image in $\PSU(3)$ is abelian.)
It is automatic that these groups satisfy the restricted rationality condition because the product of $S$ with any matrix in $A(m,n\phi)$ has eigenvalues $1,e(\tfrac{1}{3}),e(\tfrac{2}{3})$.
\end{proof}

\subsection{Extensions of \texorpdfstring{$\sym 3$}{S3}}
\label{section: BDM type D}

Type (D) in the BDM classification consists of extensions of the symmetric group $\sym 3$, represented as scaled permutation matrices, by abelian groups,
represented as diagonal matrices.

Define the matrices $S$ and $T_1$ as in \eqref{eq:permutation matrix S} and \eqref{eq:type B conjugation matrices}, respectively.
Since these matrices have real entries, the following presentations are again invariant under complex conjugation.

\begin{lemma}\label{lemma: type (D) groups} 
Suppose that $H$ is an extension of $\sym 3$ by an abelian group. Then there exists an abelian group $H'$ of order $\tfrac{1}{6}|H|$ in Table \ref{table: abelian subgroups} with $\sym 3$ symmetries such that $H$ is conjugate to 
$\langle H', S, T_1\rangle$. In particular, $H \simeq H' \rtimes \sym 3$.
\end{lemma}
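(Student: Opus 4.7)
The plan is to combine the frameworks of the two preceding lemmas. First, I would consider the index-2 subgroup $H_A \subseteq H$ that is the preimage of $\alt 3 \subseteq \sym 3$; this is an extension of $\alt 3$ by the same abelian kernel $H'$, so Lemma~\ref{lemma: type (C) groups} applies and, after conjugation, we may assume $H_A = \langle H', S \rangle$ with $H'$ drawn from Table~\ref{table: abelian subgroups} and having at least $\alt 3$ symmetries.

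Next, pick any $R \in H \setminus H_A$; such $R$ represents a transposition coset in $\sym 3$, and we may arrange for it to act on the diagonal torus as the transposition $(2\,3)$. Then $R$ normalizes the diagonal torus and swaps its second and third coordinates, forcing $R = R_{\xi,\alpha}$ of the form in \eqref{eq:type D conjugation matrix}. Because $H_A$ is normal in $H$ and $R$ conjugates $H'$ into itself, the $(2\,3)$ transposition is also a symmetry of $H'$, so combined with the $\alt 3$ symmetries already present, $H'$ acquires full $\sym 3$ symmetry—leaving only the seven entries of Table~\ref{table: abelian subgroups} marked accordingly.

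Next, I would follow the reduction in Lemma~\ref{lemma: dihedral groups}: replacing $R$ by a suitable odd power and conjugating by a diagonal matrix of the form $\Diag(1,\beta,\beta^{-1})$ brings $R$ into one of the three canonical forms $T_1, T_2, T_4$. The main obstacle is ruling out $T_2$ and $T_4$: these bring along the constraints $T_2^2 = D(0, \tfrac{1}{2},\tfrac{1}{2}) \in H'$ and $T_4^2 = D(\tfrac{1}{2}, \tfrac{1}{4},\tfrac{1}{4}) \in H'$. Here the key leverage is the $\sym 3$ symmetry of $H'$, which forces $H'$ to contain the cyclic conjugates $D(\tfrac{1}{2},0,\tfrac{1}{2})$ and $D(\tfrac{1}{4},\tfrac{1}{2},\tfrac{1}{4})$ respectively—each of the form $D(\tfrac{1}{2},*,*)$ or $D(\tfrac{1}{4},*,*)$. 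By Remark~\ref{B redundancy}, this allows us to replace $T_2$ (resp.\ $T_4$) by $T_1$ after a further diagonal conjugation. One must then check compatibility with the presence of $S$: the diagonal conjugation typically carries $S$ to some $S_{\zeta,\mu}$, but we can restore $S$ by further conjugation via the matrix $\tilde S$ from the proof of Lemma~\ref{lemma: type (C) groups} (which is again diagonal and hence preserves the form of $H'$), and verify that the compound transformation preserves the property that $H'$ appears in the table.

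The final assertion $H \simeq H' \rtimes \sym 3$ then follows because $\langle S, T_1 \rangle$ satisfies the relations $S^3 = T_1^2 = (T_1 S)^2 = I$ and hence is a copy of $\sym 3$; since every nonidentity element of $\langle S, T_1 \rangle$ projects to a nontrivial permutation in $\sym 3$, this subgroup meets the diagonal torus (and thus $H'$) only in $I$, so the extension splits.
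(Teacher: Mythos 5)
Your proposal takes a genuinely different route from the paper's. The paper conjugates $H$ to $\langle H', S, R_{\xi,\alpha}\rangle$ in one step (by the method used for Lemma~\ref{lemma: type (C) groups}) and then uses \emph{both} constraints $R_{\xi,\alpha}^{2} = D(\xi^{-2},\xi,\xi) \in H'$ and $(SR_{\xi,\alpha})^{2} = D(\alpha,\alpha,\alpha^{-2}) \in H'$ together with the $\sym 3$-symmetry of $H'$; these show that the diagonal element $D(\xi,\alpha,(\xi\alpha)^{-1})$ lies in $H'$, and multiplying $R_{\xi,\alpha}$ on the left by this element of $H'$ produces $T_1 = R_{1,1}$ with no further conjugation, so $S$ and $H'$ are left untouched. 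You never invoke $(SR)^2 \in H'$ and instead send $R$ through the $T_1/T_2/T_4$ reduction of Lemma~\ref{lemma: dihedral groups} before collapsing $T_2$ and $T_4$ via Remark~\ref{B redundancy}.

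There is a real gap at the end of your route. Remark~\ref{B redundancy} replaces $T_2$ (or $T_4$) by $T_1$ via a \emph{conjugation} by a diagonal matrix $\Diag(1,\beta,\beta^{-1})$ with $\beta$ an eighth (or sixteenth) root of unity, and this carries $S$ to $S_{\bar\beta,\beta^2}$. Your proposed repair by conjugating back by $\tilde S$ from the proof of Lemma~\ref{lemma: type (C) groups} has two problems. First, $\tilde S$ is a scaled cyclic-permutation matrix $S_{\zeta',\mu'}$, not a diagonal matrix as you assert; it does preserve $H'$, but only because $H'$ is $\sym 3$-symmetric, not because it is diagonal. Second, the compound conjugation does not fix $T_1$: it carries $T_1$ to a $(1\,2)$-transposition-type matrix that differs from $S^{-1}T_1 S$ by a diagonal factor built out of $\beta$, which has order $8$ and therefore lies in none of the seven $\sym 3$-symmetric groups of Table~\ref{table: abelian subgroups}. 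The result is a group that is conjugate to, but not equal to, $\langle H', S, T_1\rangle$, so the sentence "verify that the compound transformation preserves\ldots" is hiding a missing final conjugation rather than supplying it. The clean fix is exactly what the paper does: the constraint $(SR)^2 \in H'$, combined with $\sym 3$-symmetry, guarantees that the rescaling factor already lives in $H'$, so that multiplying rather than conjugating suffices and $S$ is never disturbed at all.
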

\begin{proof} 
As in Lemma \ref{lemma: type (C) groups}, we may assume that $H$ is conjugate to  $\langle H',S,R_{\xi,\alpha}\rangle$, where $H'$ is one of the abelian groups of order $\tfrac{1}{6}|H|$ of Table \ref{table: abelian subgroups},
and $R_{\xi,\alpha}$ is defined as in \eqref{eq:type D conjugation matrix}
for some $\xi,\alpha\in\C^\times$. Since
$$
R_{\xi,\alpha}^2= \Diag(\xi^{-2}, \xi, \xi), \qquad
(S\cdot R_{\xi,\alpha})^2= \Diag(\alpha,\alpha,\alpha^{-2}),
$$ 
$\xi$ and $\alpha$ are roots of unity. 
Moreover, for $n$ the least common multiple of the orders of $\xi$ and~$\alpha$,
the $\sym 3$-symmetry of $H'$ implies that $H'$ contains the full $n$-torsion subgroup of $\SU(3)$.
We can thus rescale to force $\xi = \alpha = 1$, yielding the desired result.
\end{proof}

\begin{proposition}  \label{sym3 groups}
Suppose that $H$ is an extension of $\sym 3$ by an abelian group,
but not an extension of $\cyc 2$ by an abelian group.
Then $H$  is conjugate to one of the $6$ following groups:
\begin{gather*}
\stgroup[D(2,2\phi)]{D(2,2)},\ \stgroup[D(3,1\phi)]{D(3,1)},\ \stgroup[D(3, 3\phi)]{D(3,3)},\ \stgroup[D(4,4\phi)]{D(4,4)},\ \stgroup[D(6,2\phi)]{D(6,2)},\ \stgroup[D(6,6\phi)]{D(6,6)}.
\end{gather*}
Here $D(m,n\phi)$ denotes the group $\langle A(m,n\phi),S,T_1\rangle\simeq (\Z/m\Z\times \Z/n\phi\Z) \rtimes\sym 3$.
\end{proposition}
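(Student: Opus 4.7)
The plan is to apply Lemma~\ref{lemma: type (D) groups} to reduce to a finite list of candidates, then filter by the hypothesis that $H$ is not an extension of $\cyc 2$ by an abelian group. By that lemma, up to conjugation $H = \langle A(m,n\phi), S, T_1\rangle \simeq A(m,n\phi) \rtimes \sym 3$, where $A(m,n\phi)$ is an abelian group from Table~\ref{table: abelian subgroups} with full $\sym 3$ symmetries. Inspection of the ``Symmetries'' column of Table~\ref{table: abelian subgroups} shows exactly seven such groups, giving seven candidates $D(m,n\phi)$ with $(m,n) \in \{(1,1),(2,2),(3,1),(3,3),(4,4),(6,2),(6,6)\}$.

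The first step is to remove $D(1,1\phi)$. Since $A(1,1\phi) = \mu_3$ consists of scalar matrices, it is centralized by all of $\SU(3)$, so $D(1,1\phi) \simeq \mu_3 \times \sym 3$; this group has $\mu_3 \times \alt 3$ as an abelian normal subgroup of index~$2$, hence is itself an extension of $\cyc 2$ by an abelian group and violates the hypothesis.

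The second step is to verify that the other six candidates do \emph{not} violate the hypothesis. Suppose $D(m,n\phi)$ admits an abelian normal subgroup $K$ of index~$2$. The image of $K$ in $\sym 3$ then has index at most $2$, so it must equal $\alt 3$; together with $A(m,n\phi) \subseteq K$ this forces $K = A(m,n\phi) \rtimes \alt 3 = C(m,n\phi)$. For $C(m,n\phi)$ to be abelian, the cyclic permutation $S$ must centralize $A(m,n\phi)$, but by~\eqref{equation: conjugation diagonal elements} this forces every element of $A(m,n\phi)$ to be a scalar matrix, so $A(m,n\phi) \subseteq \mu_3$ and $(m,n) = (1,1)$. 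Thus among the seven candidates only $D(1,1\phi)$ is excluded.

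The remaining tasks are routine. The six groups are pairwise non-conjugate in $\USp(6)$ because their orders $18mn$ are pairwise distinct. Each satisfies the restricted rationality condition; as noted in \S\ref{subsec:preliminaries} this need not be verified directly, since it will be confirmed \emph{a posteriori} by the explicit realizations of \S\ref{section: realizability}. The step that I expect to require the most care is the second one, ensuring that no hidden index-$2$ abelian subgroup of $D(m,n\phi)$ arises from a non-obvious coset outside $C(m,n\phi)$; but this is controlled cleanly by the fact that $K$ must contain $A(m,n\phi)$ (being the full preimage of the abelian subgroup of $\sym 3$), together with the explicit conjugation formula~\eqref{equation: conjugation diagonal elements}.
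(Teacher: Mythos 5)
Your approach is the same as the paper's: Lemma~\ref{lemma: type (D) groups} supplies the seven candidates $D(m,n\phi)$ from the $\sym 3$-symmetric entries of Table~\ref{table: abelian subgroups}, and the hypothesis excludes $D(1,1\phi)$; the paper's own proof is even terser and leaves the exclusion implicit. There is, however, a gap in your second step. You infer that the image of $K$ in $\sym 3$ ``has index at most $2$, so it must equal $\alt 3$,'' but the index bound alone does not rule out $\pi(K)=\sym 3$ (index~$1$). The missing observation is that $K$ is abelian, hence its quotient $\pi(K)\simeq K/(K\cap A(m,n\phi))$ is abelian, and the only abelian subgroup of $\sym 3$ of index at most~$2$ is $\alt 3$. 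The inclusion $A(m,n\phi)\subseteq K$ is likewise asserted rather than derived; it follows from $\pi(K)=\alt 3$ by the order count $|K\cap A(m,n\phi)|=|K|/3=|A(m,n\phi)|$, and the parenthetical ``being the full preimage of the abelian subgroup of $\sym 3$'' presupposes this rather than establishing it. With these two points filled in, the argument is complete and the conclusion---that $K=C(m,n\phi)$, which by \eqref{equation: conjugation diagonal elements} is abelian only for $(m,n)=(1,1)$---is correct. Your deferral of the restricted rationality condition to \S\ref{section: realizability} is permitted by the paper's conventions, though the paper's direct verification (trace $0$ on the two cosets corresponding to $3$-cycles, trace of absolute value $1$ on the three cosets corresponding to transpositions) is quicker and self-contained.
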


\begin{proof}
By Lemma \ref{lemma: type (D) groups}, the given list includes all possibilities.
As in the proof of Proposition~\ref{A3 extensions}, the restricted rationality condition is automatic in these cases.
\end{proof}

\subsection{Solvable exceptional subgroups}
\label{section:solvable exceptional}

Types (E), (F), (G) in the BDM classification are exceptional groups whose images in
$\SU(3)/\mu_3$ have orders 36, 72, 216, respectively; we denote these by $\stgroup[E(36\phi)]{E(36)}, \stgroup[E(72\phi)]{E(72)}, \stgroup[E(216\phi)]{E(216)}$. 
The group $\stgroup{E(216)}$ is the \emph{Hessian group}.

Define the matrices
\begin{equation} \label{eq:Hessian matrices}
g_1=D(0,\tfrac{1}{3},\tfrac{2}{3}),
\quad
g_2=\frac{1}{e(\tfrac{1}{3})-e(\tfrac{2}{3})}
\begin{pmatrix}
1 & 1 & 1\\[0.7mm]
1 & e(\tfrac{1}{3}) & e(\tfrac{2}{3})\\[0.7mm]
1 & e(\tfrac{2}{3}) & e(\tfrac{1}{3})
\end{pmatrix},
\quad
g_3= D(\tfrac{2}{9},\tfrac{2}{9},\tfrac{5}{9}).
\end{equation}
Then $\stgroup[E(36\phi)]{E(36)}, \stgroup[E(72\phi)]{E(72)}, \stgroup[E(216\phi)]{E(216)}$ are the respective groups
$$
\langle g_1,g_2\rangle,
\quad
\langle g_1,g_2,g_3g_2g_3^{-1}\rangle,
\quad
\langle g_1,g_2,g_3\rangle.
$$
(The presentations given in \cite[Chapter~XII]{MBD61} include the matrix $S$ of \eqref{eq:permutation matrix S},
but this is unnecessary because $S = g_2^{-1} g_1 g_2$.)

Note that $\overline{g}_2 = g_2^3$ and that 
the complex conjugate of $g_3 g_2 g_3^{-1}$ belongs to $\langle g_1, g_2, g_3 g_2 g_3^{-1} \rangle$:
since $S \in \langle g_1, g_2 \rangle$, we can see the latter by writing the conjugate as
\[
g_2 (g_3 g_2 g_3^{-1}) D(\tfrac{2}{3},\tfrac{2}{3},\tfrac{2}{3})
\]
and observing that $D(\tfrac{1}{3},\tfrac{1}{3},\tfrac{1}{3}) = g_1 S g_1^{-1} S^{-1}$. This means that all three of these presentations are invariant under complex conjugation; these groups also satisfy the restricted rationality condition.

\begin{remark}
As an aside, we note that for $S_{\zeta,\mu}$ as in \eqref{matrix S-zeta-mu}, the group
$\stgroup[E(216\phi)]{E(216)}$ contains the subgroup $\langle \stgroup[A(1,1\phi)]{A(1,1)},
S_{e(\nicefrac{2}{9}),e(\nicefrac{2}{9})} \rangle$
which is conjugate to $\stgroup[C(3,1\phi)]{C(3,1)}$ in $\SU(3)$ (by Lemma~\ref{lemma: type (C) groups})
but not in $\stgroup[E(216\phi)]{E(216)}$.
\end{remark}

\subsection{Simple exceptional groups}
\label{section:simple exceptional}

The final types (H), (I), (J) in the BDM classification are exceptional groups whose images in
$\SU(3)/\mu_3$ have orders 60, 360, 168, respectively. These images are the unique simple groups of these orders:
$\alt 5$, $\alt 6$, and $\PSL_3(\F_2) \simeq \PSL_2(\F_7)$.
We may immediately rule out the groups of order 60 and 360 because any groups made from them contain elements of order 5 and thus cannot satisfy the restricted rationality condition. We denote the group of order $168\phi$ by $\stgroup[E(168\phi)]{E(168)}$; it admits the presentation
\begin{equation} \label{eq:168 matrix}
\left\langle
D(\tfrac{1}{3}, \tfrac{1}{3}, \tfrac{1}{3}), S, D(\tfrac{1}{7}, \tfrac{2}{7}, \tfrac{4}{7}),
\tfrac{1}{\sqrt{-7}}
\begin{pmatrix}
e(\tfrac{4}{7}) - e(\tfrac{3}{7}) & e(\tfrac{2}{7}) - e(\tfrac{5}{7}) & e(\tfrac{1}{7}) - e(\tfrac{6}{7})\\[0.7mm]
e(\tfrac{2}{7}) - e(\tfrac{5}{7}) & e(\tfrac{1}{7}) - e(\tfrac{6}{7}) & e(\tfrac{4}{7}) - e(\tfrac{3}{7}) \\[0.7mm]
e(\tfrac{1}{7}) - e(\tfrac{6}{7}) & e(\tfrac{4}{7}) - e(\tfrac{3}{7}) & e(\tfrac{2}{7}) - e(\tfrac{5}{7})
\end{pmatrix}
\right\rangle.
\end{equation}
Since the latter matrix has real entries, the presentation is invariant under complex conjugation;
it also satisfies the restricted rationality condition.
(Note that omitting $D(\tfrac{1}{3}, \tfrac{1}{3}, \tfrac{1}{3})$ yields a group which projects isomorphically onto $\stgroup{E(168)}$.)

\subsection{Standard and nonstandard extensions: overview}
\label{section: standard and nonstandard extensions}

At this point, we have completed the classification of finite subgroups of $\SU(3)$ containing $\mu_3$ and satisfying the restricted rationality condition, and it remains to classify the enlargements of these to subgroups to~$J(\SU(3))$.
Before embarking on the specific calculations, we read off some general features of the situation
and set some terminology.

\begin{definition}
Let $H$ be a subgroup of $\SU(3)$ which is stable under complex conjugation. (See Remark~\ref{change of conjugation action} for the changes that need to be made to handle more general $H$.)
Write ~$\pm H$ for the group $\langle H, -1\rangle$.
For any subgroup $\tilde{H}$ of $J(\SU(3))$ with $H = \tilde{H} \cap \SU(3)$,
we have an exact sequence
\begin{equation} \label{eq:exact sequence for subgroup U1}
1 \to \pm H \to \tilde{H} \to \cyc{2} \to 1.
\end{equation}
One candidate for $\tilde{H}$ is the group $\langle H, J\rangle$;
we denote it by $J(H)$ and call it the \emph{standard extension} by $H$, and characterize any other $\tilde{H}$ as a
\emph{nonstandard extension}.

Let $N_H$ denote the normalizer of $H$ in $\SU(3)$ (which is itself stable under complex conjugation); 
then any overgroup of $H$ must have the form
$\langle H, Jg \rangle$ for some $g \in \Unitary(3)$ whose image in $\SU(3)$ belongs to $N_H$
and satisfies $JgJg \in \pm H$ (so that $[\tilde{H}:H] \leq 2$). This last condition is equivalent to requiring that the image of $g$ in $N_H/H$ maps to its inverse under complex conjugation; let $I_H$ be the set of elements of $N_H/H$ with this property.

Conjugation by $h \in N_H$ takes $Jg$ to 
\[
h^{-1}(Jg)h = J \overline{h}^{-1} g h,
\]
and in particular $N_H/H$ acts on $I_H$. 
We thus have a bijection between the set of extensions of $H$ and the quotient
set of $I_H$ by the action of $\langle N_H/H \rangle$ where $h$ acts as $g \mapsto \overline{h}^{-1} g h$.
(It is not necessary to record the conjugation action of $J$: if $g \in N_H$ represents a class in $I_H$,
then the actions of $J$ and $g^{-1}$ on this class coincide.)
\end{definition}

\begin{definition}
Consider the nonstandard extension defined by the class in $I_H$ of some $g \in N_H$. We further classify the extension according to the following criteria.

\begin{itemize}
\item
A \emph{split} nonstandard extension, denoted $J_s(H)$, is one where
$\overline{g} g = \overline{h} h$ for some $h \in H$.
Such an extension is still a semidirect product by $\cyc 2$, but via a different action.

\item
A \emph{nonsplit} nonstandard extension, denoted $J_n(H)$, is one corresponding to the class in $I_H$
of some $g \in N_H$ such that $\overline{g} g \neq \overline{h} h$ for any $h \in H$.
Such an extension is not a semidirect product by $\cyc 2$.
\end{itemize}

While this is not clear \emph{a priori}, it will turn out that every group $H$ we consider has at
most one nonstandard extension of each type. Consequently, the notations $J_s(H)$ and $J_n(H)$ will be completely unambiguous.
\end{definition}

\begin{lemma}
Let $N_H^0$ denote the identity component of $N_H$, which centralizes $H$ since $H$ is finite.
Let $N'_H \colonequals N_H/\langle N_H^0, H \rangle$ be the component group of $N_H/H$.
Then the action of $N_H/H$ on $I_H$ (where $h$ acts as $g \mapsto \overline{h}^{-1} g h$) induces an action of 
$N'_H$ on the set of connected components of $I_H$.
\end{lemma}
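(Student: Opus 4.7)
My plan is to use the continuity of the action of $N_H$ on $I_H$ combined with the discreteness of $\pi_0(I_H)$. Together with the fact that $H$ itself acts trivially on $I_H$, this will force the action on connected components to factor through $N_H/\langle N_H^0, H\rangle$.

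First I would verify (as is implicit in the preceding paragraph of the excerpt) that the formula $g \mapsto \overline{h}^{-1} g h$ preserves $I_H$: for $gH \in I_H$, i.e., $g\overline{g} \in H$, one has
\[
(\overline{h}^{-1} g h)\,\overline{(\overline{h}^{-1} g h)} \;=\; \overline{h}^{-1}\,(g\overline{g})\,\overline{h},
\]
which lies in $H$ because $\overline{h} \in N_H$. I would then observe that the action descends to $N_H/H$: if $h \in H$, then $\overline{h} \in H$ and $g^{-1}\overline{h}^{-1}g \in H$, so $\overline{h}^{-1} g H = g(g^{-1}\overline{h}^{-1}g)H = gH$. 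Hence $H$ acts trivially on $I_H$, not just on $\pi_0(I_H)$.

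The essential step is a continuity argument. I would view $I_H$ as a subspace of the quotient $N_H/H$ equipped with the quotient topology coming from $N_H \subseteq \Unitary(3)$, and note that the action map
\[
N_H \times I_H \longrightarrow I_H, \qquad (h, gH) \longmapsto \overline{h}^{-1} g h \cdot H,
\]
is continuous since it is built from multiplication, inversion, and complex conjugation. Each $h \in N_H$ therefore acts on $I_H$ by a homeomorphism and so permutes its connected components, yielding a homomorphism $N_H \to \Sym(\pi_0(I_H))$ into a finite, hence discrete, group. Such a homomorphism is locally constant and so trivial on the identity component $N_H^0$. Combining this with the previous step, the action of $N_H$ on $\pi_0(I_H)$ factors through $N_H/\langle N_H^0, H\rangle = N'_H$, as claimed.

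The whole argument is essentially formal once the topology on $I_H$ is in place, so I do not foresee any serious obstacle. The one subtlety worth recording is that $N_H^0$ will generally not act trivially on $I_H$ itself, only on its set of connected components; this is precisely the content of the lemma, and the reason the action is best packaged on $\pi_0(I_H)$ rather than on $I_H$.
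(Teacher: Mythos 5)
Your proof is correct in substance, but it takes a genuinely different route from the paper. The paper does \emph{not} argue via connectedness of $N_H^0$; instead it invokes the bijection between $I_H/(N_H/H)$ and conjugacy classes of extensions $\tilde H$, cites \cite[Remark~3.3]{FKRS12} to conclude that there are only finitely many such, deduces that the finite-index subgroup $\langle N_H^0, H\rangle/H$ also has finitely many orbits on $I_H$, and then observes that these orbits are connected, closed (by compactness), and therefore, being finitely many in number, open as well --- so they are \emph{exactly} the connected components of $I_H$. This buys a strictly stronger conclusion than the lemma asks for: the orbits of $\langle N_H^0, H\rangle/H$ coincide with $\pi_0(I_H)$, not merely lie inside components, which is what later lets the authors replace orbits of $I_H$ under $N_H/H$ with orbits of $\pi_0(I_H)$ under $N'_H$. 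Your approach is more elementary and self-contained (no appeal to the external finiteness result) but only gives the containment direction, which suffices for the stated lemma.

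One soft spot worth tightening: you assert that $\pi_0(I_H)$ is finite so that $\Sym(\pi_0(I_H))$ is discrete and the induced map locally constant, but you do not justify the finiteness (it is true, since $I_H$ is a real algebraic subvariety of a compact group, but it is not free). You can sidestep this entirely: for fixed $g \in I_H$ consider the continuous map $\phi_g\colon N_H^0 \to I_H$, $h \mapsto \overline h^{-1} g h\, H$; its domain is connected and $\phi_g(1)=gH$, so its image lies in the connected component of $gH$. That already shows $N_H^0$ fixes each element of $\pi_0(I_H)$, with no mention of $\Sym(\pi_0(I_H))$ or local constancy, and combined with your (correct) check that $H$ acts trivially on $I_H$ itself, the factorization through $N'_H$ follows. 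Two trivial bookkeeping notes: the rule $h\cdot g = \overline h^{-1} g h$ is a \emph{right} action, so the induced map $N_H \to \Sym(\pi_0(I_H))$ is an anti-homomorphism (harmless, but worth saying); and the condition defining $I_H$ is $g\overline g \in H$ (equivalently $\overline g\, g \in H$, as they are conjugate), which matches your computation.
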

\begin{proof}
Note that the action in question has only finitely many orbits: given $H$ there are finitely many isomorphism classes of groups $\tilde{H}$ as in \eqref{eq:exact sequence for subgroup U1}, and each occurs for finitely many conjugacy classes of subgroups of $J(\SU(3))$, by \cite[Remark~3.3]{FKRS12}.
If we restrict the action from $N_H/H$ to its identity component $\langle N_H^0, H \rangle/H$,
we still have only finitely many orbits (because this subgroup is of finite index), but now the orbits are connected and closed (because they are images of a continuous map between compact topological spaces); these orbits must thus be entire connected components of $I_H$. This yields the desired result.
\end{proof}

\begin{remark} \label{extension analysis}
Suppose that $N_H^0$ is contained in the diagonal torus and centralizes not only~$H$, but all of 
$N_H$ (this will only fail for some abelian groups).
Then $I_H$ is the image of a set of connected components of $N_H$: if $g \in N_H$, $h \in N_H^0$,
then $\overline{g} g = \overline{gh} gh$.
As a consequence, the set of connected components of $I_H$ is in bijection with the set $I'_H$ of elements of $N'_H$
which map to their inverses under complex conjugation.

The computation of the orbits of $I'_H$ under the action of $N'_H$ will be further facilitated by the
following general observations. (In this discussion, we do not differentiate between split and nonsplit nonstandard extensions; we make this distinction by inspection in each individual case.)
\begin{itemize}

\item
If $N'_H$ is trivial, then $H$ admits no nonstandard extension.

\item
If $N'_H \simeq \cyc 2$, then $I'_H= N'_H$, the action of $N'_H$ on $I'_H$ is trivial, and so~$H$ admits a nonstandard extension.

\item
Suppose that $N'_H$ has a normal subgroup $N''_H$ stable under complex conjugation,
such that $N'_H/N''_H$ is cyclic of odd prime order.
Let $I''_H$ be the set of elements of $N''_H$ which map to their inverses under complex conjugation;
then we have a canonical bijection $I'_H/N'_H \to I''_H/N''_H$
(regardless of the action of complex conjugation on $N'_H$).
In particular, if $N'_H$ is of odd order (e.g., $\cyc 3$), then $I'_H/N'_H$ is a singleton set
and so~$H$ admits no nonstandard extension.

\item
The set of orbits of $N'_H$ acting on $I'_H$ is invariant under twisting the action of complex conjugation by an element of $N'_H$ (compare Remark~\ref{change of conjugation action} below).
In particular, if $N'_H \simeq \sym 3$, then $N'_H$ has only inner automorphisms, so 
by Remark~\ref{change of conjugation action} the number of extensions does not depend on the action of complex conjugation on $N'_H$. For the trivial action, we see that $I'_H$ is the set of transpositions with the action of $N'_H$ by conjugation, so $H$ admits one nonstandard extension.

\end{itemize}
\end{remark}

\begin{remark} \label{change of conjugation action}
Without the assumption that $H$ is stable under complex conjugation
(but still assuming that $N_H^0$ is contained in the diagonal and centralizes all of $N_H$),
the analysis of Remark~\ref{extension analysis}
remains valid if we replace $J$ with $Jg$ for some $g \in \SU(3)$
for which $Jg$ normalizes $H$, and define the action of $N_H/H$ on $I_H$ by replacing complex conjugation with
the automorphism induced by $Jg$ (that is, complex conjugation composed with the ordinary conjugation by $g$). In particular, the number of extensions of $H$ depends only on
the group $N'_H$ and the element represented by complex conjugation in the outer automorphism group of $N'_H$;
this observation is useful even if $H$ is stable under complex conjugation, as it saves us the trouble of having to completely pin down the action of complex conjugation on $N'_H$.
\end{remark}

\subsection{Standard and nonstandard extensions: computations}

We now recapitulate the classification of finite subgroups $H$ of $\SU(3)$ containing $\mu_3$ and satisfying the restricted rationality condition, determining the standard and nonstandard extensions in each case.

\begin{proposition}
Of the $23$ groups listed in Table~\ref{table: abelian subgroups}, each admits a standard extension; the $15$ groups
\begin{gather*}
\stgroup[A(1, 4\phi)_2]{A(1,4)_2},\ \stgroup[A(1, 8\phi)_1]{A(1,8)_1},\ \stgroup[A(1, 8\phi)_2]{A(1,8)_2},\ \stgroup[A(1, 12\phi)]{A(1,12)},\ 
\stgroup[A(2, 2\phi)]{A(2,2)},\ \stgroup[A(2, 4\phi)]{A(2,4)},\ \stgroup[A(2, 6\phi)]{A(2,6)}, \\
\stgroup[A(3, 1\phi)]{A(3,1)},\ \stgroup[A(3, 2\phi)]{A(3,2)},\ 
\stgroup[A(3, 3\phi)]{A(3,3)},\ \stgroup[A(3, 4\phi)]{A(3,4)},\ \stgroup[A(3, 6\phi)]{A(3,6)},\ 
\stgroup[A(4, 4\phi)]{A(4,4)},\ \stgroup[A(6, 2\phi)]{A(6,2)},\ \stgroup[A(6, 6\phi)]{A(6,6)}
\end{gather*}
admit split nonstandard extensions; and the $9$ groups
\[
\stgroup[A(1, 2\phi)]{A(1,2)},\, \stgroup[A(1, 4\phi)_1]{A(1,4)_1},\, \stgroup[A(1, 4\phi)_2]{A(1,4)_2)},\, \stgroup[A(1, 6\phi)_1]{A(1,6)_1},\, \stgroup[A(1, 12\phi)]{A(1,12)},\, \stgroup[A(2, 4\phi)]{A(2,4)},\, \stgroup[A(3, 2\phi)]{A(3,2)},\, \stgroup[A(3, 4\phi)]{A(3,4)},\, \stgroup[A(3, 6\phi)]{A(3,6)}
\]
admit nonsplit nonstandard extensions.
\end{proposition}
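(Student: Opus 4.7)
The standard-extension claim is immediate: every group $H$ in Table~\ref{table: abelian subgroups} is generated by diagonal matrices $D(u,v,w)$ with $u,v,w \in \Q$. Since $\overline{D(u,v,w)} = D(-u,-v,-w) = D(u,v,w)^{-1} \in H$, every such $H$ is stable under complex conjugation, and so $J(H) = \langle H, J \rangle$ is a well-defined subgroup of $J(\SU(3))$ fitting into the short exact sequence $1 \to \pm H \to J(H) \to \cyc 2 \to 1$.

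For the nonstandard extensions, the plan is to work through Table~\ref{table: abelian subgroups} case by case following \S\ref{section: standard and nonstandard extensions}. For each $H$ one determines the normalizer $N_H$ in $\SU(3)$, the set $I_H$ of classes in $N_H/H$ that map to their inverses under complex conjugation, the orbits of $N_H/H$ acting on $I_H$ by the twisted conjugation $g \mapsto \overline{h}^{-1} g h$, and the split-or-nonsplit classification of each nonidentity orbit via the criterion $\overline{g} g \in \{\overline{h} h : h \in H\}$. Whenever $N_H^0$ equals the full diagonal torus, that is, whenever $H$ contains some element with three distinct eigenvalues, the simplification of Remark~\ref{extension analysis} applies: the data collapses to $I'_H/N'_H$ and the $N'_H$ column of Table~\ref{table: abelian subgroups} immediately yields one nonstandard extension when $N'_H \simeq \cyc 2$ or $\sym 3$, and none when $N'_H$ is trivial or $\simeq \alt 3$ (the unique $\alt 3$ entry being $\stgroup[A(1,7\phi)]{A(1,7)}$). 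In the $\cyc 2$ and $\sym 3$ cases I would verify splitness by exhibiting a representative $g$ that is a real permutation matrix (possibly scaled by a real diagonal) with $\overline{g} g = 1$; this accounts for all fifteen entries of the split list.

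The main obstacle is accounting for the nine nonsplit extensions, all of which lie outside the scope of Remark~\ref{extension analysis} and therefore demand a direct analysis of the full set $I_H$. The three groups $\stgroup[A(1,2\phi)]{A(1,2)}$, $\stgroup[A(1,4\phi)_1]{A(1,4)_1}$, and $\stgroup[A(1,6\phi)_1]{A(1,6)_1}$ have every generator carrying a repeated eigenvalue, so $H$ lies in a proper Levi subgroup $\Unitary(1) \times \Unitary(2) \subset \SU(3)$ and $N_H^0$ strictly contains the diagonal torus; the extra component of $I_H$ is represented by an antiholomorphic involution of the repeated-eigenvalue block, yielding $\overline{g} g = -1 \notin \{\overline{h} h : h \in H\}$. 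The remaining six groups $\stgroup[A(1,4\phi)_2]{A(1,4)_2}$, $\stgroup[A(1,12\phi)]{A(1,12)}$, $\stgroup[A(2,4\phi)]{A(2,4)}$, $\stgroup[A(3,2\phi)]{A(3,2)}$, $\stgroup[A(3,4\phi)]{A(3,4)}$, and $\stgroup[A(3,6\phi)]{A(3,6)}$ appear in both the split and the nonsplit lists; for these, the full set $I_H$ contains a second orbit beyond the one captured by $I'_H/N'_H$, represented by multiplying the split-case permutation representative by a carefully chosen diagonal element of $N_H^0$ whose norm falls outside $\{\overline{h} h : h \in H\}$. The split/nonsplit test in every case reduces to a finite arithmetic check on the subset $\{\overline{h} h : h \in H\}$ of the diagonal torus, which I would perform explicitly and cross-check against the \Gap{} and \Magma{} scripts of \S\ref{subsec:consistency checks}.
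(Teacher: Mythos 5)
Your proposal misuses Remark~\ref{extension analysis} in a way that is internally inconsistent. The remark explicitly requires that $N_H^0$ be contained in the diagonal torus \emph{and centralize all of $N_H$}, and the paper's proof opens by pointing out that this second hypothesis fails for most abelian $H$: once $N_H^0$ equals the diagonal torus, the permutation symmetries in $N_H$ are certainly not centralized by it. This is not a technicality; the whole point of the direct analysis in the paper is that the bijection between $I'_H/N'_H$ and connected components of $I_H$ breaks. You can see the breakdown concretely in the six groups you flag as lying in both the split and nonsplit lists: they all have $N'_H \simeq \cyc 2$, so a blind application of the remark would predict exactly one nonstandard extension each, contradicting the existence of both a split and a nonsplit extension. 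You try to patch this by asserting a ``second orbit beyond the one captured by $I'_H/N'_H$,'' but under the remark's conclusion there cannot be such an orbit --- the components of $I_H$ are \emph{in bijection} with $I'_H$-orbits, not merely mapped to them --- so invoking the remark and then asserting extra orbits is self-contradictory.

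The correct route (as the paper takes) is to bypass the remark entirely and analyze $I_H$ by hand. In the cases where $N_H^0$ is the diagonal torus, one parametrizes the relevant normalizing elements $g$ as anti-diagonal matrices with $\overline{g}g = D(0, b\overline{c} - c\overline{b})$-type diagonals, and shows that the components of $I_H$ are in bijection with the quotient of $H \cap \{D(0,u,-u)\}$ by the image of the map $D(u,v,w) \mapsto D(0,v-w,w-v)$ on $H$; this quotient has order 2 precisely for the six groups above, producing the additional nonsplit extensions. Separately, the stated formula $\overline{g}g = -1$ for the nonsplit extensions of $A(1,2\phi)$, $A(1,4\phi)_1$, $A(1,6\phi)_1$ is wrong: $-I_3$ has determinant $-1$ and is not in $\SU(3)$. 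The correct value is $\overline{g}g = D(0,\tfrac{1}{2},\tfrac{1}{2}) = \mathrm{Diag}(1,-1,-1)$, obtained from the block permutation $g$ with a sign; this is what actually distinguishes these extensions from being split ($\overline{h}h = 1$ for all diagonal $h$) and also explains why $A(1,3\phi)$ is excluded --- there $D(0,\tfrac{1}{2},\tfrac{1}{2}) \notin H$, so only one component of $I_H$ survives.
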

\begin{proof}
These groups are all invariant under complex conjugation, and so admit standard extensions.
To identify nonstandard extensions, we cannot apply Remark~\ref{extension analysis} because $N_H^0$ does not centralize $N_H$ in most cases; we thus make a direct analysis. To begin with,
for $H= \stgroup[A(1,1\phi)]{A(1,1)}$, $N_H = N_H^0 = \SU(3)$ and $I_H$ consists of all symmetric matrices in $\SU(3)$; thus~$I_H$ is connected and $H$ admits no nonstandard extension.

For the groups
\[
H = \stgroup[A(1, 2\phi)]{A(1,2)},\, \stgroup[A(1, 3\phi)]{A(1,3)},\, \stgroup[A(1,4\phi)_1]{A(1,4)_1},\, \stgroup[A(1,6\phi)_1]{A(1,6)_1},
\]
we have $N_H = N_H^0 = \SU(3) \cap (\Unitary(1) \times \Unitary(2))$. 
For any $g \in N_H$, the top-left entry of  $\overline{g} g$ is~$1$;
consequently, if $\overline{g}g \in H$ then
$\overline{g} g \in \{D(0,0,0), D(0,\tfrac{1}{2}, \tfrac{1}{2})\} \subseteq \stgroup[A(1,2\phi)]{A(1,2)}$ and each option accounts for a single connected component of $I_H$.
For $H = \stgroup[A(1,3\phi)]{A(1,3)}$, the second component does not occur because $D(0, \tfrac{1}{2}, \tfrac{1}{2}) \notin H$;
consequently, $H$ admits no nonstandard extension.
In the other three cases, $I_H$ contains a second connected component, so $H$ admits a nonsplit nonstandard extension corresponding to the class of 
\begin{equation} \label{eq:nonsplit extension generator1}
g = \begin{pmatrix}
1 & 0 & 0 \\
0 & 0 & 1 \\
0 & -1 & 0
\end{pmatrix}.
\end{equation}
In the remaining cases, $N_H^0$ is the diagonal torus, and $N_H$ is generated by $N_H^0$ plus a group of symmetries contained in $\sym 3$ and isomorphic to $N_H/H$. 
Note that the entire image of $N_H$ in~$I_H$ is a single orbit, corresponding to the standard extension;
moreover, the image in $I_H$ of an element of $N_H$ in the coset of an order-$3$ symmetry in $\sym 3$
cannot map to its inverse under complex conjugation.
Consequently, if the group of symmetries is either trivial or equal to~$\alt 3$, then $H$ admits no nonstandard extensions; this covers the cases $H = \stgroup[A(1,6\phi)_2]{A(1,6)_2}, \stgroup[A(1, 7\phi)]{A(1,7)}$.
In all other cases, every nonstandard extension arises from a matrix of the form
\[
g = \begin{pmatrix}
-a & 0 & 0 \\
0 & 0 & -c \\
0 & -b & 0
\end{pmatrix},
\]
where $abc=1$; this matrix satisfies
\[
\overline{g} g = \begin{pmatrix}
1 & 0 & 0 \\
0 & b\overline{c}& 0 \\
0 & 0 & c\overline{b}
\end{pmatrix}.
\]
It follows that the connected components of $I_H$ 
are in bijection with the quotient of the group $H \cap \{D(0,u,-u): u \in \Q\}$
by the image of $H$ under the map $D(u,v,w) \mapsto D(0,v-w,w-v)$.
In particular, we always get a split nonstandard extension by taking $g$ to be the matrix $T_1$ of 
\eqref{eq:type B conjugation matrices};
in addition, we obtain nonsplit nonstandard extensions for
\[
H = \stgroup[A(3, 2\phi)]{A(3,2)},\, \stgroup[A(3, 4\phi)]{A(3,4)},\, \stgroup[A(3, 6\phi)]{A(3,6)}. 
\]
by taking $g$ as in \eqref{eq:nonsplit extension generator1}, so that $u = \tfrac{1}{2}$, and for 
\[
H = \stgroup[A(1,4\phi)_2]{A(1,4)_2},\, \stgroup[A(1, 12\phi)]{A(1,12)},\, \stgroup[A(2, 4\phi)]{A(2,4)},
\]
by taking
\[
g = \begin{pmatrix}
i & 0 & 0 \\
0 & 0 & i \\
0 & 1 & 0
\end{pmatrix},
\]
so that $u = \tfrac{1}{4}$.
\end{proof}

\begin{proposition}\label{extensions of type B}
Of the $18$ groups appearing in Table~\ref{table: dihedral subgroups}, each admits a standard extension;
the $12$ groups 
\begin{gather*}
\stgroup[B(1, 4\phi)_2]{B(1,4)_2},\ \stgroup[B(1, 12\phi)]{B(1,12)},\
\stgroup[B(2, 4\phi)]{B(2,4)},\ \stgroup[B(3, 2\phi)]{B(3,2)},\ \stgroup[B(3, 4\phi)]{B(3,4)},\ \stgroup[B(3, 6\phi)]{B(3,6)}, \\
\stgroup[B(1, 4\phi; 2)_2]{B(1,4;2)_2},\ \stgroup[B(1, 12\phi; 2)]{B(1,12;2)},\
\stgroup[B(3, 2\phi; 2)]{B(3,2;2)},\
\stgroup[B(3, 6\phi; 2)]{B(3,6;2)},\
\stgroup[B(2, 4\phi; 4)]{B(2,4;4)},\
\stgroup[B(3, 4\phi; 4)]{B(3,4;4)}
\end{gather*}
admit split nonstandard extensions; and none admits a nonsplit nonstandard extension.
\end{proposition}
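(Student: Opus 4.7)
The plan is to apply the framework of \S\ref{section: standard and nonstandard extensions} to each of the 18 groups $H$ in turn. As observed in the proof of Proposition~\ref{proposition: dihedral subgroups}, every presentation in Table~\ref{table: dihedral subgroups} is invariant under complex conjugation; hence each such $H$ admits the standard extension $J(H)$.

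For the analysis of nonstandard extensions, I would first compute $N_H^0$. Since every $H$ under consideration contains (a conjugate of) the reflection $T_t \in \{T_1, T_2, T_4\}$, which swaps the second and third diagonal coordinates up to a diagonal twist, the centralizer of $H$ in the diagonal torus is at most the one-parameter subgroup of matrices $D(u,v,v)$. Because $H$ contains a diagonal element with three distinct eigenvalues, this coincides with the full centralizer of $H$ in $\SU(3)$, yielding $N_H^0$. I would then verify that $N_H^0$ centralizes all of $N_H$, so that the criterion of Remark~\ref{extension analysis} applies.

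Next, for each $H$ I would identify the finite quotient $N'_H = N_H/\langle H, N_H^0 \rangle$ explicitly. Any element of $N_H$ normalizes the diagonal subgroup $H'$ and hence lies in the normalizer of the diagonal torus, so its image in $N'_H$ is represented by an element of $\sym 3$ subject to the further constraint of normalizing $T_t$. This confines $N'_H$ to a subgroup of the stabilizer of the transposition $(2\,3)$ in $\sym 3$, which is either trivial or $\cyc 2$. In the nontrivial case, I would produce a concrete representative $g \in N_H$ whose class in $N'_H$ is inverted by complex conjugation and check whether $\overline{g}g$ can be written as $\overline{h}h$ for some $h \in H$; this decides between split and nonsplit nonstandard extensions.

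For the 12 groups claimed to admit split nonstandard extensions, I would exhibit an explicit $g$ (built from $T_t$ and a generator of $H'$, often of the shape obtained in the proof of the abelian case by multiplying a normalizing diagonal with the permutation $(2\,3)$) together with the corresponding $h$. For the remaining 6 groups, I would show that either $N'_H$ is trivial, or its nontrivial class is already inverted by conjugation within $H$ itself, so the unique orbit of $N'_H$ on $I'_H$ is the one corresponding to $J(H)$. The universal absence of nonsplit nonstandard extensions will follow because whenever $I'_H$ contains a nontrivial class, the image of the map $h \mapsto \overline{h}h$ on $H$ -- enlarged by access to the reflection $T_t$ -- already contains the relevant diagonal element $\overline{g}g$. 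The main obstacle will be the case distinction for the twisted groups $B(*,*;t)$ with $t \in \{2,4\}$, where the form of $T_t$ makes the explicit form of $\overline{g}g$ subtler and several candidate $g$ must be shown to give the same orbit; cross-checking against the \Sage{} and \Magma{} scripts referenced in \S\ref{subsec:preliminaries} would provide independent verification.
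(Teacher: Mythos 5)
Your proposal has a genuine gap in the computation of $N'_H$. You argue that every element of $N_H$ normalizes the diagonal abelian subgroup $H' = A(m,n\phi)_*$, and hence normalizes the diagonal torus and lies over a symmetry in $\sym 3$ that also stabilizes $(2\,3)$, giving $N'_H \subseteq \cyc 2$. This step is false for the three groups $\stgroup[B(2,4\phi)]{B(2,4)}$, $\stgroup[B(1,4\phi;2)_2]{B(1,4;2)_2}$, $\stgroup[B(1,12\phi;2)]{B(1,12;2)}$. Each of these projects under the map $\pi$ to the quaternion group $\langle\mathbf{i},\mathbf{k}\rangle \subset \SU(2)$, whose normalizer is the binary octahedral group $2O$, and $2O$ permutes the three cyclic index-2 subgroups of the quaternion group. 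Lifting back, $N_H$ does \emph{not} normalize $A(m,n\phi)_*$, and one finds $N'_H \simeq \sym 3$, not a subgroup of $\cyc 2$. Your method thus gives the wrong group $N'_H$ in these cases; the final count of extensions is only accidentally correct because $\sym 3$ (acting on its own set of transpositions, and with inner-only automorphisms) also yields exactly one nonstandard extension.

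More broadly, even in the remaining fifteen cases, the statement that $N_H$ normalizes $A(m,n\phi)_*$ requires an argument: the paper establishes it either by observing that the Sylow $3$-subgroup of $H$ is unique and noncentral (when $3 \mid m$), so $N_H$ normalizes both it and its centralizer $A(m,n\phi)_*$, or by a direct characteristic-subgroup check for $B(1,4\phi)_2, B(1,8\phi)_1, B(1,12\phi), B(4,4\phi), B(2,4\phi;4)$. This distinction matters because several groups in the table have two different presentations as $B(m,n\phi;t)_*$ (e.g., $\stgroup{B(1,4)_2} \simeq B(2,2\phi)$), with different choices of abelian diagonal subgroup, so uniqueness of the subgroup is not automatic and is tied to the choice of label. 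Your proposal does not address this, and without it the reduction to a computation in the diagonal torus is not justified.

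Finally, your planned argument that no nonsplit extensions arise --- ``the image of $h \mapsto \overline{h}h$, enlarged by access to $T_t$, already contains $\overline{g}g$'' --- is not developed enough to assess; the paper instead verifies splitness directly by showing that in each nontrivial case the nonstandard extension is generated by a class in $I_H$ of a diagonal element $D(\tfrac14,\tfrac14,\tfrac12)$ or $D(\tfrac12,0,\tfrac12)$ that squares to an element already realized as $\overline{h}h$ for $h \in H$. You would need to carry out this check uniformly to close the argument.
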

\begin{proof} 
These groups are invariant under complex conjugation and admit standard extensions.
For each group $H = B(m,n\phi; k)_*$ in the list,
$N_H^0$ is the torus $\{D(2u,-u,-u): u \in \Q\}$; this torus is the image of $\Unitary(1) \times \{1\}$ under $\pi$.
Moreover, $N_H$ is contained in $\SU(3) \cap (\Unitary(1) \times \Unitary(2))$ and thus centralized by $N_H^0$;
we may therefore apply Remark~\ref{extension analysis}.

We first treat the exceptional cases 
\[
H = \stgroup[B(2,4\phi)]{B(2,4)},\, \stgroup[B(1,4\phi;2)_2]{B(1,4;2)_2},\, \stgroup[B(1,12\phi;2)]{B(1,12;2)}.
\]
These groups all project to the subgroup $\langle \mathbf{i}, \mathbf{k} \rangle$ of $\SU(2)$, which has 
normalizer $2O$; it follows that in all three cases, $N_H = \langle N_H^0, 2O \rangle$ and $N'_H \simeq \sym 3$.
By Remark~\ref{extension analysis}, this implies the existence of a split nonstandard extension
corresponding to the class in $I_H$ of $D(\tfrac{1}{4}, \tfrac{1}{4}, \tfrac{1}{2})$.

In the other cases, every element of $N_H$ normalizes $A(m,n\phi)_*$.
To see this, note first that when $m$ is divisible by $3$,
the Sylow $3$-subgroup of $H$ is unique and not central,
so $N_H$ normalizes both the Sylow $3$-subgroup of $H$ and its centralizer in $H$
(namely $A(m,n\phi)_*$).
In the remaining cases 
\[
H =  \stgroup[B(1,4\phi)_2]{B(1,4)_2},\, \stgroup[B(1,8\phi)_1]{B(1,8)_1},\, \stgroup[B(1, 12\phi)]{B(1,12)},\, \stgroup[B(4,4\phi)]{B(4,4)},\, \stgroup[B(2,4\phi;4)]{B(2,4;4)},
\]
one may check by direct calculation that $A(m,n\phi)_*$ is a characteristic subgroup of $H$.
(Note that this does not apply to the alternate presentations $\stgroup[B(2,2\phi)]{B(1,4)_2},\stgroup[B(2,6\phi)]{B(1,12)}, \stgroup[B(1,8\phi)_2]{B(2,4;4)}$
of the groups $\stgroup[B(1,4\phi)_2]{B(1,4)_2}, \stgroup[B(1,12\phi)]{B(1,12)}, \stgroup[B(2,4\phi;4)]{B(2,4;4)}$; this explains our choice of labels in these cases.) 

By the previous paragraph, $N_H$ is contained in the subgroup of $\SU(3)$
generated by the diagonal torus and the symmetries listed in Table~\ref{table: abelian subgroups}.
(Note that all cases where the connected part of the normalizer of $A(m,n\phi)_*$ in $\SU(3)$ is larger than
the diagonal torus fail to contribute to Table~\ref{table: dihedral subgroups}, because the resulting groups are all abelian.) Moreover, symmetries in $\sym 3$ but not in $\cyc 2$ fail to normalize $T_j$ up to scalars. 
It follows that $N'_H$ is generated by the intersection of $N_H$ with the diagonal torus.

An element $g = D(u,v,w) \in \SU(3)$ belongs to $N_H$ if and only if $H$ contains
\[
T_j^{-1} g^{-1} T_j g = D(-u, -w, -v) D(u,v,w) = D(0, v-w, w-v);
\]
moreover, this element is trivial in $H$ if and only if $g \in N_H^0$.
Consequently, $N'_H$ is equal to the intersection of $H$ with the torus $\{D(0,u,-u): u \in \Q\}$
modulo the image of $A(m,n\phi)_*$ under $D(u,v,w) \mapsto D(0,v-w,w-v)$.

If $A(m,n\phi)_*$ has symmetries by $\sym 3$, then $N'_H$ is trivial because $D(0,u,-u)$ is the image of $D(-u,u,0)$;
consequently, the groups
\[
H = \stgroup[B(3,1\phi)]{B(3,1)},\, \stgroup[B(3,3\phi)]{B(3,3)},\, \stgroup[B(4,4\phi)]{B(4,4)},\, \stgroup[B(6,2\phi)]{B(6,2)},\, \stgroup[B(6,6\phi)]{B(6,6)},
\]
admit no nonstandard extension.
For $H = \stgroup[B(1,8\phi)_1]{B(1,8)_1}$, $N'_H$ is trivial because the image of $D(\tfrac{1}{2},\tfrac{1}{8}, \tfrac{3}{8}) \in H$ is $D(0, \tfrac{3}{4}, \tfrac{1}{4})$, which generates the intersection of $\stgroup[A(1,8\phi)_1]{A(1,8)_1}$ with the torus $\{D(0,u,-u): u \in \Q\}$. Hence $H$ admits no nonstandard extension.

The groups
\[
H = \stgroup[B(1,4\phi)_2]{B(1,4)_2},\, \stgroup[B(1,12\phi)]{B(1,12)},\, \stgroup[B(2,4\phi;4)]{B(2,4;4)}
\]
all contain $D(0,\tfrac{1}{4},\tfrac{3}{4})$ (whose image is $D(0,\tfrac{1}{2}, \tfrac{1}{2})$) but not $D(0, \tfrac{1}{8}, \tfrac{7}{8})$,
and $N'_H \simeq \cyc 2$ generated by the image of $D(\tfrac{1}{4},\tfrac{1}{4}, \tfrac{1}{2})$; hence $H$ admits a split nonstandard extension.
The groups
\[
H = \stgroup[B(3,2\phi)]{B(3,2)},\, \stgroup[B(3,4\phi)]{B(3,4)},\, \stgroup[B(3,6\phi)]{B(3,6)},\, \stgroup[B(3,2\phi;2)]{B(3,2;2)},\, \stgroup[B(3,6\phi;2)]{B(3,6;2)},\, \stgroup[B(3,4\phi;4)]{B(3,4;4)},
\]
all contain $D(0,\tfrac{1}{2},\tfrac{1}{2})$ but not $D(0, \tfrac{1}{4}, \tfrac{3}{4})$, so 
$N'_H \simeq \cyc 2$ generated by the image of $D(\tfrac{1}{2},0, \tfrac{1}{2})$; hence $H$ admits a split nonstandard extension.
\end{proof}

\begin{proposition}\label{tetrahedral extensions}
Of the $6$ groups listed in Proposition~\ref{binary tetrahedral}
and Proposition~\ref{binary octahedral}, all except $\stgroup[B(T,1\phi;1)]{B(T,1;1)}$ admit standard extensions; the $4$ groups
\[
\stgroup[B(T,1\phi)]{B(T,1)},\quad \stgroup[B(T,2\phi)]{B(T,2)},\quad \stgroup[B(T,3\phi)]{B(T,3)},\quad \stgroup[B(T,1\phi;1)]{B(T,1;1)}
\]
admit split nonstandard extensions; and none admits a nonsplit nonstandard extension.
\end{proposition}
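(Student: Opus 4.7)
The plan is to apply the framework of \S\ref{section: standard and nonstandard extensions} to each of the six groups. The presentations in Proposition~\ref{binary tetrahedral} and Proposition~\ref{binary octahedral} are invariant under complex conjugation with the single exception of $\stgroup[B(T,1\phi;1)]{B(T,1;1)}$: for the latter, complex conjugation acts as inversion on the $\Unitary(1)$ factor and as conjugation by $\mathbf{j}$ (hence trivially on $2T/Q$) on the $2T$ factor, with the combined effect of swapping the two distinct fiber products $\cyc{6\phi} \times_{\cyc 3} 2T$. Consequently, the five other groups are stable under complex conjugation and each admits the standard extension $J(H)$, while $\stgroup[B(T,1\phi;1)]{B(T,1;1)}$ is not stable and admits no standard extension.

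For nonstandard extensions, I would first observe that $H$ has a unique $1$-dimensional invariant subspace of $\C^3$ in each case (the first coordinate axis, on which the $2T$ or $2O$ factor acts trivially while the remaining $2$-dimensional subspace carries an irreducible representation), so $N_H \subseteq \SU(3) \cap (\Unitary(1) \times \Unitary(2)) = \pi(\Unitary(1) \times \SU(2))$. The identity component $N_H^0 = \pi(\Unitary(1) \times \{1\})$ lies in the diagonal torus and centralizes $N_H$, so Remark~\ref{extension analysis} applies and reduces the problem to computing $N'_H$. For the three groups $\stgroup[B(T,n\phi)]{B(T,n)}$ with $n=1,2,3$, the fact that the normalizer of $2T$ in $\SU(2)$ equals $2O$ yields $N'_H \simeq \cyc 2$; Remark~\ref{extension analysis} then produces a single nonstandard extension represented by $g = \pi(u, s)$ for any $s \in 2O \setminus 2T$ and suitable $u$, and a direct computation in the quaternions shows $\overline{g}g \in \{\overline{h}h : h \in H\}$, so the extension is split. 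For $\stgroup[B(O,n\phi)]{B(O,n)}$ with $n=1,2$, the self-normalization of $2O$ in $\SU(2)$ gives $N_H = \langle N_H^0, H\rangle$, so $N'_H$ is trivial and no nonstandard extension exists.

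For $\stgroup[B(T,1\phi;1)]{B(T,1;1)}$, the ordinary normalizer calculation forces $N'_H$ trivial because elements of $2O \setminus 2T$ send one fiber product to the other rather than preserving it. I would then invoke Remark~\ref{change of conjugation action}: taking $g_0 = \pi(1, s)$ with $s = \tfrac{1}{\sqrt{2}}(1 + \mathbf{i}) \in 2O \setminus 2T$, conjugation by $g_0$ reverses the swap of fiber products effected by complex conjugation, so $Jg_0$ normalizes $H$. After this twist, the analogous computation shows that the twisted $N'_H$ is trivial and yields a unique extension; the identity $\mathbf{j} s \mathbf{j}^{-1} \cdot s = \tfrac{1}{2}(1 - \mathbf{i})(1 + \mathbf{i}) = 1$ gives $\overline{g_0}g_0 = 1$, so this extension is split.

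The main obstacle will be the treatment of $\stgroup[B(T,1\phi;1)]{B(T,1;1)}$, where one must carefully reconcile three ingredients---the fiber-product condition defining $H$, the outer action of $2O/2T$ on $2T/Q \simeq \cyc 3$ (which is inversion), and the twist of complex conjugation by $g_0$---in order to see that $Jg_0$ genuinely normalizes $H$ and gives rise to a split rather than nonsplit extension. The remaining five cases are comparatively routine once $N'_H$ is pinned down, as the splitness check in each case reduces to an elementary computation of $\overline{g}g$ using the standard quaternionic presentation of $2O$.
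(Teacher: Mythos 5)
Your proposal is correct and follows essentially the same route as the paper: both compute $N'_H$ via the normalizer of $2T$ or $2O$ inside $\SU(2)$ and apply Remark~\ref{extension analysis} (with the twist from Remark~\ref{change of conjugation action} for $\stgroup[B(T,1\phi;1)]{B(T,1;1)}$, using an element whose $\SU(2)$-component lies in $2O\setminus 2T$ to restore normalization by the complex-conjugation coset). The only cosmetic difference is the choice of representative: the paper uses $g = D(\tfrac14,\tfrac14,\tfrac12)$ while you take $g_0 = \pi(1,s)$ with $s = \tfrac{1}{\sqrt 2}(1+\mathbf{i})$; these differ by an element of the diagonal torus and yield the same extension, and your choice makes the splitness computation $\overline{g_0}g_0 = 1$ transparent.
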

\begin{proof} 
As in Proposition~\ref{extensions of type B},
$N_H^0$ is the image of $\Unitary(1) \times \{1\}$ under $\Unitary(1) \times \SU(2) \to \SU(3)$
and $N_H$ is contained in $\SU(3) \cap (\Unitary(1) \times \Unitary(2))$;
we may thus apply Remark~\ref{extension analysis}.

For each of the groups 
\[
H=\stgroup[B(T,1\phi)]{B(T,1)},\, \stgroup[B(T,2\phi)]{B(T,2)},\, \stgroup[B(T,3\phi)]{B(T,3)}
\]
listed in Proposition~\ref{binary tetrahedral}, 
$H$ maps to the subgroup $2T$ of $\SU(2)$, so $N_H = \langle N_H^0, 2O \rangle$ and $N'_H \simeq \cyc{2}$.
Consequently, each of these $3$ groups admits a split nonstandard extension
corresponding to the class of $D(\tfrac{1}{4}, \tfrac{1}{4}, \tfrac{1}{2})$ in $I_H$.
By the same token, for each of the groups 
\[
H = \stgroup[B(O,1\phi)]{B(O,1)},\, \stgroup[B(O,2\phi)]{B(O,2)}
\]
listed in Proposition~\ref{binary octahedral}, 
$H$ maps to the subgroup $2O$ of $\SU(2)$, so $N_H = \langle N_H^0, 2O \rangle$ and $N'_H$ is trivial.
Consequently, neither of these $2$ groups admits a nonstandard extension.

The case $H = \stgroup[B(T,1\phi;1)]{B(T,1;1)}$ is anomalous; see Remark~\ref{remark: BT11} below.
\end{proof}

\begin{remark} \label{remark: BT11}
In Proposition~\ref{tetrahedral extensions}, the case $H = \stgroup[B(T,1\phi;1)]{B(T,1;1)}$ requires special dispensation because the
given presentation of $H$ is not stable under complex conjugation, so there is no standard extension.
(The action of $J$ is nontrivial on $\cyc 3$ but trivial on $2T/Q$, so the two possible fiber products are interchanged by conjugation.)

That aside, $N_H^0$ is the image of
$\Unitary(1) \times \{1\}$ under the map $\Unitary(1) \times \SU(2) \to \SU(3)$
and~$N_H$ is contained in $\SU(3) \cap (\Unitary(1) \times \Unitary(2))$, so
Remark~\ref{change of conjugation action} applies with $g = D(\tfrac{1}{4}, \tfrac{1}{4}, \tfrac{1}{2})$.
However, in this case, the projection of $N_H$ to $\SU(2)$ does not hit the nontrivial coset of $T$ in~$O$,
so~$N'_H$ is trivial. Consequently, $H$ admits a unique extension; since $(Jg)^2$ is the identity,
we categorize this as a split nonstandard extension of $H$.
\end{remark}

\begin{proposition}\label{type C extensions}
Of the $7$ groups appearing in Proposition~\ref{A3 extensions},
each admits a standard extension;
the $5$ groups
\[
\stgroup[C(2,2\phi)]{C(2,2)},\quad \stgroup[C(3,3\phi)]{C(3,3)},\quad \stgroup[C(4,4\phi)]{C(4,4)},\quad \stgroup[C(6,2\phi)]{C(6,2)},\quad \stgroup[C(6,6\phi)]{C(6,6)}
\]
admit split nonstandard extensions; and none admits a nonsplit nonstandard extension.
\end{proposition}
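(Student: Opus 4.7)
The plan is to apply the general framework of Remark~\ref{extension analysis}. Each presentation $H = C(m,n\phi) = \langle A(m,n\phi), S\rangle$ consists of diagonal matrices together with the real permutation matrix $S$, so every $H$ in the list is stable under complex conjugation and admits the standard extension $J(H) = \langle H, J\rangle$. Since $H$ contains the noncentral element $S$, its centralizer in $\SU(3)$ reduces to the scalars $\mu_3 \subseteq H$; hence $N_H^0 = \{I\}$ and Remark~\ref{extension analysis} applies with $N'_H = N_H/H$.

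For each of the seven groups, $N_H$ lies inside the normalizer $T \rtimes \Sigma$ of $A(m,n\phi)$ in $\SU(3)$, where $T$ is the diagonal torus and $\Sigma \subseteq \sym 3$ is the symmetry group listed in Table~\ref{table: abelian subgroups}. The additional requirement that a torus element $t$ normalize $\langle S\rangle \subset H$ reduces to $\mathrm{diag}(t_1/t_3, t_2/t_1, t_3/t_2) \in A(m,n\phi)$, which contributes a $\cyc 3$ to $N'_H$; the $\Sigma/\alt 3$ quotient contributes the remainder. This gives $N'_H \cong \cyc 3$ for $C(1,7\phi)$ and $|N'_H| = 6$ for each of $C(2,2\phi), C(3,3\phi), C(4,4\phi), C(6,2\phi), C(6,6\phi)$, the latter generated by the above $\cyc 3$ together with the real transposition $T_1$.

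For $C(1,7\phi)$, $N'_H \cong \cyc 3$ has odd order, so Remark~\ref{extension analysis} rules out any nonstandard extension. For each of the five ``symmetric'' cases, the $\cyc 3$ part of $N'_H$ lies in the torus and is therefore inverted by complex conjugation, while $T_1$ is fixed by complex conjugation. The orbit analysis of Remark~\ref{extension analysis} then produces exactly two orbits in $I'_H$: the identity orbit (the standard extension) and the orbit represented by $T_1$ (a unique nonstandard extension). The latter is split because $\overline{T_1}\,T_1 = T_1^2 = I_3 = \overline{h}\,h$ with $h = I_3 \in H$.

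The main obstacle is $H = C(3,1\phi)$, the Heisenberg group of order $27$, whose full normalizer in $\SU(3)$ is the Hessian group $E(216\phi)$ of order $648$. Here $N'_H \cong \SL_2(\F_3)$ is much larger than in the other cases. Complex conjugation acts on $H/Z(H) \cong \F_3^2$ as $\mathrm{diag}(-1,1)$ (inverting the diagonal generator of $H$ and fixing $S$), inducing the nontrivial element of $\operatorname{Out}(\SL_2(\F_3))$. Identifying $I'_H$ via $g \mapsto g \cdot \mathrm{diag}(-1,1)$ with the set of determinant-$(-1)$ involutions in $\GL_2(\F_3)$ converts the twisted $\SL_2(\F_3)$-action into ordinary $\SL_2(\F_3)$-conjugation; a centralizer computation then shows that the $12$ reflections form a single orbit (the centralizer of any reflection in $\SL_2(\F_3)$ has order $2$, giving class size $12$). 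Since the identity of $N'_H$ lies in this orbit, every extension of $C(3,1\phi)$ is $\USp(6)$-conjugate to the standard one, so no nonstandard extension exists. In the five cases admitting a nonstandard extension, the uniqueness established above rules out any further nonsplit nonstandard extension.
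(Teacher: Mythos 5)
Your overall scheme matches the paper's (bound $N'_H$, then apply Remark~\ref{extension analysis}), and your handling of $C(3,1\phi)$ is a nice alternative route: you translate $I'_H$ directly into the determinant-$(-1)$ involutions of $\GL_2(\F_3)$ and observe they form a single $\SL_2(\F_3)$-conjugacy class, whereas the paper first quotients $N'_H \simeq 2T$ down to $N''_H \simeq Q$ via the odd-prime-quotient bullet of Remark~\ref{extension analysis} and then performs the orbit calculation inside the quaternion group. Both are valid; yours is more self-contained but uses the identification of $N'_H$ with the symplectic group $\Sp_2(\F_3) = \SL_2(\F_3)$ acting on $H/Z(H)$, which you should state explicitly.

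There is, however, a genuine gap at the start. You assert that ``for each of the seven groups, $N_H$ lies inside the normalizer $T \rtimes \Sigma$ of $A(m,n\phi)$ in $\SU(3)$.'' First, this is outright false for $C(3,1\phi)$, whose normalizer is the entire Hessian group $\stgroup{E(216\phi)}$; you implicitly concede this when you later call $C(3,1\phi)$ ``the main obstacle,'' but the opening sentence as written is a contradiction. Second, and more substantively, even for the remaining six groups the inclusion $N_H \subseteq N_{\SU(3)}(A(m,n\phi))$ requires proof, not assertion: one must show that $A(m,n\phi)$ is \emph{characteristic} in $H$. The paper does this by case analysis — $A(1,7\phi)$ is the unique Sylow $7$-subgroup of $C(1,7\phi)$; for $C(2,2\phi)$, $C(4,4\phi)$, $C(6,2\phi)$, $C(6,6\phi)$ the Sylow $2$-subgroup is unique and not central, and $A(m,n\phi)$ is its centralizer in $H$; and for $C(3,3\phi)$ one first identifies a characteristic index-$9$ subgroup and takes its centralizer. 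Without this step the containment $N_H \subseteq T \rtimes \Sigma$ — and hence the identification of $N'_H$ as a group of order $3$ or $6$ — is unjustified, which is precisely why the analogous claim fails for $C(3,1\phi)$. A smaller issue: you leave it unclear whether the order-$6$ group $N'_H$ you construct is $\cyc 6$ or $\sym 3$; either is compatible with Remark~\ref{extension analysis} producing two orbits, but your ``exactly two orbits'' conclusion deserves a line distinguishing (or uniformly covering) the two possibilities.
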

\begin{proof} 
For each group $H = C(m,n\phi)$ in the list,
$A(m,n\phi)$ is centralized only by the diagonal torus, so the centralizer of $H$ in $\SU(3)$
is $\mu_3$. In particular, $N_H^0$ is trivial,
so Remark~\ref{extension analysis} applies with $N'_H = N_H/H$.

To analyze the case $H = \stgroup[C(3,1\phi)]{C(3,1)}$, we first note that $H$ coincides with the subgroup $\langle g_1, g_3^3, S = g_2^{-1} g_1 g_2 \rangle$ of $\stgroup[E(216\phi)]{E(216)}$. By checking that $g_2^{-1} S g_2 = g_1$ and $g_1^{-1} S g_1 = g_3^3 S$, we see that 
$N_H = \stgroup[E(216\phi)]{E(216)}$
and $N'_H \simeq 2T$; since $2T/Q \simeq \cyc 3$,
we may reduce consideration from $N'_H$ to $N''_H \simeq Q$
via Remark~\ref{extension analysis}. If we normalize the latter identification
so that $g_2$ corresponds to $\mathbf{i}$ and $g_3 g_2 g_3^{-1}$ maps to $\mathbf{j}$, then complex conjugation acts on $N''_H$ by
\[
\mathbf{i} \mapsto -\mathbf{i}, \qquad \mathbf{j} \mapsto \mathbf{k}, \qquad \mathbf{k} \mapsto \mathbf{j};
\]
hence $I''_H = \{\pm 1, \pm \mathbf{i}\}$ and the action is transitive.
(More precisely, $\mathbf{j}$ interchanges $\pm 1$ with $\pm \mathbf{i}$
while $\mathbf{k}$ interchanges $\pm 1$ with $\mp \mathbf{i}$.)
We conclude that $H$ admits no nonstandard extension.

In the remaining cases, $A(m,n\phi)$ is normalized by $N_H$. To see this, note that in the case
$H = \stgroup[C(1,7\phi)]{C(1,7)}$, the group $\stgroup[A(1,7\phi)]{A(1,7)}$ is the unique Sylow $7$-subgroup; for $H = \stgroup[C(3,3\phi)]{C(3,3)}$, we see that $\stgroup[A(3,1\phi)]{A(3,1)}$ is the unique index-$9$ normal subgroup (it being the inverse image of the center of $\stgroup{A(3,1)}$) and not central,
so its centralizer in $H$ equals $A(m,n\phi)$; while for 
\[
H = \stgroup[C(2,2\phi)]{C(2,2)},\, \stgroup[C(4,4\phi)]{C(4,4)},\, \stgroup[C(6,2\phi)]{C(6,2)},\, \stgroup[C(6,6\phi)]{C(6,6)},
\]
the Sylow $2$-subgroup of $H$ is unique and not central, so its centralizer in $H$ equals $A(m,n\phi)$.
It follows that $N_H$ is contained in the subgroup of $\SU(3)$ generated by the diagonal torus,~$S$, and~$T_1$.

An element $g = D(u,v,w) \in \SU(3)$ belongs to $N_H$ if and only if $H$ contains
\[
S^{-1} g^{-1} S g = D(-v, -w, -u) D(u,v,w) = D(u-v, v-w, w-u);
\]
note that any element of the diagonal torus can be represented in this fashion.
Moreover, $D(u,v,w) \mapsto D(u-v, v-w, w-u)$ defines a homomorphism from $A(m,n\phi)$ to itself
with kernel $\langle D(\tfrac{1}{3}, \tfrac{1}{3}, \tfrac{1}{3} \rangle \rangle$; hence
$N'_H$ consists of a copy of $\cyc 3$, times a copy of $\cyc 2$ generated by~$T_1$ in all cases
except $H = \stgroup[C(1,7\phi)]{C(1,7)}$.
Per Remark~\ref{extension analysis}, the factor of $\cyc 3$ may be ignored;
consequently, $H$ admits a standard extension in all of the cases except $H = \stgroup[C(1,7\phi)]{C(1,7)}$.
These extensions are all represented by the class of $T_1$ in $I_H$, and so are split.
\end{proof}

\begin{proposition} \label{type D extensions}
Of the $6$ groups appearing in Proposition~\ref{sym3 groups}, each admits a standard extension,
and none admits a (split or nonsplit) nonstandard extension.
\end{proposition}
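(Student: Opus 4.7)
The plan is to mimic the proof of Proposition~\ref{type C extensions}, exploiting the fact that in type (D) both $S$ and $T_1$ already lie inside $H$.

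First, each $H = D(m,n\phi)$ is stable under complex conjugation: the matrices $S$ and $T_1$ from \eqref{eq:permutation matrix S} and \eqref{eq:type B conjugation matrices} have real entries, and each abelian subgroup $A = A(m,n\phi)$ listed in Proposition~\ref{sym3 groups} has $\sym 3$-symmetries, so its generators in Table~\ref{table: abelian subgroups} are (up to inversion) invariant under complex conjugation. Hence the standard extension $J(H) = \langle H, J\rangle$ always exists.

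To rule out nonstandard extensions, I invoke Remark~\ref{extension analysis}. I first check $N_H^0 = \{1\}$: since $A$ contains a regular element of $\SU(3)$, the centralizer $Z(H)$ lies in the diagonal torus $T$, and any diagonal matrix commuting with $S$ must be a scalar, so $Z(H) = \mu_3 \subseteq H$ is finite, forcing $N_H^0$ (which centralizes $H$) to be trivial. A case-by-case Sylow analysis analogous to the proof of Proposition~\ref{type C extensions} then shows that $A$ is characteristic in $H$; consequently, $N_H$ is contained in the normalizer of $A$ in $\SU(3)$, which equals $\langle T, \sym 3\rangle$ by the $\sym 3$-symmetry of $A$. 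Since $\sym 3 = \langle S, T_1\rangle \subseteq H$, we conclude that $N_H = (N_H \cap T)\cdot \sym 3$ and $N'_H = N_H/H \cong (N_H \cap T)/A$.

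It remains to compute $(N_H \cap T)/A$. A direct matrix calculation shows that for $g = D(u,v,w) \in T$,
\[
gSg^{-1} = D(u-w,\, v-u,\, w-v)\cdot S, \qquad gT_1g^{-1} = D(0,\, v-w,\, w-v)\cdot T_1,
\]
so $g$ normalizes $H$ if and only if both $D(u-w, v-u, w-v) \in A$ and $D(0, v-w, w-v) \in A$. I expect to verify that in each of the six cases the subgroup $A^* \colonequals \{c \in \R/\Z : D(0,c,-c) \in A\}$ is cyclic of order $m$ and that the full $m$-torsion of $T$ already lies inside $A$; both of the above constraints are then captured precisely by the requirement that $u, v, w$ lie in a common coset of $A^*$. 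A short parameterization gives $|(N_H \cap T)/A| = m/n$, which lies in $\{1, 3\}$ for each of the six groups in the list. Since $N'_H$ is then of odd order in every case, Remark~\ref{extension analysis} rules out any nonstandard extension. Conceptually, the effect of adding $T_1$ to the type (C) group is to absorb the $\cyc 2$ factor that appeared in $N'_{C(m,n\phi)}$, leaving only an odd-order quotient; the main technical nuisance is the case-by-case verification that $A$ is characteristic in $H$.
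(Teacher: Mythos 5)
Your approach works for five of the six groups, but breaks down for $H = \stgroup[D(3,1\phi)]{D(3,1)}$, which is precisely the case that the paper carves out for special treatment.

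The step that fails is the claim that $A = A(m,n\phi)$ is always characteristic in $H$. For $H = \stgroup[D(3,1\phi)]{D(3,1)}$, the group $A(3,1\phi)$ is the full $3$-torsion of the diagonal torus, and the Sylow $3$-subgroup of $H$ is the order-$27$ Heisenberg group of exponent $3$. In that Heisenberg group, $A(3,1\phi)$ is only one of \emph{four} maximal abelian subgroups, and one can check that the involution $T_1$ acts on $P/Z(P) \simeq \F_3^2$ as the scalar $-1$, so all four are normal in $H$. Consequently $A(3,1\phi)$ is not characteristic, $N_H$ need not normalize $A$, and in fact $N_H = \stgroup[E(216\phi)]{E(216)}$, which contains non-monomial matrices (such as the Fourier-like matrix $g_2$) and hence is \emph{not} contained in $\langle T, \sym 3 \rangle$. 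This is parallel to the special treatment already required for $\stgroup[C(3,1\phi)]{C(3,1)}$ in the proof of Proposition~\ref{type C extensions}, which your proposal says it is mimicking but doesn't actually carry over.

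Once $N_H \not\subseteq \langle T, \sym 3\rangle$, your computation of $N'_H = (N_H \cap T)/A$ and the resulting formula $|N'_H| = m/n = 3$ are no longer valid; the correct value is $N'_H \simeq \alt 4$, of order $12$. Since $12$ is even, the odd-order criterion from Remark~\ref{extension analysis} cannot be applied directly, and the conclusion that there is no nonstandard extension genuinely requires a further argument. The paper handles this by passing from $N'_H \simeq \alt 4$ to the subgroup $N''_H \simeq Q/\{\pm 1\}$ (the Klein four group $\{1,\overline{\mathbf{i}},\overline{\mathbf{j}},\overline{\mathbf{k}}\}$) via the quotient $\alt 4/(Q/\{\pm 1\}) \simeq \cyc 3$, and then explicitly analyzes the action of complex conjugation on $N''_H$ to conclude that the orbit set $I''_H/N''_H$ is a singleton. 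This additional computation is not something that can be replaced by the uniform $m/n$ counting argument. For the other five groups, your reasoning tracks the paper's: $C(m,n\phi)$ is the unique index-$2$ subgroup, the paper's analysis of $N_{C(m,n\phi)}$ gives $N_H$ normalizing $A(m,n\phi)$, and $N'_H$ has odd order (the paper asserts $\cyc 3$; your formula gives $\{1,1,1,3,1\}$ — either way Remark~\ref{extension analysis} applies).
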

\begin{proof} 
For each group $H = D(m,n\phi)$ in the list,
$A(m,n\phi)$ is centralized only by the diagonal torus, so the centralizer of $H$ in $\SU(3)$
is $\mu_3$. In particular, $N_H^0$ is trivial,
so Remark~\ref{extension analysis} applies with $N'_H = N_H/H$.

For $H = \stgroup[D(3,1\phi)]{D(3,1)}$, we again use the presentation of the group $\stgroup[E(216\phi)]{E(216)}$ described in
\S\ref{section:solvable exceptional}. In this case, 
$N_H = \stgroup[E(216\phi)]{E(216)}$, $N'_H \simeq BT/\{\pm 1\} \simeq \alt 4$, and 
$\alt 4/(Q/\{\pm 1\}) \simeq \cyc 3$, so we 
may reduce consideration from $N'_H$ to $N''_H \simeq Q/\{\pm 1\} = \{1,\overline{\mathbf{i}},\overline{\mathbf{j}},\overline{\mathbf{k}}\}$ via Remark~\ref{extension analysis}.
With notation as in Proposition~\ref{type C extensions},
the action of complex conjugation on $N''_H$ fixes $\overline{\mathbf{i}}$ 
and interchanges $\overline{\mathbf{j}}$ with~$\overline{\mathbf{k}}$;
consequently, $I''_H = \{1,\overline{\mathbf{i}}\}$ and the action on this set is again transitive.
It follows that~$H$ admits no nonstandard extension.

For the groups
\[
H = \stgroup[D(2,2\phi)]{D(2,2)},\, \stgroup[D(3,3\phi)]{D(3,3)},\, \stgroup[D(4,4\phi)]{D(4,4)},\, \stgroup[D(6,2\phi)]{D(6,2)},\, \stgroup[D(6,6\phi)]{D(6,6)},
\]
the unique index-$2$ subgroup of $H$ is $C(m,n\phi)$, so from Proposition~\ref{type C extensions}
we see that $N_H$ normalizes both $A(m,n\phi)$ and $C(m,n\phi)$,
and that $N'_H \simeq \cyc 3$. By Remark~\ref{extension analysis}, $H$ admits no nonstandard extension.
\end{proof}

\begin{proposition}
Of the $4$ groups
\[
\stgroup[E(36\phi)]{E(36)},\quad \stgroup[E(72\phi)]{E(72)},\quad \stgroup[E(216\phi)]{E(216)},\quad \stgroup[E(168\phi)]{E(168)},
\]
each admits a standard extension;
none admits a split standard extension;
and the group $\stgroup[E(36\phi)]{E(36)}$ admits a nonsplit standard extension.
\end{proposition}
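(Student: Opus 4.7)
The existence of a standard extension for each group follows immediately from the observation, made in \S\ref{section:solvable exceptional} and \S\ref{section:simple exceptional}, that the given presentations are invariant under complex conjugation. For the analysis of nonstandard extensions, I apply Remark~\ref{extension analysis}. All four groups act irreducibly on $\C^3$: the three solvable cases each contain the Heisenberg subgroup $\langle g_1, S, \mu_3 \rangle$ of order $27$, and $E(168\phi)$ realizes an irreducible three-dimensional representation of $\PSL_2(\F_7)$. By Schur's lemma, the centralizer in $\SU(3)$ reduces to $\mu_3 \subseteq H$, so $N_H^0$ is trivial and $N'_H = N_H/H$; the task reduces to identifying $N_H$ in each case.

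For the two maximal cases, we have $N_H = H$: the Hessian group $E(216\phi)$ is a maximal finite subgroup of $\SU(3)$ by the BDM classification, and its normalizer is finite (by irreducibility) hence self-contained. For $E(168\phi)$, the image in $\PSU(3)$ is $\PSL_2(\F_7)$, and the outer automorphism of $\PSL_2(\F_7)$ swaps its two complex-conjugate three-dimensional irreducible representations, so $\PGL_2(\F_7)$ does not embed in $\PSU(3)$; together with maximality, this gives $N_H = H$. Thus $N'_H = 1$ in both cases, so no nonstandard extension exists. For $E(72\phi)$, one checks that it is a normal subgroup of index $3$ in $E(216\phi)$, so $N_H = E(216\phi)$ and $N'_H \simeq \cyc 3$; since $\cyc 3$ has odd order, Remark~\ref{extension analysis} rules out a nonstandard extension.

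For $E(36\phi)$ the key observation is that it is \emph{not} normal in $E(216\phi)$: conjugation by $g_3$ sends $E(36\phi) = \langle g_1, g_2\rangle$ to the distinct subgroup $\langle g_1, g_3 g_2 g_3^{-1}\rangle$, and one verifies that $E(36\phi)$ has exactly three conjugates in $E(216\phi)$, permuted cyclically by $\langle g_3\rangle/\mu_3$. The stabilizer of $E(36\phi)$ under this action is $E(72\phi)$, so $N_H = E(72\phi)$ (with equality in $\SU(3)$ by maximality of $E(216\phi)$) and $N'_H \simeq \cyc 2$. By Remark~\ref{extension analysis}, $E(36\phi)$ admits exactly one nonstandard extension.

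To decide whether this extension is split or nonsplit, I would take the representative $g = g_3 g_2 g_3^{-1} \in E(72\phi) \setminus E(36\phi)$ supplied by the presentation of $E(72\phi)$ in \S\ref{section:solvable exceptional}. Since $g_3$ is diagonal we have $\overline{g_3} = g_3^{-1}$, and since the matrix inside $g_2$ in \eqref{eq:Hessian matrices} is symmetric we obtain $\overline{g_2} = g_2^{-1}$; thus $\overline g g = g_3^{-1} g_2^{-1} g_3^2 g_2 g_3^{-1}$, which lies in $E(36\phi)$ (consistent with complex conjugation acting trivially on $E(72\phi)/E(36\phi) \simeq \cyc 2$). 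It then remains to verify that this element is not of the form $\overline h h$ for any $h \in E(36\phi)$, which would confirm the extension is nonsplit. This last step is the main technical obstacle; I expect to carry it out by direct enumeration over the $108$ elements of $E(36\phi)$ in \Magma{} or \Sage{}, as is done elsewhere in the paper to cross-check structural arguments.
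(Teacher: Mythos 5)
Your proposal is correct and takes essentially the same approach as the paper: standard extensions exist because the presentations in \S\ref{section:solvable exceptional} and \S\ref{section:simple exceptional} are invariant under complex conjugation; $N_H^0$ is trivial by irreducibility of the action on $\C^3$ so Remark~\ref{extension analysis} applies with $N'_H = N_H/H$; and the normalizers $N_{E(36\phi)} = E(72\phi)$, $N_{E(72\phi)} = E(216\phi)$, $N_{E(216\phi)} = E(216\phi)$, $N_{E(168\phi)} = E(168\phi)$ yield $N'_H \simeq \cyc 2$, $\cyc 3$, $\cyc 1$, $\cyc 1$ respectively, from which the count of nonstandard extensions follows exactly as in the paper. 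The paper also simply asserts that the unique nonstandard extension of $E(36\phi)$, represented by $g_3 g_2 g_3^{-1}$, ``turns out to be nonsplit'' without written calculation, relegating this check to the accompanying \Sage{}/\Magma{} scripts --- precisely the verification you propose to carry out by enumeration over $E(36\phi)$, so nothing in your sketch is missing relative to what is in print.
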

\begin{proof}
For $H = \stgroup[E(36\phi)]{E(36)}, \stgroup[E(72\phi)]{E(72)}, \stgroup[E(216\phi)]{E(216)}$, $N_H^0$ is trivial and so Remark~\ref{extension analysis} applies with $N'_H = N_H/H$.
For $H = \stgroup[E(36\phi)]{E(36)}$, $N_H = \stgroup[E(72\phi)]{E(72)}$ and so $N'_H \simeq \cyc{2}$; thus $H$ admits a nonstandard extension
corresponding to $g_3 g_2 g_3^{-1}$,
which turns out to be nonsplit.
For $H = \stgroup[E(72\phi)]{E(72)}$, $N_H = \stgroup[E(216\phi)]{E(216)}$ and so $N'_H \simeq \cyc{3}$; consequently, $H$ admits no nonstandard extension.
For $H = \stgroup[E(216\phi)]{E(216)}$, $N'_H$ is trivial; thus $H$ admits no nonstandard extension.

For $H = \stgroup[E(168\phi)]{E(168)}$, $N_H^0$ is trivial (so Remark~\ref{extension analysis} applies with $N'_H = N_H/H$)
and $N_H = H$, so $H$ admits only its standard extension.
\end{proof}

\subsection{Maximal subgroups}

We now go back through the classification to identify maximal subgroups. Recall that dropping $\phi$ from a group label corresponds to passing from $\SU(3)$ to $\PSU(3)$.
(Note that in the \LMFDB{}, the entry for each Sato--Tate group includes the maximal subgroups and minimal supergroups of that group with respect to inclusions of finite index.)

\begin{proposition} \label{maximal subgroups}
Among the possible component groups of a group satisfying the Sato--Tate axioms with connected part
$\Unitary(1)_3$, the maximal subgroups that occur are 
\begin{gather*}
\stgroup{J(B(3,4;4))},\ \stgroup{J_s(B(1,12))},\ \stgroup{J(B(3,4))},\ \stgroup{J_s(B(3,4))},\ \stgroup{J(B(T,3))},\ \stgroup{J_s(B(T,3))}, \\
\stgroup{J(B(O,1))},\ \stgroup{J(B(O,2))},\ \stgroup{J(D(4,4))},\ \stgroup{J(D(6,6))},\ \stgroup{J(E(216))},\ \stgroup{J(E(168))}.
\end{gather*}
\end{proposition}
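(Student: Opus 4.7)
The plan is to verify the 12 listed groups form the complete set of maximal candidate groups by analyzing the inclusion poset on the 171 candidates classified in the preceding subsections. The key reduction is that any inclusion $\tilde H \subseteq \tilde H'$ in $J(\SU(3))$ forces an $\SU(3)$-conjugate inclusion $H \subseteq H'$ of the underlying subgroups $H = \tilde H \cap \SU(3)$ and $H' = \tilde H' \cap \SU(3)$. So I would first determine the poset on the $63$ finite subgroups of $\SU(3)$ enumerated in \S\ref{section: BDM type A}--\S\ref{section:simple exceptional}, then lift to $J(\SU(3))$ by exploiting the classification of extensions.

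For the poset on $\SU(3)$-subgroups, I would trace the natural inclusions within each BDM type. Every abelian group in Table \ref{table: abelian subgroups} lies inside the type-(C) or type-(D) group built on it when the requisite $\alt 3$- or $\sym 3$-symmetry is present; the type-(D) groups satisfy $D(m,n\phi) \subseteq D(m',n'\phi)$ whenever $A(m,n\phi) \subseteq A(m',n'\phi)$, collapsing down to the maximal cases $D(4,4)$ and $D(6,6)$; the exceptional chain $E(36) \subset E(72) \subset E(216)$ leaves only $E(216)$ and $E(168)$ maximal among the exceptional groups; the $\SU(2)$-exceptional groups satisfy $B(T,n) \subset B(O,n)$ for $n = 1, 2$; and specific groups embed into an exceptional group (for instance $C(1,7) \subset E(168)$ and $C(3,1), D(3,1) \subset E(216)$). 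After this reduction, the maximal $H$ on which each of the 12 listed groups is built emerge as a short list of candidates.

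Having identified the maximal $H$, I would enumerate the extensions $\tilde H$ for each, using the classification in Propositions \ref{extensions of type B}, \ref{tetrahedral extensions}, \ref{type C extensions}, and \ref{type D extensions}. Since each extension has order $2|H|$, distinct extensions of the same $H$ are pairwise incomparable, so each maximal $\tilde H$ corresponds either to a maximal $H$ paired with one of its extensions, or to a non-maximal $H$ whose extensions happen to lie outside every extension of any strictly larger $H'$. Combining standard, split nonstandard, and nonsplit nonstandard extensions yields the 12 groups in the statement, and the double listing of $J(B(3,4))$ alongside $J_s(B(3,4))$ and of $J(B(T,3))$ alongside $J_s(B(T,3))$ reflects precisely this incomparability.

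The main obstacle is verifying the absence of unexpected lateral inclusions, especially among groups built over the isomorphic pairs $A(1,12) \simeq A(3,4)$ or over the small abelian groups with multiple non-conjugate embeddings in $\SU(3)$. One must check, for instance, that $J_s(B(1,12))$ and $J(B(3,4;4))$ are genuinely incomparable with each other and not absorbed by $J(E(216))$ despite compatible divisibility of orders, and similarly that $J_s(B(T,1;1))$ embeds into $J(B(O,1))$ so as not to appear in the maximal list. I would settle these individual cases by comparing explicit matrix generators, using the matrices defined in \eqref{eq:type B conjugation matrices} and \eqref{matrix S-zeta-mu}, together with the exceptional presentations \eqref{eq:Hessian matrices} and \eqref{eq:168 matrix}; and as a systematic cross-check, I would run the \Gap{} and \Magma{} scripts from \cite{FKS21} to compute the full inclusion poset on representative generators of all 171 candidate groups, thereby confirming the statement exhaustively.
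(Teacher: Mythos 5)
Your overall strategy matches the paper's: lift inclusions of the underlying $\SU(3)$-subgroups to $J(\SU(3))$, identify the cases where a non-maximal $H$ nevertheless has an extension that escapes all extensions of any strictly larger $H'$, and cross-check computationally. The paper's proof is more economical — the presentations of \S\ref{section: BDM type A}--\S\ref{section:simple exceptional} were deliberately chosen so that in most cases the inclusion into one of the twelve listed groups holds as a literal set inclusion (verified by a short \Sage{} call); only a handful of cases ($B(1,8)_1$, $B(3,4;4)$, $B(2,4;4)$, $B(3,6;2)$, $B(1,12)$) require a conjugation argument, and these are spelled out individually. You correctly identify the structural mechanism by which $J(B(3,4;4))$ and $J_s(B(1,12))$ appear as exceptional maxima even though $B(3,4;4)$ and $B(1,12)$ sit inside $B(O,2)$ and $B(6,6)$ respectively: one extension is absorbed, the other is not.

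One illustrative example you give is wrong, though it does not invalidate the plan. You assert that $J_s(B(T,1;1))$ embeds into $J(B(O,1))$; but $B(O,1)$ has cyclic abelianization of order $6$ and hence a unique index-$2$ subgroup, namely $B(T,1)$, which by Proposition~\ref{binary tetrahedral} lies in a different $\SU(3)$-conjugacy class from $B(T,1;1)$. The correct route is the index-$3$ inclusion $B(T,1;1) = \pi(\cyc{6\phi}\times_{\cyc 3} 2T) \subset \pi(\cyc{6\phi}\times 2T) = B(T,3)$, so $J_s(B(T,1;1))$ is absorbed into an extension of $B(T,3)$. This is precisely the kind of lateral check your plan says must be done case by case, so the slip illustrates why that step cannot be waved through. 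I would also add one caution on the concluding irredundancy check: if you fall back on comparing abstract isomorphism types (as the paper does in \Gap{}) rather than your proposed explicit matrix-generator comparison, you must work with subgroups of $J(\SU(3))$ and not their images in $J(\PSU(3))$, because, for example, $J(B(3,4;4))$ and $J_s(B(1,12))$ are abstractly isomorphic in $J(\PSU(3))$ while $J(B(3,4\phi;4))$ and $J_s(B(1,12\phi))$ are not.
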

\begin{proof}
As may be confirmed by a simple \Sage{} calculation (see \cite{FKS21}), in most cases our presentations have been chosen so that 
there is an inclusion into one of the listed subgroups, without any conjugation required. The remaining cases are handled as follows.
\begin{itemize}
\item
The group $\stgroup{B(1,8)_1}$ is conjugate to a subgroup of $\stgroup{B(O,1)}$ containing $\stgroup{A(1,8)_1}$.
This induces embeddings of $\stgroup{B(1,8)_1}$, $\stgroup{J(B(1,8)_1)}$, $\stgroup{J_s(A(1,8)_1)}$ into $\stgroup{J(B(O,1))}$.

\item
The group $\stgroup{B(3,4;4)}$ is conjugate to a subgroup of $\stgroup{B(O, 2)}$.
This induces an embedding of $\stgroup{J_s(B(3,4;4))}$ into $\stgroup{J(B(O, 2))}$. (However, $\stgroup{J(B(3,4;4))}$ does not embed into $\stgroup{J(B(O,2))}$; it stands as a maximal subgroup.)

\item
The group $\stgroup{B(2,4;4)}$ is conjugate to a subgroup of $\stgroup{B(4,4)}$ by Remark~\ref{B redundancy}.
This induces embeddings of $\stgroup{B(2,4;4)}$, $\stgroup{J(B(2,4;4))}$, $\stgroup{J_s(B(2,4;4))}$ into $\stgroup{J(D(4,4))}$.

\item
Combining the previous point with the isomorphism
$B(1,8)_2 \simeq \stgroup{B(2,4;4)}$ from the proof of Proposition~\ref{proposition: dihedral subgroups},
we obtain embeddings of $\stgroup{A(1,8)_2}$, $\stgroup{J(A(1,8)_2)}$, $\stgroup{J_s(A(1, 8)_2)}$ into $\stgroup{J(D(4,4))}$.
(These groups also embed into $\stgroup{J(B(O,2))}$.)

\item
The group $\stgroup{B(3,6;2)}$ is conjugate to a subgroup of $\stgroup{B(6,6)}$ by Remark~\ref{B redundancy}.
This induces embeddings of $\stgroup{B(3,6;2)}$, $\stgroup{J(B(3,6;2))}$, $\stgroup{J_s(B(3, 6; 2))}$ into $\stgroup{J(D(6,6))}$.

\item
The group $\stgroup{B(1,12)}$ is conjugate to a subgroup of $\stgroup{B(6,6)}$; this follows from the isomorphism
$\stgroup{B(1,12)} \simeq \stgroup[B(2,6)]{B(1,12)}$ given in the proof of Proposition~\ref{proposition: dihedral subgroups}.
This induces embeddings of $\stgroup{J_s(A(1, 12))}$, $\stgroup{J(B(1, 12))}$ into $\stgroup{J(D(6,6))}$.
(However, $\stgroup{J_s(B(1,12))}$ does not embed into $\stgroup{J(D(6,6))}$; it stands as a maximal subgroup.)

\end{itemize}
It remains to check that the list of maximal subgroups is irredundant; that is, no group in the list is conjugate to a subgroup of any other group in the list. This may be checked by using \Gap{} to identify conjugacy classes of subgroups of each listed group, then checking that
there is not even an abstract isomorphism between a representative of each class and any other listed group.
(This must be done at the level of subgroups of $J(\SU(3))$, as there are some spurious
isomorphisms among the resulting subgroups of $J(\PSU(3))$; for example, $\stgroup{J(B(3,4;4))}$ and $\stgroup{J_s(B(1,12))}$ 
are isomorphic as abstract groups, but $\stgroup[J(B(3,4\phi;4))]{J(B(3,4;4))}$ and $\stgroup[J_s(B(1,12\phi))]{J_s(B(1,12))}$ are not.)
\end{proof}

\subsection{Consistency checks}
\label{subsec:consistency checks}

Given the intricacies of the preceding classification, it is reasonable to ask for ways to corroborate
the result. We conclude this section with some discussion of these; see \cite{FKS21} for relevant code.

\begin{remark} \label{beukers-smyth}
Picking up on Remark~\ref{remark: multiplicative manin-mumford},
we describe an implementation in \Sage{} of the algorithm of Beukers and Smyth \cite{BS02} that verifies
Proposition~\ref{roots of unity problem}. (See also \cite[Algorithm 7.5, Theorem 7.6]{KKPR20} for a robust generalization of this algorithm.)

Suppose that $a,b,c$ are roots of unity such that $abc=1$ and $\left|a+b+c\right|^2 = n$ for some integer $n$.
We must then have $n \in \{0,\dots,9\}$ and $f_n(a,b) = 0$ for 
\[
f_n(x,y) = (x^2y + xy^2 + 1)(x+y+x^2y^2) - nx^2y^2.
\]
Note that $f_1$ factors as $(x+y)(x^2y+1)(xy^2+1)$, so its zeroes correspond precisely to the cases where $(a+b)(b+c)(c+a) = 0$; we may thus assume hereafter that
$n \neq 1$. 

Define the set
\[
S_n = \{f_n(s_1 x^e, s_2 y^e): s_1, s_2 \in \{\pm 1\}; e \in \{1,2\}; (s_1, s_2, e) \neq (1,1,1)\}.
\]
By \cite[Lemma~1]{BS02},
there exist $n \in \{0\} \cup \{2,\dots,9\}$
and $g \in S_n$ such that $g(a,b) = 0$.
For each choice of $n$ and $g$, we performed a resultant computation to eliminate $y$;
made a similar computation to pick out the roots of the resulting polynomial in $x$ which are roots of unity; then
for each option for $x$, substituted back into $f_n(x,y)$ and found the roots
of the resulting polynomial in $y$ which are roots of unity.
\end{remark}

\begin{remark}
For each listed subgroup $H$ of $\SU(3)$, it is easy to verify the restricted rationality condition directly;
we checked this in \Sage.
Conversely, we also checked in \Sage{} that there are no counterexamples $(a,b,c)$ to 
Proposition~\ref{roots of unity problem} for which $a,b,c$ generate a group of order dividing $2^4$,
$3^3$, $7^2$, or $2^3 \times 3^2 \times 7$.
\end{remark}

\begin{remark}
Proposition~\ref{roots of unity problem2} implies that every finite cyclic subgroup of $\SU(3)$ containing~$\mu_3$ and satisfying the restricted rationality condition has order dividing one of $7\phi, 8\phi, 12\phi$.
Given only this fact, one can check in \Sage{} that Table~\ref{table: abelian subgroups} is complete,
and that the lists in Proposition~\ref{A3 extensions} and Proposition~\ref{sym3 groups} are complete.
\end{remark}

\begin{remark}
To verify that there are no redundancies in the classification,
for each listed subgroup $H$ of $\SU(3) \rtimes \cyc 2$, we used \Gap{} 
to compute the isomorphism class of the image of~$H$ in $\PSU(3) \rtimes \cyc 2$;
in cases where this does not suffice to separate groups, we also computed the isomorphism class of $H$ itself.
This computation separates all groups except for the pairs
\[
\{\stgroup[A(1,4\phi)_1]{A(1,4)_1}, \stgroup[A(1,4\phi)_2]{A(1,4)_2}\},\quad 
\{\stgroup[A(1,6\phi)_1]{A(1,6)_1}, \stgroup[A(1,6\phi)_2]{A(1,6)_2}\},\quad 
\{\stgroup[A(1,8\phi)_1]{A(1,8)_1}, \stgroup[A(1,8\phi)_2]{A(1,8)_2}\},
\]
and the corresponding pairs of standard extensions.
Since these groups are all cyclic, they are easily distinguished based on their eigenvalues
(or by the computation of moments in \S\ref{sec:statistics}, which provides another proof of irredundancy).
\end{remark}

\begin{remark} \label{verify maximal subgroups}
Recall that according to Proposition~\ref{maximal subgroups}, there are $12$ maximal groups occurring in the classification.
We used \Gap{} to compute the isomorphism classes of each conjugacy class of subgroups of each
of the maximal groups. This confirms that every listed group is abstractly isomorphic to a subgroup of one of these groups, and conversely that every subgroup of one of these groups is abstractly isomorphic to one listed in the classification. 
\end{remark}

\section{Statistics on Sato--Tate distributions}
\label{sec:statistics}

Our primary purpose for computing Sato--Tate groups of abelian threefolds is to use the associated distributions to explain the observed statistical behavior of their $L$-polynomials. To this end, we describe some statistics on these distributions and how we computed them for the groups in the classification. For this discussion, we work with the longer list of $433$ groups rather than the list of $410$ specified in Theorem~\ref{T:ST result}. For short, we refer to the groups in this longer list as groups of the \emph{extended classification}.

\subsection{Group invariants}
\label{subsec:moment computations}

We first recall the definition of the two sets of invariants that we will primarily use to distinguish the groups in the extended classification. These are the \emph{simplex of moments} and the \emph{character norms}.

\begin{definition}
Let $a_1, a_2, a_3: \USp(6) \to \R$ be the functions computing the coefficients of $T^1, T^2, T^3$ in the characteristic polynomial of an element of $\USp(6)$. For $G$ a closed subgroup of $\USp(6)$ and $(e_1,e_2,e_3)$ a triple of nonnegative integers, let $M_{e_1,e_2,e_3}(G)$ denote the average value of $a_1^{e_1} a_2^{e_2} a_3^{e_3}$ on $G$, that is, if $\mu_G$ denotes the Haar measure of $G$, then
$$
M_{e_1,e_2,e_3}(G)=\int_{G}a_1(\gamma)^{e_1}a_2(\gamma)^{e_2}a_3(\gamma)^{e_3}\mu_G(\gamma)\,.
$$
This number can be interpreted as the multiplicity of the trivial representation within the representation
\[
(\C^6)^{\otimes e_1} \otimes (\wedge^2 \C^6)^{\otimes e_2} \otimes (\wedge^3 \C^6)^{\otimes e_3}
\]
of $G$, and is thus a nonnegative integer.

To convert this infinite collection of integers into a finite computable invariant, we must choose an appropriate truncation. For
$m$ a nonnegative integer, we define the \emph{$m$-simplex of moments} associated to $G$ as the collection of moments
$M_{e_1,e_2,e_3}(G)$ for all tuples $(e_1,e_2,e_3)$ with $e_1 + 2e_2 + 3e_3 \leq m$.
\end{definition}

Ordered triples $\lambda_1\geq \lambda_2\geq \lambda_3\geq 0$ of nonnegative integers (referred to as \emph{partitions} from now on) stand in bijection with the highest weights of irreducible representations of $\USp(6)$, and we will freely identify both sets. We will denote by $\chi_\lambda$ the irreducible character of $\USp(6)$ with highest weight associated to the partition $\lambda$. Following \cite{Shi16}, we make the following definition.

\begin{definition} \label{definition: m-diagonal}
For $G$ a closed subgroup of $\USp(6)$ and partitions $\lambda$ and $\mu$, let $N_{\lambda,\mu}(G)$ denote the average of $\chi_\lambda\cdot \chi_\mu$ on $G$, that is
$$
N_{\lambda,\mu}(G)=\int_G\chi_\lambda(\gamma)\chi_\mu(\gamma)\mu_G(\gamma)\,.
$$
For $m$ a nonnegative integer, we define the \emph{$m$-orthogonality matrix of characters} associated to~$G$ as the collection of averages $N_{\lambda,\mu}(G)$ for all subpartitions $\lambda,\mu$ of the rectangular partition $m \geq m \geq m$. By the \emph{$m$-diagonal of character norms} we refer to the collection of norms~$N_{\lambda,\lambda}(G)$ for all subpartitions $\lambda$ of $m \geq m \geq m $.
\end{definition}

The irreducible characters $\chi_\lambda$ can be expressed in terms of the functions $a_1, a_2, a_3$ by means of the Brauer--Klimyk formula (see \cite[\S4]{Shi16}). In Table \ref{Table: chars in terms of coef}, we have carried out this computation for the first few values of $\lambda$.

\begin{table}[ht]
\renewcommand{\arraystretch}{1.1}
\footnotesize
\begin{tabular}{|l|l|l|l|}
\hline
$\lambda$ & $\chi_\lambda$ & $\lambda$ & $\chi_\lambda$ \\
\hline&&&\\[-12pt]
$(0,0,0)$ & $1$ & $(3,0,0)$ & $-a_1^3 + 2a_1a_2 - a_3$ \\
$(1,0,0)$ & $-a_1$ & $(3,1,0)$ & $a_1^2a_2 - a_1^2 - a_1a_3 - a_2^2 + 2a_2$ \\
$(1,1,0)$ & $a_2-1$ & $(3,1,1)$ & $a_1^3 - a_1^2a_3 - 2a_1 + a_2a_3$ \\
$(1,1,1)$ & $-a_3+a_1$ & $(3,2,0)$ & $a_1^2a_3 - a_1a_2^2 + a_2a_3 - a_3$ \\
$(2,0,0)$ & $a_1^2-a_2$ & $(3,2,1)$ & $-2a_1^2a_2 + 2a_1^2 + a_1a_2a_3 + a_1a_3 - a_3^2$ \\
$(2,1,0)$ & $-a_1a_2+a_1+a_3$ &  $(3,2,2)$ & $-a_1^2a_3 - a_1a_2^2 + 4a_1a_2 + a_1a_3^2 - 2a_1 - a_2a_3$ \\
$(2,1,1)$ & $a_1a_3-a_1^2-a_2+1$ & $(3,3,0)$ & $a_1^2a_2 - 2a_1a_2a_3 + a_1a_3 + a_2^3 - 2a_2^2 + a_3^2$ \\
$(2,2,0)$ & $a_2^2-a_1a_3-a_2$ & $(3,3,1)$ & $-a_1^3 - a_1^2a_3 + 2a_1a_2^2 + a_1a_3^2 - a_2^2a_3 - a_2a_3 + a_3$ \\
$(2,2,1)$ & $-a_2a_3+2a_1a_2-a_1$ & $(3,3,2)$ & $2a_1^2a_2 - a_1^2 - 2a_1a_2a_3 - a_1a_3 - a_2^3 + 3a_2^2 + a_2a_3^2 - 2a_2$\\
$(2,2,2)$ & $a_3^2-a_2^2-a_1a_3+2a_2-1$ & $(3,3,3)$ & $a_1^2a_3 - 3a_1a_2^2 + 2a_1a_2 + a_1a_3^2 + 2a_2^2a_3 - 2a_2a_3 - a_3^3 + a_3$\\[1pt]
\hline
\end{tabular}
\medskip

\caption{Some irreducible characters of $\USp(6)$ in terms of coefficients of characteristic polynomials.}\label{Table: chars in terms of coef}
\end{table}

\begin{remark}
If $H \subseteq G$ is an inclusion of groups (not necessarily of finite index), then for any representation of $\USp(6)$,
the multiplicity of the trivial representation within the restriction to $G$ is less than or equal to the multiplicity within the restriction to $H$. This means in particular that
for any $(e_1,e_2,e_3)$ or $(\lambda, \mu)$ we have
\[
M_{e_1,e_2,e_3}(H) \geq M_{e_1,e_2,e_3}(G), \qquad N_{\lambda, \mu}(H) \geq N_{\lambda, \mu}(G).
\]
Conversely, if any one of these inequalities fails, then $H$ is not conjugate to a subgroup of $G$ within $\USp(6)$.
This can be used, for instance, to give another consistency check of the group inclusions in Proposition~\ref{maximal subgroups}.
\end{remark}

In the next sections we will explain the methods used to compute moments and character norms for the groups in the extended classification. Although the character norms can be recovered from the moments, we develop a (more efficient) method to compute them directly (except for the group $\stgroup[N(\Unitary(3))]{1.6.B.2.1a}$; see \S\ref{section: NU3}). 

\subsection{Averages over connected groups}\label{section: momentsconnected}
Let $G$ be one of the 14 connected Sato--Tate groups of the extended classification, and let $F$ be a \emph{virtual character} of $G$, that is,
a finite $\C$-linear combination of irreducible characters of $G$. 
Although we are primarily interested in the case where $F$ is the restriction to $G$ either of the character $a_1^{e_1}a_2^{e_2}a_3^{e_3}$ or of an irreducible character of $\USp(6)$, in \S\ref{section: moments disconnected groups} it will become apparent the necessity to describe the methods of this section for general virtual characters.

To compute the average of $F$ over $G$ we can either use the Weyl integration formula~~\cite{We46} or the Weyl character formula \cite[\S 24]{FH91} (see \S\ref{section: NU3} for a third approach to this problem). 

In order to apply the Weyl integration formula, we may express $F$ in terms of the eigen\-angles $\theta_1,\theta_2,\theta_3$ of an element of $G$. 
The measures $\mu$ that we need are listed below in terms of the eigenangles $\theta_i$.
\bigskip

\begin{center}
\small
\setlength{\extrarowheight}{2pt}
\begin{tabular}{|l|l|l|}
\hline&&\\[-15pt]
Group & $\mu$ & support\\[1pt]
\hline&&\\[-12pt]
$\Unitary(1)$&$ \tfrac{1}{\pi}\ d\theta_1$ & $\theta_1 \in [0,\pi)$\\[3pt]
$\SU(2)$&$\tfrac{2}{\pi}\sin^2\theta\  d\theta_1$& $\theta_1 \in [0,\pi)$ \\[3pt]
$\USp(4)$&$\tfrac{8}{\pi^2}(\cos\theta_1-\cos\theta_2)^2\sin^2\theta_1\sin^2\theta_2\ d \theta_1 d\theta_2$ & $\theta_1,\theta_2\in[0,\pi)$ \\[3pt]
$\Unitary(3)$&$\tfrac{1}{6\pi^3}\prod_{i<j}(1-\cos\theta_i\cos\theta_j-\sin\theta_i\sin\theta_j)\ d\theta_1 d\theta_2 d\theta_3$ & $\theta_1,\theta_2,\theta_3\in [0,2\pi)$\\[3pt]
$\USp(6)$ & $\tfrac{32}{3\pi^3}(\prod_{i<j}(\cos\theta_i-\cos\theta_j)^2)(\prod_i\sin^2\theta_i)\ d\theta_1 d\theta_2 d\theta_3$ & $\theta_1,\theta_2,\theta_3\in [0,\pi)$\\[4pt]
\hline
\end{tabular}
\end{center}
\bigskip

In each case we are embedding these groups (or products of them) in $\USp(6)$ as elements with three conjugate pairs of eigenvalues $e^{\pm i\theta_1},e^{\pm i\theta_2},e^{\pm i\theta_3}$.
In the case of diagonally embedded groups, some or all of the $\theta_i$ may coincide; in the case of $G=\Unitary(1)_3$, for example, we have $\theta_1=\theta_2=\theta_3$ and simply integrate $F$ against the measure for $\Unitary(1)$. In the more complicated cases, however, computing the resulting integrals becomes quite burdensome in practice.  There is however a more algorithmic approach to compute these integrals. Let $T\simeq \Unitary(1)^r$ denote a maximal torus in $G$, $\Phi$ the set of roots of the semisimple part of $\Lie(G)$, and $W$ the Weyl group of $G$. By abuse of notation, we may regard each $\alpha\in \Phi$ as a function on $\Unitary(1)^r$, and therefore there exist integers $e_i(\alpha)$ such that $\alpha(\bt)=\prod_i t_i^{e_i(\alpha)}$ for $\bt=(t_1,\dots,t_r)\in \Unitary(1)^r$. The Weyl integration formula establishes that the average of $F$ over $G$ is
$$
\tfrac{1}{|W|}\int_{\Unitary(1)^r}F(\bt)\prod_{\alpha\in \Phi}(1-\alpha(\bt))d\bt\,.
$$
Computing the above integral reduces to computing integrals of the form $\int_{\Unitary(1)} t_i^e dt_i$, which are $1$ if $e=0$ and $0$ otherwise.

An alternative way to compute the average of $F$ on $G$ is by determining, via the Weyl character formula, the (virtual) multiplicity of the trivial representation in $F$. We now explain how to do this concretely, and note that this is the method that we used in our computations. The virtual character $F$ gives rise to an element $\tilde F$ in $\C[u_1^{\pm 1}, \dots, u_r^{\pm 1}]$, where $r\leq 3$ denotes the rank of $G$: the coefficient of $\prod_i u_i^{k_i}$ in $\tilde F$ computes the multiplicity of the weight $(k_1,\dots,k_r)$ in $F$. For an integer $k$, let $[u_i^k]\tilde F$ denote the coefficient of $u_i^k$ in the Laurent polynomial~$\tilde F$ (as a Laurent polynomial in the remaining variables).

Let us start by making three elementary observations. 
\begin{itemize}
\item
Suppose that $G =H\times  \Unitary(1)_d$ for some positive integer $d$, and that the variable $u_r$ corresponds to the factor $\Unitary(1)_d$. Then it is apparent that the multiplicity of the trivial representation in $F$ is the multiplicity of the trivial representation in the virtual character of $H$ associated with $[u_r^{0}]\tilde F$.
\item
Suppose that $G =H\times  \SU(2)_d$ for some positive integer $d$, and that the variable~$u_r$ corresponds to the factor $\SU(2)_d$. Using that the irreducible representations of $\SU(2)$ are the symmetric powers $\Sym^m\C^2$ of the standard representation $\C^2$ and the weights 0 and 2 occur with the same multiplicity in $\Sym^m \C^2$ for all $m>0$, one sees that the multiplicity of the trivial representation in $F$ is the multiplicity of the trivial representation in the virtual character of $H$ associated with $[u_r^{0}]\tilde F-[u_r^{2}]\tilde F$.
\item
Suppose that $G=\Unitary(3)$. Put 
\[
\tilde E \colonequals {\tilde F}(uw, vw, u^{-1} v^{-1}w) \in \C[u^{\pm 1},v^{\pm 1},w^{\pm 1}];
\]
then the multiplicity of the trivial representation in $F$ is the multiplicity of the trivial representation of the virtual character of $\SU(3)$ associated with $[w^{0}]\tilde E$.
\end{itemize}

These observations allow us to reduce the problem to determining the multiplicity of the trivial representation in a virtual character $F$ of $G$ for some $G \in \{\USp(4), \SU(3), \USp(6)\}$. To determine this multiplicity, we apply the theory of representations of the complex Lie algebras attached to these classical groups. That is, the multiplicity of the trivial representation is recovered by successively applying the following steps:
\begin{itemize} 
\item First identify a highest weight $\lambda$ in $F$.
\item Then subtract from $F$ the character $\chi_\lambda$ attached to $\lambda$.
\end{itemize} 

The Weyl character formula provides a way to compute $\chi_\lambda$. Let $\tilde\chi_\lambda$ be the element in $\C[u_1^{\pm},\dots,u_r^{\pm}]$ associated with $\chi_\lambda$. Suppose that the variable $u_i$ corresponds to the weight~$w_i$, and let~$\lambda_i$ denote the coordinates of $\lambda$ written with respect to the basis formed by the weights~$w_i$. Let $\bu^{\lambda}$ denote $u_1^{\lambda_1}\cdots u_r^{\lambda_r}$ and write
$$
 D_{\lambda}=\sum_{w\in W}\mathrm{sign}(w)\bu^{w(\lambda)}.
$$
The Weyl character formula expresses $\tilde\chi_\lambda$ as the quotient $D_{\lambda+\rho}/D_{\rho}$, 
where $\rho$ is the half-sum of the positive roots of the Lie algebra of $G$. 

Table~\ref{table:3-diagonal} in \S\ref{sec:tables} lists the 3-diagonals of character norms for each of the possible connected Sato--Tate groups $G = G^0$ in $\USp(6)$.

\subsection{Averages over disconnected groups of central type}\label{section: moments disconnected groups} Let $F$ be a character of $\USp(2g)$, and let $G$ be a closed subgroup of $\USp(2g)$ (not necessarily connected). Computing the average of $F$ over $G$ reduces to computing the average of $F$ over every connected component $C$ of $G$. We will show that this computation can be carried out using the methods of \S\ref{section: momentsconnected} when $G$ is a group of \emph{central type} in the extended classification. 

Let $\varphi\colon \USp(2g)\rightarrow \R[T]$ denote the map that sends $\gamma$ to its reverse characteristic polynomial $\det(1-\gamma T)$. Recall that the image of $\varphi$ consists of real reciprocal polynomials and that the level sets of $\varphi$ are precisely the conjugacy classes of $\USp(2g)$.

\begin{definition}
Let $G$ be a closed subgroup of $\USp(2g)$ and let $H$ be a finite subgroup of~$G$. We will say that $G$ is of \emph{central type relative to $H$} if $G$ is generated by $G^0$ and $H$, and for every $h\in H$ the map
$$
\Phi^h\colon G^0\xrightarrow{\cdot h}G^0h\xrightarrow{\varphi|_{G^0h}} \R[T]\,,\qquad \Phi^h(\gamma)=\det(1-\gamma hT)
$$
is a class function of $G^0$. In other words, the conjugacy class of $gh$ in $\USp(2g)$ is itself a class function of $g \in G^0$.

We say that $G$ is of \emph{central type} if it is of central type relative to some finite subgroup of itself. Note that this is a property of $G$ as an embedded group, not an abstract group; equivalently, it is a property of the representation of $G$ restricted from the standard representation of $\USp(2g)$.
\end{definition}

Let $G$ be of central type relative to $H$, and let $h$ denote an element of $H$.
Let $P^h$ denote the set of characteristic polynomials of elements of $G^0h$. 
The restriction of $F$ to $G^0 h$ factors through $P^h$ as 
$$
G^0h\xrightarrow{\varphi|_{G^0h}}P^h\xrightarrow{ F^{h}} \R\,,
$$
for a certain map $F^h$. We will denote by $\Phi_F^h$ the composition $ F^h \circ \Phi^h$. It is a virtual character of $G^0$.
Moreover, the invariance of the Haar measure under translation immediately implies the following lemma.

\begin{lemma}
If $G$ is of central type relative to $H$, then for every $h$ in $H$ the average of $F$ over $G^0h$ equals the average of $\Phi^h_F$ over $G^0$. 
\end{lemma}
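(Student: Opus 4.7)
The plan is to recognize the lemma as a direct definitional unwinding, with the only real ingredient being that Haar measure on the compact Lie group $\USp(2g)$ is bi-invariant, so every coset of $G^0$ inherits a canonical probability measure.

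First I would fix notation for the measure on the coset $G^0h$. Since $G^0$ is a compact Lie group, its Haar measure $\mu_{G^0}$ is bi-invariant, and the probability measure on $G^0h$ with respect to which averages are taken is the pushforward of $\mu_{G^0}$ under right multiplication by $h$, i.e.\ $r_h\colon G^0\to G^0h$, $\gamma\mapsto\gamma h$. (This is the intrinsic choice; picking a different coset representative gives the same measure because $\mu_{G^0}$ is bi-invariant.) With this convention, the defining change-of-variables identity is
\[
\int_{G^0h} F(\gamma')\,d\mu_{G^0h}(\gamma') \;=\; \int_{G^0} F(\gamma h)\,d\mu_{G^0}(\gamma).
\]

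Next I would unwind the three definitions in play. Because $F$ is a character of $\USp(2g)$, it is a class function on $\USp(2g)$, so its restriction to the coset $G^0h\subset\USp(2g)$ depends only on the characteristic polynomial; this is exactly the assertion that it factors as $F|_{G^0h}=F^h\circ\varphi|_{G^0h}$ for a well-defined function $F^h$ on $P^h$. Composing with $r_h$ gives, for every $\gamma\in G^0$,
\[
F(\gamma h) \;=\; F^h(\varphi(\gamma h)) \;=\; F^h(\Phi^h(\gamma)) \;=\; \Phi^h_F(\gamma).
\]
Substituting into the displayed equation above yields
\[
\int_{G^0h} F\,d\mu_{G^0h} \;=\; \int_{G^0} \Phi^h_F(\gamma)\,d\mu_{G^0}(\gamma),
\]
which is the claim.

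There is essentially no obstacle in this computation; the substantive content of the hypothesis ``central type relative to $H$'' is not used in the equality itself but rather guarantees that the resulting function $\Phi^h_F$ on $G^0$ is a class function, hence a virtual character of $G^0$, so that the right-hand integral can actually be evaluated by the Weyl-integration/character-formula machinery of \S\ref{section: momentsconnected}. I would close the proof by remarking on this point, since that is precisely what makes the lemma useful for the moment computations that follow.
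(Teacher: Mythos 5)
Your proof is correct and is essentially what the paper has in mind; the paper dispatches the lemma in one sentence ("the invariance of the Haar measure under translation immediately implies the following lemma"), and your write-up is a careful unwinding of exactly that, via the identity $F(\gamma h)=\Phi^h_F(\gamma)$ and bi-invariance of Haar measure. Your closing remark — that the central-type hypothesis is not used in the equality itself but only to guarantee $\Phi^h_F$ is a class function, hence a virtual character amenable to the Weyl machinery — is an accurate reading of where the hypothesis actually does its work.
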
 

If $G$ is a group of central type in the extended classification, the above lemma reduces the computation of the average of $F$ over a connected component $C$ to the problem considered in~\S\ref{section: momentsconnected}. Fortunately, most of the groups in the extended classification turn out to be of central type. Before considering this question, let us fix presentations for the groups of absolute type $\bE$ or $\bF$ (these do not follow the conventions given in \S\ref{section: connected groups}, but rather coincide with the presentations given in the \LMFDB{}). Consider the matrices
$$
I_2=\begin{pmatrix}
1 & 0\\
0 & 1
\end{pmatrix}\,,
\quad
J_2=\begin{pmatrix}
0 & 1\\
-1 & 0
\end{pmatrix}\,.
$$ 
We fix the symplectic form $\Diag(J_2,J_2,J_2)$ and consider the groups $\SU(2)\times \SU(2)\times \SU(2)$ and $\Unitary(1)\times \SU(2)\times \SU(2)$ embedded in $\USp(6)$ as $3$-diagonal block matrices. Set
$$
t=\begin{pmatrix}
I_2 & 0 & 0\\
0 & 0 & I_2\\
0 & I_2 & 0
\end{pmatrix}\,,
\quad
s=\begin{pmatrix}
0 & 0 & I_2\\
I_2 & 0 & 0\\
0 & I_2 & 0
\end{pmatrix}\,,
\quad
a=\begin{pmatrix}
J_2 & 0 & 0\\
0 & I_2 & 0\\
0 & 0 & I_2
\end{pmatrix}
\,.
$$
Denote by $E_\star$ the group generated by $\star\subseteq \{ s,t\}$ and $\SU(2)\times \SU(2)\times \SU(2)$. The groups of absolute type $\bE$ are
$$
\stgroup[E]{1.6.E.1.1a}\,,\quad \stgroup[E_t]{1.6.E.2.1a}\,,\quad \stgroup[E_s]{1.6.E.3.1a}\,,\quad \stgroup[E_{s,t}]{1.6.E.6.1a}\,.
$$
Denote by $F_{\star}$ the group generated by $\star\subseteq \{ a,t\}$ and $\Unitary(1)\times \SU(2)\times \SU(2)$. The groups of absolute type $\bF$ are
$$
\stgroup[F]{1.6.F.1.1a}\,,\quad \stgroup[F_a]{1.6.F.2.1c}\,,\quad \stgroup[F_t]{1.6.F.2.1b}\,,\quad \stgroup[F_{at}]{1.6.F.2.1a}\,,\quad \stgroup[F_{a,t}]{1.6.F.4.2a}\,.
$$

The following lemma is an immediate consequence of the additivity of characters with respect to direct sums.

\begin{lemma}\label{lemma: centrality from factors}
Let $G$ be a closed subgroup of $\USp(2g)$. Suppose that:
\begin{enumerate}[{\rm(i)}]
\item $G^0=G_1^0\times G_2^0$, where $G_i^0$ is a subgroup of $\USp(2g_i)$ and $g=g_1+g_2$. 
\item $G$ is generated by $G^0$ and a finite subgroup $H$ of $\USp(2g_1)\times \USp(2g_2)$.
\end{enumerate} 
Let $H_i$ be the image of $H$ under the projection onto the factor $\USp(2g_i)$. If $G_1$ and $G_2$ are of central type relative to $H_1$ and $H_2$, respectively, then $G$ is of central type relative to $H$.  
\end{lemma}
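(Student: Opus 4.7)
The plan is to verify directly that $G$ satisfies the definition of central type relative to $H$. The generation hypothesis in the definition is exactly hypothesis (ii) of the lemma, so the only point left to check is that for each $h \in H$, the map $\Phi^h\colon G^0\to\R[T]$ given by $\gamma\mapsto \det(1-\gamma h T)$ is a class function on $G^0$.

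To this end, I would use the block-diagonal structure: writing $h=(h_1,h_2)\in H_1\times H_2$ and $\gamma=(\gamma_1,\gamma_2)\in G_1^0\times G_2^0 = G^0$ with respect to the decomposition $\USp(2g_1)\times\USp(2g_2)\hookrightarrow\USp(2g)$, the matrix $\gamma h$ is block-diagonal with blocks $\gamma_i h_i$. Multiplicativity of the determinant on block-diagonal matrices then gives
\[
\Phi^h(\gamma_1,\gamma_2)=\det(1-\gamma_1 h_1 T)\cdot\det(1-\gamma_2 h_2 T)=\Phi_1^{h_1}(\gamma_1)\cdot\Phi_2^{h_2}(\gamma_2),
\]
where $\Phi_i^{h_i}$ denotes the analogous map for $G_i$ and $h_i\in H_i$. (This is the coefficient-level version of the ``additivity of characters with respect to direct sums'' mentioned in the statement, applied to the restriction of the standard representation of $\USp(2g)$ and its exterior powers.)

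By the hypothesis that $G_i$ is of central type relative to $H_i$, each factor $\Phi_i^{h_i}$ is a class function on $G_i^0$. Pulling back via the two projections $G^0\to G_i^0$ preserves being a class function, and the pointwise product of class functions on a group is again a class function; hence $\Phi^h$ is a class function on $G^0$, as required. I would also note that one does not need $H$ to project onto $H_1\times H_2$: the argument only uses the individual pairs $(h_1,h_2)$ that actually arise as components of elements of $H$.

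There is no substantive obstacle; the argument is a direct bookkeeping calculation in the block-diagonal structure, and the only care needed is to make the translation between the product-of-determinants formula and the hypothesis stated in terms of the coordinate maps $\Phi_i^{h_i}$.
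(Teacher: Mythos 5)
Your proof is correct and is exactly the argument the paper has in mind; the paper simply asserts the lemma as "an immediate consequence of the additivity of characters with respect to direct sums" without writing out the block-diagonal factorization $\Phi^h(\gamma_1,\gamma_2)=\Phi_1^{h_1}(\gamma_1)\Phi_2^{h_2}(\gamma_2)$ that you supply.
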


The following lemma is used  in the proof of Proposition~\ref{proposition: basicgroups}.

\begin{lemma} \label{lemma: central type dimension 2}
If $G$ is the Sato--Tate group of an elliptic curve or an abelian surface and $G$ is not the group $\stgroup[N(\SU(2)\times \SU(2))]{1.4.B.2.1a}$, then $G$ is of central type. 
\end{lemma}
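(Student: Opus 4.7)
The plan is to proceed by cases on the identity component $G^0$, reducing everything either to the trivial observation that any function on an abelian group is a class function, or to Lemma~\ref{lemma: centrality from factors} applied to a direct product decomposition.

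First I would dispose of the trivial cases. If $G = G^0$ is connected, take $H = \{1\}$; this handles $\SU(2)$, $\USp(4)$, and the connected entries of Table~\ref{table: genus 2 recall part 1}. More substantively, if $G^0$ is abelian, then every function on $G^0$ is automatically a class function, so it only remains to exhibit a finite subgroup $H \subseteq G$ with $G^0 H = G$; for each of the relevant groups this is immediate from the explicit presentations in \cite{FKRS12}, which realize every component by a matrix of finite order in $\USp(4)$. This single observation covers $N(\Unitary(1))$ in dimension~$1$, the $5$ Sato--Tate groups with $G^0 = \Unitary(1) \times \Unitary(1)$ in Table~\ref{table: genus 2 recall part 1}, and all $32$ Sato--Tate groups with $G^0 = \Unitary(1)_2$ in Table~\ref{table: genus 2 recall part 2}.

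What remains are the Sato--Tate groups of abelian surfaces with nonabelian disconnected identity component $G^0 \in \{\SU(2)_2, \Unitary(1) \times \SU(2), \SU(2) \times \SU(2)\}$. For $G^0 = \Unitary(1) \times \SU(2)$, I would write $N(G_{1,3})$ as the direct product $N(\Unitary(1)) \times \SU(2)$ and apply Lemma~\ref{lemma: centrality from factors} with the dimension-one case already handled. For $G^0 = \SU(2) \times \SU(2)$, the only Sato--Tate groups are the connected $G_{3,3}$ and the excluded $N(G_{3,3})$. For $G^0 = \SU(2)_2 = \{\Diag(h,h) : h \in \SU(2)\} \subset \USp(4)$, I would observe that the centralizer of $G^0$ in $\USp(4)$ is a copy of $\mathrm{O}(2)$ realized as scalar block matrices $\smallmat{\alpha I_2}{\beta I_2}{\gamma I_2}{\delta I_2}$ with $\smallmat{\alpha}{\beta}{\gamma}{\delta} \in \mathrm{O}(2)$; all $10$ groups $E_n$ and $J(E_n)$ arise by extending $G^0$ by a finite subgroup $H$ of this centralizer (cyclic or dihedral), and since $H$ commutes with $G^0$, $\Phi^h$ is a class function of $G^0$ for every $h \in H$.

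To confirm that the exclusion of $N(\SU(2) \times \SU(2))$ is genuine, I would finally observe that any element of the nontrivial coset of $G^0 = \SU(2) \times \SU(2)$ must have the form $(h_1,h_2) \cdot t$, where $t$ swaps the two $\SU(2)$ blocks, and a direct calculation yields
\[
\det\bigl(1 - (g_1,g_2)(h_1,h_2)\, t\, T\bigr) = 1 - \operatorname{tr}(g_1 h_1 g_2 h_2)\, T^2 + T^4,
\]
whose $T^2$-coefficient is not invariant under independent $\SU(2)$-conjugation of $g_1$ and $g_2$. The entire argument is essentially bookkeeping; the only point requiring care is in the $\SU(2)_2$ case, where one must match each specific group $E_n$ or $J(E_n)$ from the abstract presentation in \cite{FKRS12} with its concrete realization as an extension of $\SU(2)_2$ by a finite subgroup of the centralizer $\mathrm{O}(2) \subset \USp(4)$.
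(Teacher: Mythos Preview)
Your proof is correct and follows essentially the same approach as the paper: dispose of the abelian-$G^0$ cases trivially, handle $\Unitary(1)\times\SU(2)$ via Lemma~\ref{lemma: centrality from factors}, and for $\SU(2)_2$ observe that the finite group $H$ generating the components centralizes $G^0$. The paper cites the specific generators $\Diag(e^{i\pi/n},e^{i\pi/n},e^{-i\pi/n},e^{-i\pi/n})$ and $J_4$ from \cite[\S3.4]{FKRS12} rather than identifying the full centralizer as $\mathrm{O}(2)$ (a different but conjugate presentation, depending on the choice of symplectic form), and it omits your explicit verification that $N(\SU(2)\times\SU(2))$ fails to be of central type, since the lemma does not assert this.
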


\begin{proof}
The three groups arising for $g=1$ are either connected or have abelian identity component, so the claim follows trivially. For $g=2$, having ruled out $\stgroup[N(\SU(2)\times \SU(2))]{1.4.B.2.1a}$, the only nonabelian connected groups that arise as subgroups of disconnected groups are the groups $G^0=\Unitary(1)\times \SU(2)$ and $G^0=\SU(2)_2$. In the former case, the lemma follows from Lemma \ref{lemma: centrality from factors}. As for $G^0=\SU(2)_2$, we recall from \cite[\S3.4]{FKRS12} that there exists a subgroup
$$
H\subseteq \langle \Diag(e^{\nicefrac{i\pi}{n}}, e^{\nicefrac{i\pi}{n}},e^{-\nicefrac{i\pi}{n}},e^{-\nicefrac{i\pi}{n}}), J_4\rangle\,, \quad \text{where }J_4=\begin{pmatrix}
0 & J_2\\
-J_2 & 0
\end{pmatrix}\,, 
$$
such that $G$ is generated by $H$ and $G^0$. Any such $H$ centralizes $G^0$ (see \cite[p.\,1404]{FKRS12}), so for every $\xi,\gamma\in G^0$ and $h\in H$ we have
\begin{equation} \label{eq:central type}
\det(1-\xi^{-1}\gamma \xi hT)=\det(1-\xi^{-1}\gamma h\xi T)=\det(1-\gamma hT)\,,
\end{equation}
proving the claim.
\end{proof}

\begin{proposition}\label{proposition: basicgroups}
Let $G$ be a group of the extended classification distinct from 
\begin{equation}\label{equation: exceptionalmomentgroups}
\stgroup[N(\Unitary(3))]{1.6.B.2.1a}\,,\quad   \stgroup[E_t]{1.6.E.2.1a}\,,\quad \stgroup[E_s]{1.6.E.3.1a}\,,\quad \stgroup[E_{s,t}]{1.6.E.6.1a}\,, \quad \stgroup[F_t]{1.6.F.2.1b}\,,\quad \stgroup[F_{at}]{1.6.F.2.1a}\,,\quad \stgroup[F_{a,t}]{1.6.F.4.2a}\,.
\end{equation}
Then $G$ is of central type. 
\end{proposition}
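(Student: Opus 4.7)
The plan is to proceed case-by-case through the 14 identity-component types, reducing each to one of two standard devices: direct verification when the extension is either trivial or has abelian identity component, or the combination of Lemma~\ref{lemma: centrality from factors} with Lemma~\ref{lemma: central type dimension 2} when the extension respects a nontrivial product decomposition.

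First I dispose of two free cases. If $G = G^0$ is connected, take $H$ trivial and the condition is vacuous; this handles the connected representative of each of the 14 identity-component types. If instead $G^0$ is abelian, then $\xi^{-1} \gamma \xi = \gamma$ for all $\xi,\gamma \in G^0$, so the map $\Phi^h$ is constant on $G^0$ and is trivially a class function of $G^0$. This finishes all extensions of $G^0 = \Unitary(1)\times\Unitary(1)\times\Unitary(1)$ (type $\bH$) and of $G^0 = \Unitary(1)_3$ (type $\bN$, which accounts for the bulk of the classification carried out in \S\ref{section: unitary}).

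Next I handle the product cases. For the disconnected groups of types $\bC, \bD, \bG, \bI, \bJ, \bK, \bL$, and for the group $F_a$ of type $\bF$, the classification in \S\ref{section: split products} (together with the presentation $a = \Diag(J_2, I_2, I_2)$) exhibits $G$ as generated by $G^0 = G_1^0 \times G_2^0$ and a finite subgroup $H$ sitting block-diagonally in $\USp(2g_1) \times \USp(2g_2)$ relative to the symplectic decomposition of $\C^6$ dictated by the isogeny decomposition; this remains valid for the fiber-product subcases of types $\bJ$ and $\bL$, where the generators $(n,g)$ are already block-diagonal pairs in $N(\Unitary(1)) \times G_2$. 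In every such case, the projections $G_1, G_2$ are Sato--Tate groups of dimension at most $2$, and none coincides with the one group excluded from Lemma~\ref{lemma: central type dimension 2}, namely $\stgroup[N(\SU(2) \times \SU(2))]{1.4.B.2.1a}$: that group has identity component $\SU(2) \times \SU(2)$, and no identity component on our list features $\SU(2) \times \SU(2)$ as a factor. Hence each $G_i$ is of central type, and Lemma~\ref{lemma: centrality from factors} assembles them into central type for~$G$.

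Finally, I handle type $\bM$ with $G^0 = \SU(2)_3$ separately, since this identity component is not a product in a way compatible with the block structure. By \S\ref{subsec:su2}, the normalizer satisfies $N(G^0) = ZG^0$ with $Z \simeq \mathrm{O}(3)$ realized by block-scalar matrices, so in particular $Z$ commutes with $G^0$ elementwise; choosing a finite $H \subseteq Z$ with $G = \langle G^0, H\rangle$ then makes every $h \in H$ commute with every $\xi \in G^0$, so the computation~\eqref{eq:central type} applies verbatim and $\Phi^h$ is a class function. The main obstacle in the entire argument is tracking why the excluded groups really fail the machinery: the 3-cycle $s$ generating $E_s$ and $E_{s,t}$ lies in no subgroup of the form $\USp(2g_1) \times \USp(2g_2)$; the transposition $t$ in $E_t$, $F_t$, $F_{at}$, $F_{a,t}$ does respect a block decomposition, but then the induced factor $G_2$ equals $N(\SU(2) \times \SU(2))$, which is precisely the group whose centrality is not certified by Lemma~\ref{lemma: central type dimension 2}; and for $N(\Unitary(3))$, conjugation of $\gamma J$ by $\xi \in \Unitary(3)$ yields $\xi^{-1} \gamma \overline{\xi} J$, whose characteristic polynomial depends on $\overline{\xi}\xi^{-1}$, and this is manifestly not a class function of $\xi$.
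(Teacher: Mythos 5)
Your overall strategy matches the paper's proof: dispose of connected groups and abelian identity components trivially, reduce split-product cases to Lemma~\ref{lemma: centrality from factors} combined with Lemma~\ref{lemma: central type dimension 2}, and treat type $\bM$ separately via the centralizing property from \S\ref{subsec:su2}. However, there is a genuine error in the middle step. You justify the applicability of Lemma~\ref{lemma: central type dimension 2} by asserting that ``no identity component on our list features $\SU(2)\times\SU(2)$ as a factor,'' but this is false: the very group $F_a$ that you explicitly include in the product-case analysis has identity component $\Unitary(1)\times\SU(2)\times\SU(2)$, which does feature $\SU(2)\times\SU(2)$ as the factor $G_2^0$. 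The claim you actually need --- that none of the factor groups $G_2$ arising is $\stgroup[N(\SU(2)\times\SU(2))]{1.4.B.2.1a}$ --- is true, but for a different reason: for $F_a$ the generator $a = \Diag(J_2,I_2,I_2)$ projects trivially onto the $\USp(4)$ block, so $H_2$ is trivial and $G_2 = \SU(2)\times\SU(2)$ is connected. This is exactly the observation the paper records (``if $G_2^0 = \SU(2)\times\SU(2)$ then we may assume $H_2$ is trivial''); you need to replace your incorrect blanket claim with it.

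A secondary, more cosmetic issue: in the abelian-$G^0$ case you write that $\Phi^h$ is ``constant on $G^0$,'' which is not what you mean --- $\Phi^h(\gamma) = \det(1-\gamma h T)$ certainly varies with $\gamma$. The correct statement is that every function on an abelian compact group is automatically a class function, because the conjugation action of $G^0$ on itself is trivial. The conclusion you draw is correct; just be careful not to conflate ``constant'' with ``constant on conjugacy classes.'' With these two repairs, your proof would align with the paper's.
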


\begin{proof}
The proposition is trivially true when $G$ is connected or $G^0$ is abelian. Taking into account the restrictions on $G$, this implies that the proposition holds when the absolute type of $G$ is one of $\bA$,  $\bB$, $\bE$, $\bH$, $\bL$, $\bN$. 
Suppose next that $G$ is a split product (under the restrictions on $G$, this covers the cases in which $G$ has absolute type $\bC$, $\bD$, $\bF$, $\bG$, $\bI$, $\bJ$, $\bK$,~$\bL$). In this case, $G$ satisfies the hypotheses of Lemma \ref{lemma: centrality from factors} for $g_1=1$ and $g_2=2$. Moreover, by the restrictions on $G$, if $G_2^0=\SU(2)\times \SU(2)$, then we may assume that $H_2$ is trivial. The proposition then immediately follows from Lemma~\ref{lemma: central type dimension 2}.

It remains only to consider the case that $G$ has absolute type $\bM$, that is, $G^0=\SU(2)_3$. In~\S\ref{subsec:su2} we have shown that in this case $G$ can be generated by $G^0$ and a finite subgroup $H$ centralizing~$G^0$. Hence \eqref{eq:central type} holds again, proving the claim. 
\end{proof}

\subsection{Averages over disconnected groups not of central type}\label{section: NU3} Let $F$ be a character of $\USp(6)$.
We now consider the problem of computing the average of $F$ over the connected components of the seven groups listed in \eqref{equation: exceptionalmomentgroups}.
For the three groups of absolute type $\bF$, those for which $G^0=\Unitary(1)\times \SU(2)\times \SU(2)$, this can be computed using data in the row $JG_{3,3}$ in \cite[Table~6]{FKRS12}.

For the three groups of absolute type $\bE$ with $G^0=\SU(2)\times \SU(2)\times \SU(2)$, the computation of the average of $F$ for the connected component $s^jts^{-j}G^0$ follows again from data in the row~$JG_{3,3}$ of \cite[Table 6]{FKRS12}. Let us now consider the connected component 
$sG^0$ (the case of the connected component $s^2G^0$ being analogous). We claim that the distribution of $\det(1-s\gamma T)$ as $\gamma$ runs over $G^0$ with respect to the Haar measure of $G^0$ is the same as the distribution of the polynomials
$$
1-\Trace(a)T^3 +T^6\,,
$$  
as $a$ runs over $\SU(2)$ with respect to the Haar measure of $\SU(2)$. Let $\varphi$ be any continuous function on the set of polynomials $\det(1-s\gamma T)$ for $\gamma\in G^0$. If $\gamma=\Diag(a,b,c)$, let us write $\mu_{G_0}(a,b,c)$ for the Haar measure of $G^0$ at $\gamma$. Then an elementary calculation gives
$$
\int_{G^0}\varphi\big(\det(1-\Diag(a,b,c)\cdot s\cdot T)\big)\mu_{G_0}(a,b,c)=\int_{G^0} \varphi\big(\det(1-acb \cdot T^3)\big)\mu_{G_0}(a,b,c)\,. 
$$
By the invariance of the Haar measure under translations, the right-hand side of the above equality is
$$
\int_{\SU(2)} \varphi\big(\det(1-a \cdot T^3)\big)\mu_{\SU(2)}(a)=\int_{\SU(2)} \varphi\big(1-\Trace(a) T^3+T^6\big)\mu_{\SU(2)}(a)\,,  
$$
and the claim follows.

We are left with the group $\stgroup[N(\Unitary(3))]{1.6.B.2.1a}$. Given a character $F$ of $\USp(6)$ and a closed subgroup $G\subseteq \USp(6)$, let us denote by $\fm_G(F)$ the multiplicity of the trivial representation in the restriction of $F$ to $G$. Since the Weyl character formula allows to express $F$ as a sum of irreducible characters, the problem of computing $\fm_G(F)$ reduces to the problem of computing $\fm_G(\chi_\lambda)$ for all irreducible characters $\chi_\lambda$ of $\USp(6)$. 
The next lemma explains how to compute these multiplicities when $G$ is $\stgroup[N(\Unitary(3))]{1.6.B.2.1a}$. 

\begin{lemma}\label{lemma: multNU3}
Let $\lambda$ be the partition $a\geq b \geq c$. Then:
\begin{enumerate}[{\rm(a)}]
\item $\fm_{\Unitary(3)}(\chi_{\lambda})$ is $1$ if $a,b,c$ are all even, and it is $0$ otherwise.\vspace{0,1cm}
\item $\fm_{N(\Unitary(3))}(\chi_{\lambda})$ is $1$ if $a,b,c$ are even and $a+b+c\equiv 0 \pmod 4$, and it is $0$ otherwise.
\end{enumerate} 
\end{lemma}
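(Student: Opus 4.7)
The plan is to reduce both parts to a calculation of how $U(3)$ and the order-$2$ quotient $N(\Unitary(3))/\Unitary(3)$ act on certain fixed subspaces of the irreducible $\USp(6)$-representations $V_\lambda$. Part (a) is an immediate application of the Cartan--Helgason theorem to the compact Hermitian symmetric pair $(\USp(6),\Unitary(3))$ of type CI: the $\USp(6)$-irreducibles admitting a nonzero $\Unitary(3)$-fixed vector are exactly those with $\lambda=(2a,2b,2c)$ for $a\geq b\geq c\geq 0$, and each such representation has a $1$-dimensional $\Unitary(3)$-fixed subspace.

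For part (b), I would combine the decomposition $N(\Unitary(3))=\Unitary(3)\sqcup J\cdot\Unitary(3)$ with integration to obtain
\[
\fm_{N(\Unitary(3))}(\chi_\lambda)=\tfrac{1}{2}\bigl(\fm_{\Unitary(3)}(\chi_\lambda)+c(\lambda)\bigr),
\]
where $c(\lambda)$ denotes the scalar by which $J$ acts on $V_\lambda^{\Unitary(3)}$ (and $c(\lambda)=0$ when that subspace is trivial). Since $J^2=-I$ and $-I$ acts on $V_\lambda$ as $(-1)^{|\lambda|}$, which equals $1$ for spherical $\lambda$, the scalar $c(\lambda)$ is necessarily $\pm 1$. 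The key claim, which yields (b), is that $c\bigl((2a,2b,2c)\bigr)=(-1)^{a+b+c}$, so that the multiplicity is $1$ precisely when $a_1+a_2+a_3\equiv 0\pmod 4$.

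To prove the formula for $c$, I would exploit multiplicativity: if $\lambda=\lambda_1+\lambda_2$ with both summands spherical, then projecting $v_{\lambda_1}\otimes v_{\lambda_2}$ to the Cartan summand $V_\lambda\subseteq V_{\lambda_1}\otimes V_{\lambda_2}$ yields a nonzero $\Unitary(3)$-fixed vector (this uses the integrality of the spherical Hecke algebra of $(\USp(6),\Unitary(3))$), and since $J\in\USp(6)$ intertwines the projection, one obtains $c(\lambda)=c(\lambda_1)\,c(\lambda_2)$. The monoid of spherical weights is generated by $(2,0,0)$, $(2,2,0)$, and $(2,2,2)$, so it suffices to evaluate $c$ on these three generators. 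For $(2,0,0)$ the computation is explicit: $V_{(2,0,0)}=\Sym^2\C^6$, the fixed vector is $\Omega=\sum_{i=1}^3 x_iy_i$, and $J(x_i)=-y_i$, $J(y_i)=x_i$ give $J\Omega=-\Omega$, so $c((2,0,0))=-1$. For the other two generators, I would analyze $V_{(1,1,1)}^{\otimes 2}$: a dimension count ($84+90+21+1=196=14^2$) together with the splitting into $\Sym^2$ and $\wedge^2$ would give $\Sym^2 V_{(1,1,1)}=V_{(2,2,2)}\oplus V_{(2,0,0)}$ and $\wedge^2 V_{(1,1,1)}=V_{(2,2,0)}\oplus V_{(0,0,0)}$; then the $\Unitary(3)$-branching $V_{(1,1,1)}|_{\Unitary(3)}=V^{\Unitary(3)}_{(1,1,1)}\oplus V^{\Unitary(3)}_{(1,1,-1)}\oplus V^{\Unitary(3)}_{(1,-1,-1)}\oplus V^{\Unitary(3)}_{(-1,-1,-1)}$ gives four canonical pairings spanning the $4$-dimensional $\Unitary(3)$-fixed subspace of $V_{(1,1,1)}^{\otimes 2}$. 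Using $J^2=-I$ on $V_{(1,1,1)}$, one verifies that $J$ swaps each pair of canonical pairings with a sign of $-1$, so the symmetric combinations (lying in $\Sym^2$) carry $J$-eigenvalue $-1$ and the antisymmetric ones (in $\wedge^2$) carry $+1$; consistency with $c((2,0,0))=-1$ then forces $c((2,2,2))=-1$ and $c((2,2,0))=+1$. Writing $(2a,2b,2c)=(a-b)(2,0,0)+(b-c)(2,2,0)+c(2,2,2)$ and applying multiplicativity produces $c((2a,2b,2c))=(-1)^{a-b+c}=(-1)^{a+b+c}$, completing the proof.

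The hard parts will be two verifications I am sliding past: first, the nonvanishing of the Cartan projection of the spherical vectors (the formal justification of multiplicativity, which ultimately rests on the fact that the zonal spherical functions for $(\USp(6),\Unitary(3))$ form an integral domain under pointwise multiplication); and second, the tensor decomposition $V_{(1,1,1)}^{\otimes 2}=V_{(2,2,2)}\oplus V_{(2,2,0)}\oplus V_{(2,0,0)}\oplus V_{(0,0,0)}$, which is suggested by dimensions and the $\Unitary(3)$-fixed count but requires the Newell--Littlewood rule (or a Brauer--Klimyk computation) to rigorously confirm, together with the assertion that $V_{(2,2,2)}$ and $V_{(2,0,0)}$ appear in $\Sym^2$ while $V_{(2,2,0)}$ and $V_{(0,0,0)}$ appear in $\wedge^2$ (the latter uses that $V_{(1,1,1)}$ is a symplectic self-dual representation, so the trivial summand lies in $\wedge^2$).
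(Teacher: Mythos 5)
Your proposal reaches the correct conclusion by a route genuinely different from the paper's. For (a) you invoke the Cartan--Helgason theorem for the symmetric pair $(\USp(6),\Unitary(3))$; the paper instead cites the explicit $\GL_3\subset\Sp_6$ branching rule of Howe, Tan, and Willenbring with $\mu^+=\mu^-=0$, reducing to Littlewood--Richardson numbers. Both give the spherical weights $(2a,2b,2c)$ with multiplicity one. For (b) the paper works inside the model $V_\lambda\subseteq V^{\otimes(a+b+c)}$ and computes the action of $J$ directly on weight-zero tensors; you instead isolate the sign $c(\lambda)=Jv/v$ on the spherical line, argue that $c$ is a character of the spherical monoid, and reduce to the generators $(2,0,0)$, $(2,2,0)$, $(2,2,2)$. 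Your organization is more structural and makes the dependence on $\lambda$ transparent, whereas the paper's is more hands-on.

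That said, the two verifications you flag as ``sliding past'' are precisely where the substance lies, and as written the argument is not complete without them. First, the nonvanishing of the Cartan projection of $v_{\lambda_1}\otimes v_{\lambda_2}$ for spherical $\lambda_i$ is a genuine theorem about symmetric pairs --- e.g.\ it follows from the nonvanishing of the leading coefficient in Vretare's product formula for zonal spherical polynomials --- and ``integrality of the spherical Hecke algebra'' is neither standard terminology nor by itself the reason; a citation is needed. Second, the assertion that ``$J$ swaps each pair of canonical pairings with a sign of $-1$'' is normalization-dependent: with the raw-tensor normalization of the invariants (so that the flip $\sigma$ swaps them with sign $+1$) the $J$-swap sign is indeed $-1$, but with the $\omega$-canonical normalization $J$ swaps with $+1$ and $\sigma$ with $-1$. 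The normalization-independent statement, and the one your argument actually requires, is that the product of the two swap signs equals $-1$; this follows from $J^2=-\mathrm{id}_{V_{(1,1,1)}}$ together with the alternating nature of the symplectic form $\omega$, and it is what forces the $J$-eigenvalue to be $-1$ on the fixed lines of $V_{(2,2,2)},V_{(2,0,0)}\subset\Sym^2 V_{(1,1,1)}$ and $+1$ on those of $V_{(2,2,0)},V_{(0,0,0)}\subset\wedge^2 V_{(1,1,1)}$. With these two points supplied carefully, your proof is correct and arguably cleaner than the paper's; without them it is a plausible plan rather than a proof.
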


\begin{proof}
We check (a) using the branching rule for the inclusion $\GL_3 \subset \Sp_6$ of algebraic groups as formulated in \cite[(2.3.2)]{HTW05}
(see also \cite{Kin75}).
For a representation of $\GL_3$ indexed by a pair of nonnegative partitions $(\mu^+, \mu^-)$,
its multiplicity in the restriction of $\chi_{\lambda}$ is given by a sum of Littlewood--Richardson numbers:
\[
\sum_{\delta, \gamma} c^\gamma_{\mu^+,\mu^-} c^{\lambda}_{\gamma, 2\delta}
\]
where $\delta$ and $\gamma$ run over nonnegative partitions. 
To evaluate  $\fm_{\Unitary(3)}(\chi_{\lambda})$ we take $\mu^+ = \mu^- = 0$, in which case $c^{\gamma}_{\mu^+, \mu^-} = 1$ if $\gamma=0$ and $c^{\gamma}_{\mu^+, \mu^-} = 0$ otherwise.
Hence $\fm_{\Unitary(3)}(\chi_{\lambda}) = \sum_{\delta} c^{\lambda}_{0, 2\delta}$; the claim now follows by observing that $c^{\lambda}_{0, 2\delta} = 1$ if $\lambda = 2\delta$ and $c^{\lambda}_{0, 2\delta} = 0$  otherwise.

We now prove (b). We may assume that $a$, $b$, $c$ are all even, since otherwise the statement follows from (a).
Recall that the irreducible representation $V_{\lambda}$ of $\USp(6)$ with highest weight~$\lambda$ is a subrepresentation of
$$
\Sym^{a-b}(V) \otimes \Sym^{b-c}(W) \otimes \Sym^c(U)\subseteq V^{\otimes (a+b+c)}\,,
$$
where $V$ is the standard representation of $\USp(6)$, $W$ is $\wedge^2V / 1$, and $U$ is $\wedge^3V/ V$. Let~$v$ be an element of $V_{\lambda}$ spanning the line which is fixed under the action of $\Unitary(3)$. On the one hand, this implies that $v$ is an element of weight $(0,0,0)$. On the other hand, since $J$ normalizes~$\Unitary(3)$, we see that $\Unitary(3)$ fixes $Jv$ and $Jv$ is thus a scalar multiple of $v$.

Let $v_1,\dots, v_6$ denote the standard basis of $V$. Since $v\in V^{\otimes(a+b+c)}$ is of weight $(0,0,0)$, it is a nonzero linear combination of vectors of the form
$$
w=\bigotimes_{j=1}^{a+b+c} w_j\,,
$$
where $\{w_j\}_j$, as a multiset, contains $r$ copies of $v_1$ and $v_4$, $s$ copies of $v_2$ and $v_5$, and $t$ copies of $v_3$ and $v_6$, where $2(r+s+t)=a+b+c$. Let $\hat v_j$ denote $v_{j+3}$ if $j=1,2,3$, and $v_{j-3}$ if $j=4,5,6$. Note that $Jv_j=-\hat v_j$ for $j=1,2,3$ and $Jv_j=\hat v_j$ for $j=4,5,6$, and therefore
$$
Jw=(-1)^{r+s+t}\bigotimes_{j=1}^{a+b+c} \hat w_j\,.
$$
The fact that $Jv$ is a scalar multiple of $v$ implies that $v$ is a nonzero linear combination of vectors of the form 
$$
u=\bigotimes_{j=1}^{a+b+c} w_j+(-1)^{(a+b+c)/2}\bigotimes_{j=1}^{a+b+c} \hat w_j
$$
But $J$ acts trivially on $u$ if and only if $a+b+c \equiv 0 \pmod 4$. 
\end{proof}

\begin{remark}  \label{R:closed formulas for moments}
The determination of closed formulas for the moments of the groups in the extended classification could be approached following the methods recently introduced by Lee and Oh \cite{LO20}. Let $G$ be a closed subgroup of $\USp(6)$ and let $m$ be a positive integer.    
Given a subpartition $\lambda$ of the rectangular partition $3\geq\stackrel{m}{\dots}\geq 3$, let $\lambda^*$ denote the partition
$$
m-\lambda_3'\geq m-\lambda_2' \geq m-\lambda_1'\,,
$$ 
where $\lambda_1'\geq  \lambda_2' \geq \lambda_3'$ denotes the transpose partition of $\lambda$. 
Generalizing a formula of Bump and Gamburd \cite{BG06}, Lee and Oh \cite[Prop. 3.3]{LO20} establish
\begin{equation}\label{equation: LeeOhformula}
\int_G \prod_{j=1}^m\det(1-u_j \gamma)\mu_G(\gamma)=(u_1\dots u_m)^3\sum_\lambda \fm_G(\chi_{\lambda^*})\tilde\chi_{\lambda}\,,
\end{equation}
where the sum runs over subpartitions $\lambda$ of the rectangular partition $3\geq \stackrel{m}{\dots} \geq 3$. 
Note that for $m=e_1+e_2+e_3$ the moment $M_{e_1,e_2,e_3}(G)$ is the coefficient of the monomial
$$
\prod_{j=1}^{e_1} u_j \prod_{j=e_1+1}^{e_1+e_2}u_j^2 \prod_{j=e_1+e_2+1}^{m} u_j^3
$$
in the left-hand side of \eqref{equation: LeeOhformula}. In dimension 2, Lee and Oh have derived closed formulas for $\fm_H(\chi_{\lambda^*})$ for each possible Sato--Tate group $H$ of an abelian surface. The analogous problem in dimension $3$ is an interesting question which we only attempted to solve for $\stgroup[\Unitary(3)]{1.6.B.1.1a}$ and~$\stgroup[N(\Unitary(3))]{1.6.B.2.1a}$.
\end{remark}

\begin{remark} \label{R:Kostant character formula}
The Weyl character formula admits an extension to disconnected compact Lie groups due to Kostant
\cite[\S7]{Kos61}. It should be possible to use Kostant's formula to compute moments of disconnected groups, but we have not attempted this.
\end{remark}

\subsection{Point densities}

For $G$ a nontrivial connected closed subgroup of $\USp(6)$, the distributions associated to $a_1, a_2, a_3$ are continuous; in particular, no single value occurs with positive probability. By contrast, if $G$ is disconnected, then it is possible for one or more of $a_1,a_2,a_3$ to take a constant value on a connected component of $G$.

\begin{lemma}\label{lemma: a1a2a3constants}
Let $G$ be a group satisfying (ST1), (ST2), and (ST3). Let $(i,t)$ be a pair in which $i \in \{1,2,3\}$ and $t$ is a real number with the property that the function $a_i:G\rightarrow \R$ is identically equal to the constant function $t$ on some connected component $C$ of $G$. Then
\[
(i,t) \in \bigl\{(1,0)\bigr\} \cup \bigl\{(2,t): t\in \{-1,0,1,2,3\}\bigr\} \cup \bigl\{(3, 0)\bigr\}.
\]
\end{lemma}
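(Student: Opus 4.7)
The plan is to analyze the Fourier expansion in $u$ of $a_i(\gamma_0\theta(u))$, where $\theta\colon\Unitary(1)\to G^0$ is a Hodge circle (whose existence follows from (ST2)) and $\gamma_0\in C$. Since $\theta(u)\in G^0$, we have $\gamma_0\theta(u)\in C$ for every $u$, so constancy of $a_i$ on $C$ forces $u\mapsto a_i(\gamma_0\theta(u))$ to be constant on $\Unitary(1)$.

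I would first conjugate so that $\theta(u)=\Diag(uI_3,u^{-1}I_3)$ and write $\gamma_0=\smallmat{A}{B}{C}{D}$ in this block form. Combining (ST1) with $\gamma_0\in U(6)$ (the quaternionic description of $\USp(6)$) yields the identities $D=\bar A$, $C=-\bar B$, and in particular $\|A\|_F^2+\|B\|_F^2=3$. The coefficient of $T^i$ in the reverse characteristic polynomial of $\gamma_0\theta(u)$ is $(-1)^i\Trace(\wedge^i(\gamma_0\theta(u)))$; since $\wedge^i\C^6$ decomposes under $\theta$ into isotypic pieces of weight $u^{2k-i}$ for $k=0,\dots,i$, this trace is a Laurent polynomial in $u$ supported on those powers.

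For $i=1$ the monomials are $u^{\pm1}$, and for $i=3$ they are $u^{\pm1},u^{\pm3}$; in neither case does $u^0$ appear, so constancy in $u$ forces all Fourier coefficients (and $a_i$ itself) to vanish, giving $t=0$. For $i=2$ the monomials are $u^{-2},u^0,u^2$, so constancy forces the $u^{\pm2}$ coefficients to vanish: a direct computation identifies these as $e_2(A)$ and $e_2(\bar A)=\overline{e_2(A)}$, so $e_2(A)=0$, and the constant value equals the $u^0$ Fourier coefficient, which expands to $t=|\Trace(A)|^2+\Trace(B\bar B)$.

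It then remains to bound $t$. Newton's identity converts $e_2(A)=0$ into $\Trace(A)^2=\Trace(A^2)$, after which applying Cauchy--Schwarz to $\Trace(A^2)=\sum_{i,j}A_{ij}A_{ji}$ yields $|\Trace(A)|^2\le\|A\|_F^2$; combined with the analogous $|\Trace(B\bar B)|\le\|B\|_F^2$ and the unitarity identity $\|A\|_F^2+\|B\|_F^2=3$, this gives $t\le3$. For the lower bound, write $a_2(\gamma_0)=\sigma_2(z_1,z_2,z_3)+3$, where $z_j=2\cos\phi_j\in[-2,2]$ is determined by the eigenangles of $\gamma_0\in\USp(6)$; an elementary minimization of $\sigma_2$ on the cube $[-2,2]^3$ gives $\sigma_2\ge-4$ (attained at vertices like $(2,2,-2)$), so $t\ge-1$. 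Since (ST3) applied to the $\wedge^2$ representation forces $t=\Exp_C[a_2]\in\Z$, this pins down $t\in\{-1,0,1,2,3\}$. The hardest step is recognizing that $e_2(A)=0$ is precisely the constraint needed to upgrade the crude bound $|\Trace(A)|^2\le3\|A\|_F^2$ to the sharp $|\Trace(A)|^2\le\|A\|_F^2$, which is what makes the upper bound $t\le3$ tight.
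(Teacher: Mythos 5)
Your proof is correct, and its handling of $i=2$ is a genuinely different and self-contained route. The paper's argument is much shorter: for $i=1,3$ it uses only that $-1=\theta(-1)\in G^0$, so $-C=C$ and the odd characteristic-polynomial coefficients change sign under $\gamma\mapsto-\gamma$; and for $i=2$ it applies the intermediate value theorem along a path in $C$ joining some $\gamma$ to $-\gamma$ to produce $\gamma_0\in C$ with $a_1(\gamma_0)=0$, then cites the bound $-1\leq a_2(\gamma_0)\leq 3$ from \cite[Prop.~4]{KS08}. You instead exploit the whole Hodge circle rather than just $\theta(-1)$: the Fourier analysis of $a_2(\gamma_0\theta(u))$ gives $e_2(A)=0$, a strictly stronger constraint than $a_1(\gamma_0)=0$, and from this you re-derive the needed bound from scratch (Newton's identity plus Cauchy--Schwarz for $t\leq 3$; the unconstrained inequality $a_2=3+\sigma_2(2\cos\phi_1,2\cos\phi_2,2\cos\phi_3)\geq -1$, valid on all of $\USp(6)$, for $t\geq -1$). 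The trade is clear: you gain independence from \cite{KS08} at the cost of essentially re-proving the cited proposition. One step worth making explicit in your write-up: putting $\theta(u)$ in the diagonal form $\Diag(uI_3,u^{-1}I_3)$ while keeping $\gamma_0$ in the quaternionic block form requires conjugating $\theta$ into the standard maximal torus by an element of $\USp(6)$, which is standard (conjugacy of maximal tori in a compact group, plus a Weyl group element to sort the exponents) but unstated.
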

\begin{proof}
Since $G$ satisfies~(ST2), $G^0$ contains $-1$. For $i=1,3$, the presence of $-1$ in $G^0$ implies that the only possible constant value is $0$. Now suppose $i=2$.  Since~$G$ satisfies~(ST3), $t$ must be an integer.  Choose $\gamma_0\in C$ so that $a_1(\gamma_0)=0$; such a $\gamma_0$ exists, since if $a_1$ is not identically zero on $C$, then $a_1(\gamma) < 0 < -a_1(\gamma)= a_1(-\gamma)$ for some $\gamma\in C$, and $\gamma_0$ exists by continuity (note $-C=C$).  We have $-1\leq a_2(\gamma_0)\leq 3$ by \cite[Prop. 4]{KS08}, therefore $-1\le t \le 3$.
\end{proof}

\begin{remark}
Lemma~\ref{lemma: a1a2a3constants} can be generalized to any closed subgroup of $\USp(2g)$ that satisfies (ST2) and (ST3).  The same argument shows that if $a_2$ takes the constant value $t$ on a component, then $t$ is an integer in $[2-g,g]$ when $g$ is odd, and in $[-g,g]$ when $g$ is even.
\end{remark}

The previous lemma suggests the following definition.

\begin{definition}
Let $G$ be a group of the extended classification.
If $S$ is a string of digits and~$t$ is a real number, let $z_{S}^{t}\in \Q$ denote the proportion of connected components $C$ of $G$ with all of the following properties: $a_1$ is identically $0$ on $C$ if $1$ appears in $S$, $a_2$ is identically~$t$ on~$C$ if $2$ appears in $S$, and $a_3$ is identically $0$ on~$C$ if $3$ appears in $S$.
If $2$ does not appear in~$S$ we omit the superscript $t$. If $2$ does appear in~$S$, let $z_S$ denote the sum of all~$z_S^t$ for all possible values of $t$.
We define the \emph{matrix of point densities} associated to $G$ as
$$
Z(G)=\begin{bmatrix}
1 & z_2 & z_{2}^{-1} & z_{2}^{0} & z_{2}^{1} & z_{2}^{2} & z_{2}^{3}\\[2pt]
z_1 & z_{12} & z_{12}^{-1} & z_{12}^0 & z_{12}^1 & z_{12}^2 & z_{12}^{3}\\[2pt]
z_3 & z_{23} & z_{23}^{-1} & z_{23}^0 & z_{23}^1 & z_{23}^2 & z_{23}^3\\[2pt]
z_{13} & z_{123} & z_{123}^{-1} & z_{123}^0 & z_{123}^1 & z_{123}^2 &z_{123}^3
\end{bmatrix}.
$$
\end{definition}

\subsection{A \Magma{} computation}

For each group $G$ in the extended classification, we have implemented \Magma{} code that gives matrix generators for the group of components of~$G$, and using the methods of the previous sections, computes:
\begin{itemize}
\item the $12$-simplex of moments of $G$;
\item the $3$-diagonal of character norms of $G$;
\item the matrix of point densities $Z(G)$;
\item the averages of $a_i^e$ over $G$ for $i\in \{1,2,3\}$ and $1\leq e\leq 12$;
\item the group of connected components of $G$;
\item the lattice of finite index subgroups of $G$.
\end{itemize}

This code may be found in the \GitHub{} repository \cite{FKS21}.  The output it produces is included in the \LMFDB{} home page for each of the 410 groups realized by abelian threefolds.

\begin{remark} Each group $G$ of central type in the extended classification is of central type relative to the finite group $H$ specified in \S\ref{section:STgroups} or \S\ref{section: unitary} in order to construct it. For each such $G$ and specific choice of $H$, the \Magma{} code constructs the Laurent polynomials
\begin{equation}\label{equation: magma code polynomials}
\tilde \Phi_F^h\in \C[u_1^{\pm 1},\dots, u_r^{\pm 1}]\,,
\end{equation} 
where $h$ runs over $H$, and $F$ is either $a_1^{e_1}a_2^{e_2}a_3^{e_3}$ for $e_1+2e_2+3e_3\leq 12$, $a_i^e$ for $1\leq e\leq 12$, or $\chi_{\lambda}^2$ for $\lambda$ a subpartition of $3\geq 3\geq 3$. In fact, the explicit presentations of the finite subgroups~$H$ show that the coefficients of $\tilde \Phi_F^h$ always belong to the cyclotomic field generated by a primitive root of unity of order~504. The problem of computing the average of $F$ over~$G^0h$ is then resolved as explained in \S\ref{section: momentsconnected}.   

Computing moments of the group $\stgroup[N(\Unitary(3))]{1.6.B.2.1a}$ is immediate from the Weyl character formula for $\USp(6)$ and Lemma \ref{lemma: multNU3}. The remaining groups not of central type are treated using the results of \S\ref{section: NU3}.
\end{remark}

Our \Magma{} computation yields the following proposition; see Theorem~\ref{thm: minimal set of 3-diagonals} for a sharper result regarding the 410 groups that arise for abelian threefolds.

\begin{proposition} \label{proposition: same measures}
The following statements hold:
\begin{enumerate}[{\rm(a)}]
\item The $433$ groups in the extended classification give rise to $432$ distinct $14$-simplices of moments (resp. $3$-diagonals of character norms). 
\item For the groups 
\[
G_1 = \stgroup{J(C(3,3))}, \qquad G_2 = \stgroup{J_s(C(3,3))},
\]
the pushforward measures from $G_1, G_2$ to conjugacy classes in $\USp(6)$ coincide. In particular, we have
$M_{e_1,e_2,e_3}(G_1) = M_{e_1,e_2,e_3}(G_2)$ for all nonnegative integers $e_1,e_2,e_3$.
\end{enumerate}
\end{proposition}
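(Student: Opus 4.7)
The plan is to prove both parts computationally, extending the \Magma{} moment machinery of \S\ref{sec:statistics}. For part~(a), I would iterate over the 433 groups in the extended classification, compute the 14-simplex of moments for each via the coset-by-coset integration of \S\ref{section: momentsconnected}--\S\ref{section: NU3}, and compare across all $\binom{433}{2}$ pairs to confirm that exactly 432 distinct 14-simplices occur; the same enumeration applied to the 3-diagonals of character norms gives the same count, and the unique colliding pair turns out to be that named in part~(b).

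For part~(b), the starting observation is that $G^0 = \Unitary(1)_3$ (the $3$-fold diagonal image of $\Unitary(1)$, of real dimension $1$) has centralizer $Z \cong \Unitary(3)$ in $\USp(6)$. Since conjugation by any $z \in Z$ fixes $G^0$ pointwise and preserves characteristic polynomials, the distribution of normalized $L$-polynomials on a single coset $G^0 \cdot h$ depends only on the $Z$-orbit of $h\cdot G^0$ in $G_i/G^0$. Hence the pushforward measure to $\USp(6)$-conjugacy classes is determined by the multiset of $Z$-orbits on the 54 cosets of $G^0$ in $G_i$. The groups $G_1$ and $G_2$ share the same intersection $G^0 \cdot C(3,3\phi)$ with $\Unitary(3) \subset \USp(6)$, so the 27 cosets lying in the $\Unitary(3)$-component contribute identically; the task thus reduces to comparing the 27 cosets in the other connected component $J \cdot \Unitary(3)$, generated by $J$ in $G_1$ versus $JT_1$ in $G_2$.

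The required identity of $Z$-orbit multisets on the $J$-component is the heart of the argument, and in practice would be verified by \Magma-aided enumeration: produce a representative of each coset, compute its characteristic polynomial (augmented by any finer $Z$-conjugacy invariants needed to distinguish orbits), and confirm that the multisets of invariants agree between $G_1$ and $G_2$. Granted this equality, all moments of the two pushforward measures coincide, and the Stone--Weierstrass theorem on the compact space of $\USp(6)$-conjugacy classes (realized as a quotient of the maximal torus by the Weyl group) then upgrades moment equality to equality of measures. The main obstacle is conceptual: $G_1$ and $G_2$ are not isomorphic as abstract groups, so there is no structural reason to expect their $Z$-orbit multisets to coincide; the matching is a genuine arithmetic coincidence specific to the subgroup $C(3,3\phi)$ and the twist $T_1$, which only surfaces through direct computation and accounts for why this pair is singular in the full classification.
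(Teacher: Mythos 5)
Your overall strategy matches the paper's: both parts are established by \Magma{} computation, and for (b) the key step is a coset-by-coset comparison of pushforward measures on $G_1$ and $G_2$. The paper makes this precise using the ``central type'' framework of \S\ref{section: moments disconnected groups}: each coset $G^0h$ contributes a pushforward measure to $\USp(6)$-conjugacy classes that is fully encoded by the triple of Laurent polynomials $(\tilde\Phi_{a_1}^h,\tilde\Phi_{a_2}^h,\tilde\Phi_{a_3}^h)$, and the \Magma{} computation verifies that the multisets of these triples over $H_1$ and $H_2$ coincide, which directly gives equality of the pushforward measures. You, instead, frame the comparison in terms of $Z$-conjugacy of cosets (where $Z\cong\Unitary(3)$ centralizes $G^0$). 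Two cautions here: first, the $Z$-orbit of $G^0h$ need not remain inside $G_i/G^0$, so the phrase ``the $Z$-orbit of $h\cdot G^0$ in $G_i/G^0$'' is not quite what you want; what matters is $Z$-conjugacy of $G^0h$ as a subset of $\USp(6)$, which you do use correctly in the next sentence. Second, and more substantively, $Z$-conjugacy of cosets is a \emph{sufficient} but not obviously \emph{necessary} condition for their pushforward measures to agree, so asserting that ``the required identity of $Z$-orbit multisets\dots is the heart of the argument'' is a stronger claim than what the proposition actually needs and what the paper verifies; in principle the multisets of coset measures could match without the multisets of $Z$-conjugacy classes matching, and you would not detect this unless the invariants you compute are precisely the coset measures (i.e., what the paper's $\tilde\Phi$ polynomials encode), at which point the $Z$-conjugacy layer is superfluous. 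Finally, once the multisets of coset pushforward measures are shown to agree, the total pushforward measures agree immediately as their averages; the Stone--Weierstrass step you invoke is unnecessary and suggests you were comparing only moments of the coset distributions rather than the distributions themselves. None of these are fatal—your approach, implemented carefully, yields the result—but the paper's formulation is tighter and the $Z$-conjugacy framing should be dropped in favor of direct comparison of coset measures.
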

\begin{proof}
The statement about the $3$-diagonals is direct result of the \Magma{} computation, which also shows that the $433$ groups appearing in the classification give rise to $431$ distinct $12$-simplices of moments. The two absolute type $\bN$ groups $\stgroup{J(C(3,3))}$ and $\stgroup{J_s(C(3,3))}$ share the same $12$-simplex of moments, and so do the absolute type $\bL$ groups $G_3=\stgroup{L(J(D_6),J(C_6))}$ and $G_4=\stgroup{L(J(D_6),D_6)}$. The latter two groups are respectively the fiber product of $\stgroup[N(\Unitary(1))]{N(U(1))}$ and $\stgroup{J(D_6)}$ along $\stgroup{J(C_6)}$, and the fiber product of $\stgroup[N(\Unitary(1))]{N(U(1))}$ and $\stgroup{J(D_6)}$ along $\stgroup{D_6}$. By slightly extending the \Magma{} computation, one finds $M_{0,4,2}(G_3)=98083$ and $M_{0,4,2}(G_4)=98082$, showing that there are at least 432 distinct 14-simplices.

For $i=1,2$, let $H_i$ be the finite group described in  \S\ref{section: unitary} which generates $G_i$ together with~$G^0_i$. Let $S_i$ denote the multiset of triples
\begin{equation}\label{equation: triple Laurent polynomials}
\big(\tilde \Phi_{a_1}^{ h}, \tilde \Phi_{a_2}^{ h},\tilde \Phi_{a_3}^{h} \big)
\end{equation}
as $h$ runs in $H_i$. Since both $G_1$ and $G_2$ are of central type, the multiset $S_i$ determines the pushforward measure from $G_i$ to conjugacy classes in $\USp(6)$ as discussed in \S\ref{section: momentsconnected} and \S\ref{section: moments disconnected groups}. The \Magma{} computation verifies that the multisets $S_1$ and $S_2$ coincide, proving (b), and this completes the proof of (a) and the proposition.
\end{proof}

\begin{remark}
Let $G$ be a group in the extended classification, and let $F$ be a character of $\USp(6)$. If $h$ and $h'$ are conjugate in $G$, then the average of $F$ over $G^0h$ coincides with that over $G^0h'$. This implies that many of the Laurent polynomials $\tilde \Phi^h_F$ of \eqref{equation: magma code polynomials} computed by the \Magma{} code coincide. This naturally raises the problem of studying the $3$-diagonals of the \emph{connected components} of the groups in the extended classification. By computation, we have observed that there are only $82$ distinct 3-diagonals of connected components, while, as observed in the previous lemma, their appropriate combination gives rise up to 432 distinct 3-diagonals for entire groups. This discrepancy is particularly remarkable for the groups of absolute type $\bN$, whose connected components give rise to just $19$ distinct $3$-diagonals.
\end{remark}

\begin{remark}
Data inspection shows that, uniformly over the groups, the size of the entries of the $3$-diagonal of character norms is much smaller than that of the entries of the $14$-simplex of moments (in addition to having just 20 nonzero entries rather than 81). As already suggested in \cite{Shi16}, this makes the $3$-diagonal of character norms a more desirable invariant to identify Sato--Tate groups from a computational perspective.
To illustrate this point, we list in Table~\ref{table:3-diagonal} the 3-diagonals for the connected groups in the extended classification; these values are all at most $10^5$, whereas the $14$-simplices include values that exceed $10^8$.
\end{remark}

\begin{remark} \label{remark: Gassmann equivalence}
Despite inducing the same measures on the set of conjugacy classes of $\USp(6)$ by Proposition~\ref{proposition: same measures}, the groups $\stgroup{J(C(3,3))}$ and $\stgroup{J_s(C(3,3))}$, which are not conjugate inside $\USp(6)$, do not even have isomorphic groups of connected components. 
Although the group-theoretic mechanism at work is somewhat different, this phenomenon
is loosely analogous to the phenomenon of \emph{Gassmann equivalence}, in which nonconjugate subgroups of an ambient group give rise to isomorphic permutation representations. See \cite{Sut18,Sut21} for a modern survey of this topic and some recent results.
\end{remark}

\begin{remark}
The computation of the lattice of subgroups of finite index of a given group relies on Proposition \ref{proposition: same measures}. This ensures that any two groups in the extended classification are uniquely distinguished by the pair formed by the group of components and the $3$-diagonal of character norms. Given $G$, we compute its set of maximal subgroups of finite index up to conjugacy and identify each of them by computing the previous pair of invariants. 
\end{remark}

\begin{remark}
In principle, the point densities do not carry any additional information beyond that of the moments. (In particular,
by Proposition~\ref{proposition: same measures}, the point densities cannot be used to separate the groups
$\stgroup{J(C(3,3))}, \stgroup{J_s(C(3,3))}$.) However, in practice it is much easier to measure point densities on experimental data than very high moments, so including them in the statistical profile when matching $L$-functions to Sato--Tate groups yields better results.
\end{remark}

\begin{theorem} \label{thm: minimal set of 3-diagonals}
The following statements hold:
\begin{enumerate}[{\rm(a)}]
\item
The $2$-simplex of moments distinguishes the $14$ connected Sato--Tate groups that arise for abelian threefolds over number fields.
\item
The $3$-diagonal of character norms distinguishes the $409$ Sato--Tate distributions that arise for abelian threefolds over number fields.
In fact, the same holds if we replace the $3$-diagonal with the list of $N_{\lambda,\lambda}(G)$ for $\lambda\in \{(3,2,2),(3,3,0),(3,3,1)\}$.
\item
The isomorphism class of $G/G^0$ together with the matrix of point densities and the $2$-diagonal of character norms distinguishes the $410$ Sato--Tate groups $G$ that arise for abelian threefolds over number fields. In fact, the same holds if we replace the $2$-diagonal with the list of $N_{\lambda,\lambda}(G)$ for $\lambda\in \{(1,1,0),(1,1,1),(2,0,0)\}$.
\end{enumerate}
\end{theorem}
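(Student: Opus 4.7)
The plan is to prove all three assertions by direct computation using the \Magma{} code described in \S\ref{sec:statistics}, which has already been used to produce the $12$-simplex of moments, the $3$-diagonal of character norms, and the matrix of point densities for every group in the extended classification. Since all three parts are ``no-coincidence'' statements, the proof reduces to finite case-checks once the relevant data is extended slightly and tabulated.

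For part (a), I would compute the $2$-simplex of moments for each of the $14$ connected groups listed in Table~\ref{table:connected ST groups} by the methods of \S\ref{section: momentsconnected}. The $2$-simplex contains only the four moments $M_{0,0,0}$, $M_{1,0,0}$, $M_{0,1,0}$, $M_{2,0,0}$; of these, $M_{0,0,0}=1$ universally and $M_{1,0,0}=0$ for every group containing $-1$, so the separation is effectively carried by $M_{0,1,0}$ and $M_{2,0,0}$. A single table lookup then confirms distinctness.

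For part (b), I would proceed in two stages. First, Proposition~\ref{proposition: same measures}(a) already establishes that the $3$-diagonal of character norms distinguishes $432$ of the $433$ groups of the extended classification, with the unique collision being the pair $\stgroup{J(C(3,3))}$, $\stgroup{J_s(C(3,3))}$ from Proposition~\ref{proposition: same measures}(b); since these two groups yield identical pushforward measures on conjugacy classes of $\USp(6)$, they yield the same $3$-diagonal for every extension of the rationality condition, so the count of $409$ distinct distributions among the $410$ groups realized by abelian threefolds is exact. Second, for the sharper claim about the three partitions $\{(3,2,2),(3,3,0),(3,3,1)\}$, I would extend the \Magma{} script to output only the triple $(N_{(3,2,2),(3,2,2)},N_{(3,3,0),(3,3,0)},N_{(3,3,1),(3,3,1)})$ for each of the $410$ groups, and verify by direct enumeration that this triple takes $409$ distinct values, matching the stratification into distributions from the first stage.

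For part (c), I would note first that the pair $\stgroup{J(C(3,3))}$, $\stgroup{J_s(C(3,3))}$ from Proposition~\ref{proposition: same measures} is the only obstruction to distinguishing groups by their distributions, and its two members have non-isomorphic component groups (of orders $54$ versus $54$ but with different structure, as recorded in their \LMFDB{} entries); hence augmenting the $3$-diagonal with the isomorphism class of $G/G^0$ separates all $410$ groups. To obtain the sharper statement, I would extend the code to output the triple $(|G/G^0|\text{-iso-invariant},\,Z(G),\,(N_{(1,1,0),(1,1,0)},N_{(1,1,1),(1,1,1)},N_{(2,0,0),(2,0,0)}))$ and enumerate; the main subtlety here is that dropping down from the full $2$-diagonal to just three character norms is a genuine strengthening, which must be checked by exhaustive pairwise comparison.

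The main obstacle is essentially a presentation issue rather than a mathematical one: the verification is entirely algorithmic given the infrastructure already in place, but I need to argue that the \Gap{}/\Magma{} computations of abstract isomorphism classes of component groups, of averages $N_{\lambda,\lambda}(G)$, and of the point densities $z_S^t$ are reliable. For the isomorphism classes I would rely on the Small Groups library identifications computed in Remark~\ref{verify maximal subgroups} and extend them to every group in the classification; for the character norms I would cross-validate against the recomputation via the moments (which are independently checked against the $12$-simplex computation); and for the point densities I would cross-validate by confirming they are consistent with the observed zero patterns in the coefficient distributions sampled on the web site referenced in the introduction.
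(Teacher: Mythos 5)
Your proposal is correct and matches the paper's own proof, which simply invokes the \Magma{} scripts in \cite{FKS21} to compute the stated invariants for the $14$ connected groups of Table~\ref{table:connected ST groups} and the $410$ groups of Theorem~\ref{T:ST result} and checks distinctness by direct enumeration. Your additional framing (reducing the $2$-simplex to $M_{0,1,0},M_{2,0,0}$ for the connected case, invoking Proposition~\ref{proposition: same measures} to explain the $409$-versus-$410$ gap, and flagging the cross-validation points) is consistent with, and somewhat more detailed than, the paper's one-line proof, but the underlying verification route is the same.
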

\begin{proof}
This is verified by computing these invariants for the 14 Sato--Tate groups that appear in Table~\ref{table:connected ST groups}
and the 410 Sato--Tate groups admitted by Theorem~\ref{T:ST result}
using the \Magma{} scripts in \cite{FKS21}.
\end{proof}

\section{Methods for explicit realizations of abelian threefolds}
\label{sec:realization techniques}

In preparation for our treatment of the lower bound aspect of Theorem~\ref{T:ST result}, we document
some strategies for constructing abelian threefolds with particular Sato--Tate groups.
These include consideration of curves with various automorphism groups
and Galois twisting of abelian varieties.
We also discuss how to compute the $L$-functions of these abelian threefolds, so as to give
an empirical corroboration of the assignment of Sato--Tate groups.  Recall that $\Aut'(C)$ denotes the reduced automorphism group of a curve $C$, as defined in \S 2.

\subsection{Curves with automorphisms}
\label{subsec:curves with automorphisms}

\begin{proposition} \label{P:automorphism groups}
Let $C$ be a curve of genus $3$ over $\Qbar$.
\begin{enumerate}[{\rm(a)}]
\item
The group $\Aut'(C)$ takes one of the values listed in
Table~\ref{table: types of generic automorphism groups}.
\item
For each option for $\Aut'(C)$, the models listed give the generic hyperelliptic or nonhyperelliptic curves realizing that option.
\item
For each option for $\Aut'(C)$, the isogeny decomposition of $\Jac(C)$ in the generic case is as listed. Here $E, E',E''$ represent pairwise nonisogenous elliptic curves; $A_i$ represents an abelian variety of dimension $i$; $*[\alpha]$ with $\alpha \in \overline{\Q}$ denotes an action on $*$ by $\alpha$; and $*[\mathcal{O}(6)]$ denotes an action on $*$ by a maximal order in the quaternion algebra of discriminant $6$ over $\Q$.
\item
For each option for $\Aut'(C)$, the absolute type of a generic curve realizing that option is as listed.
\end{enumerate}
\end{proposition}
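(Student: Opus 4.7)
The plan is to proceed classically through the Henn classification of automorphism groups of genus $3$ curves, separating the hyperelliptic and nonhyperelliptic cases, and then to read off the isogeny decomposition and absolute type from each explicit normal form.

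For part (a), I would invoke the classical fact that if $C$ is hyperelliptic then $\Aut'(C) = \Aut(C)/\langle \iota \rangle$ embeds in $\PGL_2(\Qbar)$ through its action on $\mathbb{P}^1 = C/\langle \iota \rangle$, while permuting the $8$ Weierstrass points; the possible groups are thus the finite subgroups of $\PGL_2$ acting on $\mathbb{P}^1$ with at least one orbit of size dividing~$8$ (including branching). If $C$ is nonhyperelliptic, then the canonical embedding realizes $C$ as a smooth plane quartic and $\Aut(C) = \Aut'(C)$ embeds in $\PGL_3(\Qbar)$. Enumerating the finite subgroups of $\PGL_2$ and $\PGL_3$ admitting such an action recovers the list in Table~\ref{table: types of generic automorphism groups}; this is documented in work of Henn, Magaard--Shaska--Shpectorov--V\"olklein (hyperelliptic case), and Dolgachev (quartic case). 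The \LMFDB{} database at \url{http://www.lmfdb.org/HigherGenus/C/Aut/} can be used as a cross-check.

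For part (b), for each candidate group $G$ I would exhibit a normal form by computing $G$-invariants. In the hyperelliptic case, once $G \subseteq \PGL_2$ is fixed, a generic $G$-invariant hyperelliptic curve takes the form $y^2 = f(x)$ where $f$ is a $G$-invariant binary form of degree $7$ or $8$; these invariants are classical and give rise to the equations listed. In the nonhyperelliptic case, one computes the $G$-invariant quartics in $\mathbb{P}^2$ for each finite $G \subseteq \PGL_3$; this again yields the listed models. Genericity is verified by checking that for a Zariski dense subset of parameters the full automorphism group is exactly $G$ (rather than a strictly larger group from the list).

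For part (c), the isogeny decomposition in each case can be read off the group action by means of quotient curves and (where needed) Prym constructions: for each nontrivial subgroup $H \leq \Aut(C)$, the quotient $C/H$ is a curve whose Jacobian is an isogeny factor of $\Jac(C)$, and one combines these factors via Kani--Rosen type identities in the Grothendieck group of $\Qbar$-abelian varieties with $G$-action. For most of the entries in the table this calculation has already been done in the literature (for instance \cite{FLS18, ACLLM18, FS16} for Klein-type, Fermat-type, and Picard-type curves), and I would cite those computations directly. The quaternionic entry $A_2[\mathcal O(6)]$ requires slightly more care: one identifies the natural embedding of a maximal order of the quaternion algebra of discriminant~$6$ into $\End^0(\Jac(C))$ induced by the commuting involutions. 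This part, and especially the verification that the generic member has \emph{no} extra endomorphisms beyond those induced by $\Aut(C)$, is the main obstacle, because one has to rule out accidental isogenies between the elliptic factors in a suitable open subset of the parameter space. I would handle this by specializing to an explicit generic-looking parameter and computing the endomorphism ring of the specialization (using, e.g., \cite{CMSV19}), then appealing to semicontinuity of endomorphism algebras under specialization.

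For part (d), once the isogeny decomposition is known, the absolute type is determined by the recipe in \S\ref{section: endomorphism rings}: one simply reads off which of the $14$ options $\bA$--$\bN$ for $\End(A_K)_{\R}$ is realized by the generic isogeny decomposition. For example, a generic $E \times E' \times E''$ gives absolute type $\bE$, while a generic $E \times A_2[\mathcal O(6)]$ with nontrivial quaternion action gives absolute type $\bI$, etc. Consistency with Proposition~\ref{Hodge equals Lefschetz} ensures that no additional Hodge-theoretic cycles alter the picture in dimension~$3$, so the absolute type is indeed determined by endomorphisms.
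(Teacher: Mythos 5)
Your high-level plan — classify via Henn/Dolgachev, exhibit normal forms via invariants, decompose the Jacobian using quotients and Prym constructions, and read off the absolute type from \S\ref{section: endomorphism rings} — matches the paper's approach, and the specialization-plus-semicontinuity trick you propose for the ``no extra endomorphisms'' direction is essentially what the paper does, though it delays this to the explicit realizations in \S\ref{section: realizability} rather than folding it into the proof of the proposition.

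However, your treatment of the one genuinely delicate case, $\Aut'(C) = \cyc 6$ with Prym factor $A_2[\mathcal{O}(6)]$, has a gap. You write that one ``identifies the natural embedding of a maximal order of the quaternion algebra of discriminant~$6$ into $\End^0(\Jac(C))$ induced by the commuting involutions,'' but for $\Aut'(C) = \cyc 6$ the reduced automorphism group is cyclic: there is a unique involution, and no pair of noncommuting involutions whose product would generate a quaternion algebra. The quaternionic structure on the Prym does not come from the group action in this direct way. The correct mechanism, which the paper invokes, is the theorem of Birkenhake and van Geemen \cite[Proposition 1.1, Theorem 1.2]{BvG16}: the Prym variety of the degree-$2$ quotient $C \to C_1$ is an abelian surface carrying both an order-$3$ automorphism (induced from the cyclic action) and a polarization of degree $(1,2)$, and it is this combination — an automorphism plus a nonprincipal polarization type, not a pair of involutions — that forces an action of the Eichler order $\mathcal{O}(6)$ in the discriminant-$6$ quaternion algebra. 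Without some replacement for this input, your argument does not produce the quaternion action. (The Kani--Rosen identities and quotient curves you cite give the isogeny decomposition $E \times A_2$, but leave the endomorphism structure of $A_2$ entirely open.)

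A smaller inaccuracy: your sample reading of the table, ``a generic $E \times A_2[\mathcal{O}(6)]$ with nontrivial quaternion action gives absolute type $\bI$,'' does not correspond to any row of Table~\ref{table: types of generic automorphism groups}. In the $\cyc 6$ entry the elliptic factor is $E[\zeta_3]$, which has CM, so the real endomorphism algebra is $\C \times \M_2(\R)$ and the absolute type is $\bJ$, not $\bI$. Type $\bI$ with a quaternionic surface would require a non-CM elliptic factor, which this construction does not produce.
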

\begin{proof}
For (a) and (b), see \cite{Sha06} or \cite[Table~1]{MP22}
in the hyperelliptic case and \cite[Theorem~3.1]{LRRS14} or \cite[Theorem~6.5.2]{Dol12}
in the nonhyperelliptic case. (See also \cite{OS20} for an extension of \cite{MP22} to superelliptic curves.)

For (c), we establish here only the lower bound on $\End(\Jac(C))_{\Q}$ implied by Table~\ref{table: types of generic automorphism groups}; the upper bound in the generic case will follow from the explicit realizations given in \S\ref{section: realizability}. Before proceeding, we note that the two entries in the table with $\Aut'(C) = \cyc 2$ correspond to two different values of $\Aut(C)$, respectively $\cyc 4$ and $\cyc 2 \times \cyc 2$.

The decomposition into isogeny factors, without regard to endomorphisms, can be derived using the method of \cite{Pau08}, as implemented in \LMFDB; alternatively, see \cite[Table~1, Table~2]{LLRS21}. This suffices to treat the cases where the isogeny factors do not have extra endomorphisms, i.e., $\Aut'(C) \in \{
\cyc 1, \cyc 2 \times \cyc 2, \sym 3, \dih 4, \sym 4\}$ or $(\Aut'(C), \Aut(C)) = (\cyc 2, \cyc 2 \times \cyc 2)$.
It also suffices in the cases where there is a single isogeny factor, i.e., $\Aut'(C) \in \{\cyc 3, \cyc 7, \cyc 9\}$ or $(\Aut'(C), \Aut(C)) = (\cyc 2, \cyc 4)$.

In cases where all factors with extra automorphisms are elliptic,
it is sufficient to look up the $j$-invariants of these factors in \cite[Table~1, Table~2]{LLRS21}. This covers the cases
$\Aut'(C) \in \{\dih 6, \dih 8, \group{16}{13}, \group{48}{33}, \group{96}{64}, \PSL_2(\F_7)\}$.

This leaves only the case $\Aut'(C) = \cyc 6$.
In this case, the elliptic factor of $\Jac(C)$ inherits an order-3 automorphism from $C$ (see \cite[\S 3]{LLRS21}). It thus remains to identify the endomorphisms of $A_2$, which may be taken to be the Prym variety for the quotient by the order-2 automorphism. 
In this case, $A_2$ admits an automorphism of order 3 and a polarization of degree $(1,2)$, so we may appeal to \cite[Proposition 1.1, Theorem 1.2]{BvG16} to see that $A_2$ carries an action of the quaternion order $\mathcal{O}(6)$.

To conclude, note that (d) follows at once from (c).
\end{proof}

We make a few cases more explicit, starting with that of a single extra involution on a hyperelliptic curve.

\begin{lemma} \label{lemma: double cover}
Let $P(x) \in k[x]$ be a polynomial of degree $4$ such that $xP(x)$ has no repeated roots. Then
the genus $3$ curve
\[
C: y^2 = P(x^2)
\]
maps to the curves
\[
C_1: y^2 = P(x), \qquad C_2: y^2 = xP(x)
\]
of respective genera $1$ and $2$, and the Prym variety of $C \to C_1$ is $2$-isogenous to $\Jac(C_2)$.
\end{lemma}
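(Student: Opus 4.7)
The plan is to realise $C\to C_1$ and $C\to C_2$ as the two nontrivial quotients of $C$ by commuting involutions, and then exploit the fact that one of these quotients is étale of degree~$2$ to produce a $2$-isogeny from $\Jac(C_2)$ onto the Prym of the other.

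First I would introduce the hyperelliptic involution $\sigma\colon(x,y)\mapsto(x,-y)$ and the extra involution $\iota\colon(x,y)\mapsto(-x,y)$; these commute and generate a Klein four-group of automorphisms of $C$. In the affine chart, the invariants of $\iota$ are generated by $x^2$ and $y$, so $C/\iota$ is the affine curve $y^2=P(X)$ (with $X=x^2$), i.e.\ $C_1$, which is smooth of genus~$1$ because $P$ has four distinct roots. The invariants of $\sigma\iota\colon(x,y)\mapsto(-x,-y)$ are generated by $u=x^2$ and $v=xy$, satisfying $v^2=uP(u)$, and this is $C_2$, smooth of genus~$2$ because $XP(X)$ has five distinct roots. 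This gives the quotient maps $\pi_1\colon C\to C_1$ and $\pi_2\colon C\to C_2$ with the formulas in the statement.

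The crucial observation is that $\sigma\iota$ acts freely on $C$, so $\pi_2$ is an étale double cover. In the affine chart, a fixed point would force $x=0$ and $y=0$, but $y^2=P(0)\ne 0$ since $0$ is not a root of $XP(X)$; at infinity, using the chart $s=1/x$, $t=y/x^4$, in which the equation of $C$ becomes $t^2=p_4+p_3s^2+\cdots+p_0s^8$ (with $p_i$ the coefficients of $P$), the map $\sigma\iota$ becomes $(s,t)\mapsto(-s,-t)$, which interchanges the two points $(0,\pm\sqrt{p_4})$ at infinity. Consequently $\pi_2^{*}\colon\Jac(C_2)\to\Jac(C)$ is an isogeny onto its image, with kernel of order~$2$ generated by the $2$-torsion line bundle classifying the étale cover.

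I would then identify $\pi_2^{*}\Jac(C_2)$ with $\Prym(C/C_1)$ via eigenspace analysis of the $\iota^{*}$-action on $\Jac(C)$. On the basis $\omega_i=x^i\,dx/y$ ($i=0,1,2$) of $H^0(C,\Omega^1_C)$ one computes $\iota^{*}\omega_i=(-1)^{i+1}\omega_i$, so the $(+1)$- and $(-1)$-eigenspaces have dimensions $1$ and $2$. Since $\pi_1$ is ramified (at the four points over $x=0$ and $x=\infty$), the map $\pi_1^{*}$ is injective, so $\pi_1^{*}\Jac(C_1)$ is the full $(+1)$-eigen-abelian-subvariety, and $\Prym(C/C_1)$ coincides with the connected $(-1)$-eigen-abelian-subvariety of dimension~$2$. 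Finally, the identity $\pi_2\circ\iota=\tau_2\circ\pi_2$, where $\tau_2\colon(u,v)\mapsto(u,-v)$ is the hyperelliptic involution of $C_2$, yields $\iota^{*}\circ\pi_2^{*}=\pi_2^{*}\circ\tau_2^{*}=-\pi_2^{*}$; hence $\pi_2^{*}\Jac(C_2)$ lies in the $(-1)$-eigenspace and, by a dimension count, equals $\Prym(C/C_1)$. Combined with the order-$2$ kernel from the étaleness of $\pi_2$, this produces the asserted $2$-isogeny.

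The step most prone to slipping up will be the verification of freeness at infinity, since it is easy to choose the wrong chart on the smooth completion of $C$; the eigenspace identification is essentially formal once this étaleness is in hand.
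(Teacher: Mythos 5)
Your proof is correct, but it takes a genuinely different route from the paper's. The paper's proof consists of exhibiting the two quotient maps $(x,y)\mapsto(x^2,y)$ and $(x,y)\mapsto(x^2,xy)$ and then citing Ritzenthaler--Romagny \cite[\S 1]{RR18} for the Prym assertion; it does not reprove the isogeny. You instead give a self-contained argument via bigonal descent: realizing $C_1$ and $C_2$ as quotients of $C$ by the commuting involutions $\iota$ and $\sigma\iota$ in the Klein four-group $\langle\sigma,\iota\rangle$, checking that $\sigma\iota$ is fixed-point-free (both in the affine chart, using $P(0)\neq 0$, and at infinity, using $\deg P=4$ so $p_4\neq 0$), so that $\pi_2$ is an \'etale double cover with $\ker\pi_2^*$ of order $2$, and then using the eigenvalues $(-1)^{i+1}$ of $\iota^*$ on the basis $x^i\,dx/y$ of $H^0(C,\Omega^1)$ together with $\pi_2\circ\iota=\tau_2\circ\pi_2$ to identify $\pi_2^*\Jac(C_2)$ with the $(-1)$-part $\Prym(C/C_1)$ by a dimension count. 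Your derivation is elementary and self-contained, at the cost of being longer; the paper's is a one-line appeal to the literature. Both are valid, and your identification of the fixed-point check at infinity as the most delicate step is accurate; your computation there (in the chart $s=1/x$, $t=y/x^4$, where $\sigma\iota$ becomes $(s,t)\mapsto(-s,-t)$ and swaps the two points at infinity since $p_4\neq 0$) is correct.
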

\begin{proof}
The map $C \to C_1$ is $(x,y) \mapsto (x^2,y)$. The map $C \to C_2$ is
$(x,y) \mapsto (x^2,xy)$. For the assertion about the Prym variety, see \cite[\S 1]{RR18}.
\end{proof}

\begin{remark} \label{R:Ritzenthaler-Romagny}
The general form of a plane quartic with an involution defined over $k$ is
\[
C: Y^4 - P_2(X,Z) Y^2 + P_4(X,Z) = 0.
\]
The quotient by the involution is the genus $1$ curve
\[
C_1: y^2 - P_2(x,1)y + P_4(x,1) = 0.
\]
A formula for the genus $2$ quotient has been given by Ritzenthaler and Romagny \cite{RR18} (see also \cite{LLRS21}).
Note that it depends on a factorization of $P_4(x,1)$ as a product of two quadratic polynomials that are either Galois invariant or Galois conjugates, which in general does not exist over~$k$ (when it does not one can work over the extension defined by the cubic resolvent of $P_4(x,1))$.

One can effectively reverse this construction by gluing curves of genus 1 and 2 along their 2-torsion; see \cite{HSS21}.
\end{remark}

\begin{remark} \label{R:pair of quartics}
The general form of a genus 3 hyperelliptic curve over $k$ with reduced automorphism group $\cyc 2 \times \cyc 2$ 
(where all three automorphisms are defined over $k$)
is
\[
y^2 = ax^8 + bx^6 + cx^4 + bx^2 + a
\]
with the three involutions
\[
(x,y) \mapsto (-x, y), \qquad (x,y) \mapsto \left(\tfrac{1}{x}, y\right), \qquad (x,y) \mapsto \left(-\tfrac{1}{x}, y\right).
\]
The general form of a plane quartic over $k$ with reduced automorphism group $\cyc 2 \times \cyc 2$ is 
\[
P_2(X^2, Y^2, Z^2) = 0
\]
with the three involutions
\[
(X:Y:Z) \mapsto (-X:Y:Z), 
\quad
(X:Y:Z) \mapsto (X:-Y:Z), 
\quad
(X:Y:Z) \mapsto (X:Y:-Z).
\]
A result of Howe, Lepr\'evost, and Poonen \cite[Proposition~14, Proposition~15]{HLP00}
gives a recipe for reconstructing a genus $3$ curve with reduced automorphism group containing $\cyc 2 \times \cyc 2$
from these quotients (this imposes a matching condition on $2$-torsion).
\end{remark}

The following is a variant of \cite[Example~3.4]{Lor18}.
\begin{lemma} \label{lemma: S3 quartic}
Let $C$ be a smooth plane quartic curve over $k$ of the form
\[
X^3Z + bY^3Z + cX^2Y^2 + dXYZ^2 + eZ^4 = 0.
\]
Over $L = \Q(\zeta_3, b^{\nicefrac{1}{3}})$, $C$ admits the group of automorphisms $\sym 3$ generated by
\[
s: [X:Y:Z] \mapsto [\zeta_3 X: \zeta_3^2 Y:Z], \qquad
t: [X:Y:Z] \mapsto [b^{\nicefrac{1}{3}}Y: b^{\nicefrac{-1}{3}}X:Z].
\]
The quotient $C_1 = C_L/\langle s \rangle$ has the form
\[
C_1: y^2 + b x^3 + cx^2y + dxy + ey = 0,
\]
and the projection $C \to C_1$ is a degree-$3$ map defined over $k$. The associated Prym variety is
$3$-isogenous over $L$ to $\Jac(C_2)^2$ where $C_2$ has the form
\[
C_2: x^3 - 3b^{\nicefrac{1}{3}} xy + cy^2 + dy + e = 0.
\]
\end{lemma}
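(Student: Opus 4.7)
The plan is to verify the automorphism claim by substitution, compute $C/\langle s \rangle$ by elementary invariant theory, decompose $\Jac(C_L)$ using representations of $\sym 3$, and then match the Prym explicitly with $\Jac(C_2)^2$.

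First I would check that each monomial of the defining equation is preserved by $s$ (using that on each monomial $X^aY^bZ^c$ appearing, $a + 2b \equiv 0 \pmod 3$) and either fixed or swapped by $t$ (the $X^3Z$ and $bY^3Z$ terms interchange, while the cross-terms are manifestly invariant). Then $s^3 = t^2 = 1$ and $tst = s^{-1}$ follow from direct coordinate computations, so $\langle s, t \rangle \simeq \sym 3$. To compute $C_1 = C_L/\langle s \rangle$, I would pass to affine coordinates $u = X/Z$, $v = Y/Z$ and use the $\langle s \rangle$-invariants $x = uv$, $y = u^3$, which satisfy $v^3 = x^3/y$. Substituting into $u^3 + bv^3 + cu^2v^2 + duv + e = 0$ and clearing denominators yields $y^2 + bx^3 + (cx^2 + dx + e) y = 0$; the map $[X:Y:Z] \mapsto (XY/Z^2, X^3/Z^3)$ is visibly defined over $k$ and is generically $3$-to-$1$ (one preimage for each cube root of $y = u^3$).

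Next I would exploit the $\sym 3$-action on $H^1(\Jac(C_L), \Q)$: decomposing into irreducibles $V_{\mathrm{tr}}, V_{\mathrm{sg}}, V_{\mathrm{st}}$ of dimensions $1, 1, 2$, write
\[
H^1(\Jac(C_L), \Q) \simeq V_{\mathrm{tr}}^{a} \oplus V_{\mathrm{sg}}^{b} \oplus V_{\mathrm{st}}^{c}, \qquad a + b + 2c = 6.
\]
Taking $\langle s \rangle$-invariants gives $H^1(\Jac(C_1), \Q)$, of dimension $a + b = 2$. To fix $a$, I would compute the induced action of $t$ on $C_1$, obtaining $(x, y) \mapsto (x, -y - cx^2 - dx - e)$. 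Setting $y' = 2y + cx^2 + dx + e$ rewrites $C_1$ as the hyperelliptic model $(y')^2 = (cx^2+dx+e)^2 - 4bx^3$ on which $t$ becomes the hyperelliptic involution, so $C_L/\sym 3 \simeq \mathbf{P}^1$ has genus $0$ and $a = 0$. Thus $(a,b,c) = (0, 2, 2)$, and $\Jac(C_L) \sim A_{\mathrm{sg}} \oplus A_{\mathrm{st}}$ with $A_{\mathrm{sg}} \sim \Jac(C_1)$ of dimension $1$ and the Prym $P \simeq A_{\mathrm{st}}$ of dimension $2$.

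To identify $A_{\mathrm{st}}$ with $\Jac(C_2)^2$, I would change coordinates to $U = X + b^{\nicefrac{1}{3}} Y$, $V = X - b^{\nicefrac{1}{3}} Y$ so that $t$ acts by $V \mapsto -V$, with $W = V^2$ the remaining invariant. A direct expansion of the defining equation produces a quadratic in $W$ whose discriminant equals
\[
16 b^{\nicefrac{2}{3}}\bigl(-4cU^3 + 9 b^{\nicefrac{2}{3}} U^2 - 6 b^{\nicefrac{1}{3}} dU + (d^2 - 4ce)\bigr),
\]
exhibiting $D_1 = C_L/\langle t \rangle$ as the genus-$1$ double cover of $\mathbf{A}^1_U$ branched at the stated cubic. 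Viewing $C_2$ as a quadratic in $y$ and computing its discriminant in $x$ produces the identical cubic $-4cx^3 + 9 b^{\nicefrac{2}{3}} x^2 - 6 b^{\nicefrac{1}{3}} dx + (d^2 - 4ce)$; thus $D_1$ and $C_2$ are birational and $\Jac(D_1) \simeq \Jac(C_2)$.

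The hard part will be arranging the degree of the isogeny $\Jac(C_2)^2 \to P$ to be exactly $3$. The three conjugate involutions $\langle t \rangle, \langle ts \rangle, \langle ts^2 \rangle$ yield three isomorphic quotients $D_1, D_1', D_1''$ (permuted by $\langle s \rangle$), and pullback embeds each $\Jac(D_i) \simeq \Jac(C_2)$ into $P$ as an elliptic subvariety $E_i$ corresponding to the fixed line of the associated transposition inside $V_{\mathrm{st}}$. Since distinct transpositions have transverse fixed lines in the two-dimensional $V_{\mathrm{st}}$, any two $E_i, E_j$ span $P$ on $H^1$-level, so the sum map $E_i \times E_j \to P$ is an isogeny with finite kernel $E_i \cap E_j$. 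I would expect this intersection to have order $3$, provable either by a direct torsion-point computation exploiting the canonical relation $\sum_{i=1}^3 \pi_i^* = 0$ modulo $\Jac(C_1)$ satisfied by the three pullbacks, or by appealing to the general structure theorem for Prym varieties of non-Galois degree-$3$ covers whose Galois closure has group $\sym 3$.
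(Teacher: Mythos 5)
Your proposal is substantially more detailed and self-contained than the paper's proof, which simply references the machinery of Lombardo--Lorenzo Garc\'{\i}a--Ritzenthaler--Sijsling (and the related Example~3.4 of Lorenzo Garc\'{\i}a) for both the equation of the quotient and the isogeny claim. In place of those citations, you verify the automorphisms directly, compute $C/\langle s\rangle$ via the invariants $x=uv$, $y=u^3$, deduce $\bar t\colon(x,y)\mapsto(x,-y-cx^2-dx-e)$ is the hyperelliptic involution on $C_1$ so that $C/\sym 3\simeq\mathbf{P}^1$, decompose $H^1(\Jac C_L,\Q)$ with multiplicities $(a,b,c)=(0,2,2)$, pass to $t$-eigencoordinates $(U,V)$, and match the discriminant cubic $-4cU^3+9b^{\nicefrac{2}{3}}U^2-6b^{\nicefrac{1}{3}}dU+(d^2-4ce)$ with the discriminant of $C_2$ to identify $C/\langle t\rangle$ with $C_2$. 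All of these computations check out and are a genuine improvement in explicitness over the paper's proof. Note that the paper does state that $C_2=C/\langle t\rangle$; your birational argument via matching branch loci gives a clean independent proof of this.

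The gap is the final isogeny-degree statement, and you acknowledge it. You assert ``I would expect this intersection to have order $3$'' and gesture at two routes without carrying either out. The route through the ``canonical relation $\sum_{i=1}^3\pi_i^*=0$ modulo $\Jac(C_1)$'' needs more care than you give it: the $\pi_i^*$ have distinct sources $\Jac(D_i)$, so the relation only makes sense once the $\Jac(D_i)$ are identified via the $s$-conjugations, and even then the statement that holds is that in the standard representation $V_{\mathrm{st}}$ the three transposition-fixed lines admit generators summing to zero; transferring this to an index computation for the map $E_1\times E_2\to P$ in the integral lattice $H^1(\Jac C_L,\Z)$ requires a compatibility you have not established. (For example, in the ``tautological'' lattice $\{(a,b,c)\in\Z^3:a+b+c=0\}$ the sublattice $\Z(2,-1,-1)+\Z(1,-2,1)$ has index $3$, which is the right answer, but one must argue that the Hodge-theoretic lattice behaves the same way, e.g.\ via the polarization type of the Prym.) The alternative of citing a structure theorem for Pryms of non-Galois degree-$3$ covers with $\sym 3$ Galois closure is the cleaner route and is essentially what the paper does; you would need to supply a concrete reference (e.g.\ the framework of \cite{LLRS21}) rather than a placeholder. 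Until one of these is completed the degree $3$ (as opposed to, say, $1$ or $9$) is unjustified, so this is a real gap, albeit an honestly flagged one with a sound plan.
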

\begin{proof}
Note that $C_2$ is the quotient of $C$ by $t$.
The equation for $C_2$ can be derived either by adapting the method of \cite{LLRS21} or by changing coordinates on the model given in \cite[Table~2]{LLRS21}.
\end{proof}

The following is a variant of entry V of \cite[Proposition~2.1]{Lor18}.

\begin{lemma} \label{lemma: quadruple cover}
Let $C$ be a smooth plane quartic over $k$ of the form
\[
Y^4 = X^3Z + aX^2Z^2 + bXZ^3.
\]
Over $L = k(i, b^{\nicefrac{1}{2}})$, $C$ admits the group of automorphisms $\dih 4$ given by
\[
r: [X:Y:Z] \mapsto [X:iY:Z], \qquad
s: [X:Y:Z] \mapsto [b^{\nicefrac{1}{2}} Z: Y: b^{\nicefrac{-1}{2}} X].
\]
Then the quotient $C_L/\langle r^2 \rangle$ is the genus $1$ curve
\[
C_1: y^2 = x(x^2+ax+b),
\]
and the projection $C \to C_1$ is a degree-$2$ map defined over $k$. The associated Prym variety
is $2$-isogenous over $L$ to $\Jac(C_2)^2$ where $C_2$ is the genus $1$ curve
\[
C_2: y(x^2 + (a-2b^{\nicefrac{1}{2}})y) = 1.
\]
(Note that $\Jac(C_2)$ has CM by $\Q(i)$.)
\end{lemma}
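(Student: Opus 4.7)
First, I would verify the automorphism claims by direct substitution. The invariance of $Y^4 = X^3Z + aX^2Z^2 + bXZ^3$ under $r$ is immediate from $(iY)^4 = Y^4$, and under $s$ one checks that the outer monomials $X^3Z$ and $bXZ^3$ are exchanged (since $X^3Z \mapsto (b^{\nicefrac{1}{2}}Z)^3(b^{\nicefrac{-1}{2}}X) = bXZ^3$) while $aX^2Z^2$ is preserved. The relations $r^4 = s^2 = 1$ are then immediate, and the defining relation for $\dih 4$ follows from a direct coordinate computation of $srs$. For the quotient $C_L/\langle r^2 \rangle$, since $r^2$ acts by $Y \mapsto -Y$, the substitution $y = Y^2/Z^2$, $x = X/Z$ on the affine chart $Z=1$ transforms the equation into $y^2 = x(x^2 + ax + b)$, giving $C_1$; the map is $k$-rational because $r^2$ is.

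To identify the Prym $P$ of $C_L \to C_1$, I would exploit the full $\langle r, s\rangle$-action on $\Jac(C_L)$. Since $r^2 = -1$ on $P$ and $r^4 = 1$, the element $r$ induces an embedding $\Z[i] \hookrightarrow \End(P)$. The involution $s$ commutes with $r^2$ and so acts on $P$ by an involution $\sigma$ that commutes with the $\Q(i)$-action; computing its action on the tangent space via the basis $dx/y^3$, $xdx/y^3$ of the relevant part of $H^0(\Omega^1_C)$ shows $\sigma$ has eigenvalues $\pm 1$. This yields an isogeny decomposition $P \sim E_+ \times E_-$ over $L$, where each $E_\pm$ is an elliptic curve with CM by $\Q(i)$.

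The final step is to match $E_+ \times E_-$ with $\Jac(C_2)^2$ via a $2$-isogeny over $L$. Computing the quotients $C_L/\langle s\rangle$ and $C_L/\langle r^2 s\rangle$ using $s$- and $r^2s$-invariant rational functions such as $u = x + b/x$ and $w = y^2/x$, one obtains elliptic curves in Weierstrass form $v^2 = w^3 - (a \mp 2b^{\nicefrac{1}{2}})w$; these are quartic twists of $y^2 = x^3 - x$ by $c_\pm = a \pm 2b^{\nicefrac{1}{2}}$. Meanwhile, $C_2$ is birational via $z = 2c_- y + x^2$ to the bielliptic curve $z^2 = x^4 + 4c_-$, which visibly has CM by $\Z[i]$ through $x \mapsto ix$ and is $2$-isogenous to $v^2 = w^3 + 4c_- w$ via $(x, z) \mapsto (x^2, xz)$. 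The hard part will be that $E_+$ and $E_-$ are quartic twists by distinct scalars and are generically not $L$-isomorphic, so the $2$-isogeny in the statement (rather than an isomorphism) is essential: I expect to exploit the identity $c_+ c_- = a^2 - 4b \in k^*$ together with the standard $2$-isogeny relating the bielliptic and Weierstrass forms of a CM elliptic curve to assemble an explicit $L$-rational $2$-power isogeny from $\Jac(C_2)^2$ onto $P$.
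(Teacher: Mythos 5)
Your opening computations are correct, and the approach is genuinely different from the paper's (which is a one-line citation to Remark~\ref{R:pair of quartics} and Lemma~\ref{lemma: S3 quartic}): the observation that $r$ endows the Prym with a $\Z[i]$-action and your explicit identification of the two quotients $C/\langle s\rangle$ and $C/\langle r^2 s\rangle$ as $v^2 = w^3 - (a\mp 2b^{1/2})w$ are both right. However, the final paragraph is not a proof, and the tension you flag there is a genuine obstruction, not a hurdle you can clear by exploiting $c_+c_- = a^2-4b$. Two curves $y^2 = x^3 - cx$ and $y^2 = x^3 - c'x$ over a field $L\supseteq\Q(i)$ are $L$-isogenous if and only if $c/c'\in (L^\times)^4$ (their Hecke characters differ by the quartic character attached to $c/c'$); no $2$-isogeny can connect two curves that are not isogenous at all. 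Generically $c_+/c_- = (a+2b^{1/2})/(a-2b^{1/2})$ is not a fourth power in $L = k(i, b^{1/2})$. For a concrete check take $a=b=1$, $k=\Q$, $L=\Q(i)$: then $c_+/c_- = -3$, and counting points at $p=5$ gives $a_5(y^2=x^3-3x) = 4$ while $a_5(y^2=x^3+x) = 2$, so the two factors are not $\Q(i)$-isogenous; one verifies $a_5(C) = 4 = a_5(C_1) + a_5(E_{c_+}) + a_5(E_{c_-}) = -2 + 4 + 2$, confirming that the Prym is $L$-isogenous to $E_{c_-}\times E_{c_+}$ and \emph{not} to $\Jac(C_2)^2 = E_{c_-}^2$. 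So you have uncovered a real problem with the statement, not just with your argument — and the paper's own proof via the $\cyc 2\times\cyc 2$ decomposition has the same gap, since that decomposition naturally produces $\Jac(C/\langle s\rangle)\times\Jac(C/\langle r^2 s\rangle)$ rather than a square.

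A subordinate error: you assert that a direct computation of $srs$ gives the $\dih 4$ relation, but in fact $srs^{-1} = r$ (not $r^{-1}$), so $\langle r,s\rangle \cong \cyc 4\times\cyc 2$ is abelian. This is not cosmetic: it is precisely why the two involutions $s$ and $r^2 s$ are \emph{not} conjugate in $\langle r,s\rangle$, so the two elliptic quotients have no reason to be $L$-isomorphic, which is the source of the discrepancy above. Your plan to finish via the bielliptic model of $C_2$ and the relation $c_+c_- = a^2-4b$ would need $a^2-4b$ to be a fourth power in $L$ (after adjoining $c_-^{1/4}$), which is an extra hypothesis the lemma does not impose.
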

\begin{proof}
This can be treated as a special case of Remark~\ref{R:pair of quartics}: $C_2$ is the quotient of~$C$ by $s$.
The equation for $C_2$ can be derived as in Lemma~\ref{lemma: S3 quartic}.
\end{proof}

\begin{remark}
It is also useful to consider twists of curves with extremal automorphism groups. In lieu of analyzing such cases here,
we will draw on prior results from \cite{ACLLM18}, \cite{FLS18}, \cite{FS16}.
\end{remark}

\subsection{Twisting constructions}
\label{subsec:polarizations of twists}

In \cite[\S 4.2.5]{FKS19}, a number of examples of abelian threefolds with particular Sato--Tate groups were constructed by twisting abelian varieties which are $\Qbar$-isogenous to a power of an elliptic curve with complex multiplication. We make this construction more explicit so that we can confirm numerically that the resulting $L$-functions agree with the computed Sato--Tate groups. In these cases, equidistribution is known unconditionally by an application of Hecke's theory of Gr\"o\ss encharacters (see \cite[Proposition~16]{Joh17}, for example).

Throughout \S\ref{subsec:polarizations of twists}, let $E$ be an elliptic curve over $k$ with CM by the maximal order of an imaginary quadratic field $M \not\subseteq k$ of class number $1$ and discriminant $-D$.
Let $r$ be a positive integer. 

\begin{definition} \label{D: twisting construction0}
Let $\rho_M: G_{kM} \to \GL_r(\cO_M)$ be a continuous representation for the discrete topology on $\cO_M$ with kernel $G_L$.
By an \emph{isogeny twist} of $E^r$ by $\rho_M$, we will mean an abelian variety $A_\rho$ over $k$ of dimension $r$ for which
 $A_{\rho,L}$ admits an isogeny to $E_L^r$
and the representation of $G_{kM}$ on $\Hom(E_L, A_{\rho,L})_{\Q} \cong M^r$ is $M$-equivalent to $\rho_M$ (meaning isomorphic as $M$-linear representations).

From this definition, it is not clear whether isogeny twists exist or are unique; one necessary condition for existence can be described as follows.
Let $\rho_M^c$ be the representation obtained from $\rho_M$ by precomposing with the action of $\Gal(kM/k)$ on $G_{kM}$
and postcomposing with the entrywise action of $\Gal(kM/k)$ on $\GL_r(\cO_M)$.
Then for an isogeny twist to exist, it is necessary that $\rho_M^c$ be $M$-equivalent to $\rho_M$.
The stronger condition that $\rho_M^c$ be $\cO_M$-equivalent to $\rho_M$ (meaning isomorphic as $\cO_M$-linear representations) is sufficient (see Remark~\ref{remark: twisting construction1}) but not necessary (see Remark~\ref{remark: twisting construction2a} and so on).
\end{definition}

\begin{remark} \label{R:effect of isogeny twist choices}
In Definition~\ref{D: twisting construction0}, the field $L$ is not the endomorphism field of $A_\rho$ but of $E \times A_\rho$;
the endomorphism field of $A_\rho$ is rather the kernel field of the projectivization of $\rho_M$ (as in \cite[Remark 4.4]{FS14}).

On a related note, the isogeny class of an isogeny twist is not in general uniquely determined by the image of the representation $\rho_M$ and the extension $L/k$; the choice of the 
isomorphism of $\Gal(L/kM)$ with the image of $\rho_M$ also has an effect. However, $\rho_M$ alone suffices to determine the algebraic Sato--Tate group of $A_\rho$ (as the latter is generated by diagonal matrices plus the image of $\rho_M$) and hence the Sato--Tate group.
\end{remark}

In order to construct isogeny twists, it is not enough to have equivalence of $\rho_M$ with~$\rho_M^c$ over~$M$.
But when we have equivalence over $\cO_M$, the situation is relatively straightforward.
\begin{remark} \label{remark: twisting construction1}
In Definition~\ref{D: twisting construction0}, take $k = \Q$
and suppose that the representations $\rho_M$ and~$\rho_M^c$ are $\cO_M$-equivalent, not just $M$-equivalent. We can then express this isomorphism via some matrix $V \in \GL_r(\cO_M)$ (we may omit mention of $V$ when it is the identity matrix).

In this case, we can construct an isogeny twist by applying Definition~\ref{D:twist} as follows.
We first view $\rho_M$
as a 1-cocycle on $\Gal(L/M)$ valued in $\GL_r(\cO_M))$ for the trivial action; this yields an abelian variety $A_{\rho,M}$ over $M$ (the twist of $E_M^r$ by $\rho_M$). Then use $V$ to extend $\rho_M$ to a 1-cocycle on $\Gal(L/\Q)$ valued in $\GL_r(\cO_M) \rtimes \Gal(M/\Q) \cong \Aut(E_M^r)$ by choosing a lift $\tilde{c} \in \Gal(L/\Q)$ of the complex conjugation $c \in \Gal(M/\Q)$ and mapping $\tilde{c}$ to $V$; this yields an abelian variety $A_\rho$ over $\Q$. 

The choice of $\tilde{c}$ does not affect the resulting class in $H^1(\Gal(L/\Q), \Aut(E_M^r))$, and thus does not affect the isomorphism class of $A_\rho$. By contrast, the choice of $V$ does affect the isomorphism class of $A_\rho$, although not the resulting Sato--Tate group (see Remark~\ref{R:effect of isogeny twist choices}).
\end{remark}

We next consider what happens in some cases where Remark~\ref{remark: twisting construction1} does not apply.
In these cases, we can replace $E^2$ with an isogenous variety so as to replace $\rho_M^c$ with a different integral model.

\begin{remark}  \label{remark: twisting construction2a}
In Definition~\ref{D: twisting construction0}, take $r = 2$, fix a positive integer $m$,
and suppose that $\rho_M$ is valued in the subring
\[
R = \left\{ \begin{pmatrix} a & b \\ c & d \end{pmatrix} \in M_2(\cO_M): b \in m \cO_M \right\} \subseteq \End(E_{kM}^2).
\]
Let $\rho_M^{c\prime}$ be the representation obtained from $\rho_M$ by precomposing the action of $\Gal(kM/k)$ on $G_{kM}$
and postcomposing with the involution of $R$ given by
\[
u \mapsto g^{-1} \overline{u} g, \qquad g = \begin{pmatrix} 0 & m \\ 1 & 0 \end{pmatrix}.
\]
Suppose that $\rho_M$ is $\cO_M$-equivalent to $\rho_M^{c\prime}$.
Then the twist $A_{\rho,M,0}$ of $E_{kM}^2$ by $\rho_M$
(in the sense of Remark~\ref{remark: twisting construction1})
 contains a subscheme $G$ isomorphic to $E_{kM}[m]$.
If $E$ admits a cyclic $m$-isogeny, we can identify its kernel with a subgroup of $G$ and then quotient $A_{\rho,M,0}$ by this subgroup
to obtain an abelian variety $A_{\rho,M}$ isomorphic to its conjugate by $\Gal(kM/k)$. In this way, we obtain an abelian variety $A_\rho$ over $k$.
\end{remark}

\begin{remark}  \label{remark: twisting construction2}
An important case of Remark~\ref{remark: twisting construction2a} that can be carried out with $k = \Q$ is $M = \Q(\sqrt{-3})$, $m = 2$.
In this case, we may take $E$ to be any curve with $j(E) = 0$; for definiteness, we take the particular curve $y^2 = x^3 - 1$ in applications.
\end{remark}

\begin{remark}  \label{remark: twisting construction3}
An important case of Remark~\ref{remark: twisting construction2a} that cannot be carried out with $k = \Q$ is $M = \Q(i)$, $m = 3$,
as there is no elliptic curve $E$ over $\Q$ with CM by $M$ such that $E$ admits a cyclic $m$-isogeny. (This corresponds via class field theory to the fact that the quadratic order $\mathbb{Z}[3i]$ has class number 2 rather than 1.)
In this case, the best we can do is to take $k = \Q(\sqrt{3})$, in which case we may take $E$ to be the curve  \href{https://www.lmfdb.org/EllipticCurve/2.2.12.1/9.1/a/3}{9.1-a3}.

To keep notation somewhat uniform across different twisting constructions, we only apply this construction in the case where $\rho_M$
is the restriction of a representation $G_M \to \GL_r(\cO_M)$ whose kernel field is linearly disjoint from $k$,
and write $\rho_M$ also for the latter representation.
\end{remark}

\begin{remark} \label{remark: twisting construction3b}
When applying Remark~\ref{remark: twisting construction3},
we will obtain an $L$-function which ``looks like'' the $L$-function of an abelian surface over~$\Q$. In particular,
$A_\rho$ will be $(2,2)$-isogenous to its Galois conjugate (because \href{https://www.lmfdb.org/EllipticCurve/2.2.12.1/9.1/a/3}{9.1-a3} is 2-isogenous to its conjugate), but it is unclear whether it can be chosen to be isomorphic to its Galois conjugate.

It is possible that Sato--Tate groups realized in this manner do occur for K3 surfaces over~$\Q$, rather than abelian surfaces: a given K3 surface can sometimes be realized in multiple ways as the Kummer surface of an abelian surface, using the criterion of Nikulin \cite{Nik75}.
\end{remark}

Since twisting is not guaranteed to give a Jacobian or even a principally polarizable abelian variety, we include a brief discussion of polarizations of twists.

\begin{remark} \label{remark: twisting construction induced polarization}
Suppose that $\rho_M$ is absolutely irreducible;
then $A_{\rho,M}$ will admit an invariant polarization which is unique up to rescaling.
We may compute this polarization as follows. Let $L$ be the kernel field of $\rho_M$.
The matrix
\[
\sum_{g \in \Gal(L/kM)} \rho_M(g)^* \rho_M(g) \in M_r(M)
\]
(where $*$ denotes the Hermitian transpose) is positive definite and hence invertible; let $U \in M_r(\cO_M)$ be a $\Q_{>0}$-scalar multiple (which we choose to be ``as primitive as possible'' modulo the next paragraph).
Then for $\psi: E^r \to E^r$ the product polarization, $U \circ \psi$ is a polarization on $A_{\rho,M}$ whose type can be read off from the elementary divisors of $U$.

In Remark~\ref{remark: twisting construction1}, there are no extra constraints on $U$, and the polarization type is exactly equal to the sequence of elementary divisors; in particular, the degree of the polarization equals $\det(U)$.
By contrast, in Remark~\ref{remark: twisting construction2} and Remark~\ref{remark: twisting construction3}), we must ensure the top-right entry of $U$ is divisible by $m$ so that $U \circ \psi$ induces a polarization on $A_{\rho,M}$; the degree of the polarization equals $\det(U)/m$.
\end{remark}

\begin{remark}
In our applications, the polarizations given by Remark~\ref{remark: twisting construction induced polarization}
will have degrees in $\{1,2,3,6\}$ (see Table~\ref{table: maximal ST groups}). 
In particular, we obtain some examples of abelian surfaces over~$\Q$ which carry a
$(1,6)$-polarization, and which we believe (but have not checked) admit no isogeny defined over $\Q$ to an abelian surface carrying a polarization of lower degree.
\end{remark}

\subsection{Exhaustive searches}

In addition to the explicit constructions described in the next section, we conducted several large scale enumerations of genus 3 curves in an effort to realize as many Sato--Tate groups as possible using Jacobians of curves defined over $\Q$; these enumerations were part of a larger project to construct a database of genus 3 curves to be incorporated into the \LMFDB. See \cite{Sut19} for a description of the enumeration of all smooth plane quartics over $\Q$ defined by models with integer coefficients of absolute value at most~9; this amounts to more than $10^{17}$ curves, of which only those with absolute discriminant at most~$10^7$ (about 80,000 curves) were retained. Based on an analysis of this dataset we constructed a larger dataset of about a million curves using a similar enumeration that retained smooth plane quartics whose discriminants are 7-smooth (divisible only by primes~$p\le 7$), or of absolute value at most ~$10^9$.

The motivation for imposing a smoothness constraint is that it substantially increases the likelihood of finding exceptional Sato--Tate groups. 
If $S$ denotes the set of $\Q$-isomorphism classes of smooth plane quartics represented by integral models with coefficients of absolute value at most~9, the proportion of elements of $S$ with non-generic Sato--Tate group is much higher in the subset with 7-smooth discriminants than it is in the subset with absolute discriminants bounded by $10^9$.  Indeed, among the approximately 2.6 million elements of $S$ with absolute discriminant bounded by $10^9$, more than 98 percent have Sato--Tate group $\USp(6)$, while less than half of the approximately 1.0 million elements of $S$ with 7-smooth minimal discriminants have Sato--Tate group $\USp(6)$.

We also constructed a large database of hyperelliptic curves of genus 3 over $\Q$ using a generalization of the method described in \cite{BSSVY16} for genus 2 curves.  This dataset includes approximately 4.9 million genus 3 curves defined by an integral model of the form $y^2=f(x)$ with coefficients of absolute value at most 48 and 7-smooth discriminants.

To analyze the Sato--Tate groups of these curves we first filtered out those we could easily prove have generic Sato--Tate group $\USp(6)$ by computing the reductions modulo $\ell=2,3,5$ of $L$-polynomials $L_p(T)$ at small primes $p$.  The set of characteristic polynomials that arise in $\GSp_6(\F_\ell)$ is strictly larger than the sets that arise in any of its maximal subgroups, and it suffices to collect a subset of characteristic polynomials that rule out any maximal subgroup. By the Chebotaryov density theorem, this approach is guaranteed to eventually identify any abelian threefold $A/\Q$ whose mod-$\ell$ Galois image is equal to $\GSp_6(\F_\ell)$, and on average one expects this to happen quite quickly: for $\ell=2,3,5$ respectively, approximately $8,5,4$ random elements of $\GSp_6(\F_\ell)$ suffice, on average, to rule out every maximal subgroup.  In the typical case where $A$ has surjective mod-$\ell$ representation for $\ell=2,3,5$ one heuristically expects to be able to prove this for at least one of these $\ell$ by computing $L$-polynomials $L_p(T)$ for just the first three primes $p>5$ of good reduction, on average.  It follows from Proposition~\ref{prop: surjective mod-ell} below that an abelian threefold that has a surjective mod-$\ell$ image for any prime $\ell$ necessarily has large monodromy and Sato--Tate group $\USp(6)$.

\begin{proposition}\label{prop: surjective mod-ell}
Let $C/\Q$ be a smooth and projective curve of genus $g$, and let $A$ denote its Jacobian. Let $\rho_{A,\ell}\colon G_k\to \Aut(T_\ell(A))\simeq \GSp_{2g}(\Z_\ell)$ denote the $\ell$-adic representation attached to $A$, and let $\bar\rho_{A,\ell}$ denote the mod $\ell$ representation. If $\ell\ne 2g$ and $\Sp_{2g}(\F_\ell)\subseteq \bar \rho_{A,\ell}(G_k)$, then $\ST(A)=\USp(2g)$.
\end{proposition}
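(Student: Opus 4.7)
The plan is to pass from large mod-$\ell$ image to large $\ell$-adic Zariski closure, then to the Mumford--Tate group, and finally to the Sato--Tate group.

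First, let $G_\ell$ denote the Zariski closure of $\rho_{A,\ell}(G_k)$ in $\GSp_{2g}/\Q_\ell$; I would show $G_\ell = \GSp_{2g}$ by a standard lifting argument going back to Serre. Under the hypothesis $\ell \neq 2g$, which guarantees the vanishing of the relevant Lie-algebra cohomology (in particular $H^1(\Sp_{2g}(\F_\ell), \mathfrak{sp}_{2g}(\F_\ell)) = 0$ via the semisimplicity of the adjoint representation), the inclusion $\Sp_{2g}(\F_\ell) \subseteq \bar\rho_{A,\ell}(G_k)$ lifts to the statement that $\rho_{A,\ell}(G_k) \cap \Sp_{2g}(\Z_\ell)$ is open in $\Sp_{2g}(\Z_\ell)$, hence Zariski-dense in $\Sp_{2g}/\Q_\ell$. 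Combined with the fact that the similitude character of $\rho_{A,\ell}$ is (a power of) the cyclotomic character and so Zariski-dense in $\mathbb{G}_m$, this gives $G_\ell = \GSp_{2g}$, which is connected.

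Next, I would invoke the theorem of Deligne and Borovoi (the known, easy direction of the Mumford--Tate conjecture), which states $G_\ell^0 \subseteq \MT(A) \otimes_\Q \Q_\ell$. Combined with the tautological inclusion $\MT(A) \subseteq \GSp(V,\psi)$, the equality $G_\ell^0 = \GSp_{2g}$ forces $\MT(A) = \GSp(V,\psi)$, and consequently $\Hg(A) = \MT(A) \cap \Sp(V, \psi) = \Sp(V,\psi)$.

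Finally, I would transfer to the Sato--Tate group. For $g \leq 3$, Propositions \ref{Hodge equals Lefschetz} and \ref{MT conjecture in dim 3} identify $\TL(A)$ with $\Sp(V,\psi)$, so the maximal compact subgroup $\ST(A)$ equals $\USp(2g)$. For general $g$ one appeals instead to the motivic Sato--Tate framework of \cite{BaK16}: the identity component $\ST(A)^0$ is a maximal compact subgroup of $\Hg(A)(\C) = \Sp_{2g}(\C)$, hence equals $\USp(2g)$, and since $\ST(A) \subseteq \USp(2g)$ already, we obtain $\ST(A) = \USp(2g)$. The main obstacle is Step 1: the exclusion $\ell \neq 2g$ is exactly what is needed to avoid the degenerate primes where the adjoint representation of $\Sp_{2g}(\F_\ell)$ fails to supply the cohomology vanishing that drives the lift. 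For $\ell$ large relative to $g$ the argument is classical, and pinning down the sharp form of the hypothesis is the only delicate point.
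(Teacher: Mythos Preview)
Your overall architecture—large $\ell$-adic Zariski closure, then Deligne's inclusion $G_\ell^0 \subseteq \MT(A)_{\Q_\ell}$, then pass to $\ST(A)$—matches the paper exactly (the paper cites \cite[Prop.~6.2]{Del82} for your Step~2 and adds Larsen--Pink \cite{LP95} to propagate to all primes).

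The gap is in Step~1. The claim that $\ell \neq 2g$ forces $H^1(\Sp_{2g}(\F_\ell),\mathfrak{sp}_{2g}(\F_\ell))=0$ is not a standard result, and ``semisimplicity of the adjoint representation'' does not imply it: semisimplicity of a module says nothing about $H^1$ when $\ell$ divides the group order, which it always does here. More to the point, the Frattini-type lift you invoke is not available for $\ell=2$. Observe that $(g,\ell)=(2,2)$ satisfies $\ell\neq 2g$, yet the paper does \emph{not} lift there; instead it applies Zarhin's theorem \cite[Thm.~1.1]{Za10} to deduce $\End(A_\Qbar)=\Z$ directly from $\Gal(f)\cong\sym{6}\cong\Sp_4(\F_2)$, bypassing any $\ell$-adic argument. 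For $g\geq 2$ and $\ell\geq 3$ the paper invokes Vasiu's surjectivity criterion \cite[Thm.~4.1]{Va04}, which is the precise lifting statement you want but with the correct hypotheses. For $g=1$ the paper argues by contraposition through the CM classification (small Zariski closure forces CM, which forces small mod-$\ell$ image), again with no lift. The hypothesis $\ell\neq 2g$ in the statement only excludes $(g,\ell)=(1,2)$, where $\Sp_2(\F_2)\cong\sym{3}$ is too small to rule out CM; it is not encoding a cohomological vanishing, and your attempt to read it that way is where the argument breaks.
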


\begin{proof}
Let $V_\ell(A)$ denote $T_\ell(A)\otimes \Q_\ell$, and let $G_{\ell}^{\Zar}$ denote the Zariski closure of the image of $\varrho_{A,\ell}$ in $\GL(V_\ell(A))$. It will suffice to show that $G_\ell^{\Zar}=\GSp_{2g,\Q_\ell}$, in which case $G_{\ell'}^{\rm zar}=\GSp_{2g,\Q_{\ell'}}$ for every prime $\ell'$. Indeed, if $G_\ell^{\Zar}=\GSp_{2g,\Q_\ell}$, then the inclusions  $G_\ell^{\Zar}\subseteq \MT(A)_{\Q_\ell}\subseteq \GSp_{2g,\Q_\ell}$ provided by \cite[Prop.\,6.2]{Del82} become equalities. Moreover, by \cite[Thm.\,4.3]{LP95}, if these equalities hold for one prime $\ell$, then they hold for every prime $\ell'$.

Suppose that $g=1$. If $G_\ell^{\Zar}\subsetneq \GSp_{2,\Q_\ell}$, then $A$ has CM by an order $\cO$ in an imaginary quadratic field $M$, and $\bar\rho_{A,\ell}(G_M)\subseteq \Aut_\cO(A[\ell])\simeq (\cO/\ell\cO)^\times$ cannot contain any index $2$ subgroup of $\Sp_2(\F_\ell)=\SL_2(\F_\ell)$ for $\ell >2$.

Suppose that $g=2$ and $\ell=2$. Then $C$ is given by $y^2=f(x)$, for some separable polynomial $f\in \Q[x]$ of degree $6$. It is well known that the Galois group of $f$ coincides with $\bar\rho_{A,2}(G_\Q)$. Therefore, if $\bar\rho_{A,2}(G_\Q)$ contains $\Sp_4(\F_2)\simeq \sym{6}$, then the Galois group of $f$ must be~$\sym 6$. In this case, by \cite[Theorem 1.1]{Za10}, we have $\End(A_\Qbar)=\Z$ and therefore $G_\ell^{\Zar}= \GSp_{4,\Q_\ell}$.

Finally, if $g\geq 2$ and $\ell \geq 3$ then \cite[Theorem 4.1]{Va04} shows that if $\Sp_{2g}(\F_\ell)\subseteq \bar\rho_{A,\ell}(G_\Q)$, then $\rho_{A,\ell}(G_\Q)=\GSp_{2g}(\Z_\ell)$, which clearly implies $G_\ell^{\Zar}= \GSp_{2g,\Q_\ell}$.
\end{proof}

\begin{remark}
For $\ell=2,3$ there are maximal subgroups of $\GSp_4(\F_\ell)$ in which every characteristic polynomial of $\GSp_4(\F_\ell)$ arises, but one can use $\ell=5,7,11,13$.  Similarly, for $\GSp_2(\F_\ell)=\GL_2(\F_\ell)$ one should avoid $\ell=2,3,5,7$ and instead use $\ell=11,13,17,19$.

In principle, it should be possible to extend these criteria to $\GSp_{2g}(\F_\ell)$ for other pairs $(g, \ell)$ using an enumeration of the maximal subgroups based on Aschbacher's classification theorem \cite{Asc84}. See the tables in \cite{BHRD13} for extensive data.
\end{remark}

\begin{remark}\label{rem: hyperelliptic mod-2}
When $A$ is the Jacobian of a genus $g\ge 3$ hyperelliptic curve $y^2=f(x)$, the image of $\bar\rho_{A,2}$ is never all of $\GSp_{2g}(\F_2)$; it is a subgroup isomorphic to the Galois group $\Gal(f)$ of the splitting field of $f$ (the 2-torsion field of $A$).  This is a proper subgroup because $\#\sym{2g+2}<\#\GSp_{2g}(\F_2)$ for all $g\ge 3$. But \cite[Theorem 1.1]{Za10} implies that $\End(A_\Qbar)=\Z$ when $\Gal(f)$ is $\alt{n}$ or $\sym{n}$, where $n=\deg f$, a condition that is easy to check.  If $\End(A_\Qbar)=\Z$ and $g$ is odd, 2, or 6, we may apply \cite{Ser86} to deduce that $\ST(A)=\USp(2g)$.
This includes all $g\le 3$, and we applied this criterion to the hyperelliptic genus 3 curves in our data set.
\end{remark}

Having filtered out curves that we could easily prove have generic Sato--Tate group, we attempted to compute the geometric real endomorphism type of the Jacobian $A$ of each curve using three different software packages, each of which implements a different heuristic algorithm for computing $\End(A_\Qbar)_\R$ that is practical to apply to millions of curves en masse:
\begin{enumerate}[{\rm(i)}]
\item The \textsc{endomorphisms} software repository \cite{CS19} provides a \Magma{} implementation that uses a period matrix calculation to compute a numerical approximation of $\End(A_\Qbar)_\R$ to a specified precision; the results are not guaranteed to be correct, but when performed to sufficient precision almost always are.  It is possible to certify the results after the fact as described in \cite{CMSV19}, but we did not attempt this.
\item The \textsc{monodromy} software repository \cite{Zyw21} provides a \Magma{} implementation of the algorithm described in \cite{Zyw22}, which uses a given set of $L$-polynomials at good primes $p\le B$ to compute an upper bound on the geometric endomorphism algebra $\End(A_\Qbar)_\Q$ (and a lower bound on the Mumford--Tate group) that is guaranteed to be correct for sufficiently large $B$ (but without an effective bound on such a $B$).
\item The \textsc{crystalline\_obstruction} software repository \cite{C21} provides a \Sage{} implementation of the algorithm described in \cite{CS21}, which uses a $p$-adic computation at a prime of good reduction to compute upper bounds on the rank $r$ of the N\'eron--Severi group of $A_\Qbar$ and the dimension $d$ of the geometric endomorphism algebra $\End(A_\Qbar)_\Q$; by varying the choice of $p$ and taking minima one expects to eventually obtain tight bounds.  As shown in \cite{CFS19}, the pair $(r,d)$ coincides with the 2-simplex of moments of $\ST(A_\Qbar)$, which by Theorem~\ref{thm: minimal set of 3-diagonals} determines $\ST(A)^0$ and $\End(A_\Qbar)_\R$.
\end{enumerate}

We applied these three methods to each curve in stages, increasing the precision and/or number of primes used in each stage, until all three methods output the same result.   We chose bounds on the precision and number of primes so that each method took approximately the same time, with computation times roughly doubling with each stage; the number of curves surviving each stage declined at a much faster rate (roughly a factor of~8), so that the total time was dominated by the first stage, where each method spent 5--10 seconds on each curve.  No more than 7 stages were required for any curve.
The results are summarized in Table~\ref{table: search counts} below, in which the notation $\| f\|$ denotes the maximum of the absolute values of the integer coefficients of a  polynomial $f$.

\begin{remark}
We encountered a small number of genus 3 curves with Sato--Tate group $\USp(6)$ (as proved by the upper bounds on $\End(A_\Qbar)_\Q$ given by methods (ii) and (iii) above), that nevertheless appear to have non-surjective mod-$\ell$ Galois representations for $\ell=2,3,5$.  One example is the hyperelliptic curve
\[
 y^2 = 4x^8 - 24x^6 + 45x^4 - 16x^3 - 25x^2 + 16x,
\]
for which the $L$-polynomials at good primes $p\le 2^{24}$ do not imply the surjectivity of $\bar\rho_{A,\ell}$ for any of $\ell=2,3,5$.
In this example the mod-2 image is isomorphic to the group $\group[F_5]{20}{3}$ of order~20 (so Remark~\ref{rem: hyperelliptic mod-2} does not apply), the mod-3 image appears to be an index 364 subgroup of $\GSp_6(\F_3)$, and the mod-5 image appears to be an index 7812 subgroup of $\GSp_6(\F_5)$.
\end{remark}

\begin{table}[htp!]
\begin{tabular}{|c|c|r|r|r|r|}
\hline&&&&&\\[-12pt]
type & 2-simplex & hyperelliptic & quartic & quartic cover & Picard\\[1pt]
\hline&&&&&\\[-11pt]
$\bA$ & $(1,1)$ & 4289200 & 451060 &- & -\\
$\bB$ & $(1,2)$ & 62228 & 24318 & - &  4826346\\
$\bC$ & $(2,2)$ & 456806 & 366475 & 5779677 & -\\
$\bD$ & $(2,3)$ & 35510 & 26766 &  314985 & -\\
$\bE$ & $(3,3)$ & 36846 & 90508 & 2507404 & -\\
$\bF$ & $(3,4)$ & 4238 & 18551 & 474103 & -\\
$\bG$ & $(3,5)$ & 166 & 962 & 21668 & -\\
$\bH$ & $(3,6)$ & 1382 & 137 & 202 & 7678\\
$\bI$ & $(4,5)$ & 30772 & 23821 & 435686 & -\\
$\bJ$ & $(4,6)$ & 4629 & 5703 &  61813 & 578221\\
$\bK$ & $(5,9)$ & 466 & 12050 & 175296 & -\\
$\bL$ & $(5,10)$ & 1479 & 642 & 7311 & 40937 \\
$\bM$ & $(6,9)$ & 116 & 301 & 4713 & -\\
$\bN$ & $(9,18)$ & 712 & 623 &  5643 & 2598\\[1pt]
\hline&&&&&\\[-11pt]
 &  & 4922550 & 1021917 & 9788501 & 5455780 \\[1pt]
\hline
\end{tabular}
\medskip

\caption{Counts of heuristically computed absolute types of Sato--Tate groups for Jacobians of genus~3 curves over $\Q$ with 7-smooth discriminants, including hyperelliptic curves $y^2=f(x)$ with $\|f\|\le 48$, quartic curves $f(x,y,z)=0$ with $\|f\|\le 9$, quartic covers  $y^4 + h(x,z)y^2 = f(x,y,z)$ with $\|f\|,\|h\|\le 64$ and Picard curves $y^3=f(x)$ with $\|f\|\le 2187$.}\label{table: search counts}
\end{table}

\subsection{Explicit computation of \texorpdfstring{$L$}{L}-functions of abelian threefolds}
\label{subsec:explicit L-functions}

The explicit realization of abelian threefolds $A$ matching each of the Sato--Tate groups in our classification was facilitated at several stages by the computation of $L$-functions. This was done both in the process of searching for specific examples in large datasets of curves such as those described in the previous section, and also to empirically corroborate our computations of moments and the matrices of point densities for each of the 410 Sato--Tate groups.  For each of the 33 maximal Sato--Tate groups whose realizations via explicit abelian threefolds are described in the next section we computed $L$-polynomials $L_\p(T)$ for all degree $1$ primes of norm bounded by some $B\ge 2^{30}$ outside of a small excluded set of primes $S$.  The set $S$ typically consisted of the primes of bad reduction, but in some cases it was convenient to exclude a few others; none of the excluded sets contained more than ten primes, and their exclusion has no impact on the Sato--Tate statistics we are interested in.

Once computed this data can then be used to compute Sato--Tate statistics for the base change of $A$ to any number field $K$ by including each Euler factor with multiplicity equal to the number of degree 1 primes of $K$ above $\p$ (the contribution from primes of higher degree is both asymptotically and practically negligible).  For each of the 33 abelian threefolds $A$ realizing one of the maximal Sato--Tate groups, we computed Sato--Tate statistics of the base change of $A$ to every subfield of its endomorphism field, thus obtaining Sato--Tate statistics for all 410 Sato--Tate groups (with multiplicity in cases where maximal Sato--Tate groups with the same identity component have intersection larger than the identity component).  Visualizations of these statistics are available at
\begin{center}
\url{https://math.mit.edu/~drew/st3/g3SatoTateDistributions.html}
\end{center}

We used a wide range of methods to compute the $L$-polynomials of $L_\p(T)$ for an abelian variety $A$ over $\Q$ or a quadratic field at degree 1 primes $\p$ with $\Norm(\p)\le B$ outside a small set of excluded primes that includes all primes of bad reduction.  Several of our explicit realizations of abelian threefolds with maximal Sato--Tate groups involve products of abelian varieties of dimension 1 or~2, which we exploited whenever possible.

The various methods we used are summarized below.
For purposes of comparison we give asymptotic time complexity estimates in terms of~$p\colonequals\Norm(\p)$, which should be understood as averages over $p\le B$ where average polynomial-time algorithms are noted, along with a rough estimate of the total time to compute $L_p(T)$ for all $p\le B=2^{30}$ outside the excluded set using a single core of a modern CPU (we used a 3.1GHz Intel i9-9960X CPU in our computations).  In our actual computations we had up to 128 cores working in parallel and were able to extend the computations to $B=2^{32}$ in most cases.

\begin{itemize}
\item For CM elliptic curves $E$ we compute $L_p(T)=1-t_pT+pT^2$ at primes that split in the CM field by solving the norm equation $4p=t^2-v^2D$ with $D=\disc(\End(E_\Qbar))$ using Cornacchia's algorithm and determining the Frobenius trace $t_p$ via \cite{RS10}, with $t_p=0$ at inert primes.  This has time complexity $(\log p)^{2+o(1)}$ and takes less than a core-minute for $B=2^{30}$ using the \smalljac{} library \cite{KS08}.

\item For non-CM elliptic curves $E$ we compute $L_p(T)=1-t_pT+pT^2$ by computing $t_p=p+1-\#E(\Fp)$ using the optimized baby-step giant-step algorithm implemented in \smalljac.  This has expected time complexity $p^{\nicefrac{1}{4}+o(1)}$ which is faster than an $(\log p)^{4+o(1)}$ polynomial-time approach for $B=2^{30}$, taking less than ten core-minutes.

\item For Jacobians $A$ of genus 2 curves $C/\Q$ that are twists of $y^2=x^5-x$ or $y^2=x^6+1$ we compute $L_p(T)$ using the algorithm in \cite[\S5.2.1]{FS14}, which is implemented in \smalljac{}.  In the situation where we know the endomorphism field $K$ of the twist (as is the case in all the examples we construct), the ambiguities noted in \cite[\S6.2.1]{FS14} can be efficiently distinguished by computing Frobenius elements in $\Gal(L/M)$, where $M$ is the CM field of the elliptic curve $E/\Q$ whose square is $\Qbar$-isogenous to $A$, and $L$ is the minimal field over which all homomorphisms $E_\Qbar\to A_\Qbar$ are defined; the triples $(M,K,L)$ are listed in Tables 6 and 7 of \cite{FS14}. This has time complexity $(\log p)^{2+o(1)}$ and takes less than a core-minute for $B=2^{30}$ using \smalljac{}.

\item For Jacobians of other genus 2 curves $C/\Q$ we use the average polynomial-time algorithm in \cite{HS14,HS16} to compute $L_p(T)=1+a_1T+a_2T^2+a_1pT^3+p^2T^4$ modulo $p$, which for $p>64$ determines $a_1\in \Z$, leaving at most 5 possibilities for $a_2$; see \cite[Prop.~4]{KS08}.  All but one possibility can be eliminated in $(\log p)^{2+o(1)}$ expected time by sampling random points $P\in \Jac(C_p)$ and testing if $L_p(1)P= 0$ for each value of $L_p(1)$ implied by the choices for $a_2$ and doing the same on the Jacobian of the quadratic twist of $C_p$ with $a_1$ negated.  The average expected time is $(\log p)^{4+o(1)}$, and the total time for $B=2^{30}$ is about a core-day (less if the curve has rational Weierstrass points, as explained in \cite[\S 6.1]{HS16}).

\item For Jacobians of generic Picard curves $y^3=f(x)$ over $\Q$ we use the algorithm in \cite{Sut20} to compute $L_p(T)$ modulo $p$ and then apply the algorithm in \cite{AFP21} to lift it to $\Z[T]$, which takes negligible time.  The average complexity is $(\log p)^{4+o(1)}$ and the  time for $B=2^{30}$ is about 8 core-hours.

\item For Jacobians of genus 3 hyperelliptic curves $y^2=f(x)$ over $\Q$ we compute $L_p(T)$ modulo $p$ using the average polynomial time algorithm of \cite{HS14,HS16}.  This leaves $O(p^{\nicefrac{1}{2}})$ possibilities for $L_p(T)$ which we distinguish by performing a baby-step giant-step search along an arithmetic progression determined by $L_p(T)$ modulo $p$ to compute $L_p(1)$ and $L_p(-1)$ using the group computations in the Jacobian of the reduction of the curve and its quadratic twist, which for $p>1640$ uniquely determines $L_p(T)$ \cite[Lemma~4]{Sut09}; see \cite{KS08,Sut09} for details of this computation, which is implemented in \smalljac{}.  The average complexity for computing $L_p(T)$ modulo $p$ is $(\log p)^{4+o(1)}$ and the expected cost of lifting to $\Z[T]$ is $p^{\nicefrac{1}{4}+o(1)}$; for $B=2^{30}$ the time spent on lifting is less than 20 percent of the total time, which is 3 or 4 core-days.

\item For Jacobians of genus 3 curves that are degree-2 covers of pointless conics over $\Q$ (these admit a hyperelliptic model $y^2=f(x)$ over a quadratic extension but not over~$\Q$), we use the average polynomial-time algorithm described in \cite{HMS16} to compute $L_p(T)$ modulo~$p$, and then lift $L_p(T)$ to $\Z[T]$ as above (over $\Fp$ these curves necessarily admit a hyperelliptic model).  The complexity bounds are the same as in the rationally hyperelliptic case but the constant factors are worse.  For $B=2^{30}$ the total time is one or two core-weeks, of which perhaps a core-day is spent lifting $L_p(T)$.

\item For Jacobians of smooth plane quartic curves over $\Q$ we compute $L_p(T)$ modulo~$p$ using the average polynomial time algorithm of \cite{CHS23}, which has an average complexity of $(\log p)^{4+o(1)}$ and takes about two core-weeks for $B=2^{30}$.  We did not implement the lifting phase in the generic case (we can compute trace statistics without this), but in principle it can be done in $p^{\nicefrac{1}{4}+o(1)}$ time using group operations in the Jacobian of the curve and the corresponding trace zero variety (the kernel of $\pi+1$, where $\pi$ is the Frobenius automorphism) via the addition formulas in \cite{FOR08}.

\item For smooth plane quartic curves over $\Q$ that are covers of genus 1 curves, we compute $L_p(T)$ modulo $p$ for both curves, leaving at most five options for $L_p(T) \in \Z[T]$, which we can distinguish in $(\log p)^{2+o(1)}$ expected time using group operations in the Jacobian of the genus 2 curve determined by \cite[Proposition~2.1]{RR18} (which may be defined over a cubic extension of $\Q$, see Remark~\ref{R:Ritzenthaler-Romagny}).  As above, the total time for $B=2^{30}$ is one or two core-weeks.

\item For Jacobians $A$ of twists of the Fermat and Klein quartics $X^4+Y^4+Z^4=0$ and $X^3Y+Y^3Z+XZ^3=0$, both covers of genus 1 curves,  we can compute $L_p(T)$ modulo $p$ as above and apply the method described in \cite{FLS18} to lift $L_p(T)$ to $\Z[T]$, which takes $(\log p)^{2+o(1)}$ time.  Alternatively (and more efficiently), we can compute $L_p(T)$ using an isogeny twist of $E^3$ by a 3-dimensional Artin representation~$\rho_M$, where $E/\Q$ is an elliptic curve with CM by $M$ whose cube is $\Qbar$-isogenous to $A$.  This has complexity $(\log p)^{2+o(1)}$ following the precomputation of $\rho_M$.  Using a \Magma{} implementation of the approach sketched in \S\ref{subsec:explicit isogeny twists} below, it takes a few core-hours for $B=2^{30}$; a \smalljac{} implementation would likely reduce this to a few core-minutes.

\item For abelian threefolds that are the restriction of scalars of a CM elliptic curve defined over a cubic number field we can compute $L_p(T)$ using the Euler factors of the corresponding Hecke character at the primes above $p$ in $(\log p)^{2+o(1)}$ time; this takes less than a minute for $B=2^{30}$ using \smalljac{}.

\item For abelian surfaces and threefolds $A$ that are isogeny twists of squares or cubes of CM elliptic curves over $\Q$ or $\Q(\sqrt{3})$ by Artin representations $\rho_M$ of dimension 2 or 3 over the CM field $M$ we compute $L_p(T)$ using the Hecke character of the CM elliptic curve and computing Frobenius elements via the algorithm of \cite{DD13} as described below.  This has time complexity $(\log p)^{2+o(1)}$ once a suitable precomputation for~$\rho_M$ has been completed, but that precomputation may take a long time when the endomorphism field of $A$ is large; in the most extreme cases the kernel field of $\rho_M$ is a degree 1296 extension of $M$ and it takes about 36 hours, but in most cases only a few minutes (see Remark~\ref{remark: precomputation} for how this could be improved).   Using a \Magma{} implementation of the approach sketched in \S\ref{subsec:explicit isogeny twists} below, it takes a few core-hours for $B=2^{30}$; as noted above, a \smalljac{} implementation would likely reduce this to a few core-minutes.
\end{itemize}

\begin{remark}
All 33 of the maximal Sato--Tate groups can be realized using a construction that does not involve the Jacobian of a generic smooth plane quartic or a hyperelliptic cover of a pointless conic; we give at least one such example for each group in \S \ref{section: realizability}.  It was thus never necessary to apply the algorithms described above for these cases in order to compute moment statistics with $B=2^{30}$, although we did perform computations with smaller values of~$B$ as a sanity check and to obtain timing estimates.  The most time-consuming computations we actually performed were for the generic case $\bA$, which can be realized by a genus 3 hyperelliptic curve with a rational Weierstrass point;  this took less than 3 core-days.
\end{remark}

\subsection{Computing \texorpdfstring{$L$}{L}-polynomials of isogeny twists}
\label{subsec:explicit isogeny twists}

As described in the next section, the~12 maximal Sato--Tate groups of type $\bN$ can all be realized using abelian varieties that are isogeny twists of the cube of a CM elliptic curve.
In this section we sketch the method we used to compute $L$-polynomials for these realizations. 

Let $E/k$ be an elliptic curve with CM by $M$, and let $A$ be an isogeny twist of $E^r$ by an Artin representation $\rho_M\colon \Gal(L/M)\to \GL_r(\C)$ as in Definition~\ref{D: twisting construction0}.  We may compute the $L$-polynomials of $A$ using its interpretation as the base change from $kM$ to $k$ of the twist of the Artin $L$-function of $\rho_M$ by one of the Hecke characters $\psi_E$ corresponding to $E$.  For the purpose of computing Sato--Tate distributions, we can restrict to degree 1 primes of~$k$, since the contribution from other primes is asymptotically negligible.If $\p$ is a degree 1 prime of $k$ of good reduction for $E$ that splits as $\p=\mathfrak r\bar{\mathfrak r}$ in $kM$ and is unramified in $L$, then we may compute the $L$-polynomial of $A$ as
\[
L_\p(T) = \mathrm{Norm}_{M/\Q}f(\psi_E(\mathfrak r) T),
\]
where $f$ is the reverse characteristic polynomial of $\rho_M(\Frob_\q)$, with $\q\colonequals\mathfrak r \cap M$, by applying \cite[Eq.\,(3-8)]{FS14}.

At degree 1 primes $\p$ of~$k$ of norm $p$ that are inert in $M$ we have
\[
L_\p(T) = \begin{cases}
1 + a_2T^2 + p^2T^4 &\text{if }r=2,\\
1+ a_2T^2+ pa_2T^4+p^3T^6  &\text{if }r=3,
\end{cases}
\]
where $a_2=r-4+(\zeta_s+\bar\zeta_s)^2$ with $s=2,4,6,8,12$ the order of $\rho_M(\Frob_{p\cO_M})$; see \cite[Prop.\,3.3]{FS14} for $r=2$ and see \cite[Cor.\,2.4]{FLS18} for $r=3$.

To compute $\rho_M(\Frob_\q)$ at a degree 1 prime $\q$ of $M$ that is unramified in $L$, we use the algorithm of Dokchitser and Dokchitser \cite{DD13}, which allows one to determine very efficiently the conjugacy class of $\Frob_\q$ in $\Gal(L/M)$ once a suitable precomputation has been done.  Given a monic polynomial $f\in \cO_M[x]$ with splitting field $L$ and distinct roots $r_1,\ldots,r_d\in \cO_L$, one chooses a set of polynomials $h\in \cO_M[x]$ of degree less than $d$ such that for each conjugacy class $C$ of $\Gal(L/M)$, the polynomials
\[
\Gamma_{C,h}(X) \colonequals \prod_{\sigma\in C}\left(X-\sum_{j=1}^d h(r_j)\sigma(r_j)\right)\in \cO_M[X]
\]
are coprime, and for each prime $\q$ of $M$ with residue field $\Fq\colonequals\cO_M/\q$ that is unramified in~$L$ the conjugacy class $C$ of $\Frob_\q$ is the unique conjugacy class for which
\begin{equation}\label{eq:traceval}
\Gamma_{C,h}\left(\Trace_{\frac{\Fq[x]}{(f(x))}/\Fq}\bigl(h(x)x^q\bigr)\right) = 0
\end{equation}
holds for all of the polynomials $h(x)$ that have been chosen.  Any $h$ for which the $\Gamma_{C,h}(X)$ are coprime will already distinguish the conjugacy class of $\Frob_\q$ for all but finitely many $\q$, by \cite[Thm.~5.3]{DD13}, and it typically takes only a few $h$ to handle all unramified primes.

The computation of the $\Gamma_{C,h}$ involves explicitly computing the $\Gal(L/M)$-set $\{r_1,\ldots, r_d\}$ and then trying various $h\in \cO_M[x]$ until a suitable set has been found (in our implementation we use sparse $h\in \Z[x]$ with small coefficients).  Once this precomputation has been done and the $\Gamma_{C,h}$ are fixed, one can very efficiently compute the conjugacy class of $\Frob_\q$ for any prime $\q$ of $M$ using arithmetic operations that involve only evaluation of the reduction map $\cO_M\to \Fq$ and arithmetic operations in $\Fq$ (which for us is a prime field).

\begin{remark}\label{remark: precomputation}
The cost of precomputing the polynomials $\Gamma_{C,h}$ is dominated by the time to explicitly compute the roots of a polynomial $f\in \cO_M[x]$ in its splitting field $L/M$.  This can be quite expensive when $[L:M]$ is large (in our most extreme case $[L:M]=648$ and it takes about 24 hours).  This cost can be dramatically reduced by computing the roots in a suitable $p$-adic field, together with the $\Gal(L/M)$-action, which is efficiently implemented in \Magma{}.  This requires recognizing the polynomials $\Gamma_{C,h}\in\cO_M[x]$ as polynomials whose coefficients have been embedded in this $p$-adic field.  It is in principle feasible to do this efficiently using a $p$-adic LLL computation with explicit height bounds, but we did not attempt this.
\end{remark}

We mention here an additional precomputation that can substantially improve the efficiency of computing \eqref{eq:traceval} in the cases of relevance to us here, where the groups $\Gal(L/M)$ of interest are not all that much larger than the degree $d$ of $f$ and most of the conjugacy classes have fewer than $d$ elements.  In this situation, it is advantageous to avoid computing the cycle type of $\Frob_\q$ and to avoid all of the matrix operations suggested in \cite[\S 5.9]{DD13}, which lead to an asymptotic complexity dominated by the cost of $O(d\log q)$ multiplications of $d\times d$ matrices over $\Fq$; this has a bit complexity of $O(d^{\omega+1}\log q\sfM(\log q))$, where $\omega$ is the exponent of matrix multiplication and $\sfM(n)$ is the cost of multiplying $n$-bit integers.  The values of $d$ and $\log q$ of interest to us are less than $50$, and in this range the time to compute $\rho_M(\Frob_\q)$ will be proportional to $d^4(\log q)^3$.

Let $A_f\in \cO_M^{d\times d}$ be the companion matrix of $f$, so $f(A_f)=0$, and let $\pi_\q\colon \cO_M\to \cO_M/\q=\Fq$ be the reduction map.  For any $n\ge 0$ we have
\[
\Trace_{\frac{\Fq[x]}{(f(x))}/\Fq}(x^n) = \Trace(\pi_\q(A_f)^n) = \pi_\q(\Trace(A_f^n)).
\]
If we precompute the vector $v_f\colonequals[1,\Trace(A_f),\Trace(A_f^2),\ldots, \Trace(A_f^{d-1})]\in\cO_M^d$, we can then compute~\eqref{eq:traceval} by computing the coefficient vector of $h(x)x^q\bmod f(x)$ in $\Fq^d$ and plugging the dot product of this vector with the reduction of $v_f$ to $\Fq^d$ into the reduction of $\Gamma_{C,h}(x)$ to~$\Fq$.  Assuming the cost of applying the reduction map $\pi_\p$ is $O(\mathsf{M}(\log q))$, the asymptotic complexity of this approach is $O(\log q\sfM(d\log q) + e\sfM(\log q))$, where $e=[L:M]$. Using na\"ive multiplication, the time to compute $\rho_M(\Frob_\q)$ will be proportional to $d^2(\log q)^3 +e (\log q)^2$.

All of the examples considered in the next section have $e\le 4d^2$, in which case we save a factor of $d^2$ over the approach above, which is a significant improvement.  In practice it takes substantially less time to compute $\rho_M(\Frob_\q)$ in our examples than it does to compute the cycle type of $\Frob_\q$ in the usual way (by computing $\gcd(x^{q^n}-x,f(x))$ for $n\le d/2$), which is the first step of the approach in \cite[\S 5.9]{DD13}.  But we should note that this will not be true in the generic case where $e$ is exponentially larger than $d$.

A \Magma{} implementation of this algorithm can be found in our \GitHub{} repository \cite{FKS21}, along with scripts that compute $L$-polynomials at degree 1 primes for all of the examples that involve isogeny twists.

\section{The lower bound via explicit realizations}
\label{section: realizability}

We now establish the ``lower bound'' assertion of Theorem~\ref{T:ST result} by establishing that among the 410 groups described in \S\ref{section:STgroups} and \S\ref{section: unitary},
the 33 maximal groups all occur as the Sato--Tate groups of certain abelian threefolds. 
This was done in a somewhat abstract fashion in \cite{FKS19} by systematic use of twisting constructions;
here, we exhibit explicit examples which may be verified both theoretically (by a rigorous computation of the endomorphism algebra and its Galois action) and empirically (by computing the $L$-series as described in Section~\ref{subsec:explicit L-functions} and comparing the resulting distribution to the theoretical prediction given by Section~\ref{sec:statistics}).

\subsection{Overview of techniques}
\label{subsec: realization techniques}

The examples we will be using are summarized in Table~\ref{table: maximal ST groups}. 
In the notation, we use the following elliptic curves defined over $\Q$:
\begin{align*}
E_0\colon & y^2 = x^3 - x + 1  \\
E_1\colon & y^2 = x^3 - 288x - 1872 \\
E_{-3}\colon & y^2 = x^3 - 1 \\
E_{-4}\colon & y^2 = x^3 - x \quad (E_{-4,n}\colon y^2 = x^3-nx) \\
E_{-7}\colon & y^2 + x y = x^{3} -  x^{2} - 2 x - 1 \\
E_{-8}\colon & y^2 = x^{3} +x^2-3x+1.
\end{align*}
Note that $E_0$ and $E_1$ do not have CM, whereas for $D<0$,
$E_{D}$ and $E_{D,n}$ have CM by the imaginary quadratic order of discriminant $D$.
We also use the curve $E_{-4,12-8\sqrt{3}}$ over $\Q(\sqrt{3})$, which is a model of the curve
\href{https://www.lmfdb.org/EllipticCurve/2.2.12.1/9.1/a/3}{9.1-a3}
used in Remark~\ref{remark: twisting construction3}.

The techniques used are listed below, in our order of preference. Here $k$ denotes the field of definition (equal to $\Q$ unless specified) and 
$d$ denotes the minimum (known) polarization degree for the resulting abelian threefold.
\begin{itemize}
\item
The product of three elliptic curves over $\Q$.
\item
The Weil restriction of an elliptic curve $C_{1,3}$ defined over the cubic number field $L_3 = \Q[\alpha]/(\alpha^3-\alpha^2+\alpha-2)$. The extension $L_3/\Q$ is not Galois; 
in the analysis we use $L_3'/\Q$ to denote its Galois closure
and $\alpha_1,\alpha_2,\alpha_3$ for the conjugates of $\alpha$ in $L_3'$.
\item
The product of an elliptic curve and the Jacobian of a curve $C_2$ of genus 2.
\item
The Jacobian of a curve $C_3$ of genus 3 (preferably hyperelliptic). When one of the previous constructions has been used
but a Jacobian is also available, we list both.
\item
The product of an elliptic curve with an abelian surface of the form $\Prym(C_3/C_1) = \ker(\Jac(C_3) \to \Jac(C_1))$,
where $\pi: C_3 \to C_1$ is a degree-2 morphism from a genus 3 curve to a genus 1 curve. In this case, $d=2$.
\item
The product of an elliptic curve with an abelian surface $A_{D}$ 
obtained as an isogeny twist of $E_{D}^2$ as described in Remark~\ref{remark: twisting construction1}.
In this case, $d$ is given by Remark~\ref{remark: twisting construction induced polarization}.
\item
The product of an elliptic curve with an abelian surface $A_{D[m]}$
obtained as an isogeny twist of $E_{D}^2$ as described in
Remark~\ref{remark: twisting construction2}
or Remark~\ref{remark: twisting construction3} (taking $k = \Q(\sqrt{3})$ in the latter case).
In this case, $d$ is given by Remark~\ref{remark: twisting construction induced polarization}.
(We add subscripts $a$ and $b$ to distinguish cases.)
\item
An abelian threefold constructed by twisting as in Remark~\ref{remark: twisting construction1}.
In this case, $d$ is given by Remark~\ref{remark: twisting construction induced polarization}.
\end{itemize}
In all cases that do not involve isogeny twists,
one may use the methods of \cite{CMSV19}, as implemented in \cite{CS19}, to verify that the Sato--Tate group
is as claimed; however, in all cases we give more direct (if less systematic) verifications. See \cite{FKS21} for code to perform some steps in these verifications.

\begin{remark}
The twisting constructions follow the approach described in \cite{FKS19}, using group-theoretic considerations
to formulate some Galois embedding problems.
Unlike in \cite{FKS19}, where these embedding problems are (mostly) solved uniformly using the invariant rings of complex reflection groups,
here we take a more \emph{ad hoc} approach in order to produce ``small'' explicit examples.
It would be desirable to find more explicit varieties realizing these Sato--Tate groups; however, the computed values of $d$
suggest obstructions to realizing certain cases using Jacobians or Pryms of double covers. (To promote these to genuine obstructions, one would need to make a more exhaustive analysis in the style of \cite{FG18} to show that no other twisting constructions are possible.)

For the constructions involving twists of abelian surfaces, it might be preferable to work with the associated Kummer surfaces, which are singular K3 surfaces where more of the polarization-preserving automorphisms can be seen directly. These can be described as elliptic fibrations via a construction of Inose (see 
\cite[Proposition~4.1]{Sch07}, for example, or see \cite[Theorem~11]{Kum08} for a more general result covering arbitrary genus $2$ curves).
However, it may be possible to find other models that make the symmetries more easily visible.
\end{remark}

\subsection{Examples: types \texorpdfstring{$\bA$}{A} to \texorpdfstring{$\bM$}{M}}

We remind the reader that the curves $C_{1,3}, C_2, C_3$ used in the following examples are listed in Table~\ref{table: maximal ST groups}.

\begin{example} \label{exa:type A}
The unique maximal group of type $\bA$ occurs as the Sato--Tate group of $\Jac(C_3)$ 
where $C_3$ is a hyperelliptic curve whose Jacobian has trivial geometric endomorphism ring, 
by a theorem of Zarhin \cite[Theorem~2.1]{Za00}.
\end{example}

\begin{example} \label{exa:type B}
The unique maximal group of type $\bB$ occurs as the Sato--Tate group of $\Jac(C_3)$
where $C_3$ is a hyperelliptic curve whose Jacobian has geometric endomorphism ring $\Z[i]$. 
To confirm this, we check\footnote{In a previous version of this paper this analysis followed \cite{Upt09}, but the description of the Galois image there is missing some scalars and so some arguments therein need to be reworked. We thank Pip Goodman for bringing this point to our attention.} that the mod-7 Galois representation has image equal to the subgroup of $\GSp(6, \F_7)$ generated by $\GU(3, \F_{7^2})$ and its centralizer; this then implies that the 7-adic Galois representation has image containing $\Unitary(3, \Z_{7^2})$
(see \cite[Theorem~2.2.5]{Va04}, for example). For $p = 3$ we have $L_p(T) \equiv T^6 + 6T^4 + 4T^2 + 6 \pmod{7}$;
a short \Magma{} computation shows that no maximal subgroup of $G$
contains elements with this characteristic polynomial.

This group also occurs as the Sato--Tate group of the Jacobian of a Picard curve
whose Jacobian has geometric endomorphism ring $\Z[\zeta_3]$.
For example, we may take the curve $y^3 = x^4 + x +1$; since
 $\nfield[\spl(x^4+x+1)]{4.0.229.1}$ has Galois group $S_4$,
 we may apply \cite[Theorem~1.3]{Zar18} to compute the endomorphism ring. We will verify later that this curve also has large mod-2 Galois image (see Example~\ref{exa:type N 432 734}).
\end{example}

\begin{example} \label{exa:type C D}
The unique maximal group of type $\bC$ occurs as the Sato--Tate group of $E_0 \times \Jac(C_2)$
where $C_2$ occurs in 
\cite[Table~11]{FKRS12} as an example whose Jacobian has Sato--Tate group $\stgroup[\USp(4)]{USp(4)}$. 
Table~\ref{table: maximal ST groups} also includes an example of a Jacobian which can be explained using
Lemma~\ref{lemma: double cover}; the genus $2$ quotient is the curve
\href{https://www.lmfdb.org/Genus2Curve/Q/709/a/709/1}{\texttt{709.a.709.1}} in the \LMFDB{}.

The unique maximal group of type $\bD$ occurs as the Sato--Tate group of $E_{-3} \times \Jac(C_2)$.
Table~\ref{table: maximal ST groups} also includes an example of a Jacobian which can be explained using
Lemma~\ref{lemma: double cover}; the genus $2$ quotient is the curve
\href{https://www.lmfdb.org/Genus2Curve/Q/836352/a/836352/1}{\texttt{836352.a.836352.1}} in the \LMFDB{}.
\end{example}

\begin{example} \label{exa:type E}
The unique maximal group of type $\bE$ occurs as the Sato--Tate group of $\Res_{L_3/\Q} C_{1,3}$;
note that 23 splits completely in $L_3$ and the Frobenius traces of $C_{1,3}$ at the primes of $L_3$ above 23
are not all equal up to sign, so $C_{1,3}$ cannot be a $\Q$-curve. (Another way to confirm that $C_{1,3}$ is not a $\Q$-curve is to use the algorithm described in \cite[\S 5]{CN21}.)
Table~\ref{table: maximal ST groups} also includes an example of a Jacobian. The curve $C_3$ has the form
\[
y^2 = (x-1)(2x^3+1)((1+x)^3 - 2).
\]
Over $\Q(2^{\nicefrac{1}{3}})$, $C_3$ acquires the nonhyperelliptic involution
\[
(x,y) \mapsto \left( \frac{-x + 4^{\nicefrac{1}{3}}}{2^{\nicefrac{1}{3}}x + 1}, \frac{9}{(2^{\nicefrac{1}{3}} x + 1)^4} \right).
\]
Hence over $L = \Q(\zeta_3, 2^{\nicefrac{1}{3}})$, the reduced automorphism group of $C_3$ becomes $\cyc 2 \times \cyc 2$, and the quotients by the three involutions are conjugate over $L$.
We may recover the $j$-invariant of one of these quotients by computing that the map
\[
x \mapsto x + \frac{-x + 4^{\nicefrac{1}{3}}}{2^{\nicefrac{1}{3}}x + 1}
\]
carries the Weierstrass points $1, -2^{\nicefrac{-1}{3}}, -2^{\nicefrac{-1}{3}} \zeta_3, -2^{\nicefrac{1}{3}} \zeta_3^2$ to $2^{\nicefrac{1}{3}}, \infty, 2^{\nicefrac{-1}{3}} (1+\zeta_3), 2^{\nicefrac{-1}{3}} (1+\zeta_3^2)$;
we find that this quotient does not have CM.
\end{example}

\begin{example} \label{exa:type F}
The unique maximal group of type $\bF$ occurs as the Sato--Tate group of the product $E_{-8} \times \Jac(C_2)$
where $C_2$ occurs in \cite[Table~11]{FKRS12} as an example whose Jacobian has Sato--Tate group $\stgroup{N(G_{3,3})}$; the endomorphism field of $E_{-8}\times\Jac(C_2)$ is $\Q(\sqrt{-2},i)=\Q(\zeta_{8})$.
Table~\ref{table: maximal ST groups} also includes an example of a Jacobian with endomorphism field $\Q(\zeta_8)$ which can be explained using
Lemma~\ref{lemma: double cover}; the genus $1$ quotient has Jacobian with CM by $\Q(\sqrt{-2})$, while the genus $2$ quotient becomes isomorphic over $\Q(\sqrt{2})$ to
\[
y^2 = (1 + \sqrt{2})x^6 + (-1 + \sqrt{2})x^4 + (-1-\sqrt{2})x^2 + (1-\sqrt{2}).
\]
The quotient of this curve by the nonhyperelliptic involution $(x,y) \mapsto (-x,y)$ is the elliptic curve \href{https://www.lmfdb.org/EllipticCurve/2.2.8.1/512.1/h/2}{\texttt{512.1-h2}} in the \LMFDB{}, which does not
have CM and is not a $\Q$-curve.
\end{example}

\begin{example} \label{exa:type G C4}
The maximal group of type $\bG$ with component group $\cyc 4$ occurs as the Sato--Tate group of $E_0 \times \Jac(C_2)$
where $C_2\colon y^2 = x^5+1$ occurs in 
\cite[Table~11]{FKRS12} as an example whose Jacobian has Sato--Tate group $\stgroup{F_{ac}}$ and endomorphism
field $\Q(\zeta_5)$. 
Table~\ref{table: maximal ST groups} also includes an example of a Jacobian with endomorphism field $\Q(\zeta_5)$ which can be explained using
Lemma~\ref{lemma: double cover}; the genus $2$ quotient is isomorphic to $C_2$.
\end{example}

\begin{example} \label{exa:type G}
The maximal group of type $\bG$ with component group $\cyc 2 \times \cyc 2$ occurs as the Sato--Tate group of $E_0 \times E_{-4} \times E_{-8}$, which has endomorphism field $\Q(i,\sqrt{2})=\Q(\zeta_8)$.
Table~\ref{table: maximal ST groups} also includes an example of a Jacobian with endomorphism field $\Q(\zeta_8)$ which can be explained using
Lemma~\ref{lemma: double cover}; the genus $2$ quotient is isomorphic to 
\[
2y^2 = x^6 + x^4 - 3x^2 + 1.
\]
The quotient by the involution $(x, y) \mapsto (-x, y)$ has CM by $\Q(\sqrt{-2})$ and the quotient by the involution $(x,y) \mapsto (-x, -y)$ has CM by $\Q(i)$.
\end{example}

\begin{example} \label{exa:type H}
The maximal group of type $\bH$ with component group $\cyc 6$ occurs as the Sato--Tate group of $\Jac(C_3)$, 
where $C_3$ is a hyperelliptic curve with endomorphism field
$\Q(\zeta_7)$.

The maximal group of type $\bH$ with component group $\cyc 2 \times \cyc 4$ occurs as the Sato--Tate group of $E_{-3} \times \Jac(C_2)$
where $C_2$ is as in Example~\ref{exa:type G C4};
note that the endomorphism field of $\Jac(C_2)$ does not contain $\Q(\sqrt{-3})$.
Table~\ref{table: maximal ST groups} also includes an example of a Jacobian which can be explained using
Lemma~\ref{lemma: double cover}; the genus $1$ quotient has CM by $\sqrt{-2}$,
while the genus $2$ quotient appears in \cite[Table~1]{vW99} as an example
whose Jacobian has CM by the number field $\spl(\nfield[x^4+4x^2+2]{4.0.2048.2})$.

The maximal group of type $\bH$ with component group $\cyc 2 \times \cyc 2 \times \cyc 2$ occurs as the Sato--Tate group of 
$E_{-3} \times E_{-4} \times E_{-8}$.
Table~\ref{table: maximal ST groups} also includes an example of a Jacobian (shown to us by Everett Howe)
which can be explained using Remark~\ref{R:pair of quartics}; it is isogenous to a product of certain twists of $E_{-3}, E_{-4}, E_{-8}$.
\end{example}

\begin{example} \label{exa:type I JE4}
The maximal group of type $\bI$ with component group $\dih 4$ occurs as the Sato--Tate group of 
$E_0 \times \Jac(C_2)$ where $C_2$ occurs in
\cite[Table~11]{FKRS12} as an example whose Jacobian has Sato--Tate group $\stgroup{J(E_4)}$ and endomorphism
field  $\nfield[\Q(i, 2^{\nicefrac{1}{4}})]{8.0.16777216.2}$. Per \cite[Proposici\'o~8.2.1]{Car01} or \cite[\S 5.3]{GS01}, the $\Qbar$-isogeny factors of $\Jac(C_2)$
have $j$-invariants which are not in $\Q$.
Table~\ref{table: maximal ST groups} also includes an example of a Jacobian which can be explained using
Lemma~\ref{lemma: double cover}; the genus $2$ curve is again $C_2$.

Similarly, the maximal group of type $\bJ$ with component group $\cyc 2 \times \dih 4$ occurs as the Sato--Tate group of $E_{-3} \times \Jac(C_2)$.
Table~\ref{table: maximal ST groups} also includes an example of a Jacobian  which can be explained using
Lemma~\ref{lemma: double cover}: the genus $1$ quotient has CM by $\Q(i)$, while \Magma{} confirms that the genus $2$ quotient acquires an action of $\dih 4$ over $\spl(\nfield[x^4 - 6x^2 + 3]{4.4.27648.1})$.  The endomorphism field of the Jacobian is $\spl(x^4 - 6x^2 + 3)(i) = \spl(\nfield[x^8+30x^4+9]{8.0.3057647616.6})$.
\end{example}

\begin{example} \label{exa:type I JE6}
The maximal group of type $\bI$ with component group $\dih 6$ occurs as the Sato--Tate group of 
$E_0 \times \Jac(C_2)$ where $C_2$ occurs in
\cite[Table~11]{FKRS12} as an example whose Jacobian has Sato--Tate group $\stgroup{J(E_6)}$ and endomorphism
field $\nfield[\Q(\sqrt{-3}, (-2)^{\nicefrac{1}{6}})]{12.0.20061226008576.4}$. 
Table~\ref{table: maximal ST groups} also includes an example of a Jacobian which can be explained using
Lemma~\ref{lemma: double cover}; over the field $\Q[\alpha]/(\alpha^3 - 2\alpha^2 + \alpha - 16/27)$, $\Jac(C_2)$
admits a Richelot isogeny to its quadratic twist by~$\alpha$. Hence the endomorphism field contains 
$\spl(x^6 - 2x^4 + x^2 - 16/27)=\spl(\nfield[x^6-4x^3+1]{6.2.1259712.2})$; the results of \cite{FKRS12} will now force the Sato--Tate group to be $\stgroup{J(E_6)}$ once we check that $\Jac(C_2)$ cannot have CM in some imaginary quadratic field $M$. If this were the case, then $M$ would be unramified away from the primes of bad reduction of $\Jac(C_2)$,
which in this case are $2$ and $3$; this implies $M \subseteq \Q(\zeta_{24})$.
Also, from the proof of \cite[Corollary~3.12]{FS14}, the Frobenius trace of $\Jac(C_2)$ at every prime that is inert in $M$ would be 0;
by checking this condition for $p=5, 13, 17$, we rule out all possibilities except $M = \Q(i)$.
Finally, note that the $L$-polynomial for each prime that is split in $M$ would factor nontrivially over $M$; this does not occur for $p=5$.
(See also Remark~\ref{R:Shimura curve} below.)

Similarly, the maximal group of type $\bJ$ with component group $\cyc 2 \times \dih 6$ occurs as the Sato--Tate group of $E_{-4} \times \Jac(C_2)$.
Table~\ref{table: maximal ST groups} also includes an example of a Jacobian which can be partially explained using
Remark~\ref{R:Ritzenthaler-Romagny}; writing $C_3$ in the form
\[
2Y^4+Y^2(4X^2-6Z^2)+4X^4+6X^3Z+XZ^3+3Z^4 = 0, 
\]
we see that the genus $1$ quotient has CM by $\Q(i)$,
while the Prym is 2-isogenous to the Jacobian of a curve $C_2'$ defined over $\Q[\alpha]/(\alpha^3 - \alpha^2 - 2\alpha - 6)$.
Using \Magma{} to compute a Richelot isogeny, we check that $\Jac(C_2)$ is $(2,2)$-isogenous to the Jacobian of
\begin{align*}
C_2: y^2 &= 56x^6 - 144x^5 + 12x^4 + 16x^3 + 90x^2 + 60x + 9 \\
&= (2x^2 - 4x - 1)(28x^4 - 16x^3 - 12x^2 - 24x - 9).
\end{align*}
By computing Igusa invariants, we see that $C_2$ and $C_2'$ are isomorphic over $\Qbar$, but \Magma{} confirms that they
are not isomorphic over $\spl(\nfield[x^3-x^2-2x-6]{3.1.1176.1})$. As in the previous paragraph, we can now verify that $\Jac(C_2)$ has Sato--Tate group
$\stgroup{J(E_6)}$ by checking that $\Jac(C_2)$ cannot have CM in some imaginary quadratic field $M$. In this example,
$\Jac(C_2)$ has good reduction away from 2, 3, 7, and so $M \subseteq \Q(\zeta_{168})$.
By checking the nonvanishing of Frobenius traces for $p=5, 11, 31, 53$, we rule out all possibilities except $M = \Q(\sqrt{-6})$;
we rule out that case by checking that the $L$-polynomial for $p=5$ does not factor nontrivially over $\Q(\sqrt{-6})$.

To compute the endomorphism field of $\Jac(C_2)$, note that the field of definition of the Weierstrass points of $C_2$ is an $(\cyc 2 \times \sym 4)$-extension $L$ of $\Q$
over which the Weierstrass points of $C_2'$ are also defined; therefore $C_2$ and $C_2'$ must be isomorphic over $L$. The only $\dih 6$-subextension of $L$ is $\spl(x^3-x^2-2x-6)(\sqrt{-6}) = \spl(\nfield[x^6 + 7x^4 + 7x^2 - 63]{6.2.154893312.1})$, which must then be the endomorphism field of $\Jac(C_2)$. 
Hence the endomorphism field of $\Jac(C_3)$ is
\[
\spl(x^6 + 7x^4 + 7x^2 - 63)(i) = 
\spl(x^{12} + 10x^{10} + 177x^8 + 244x^6 + 172x^4 + 96x^2 + 36).
\]
\end{example}

\begin{remark} \label{R:Shimura curve}
More generally, the family of genus $2$ curves
\[
C_t: y^2 = x^5 + 8x^4 + tx^3 + 16x^2 - 4x.
\]
has the property that for generic $t$,
the endomorphism field $L_t$ of $\Jac(C_t)$ 
is the splitting field of $x^6 - 2x^4 + x^2 + 64/(t^2-432)$
and $\End(\Jac(C_t)_{L_t})$ is an index-2 order of the quaternion algebra over $\Q$ of discriminant $6$;
that is, $\Jac(C_t)$ is the universal family of quaternionic abelian surfaces over some genus $0$ Shimura curve
(compare \cite{BG08}).
Namely, as pointed out to us by Noam Elkies, this can be confirmed by applying \cite[Theorem~11]{Kum08}
to compute the associated elliptic K3 surface
with fibers of type $E_7$ and $E_8$ at $0$ and $\infty$, respectively.
\end{remark}

\begin{example} \label{exa:type J JD6}
The maximal group of type $\bK$ with component group $\cyc 2 \times \dih 6$ occurs as the Sato--Tate group of 
$E_0 \times \Jac(C_2)$ where $C_2$ occurs in
\cite[Table~11]{FKRS12} as an example whose Jacobian has Sato--Tate group $\stgroup{J(D_6)}$ and endomorphism
field 
\[
\Q(\zeta_{24})[\alpha]/(\alpha^3 + 3\alpha - 2)=\spl(\nfield[x^{12} - 4x^9 + 8x^6 - 4x^3 + 1]{12.0.8916100448256.1}).
\]
The maximal group of type $\bL$ with component group $\cyc 2 \times \cyc 2 \times \dih 6$ occurs as the Sato--Tate group of $E_{-7} \times \Jac(C_2)$ with endomorphism field $\spl(x^{24} - 660x^{18} + 12134x^{12} - 660x^6 + 1)$.

Table~\ref{table: maximal ST groups} also includes an example of a Jacobian for the maximal group of type $\bK$ with component group $\cyc 2 \times \dih 6$,
which can be explained using Remark~\ref{R:Ritzenthaler-Romagny}: 
over $\Q[\alpha]/(\alpha^3 - 3\alpha - 4)$, the Prym of $C_3 \to C_1$ becomes isogenous to $\Jac(C_2)$ for
\[
C_2: y^2 = x^5 + 8x^4 + 16x^2 - 4x
\]
(this is the degenerate specialization $t=0$ of the family from Remark~\ref{R:Shimura curve}).
The latter acquires endomorphisms by $\cyc 2 \times \cyc 2$ over $\Q(\zeta_8)$, and the isogeny factors have CM by $\Q(\sqrt{-6})$ (compare \cite[Proposition~3.7]{BG08}).
The endomorphism field of $\Jac(C_3)$ is
\[
\spl(x^3 - 3x - 4)(\zeta_8, \sqrt{-6}) = \spl(\nfield[x^{12}-4x^6+1]{12.4.6499837226778624.22}).
\]
\end{example}

\begin{example} \label{exa:type J JO}
The maximal group of type $\bK$ with component group $\cyc 2 \times \sym 4$ occurs as the Sato--Tate group of 
$E_0 \times \Jac(C_2)$ where $C_2$ occurs in
\cite[Table~11]{FKRS12} as an example whose Jacobian has Sato--Tate group $\stgroup{J(O)}$ and endomorphism
field $\spl(\nfield[x^6+4x^4-22]{6.2.2725888.2})$.
Similarly, the maximal group of type $\bL$ with component group $\cyc 2 \times \cyc 2 \times \sym 4$ occurs as the Sato--Tate group of $E_{-3} \times \Jac(C_2)$ with endomorphism field $\spl(x^{12} - 9x^8 - 38x^6 - 9x^4 + 1 )$.

Table~\ref{table: maximal ST groups} also includes an example of a Jacobian; to analyze this example,
let $\alpha_1, \alpha_2, \alpha_3$ be the roots of the polynomial $x^3 + 2x+1$ in a splitting field $L$. 
Over $L$, we may change the model to
\[
Y^4 = X^3Z + a_j X^2Z^2 + b_j XZ^3,
\qquad (a_j,\ b_j) = (-4-3\alpha_j^2,\ 2\alpha_j^2-3\alpha_j+4).
\]
Using Lemma~\ref{lemma: quadruple cover}, we deduce that the endomorphism field of $\Jac(C_3)$ is
\[
K \colonequals \Q(i, \sqrt{-59}, \alpha_1, b_1^{\nicefrac{1}{2}}, b_2^{\nicefrac{1}{2}}, b_3^{\nicefrac{1}{2}}) = \spl(\nfield[x^6-4x^4-59]{6.2.13144256.4}).
\]
\end{example}

\begin{example} \label{exa:type M D6}
The maximal group of type $\bM$ with component group $\dih 6$ occurs as the Sato--Tate group of
$E_1 \times \Jac(C_2)$ where $C_2$ occurs in \cite[Table~11]{FKRS12} as an example whose Jacobian has Sato--Tate group $\stgroup{J(E_3)}$ with endomorphism field $\nfield[\Q(\zeta_3,2^{\nicefrac{1}{3}})]{6.0.34992.1}$.
We analyze this example following \cite[\S 4.2.4]{FKS19}.
Over $\Q(2^{\nicefrac{1}{3}})$, $C_2$ becomes isomorphic to $2y^2 = 2x^6 + x^3 + 2$, 
whose quotient by the involution $(x,y) \mapsto (1/x, y/x^3)$ may be identified with the curve
$C_1: 2y^2 = (x+2)(2x^3 - 6x + 1)$ via the map $(x,y) \mapsto (x+1/x, y(x+1)/x^2)$.
Since $\Jac(C_1)$ is the quadratic twist of $E_1$ over $\Q(i)$, 
the endomorphism field of $E_1\times\Jac(C_2)$ is $\nfield[\Q(\zeta_{12},2^{\nicefrac{1}{3}})]{12.0.313456656384.1}$.

Table~\ref{table: maximal ST groups} also includes an example of a Jacobian; 
using Lemma~\ref{lemma: S3 quartic}, we see that all three quotients have $j$-invariant $-216$ and 
the endomorphism field is $\Q(\zeta_{12}, 2^{\nicefrac{1}{3}})$. 
\end{example}

\begin{example} \label{exa:type M S4}
The maximal group of type $\bM$ with component group $\sym 4$ occurs as the Sato--Tate group of
$\Res_{L_3/\Q} C_{1,3}$.
The endomorphism field is
\[
L'_3[\sqrt{\alpha_1/\alpha_2}, \sqrt{\alpha_2/\alpha_3}] = \spl(\nfield[x^4 - 4x^2 - 8x - 3]{4.2.21248.2}).
\]
Table~\ref{table: maximal ST groups} includes an example of a Jacobian with endomorphism field $\spl(\nfield[x^4 - 2x^3 - 6x + 3]{4.2.3888.1})$; it is a reduced model of the twist of $y^2 = x^8 + 14x^4 + 1$ listed in \cite[Table~4, row~11]{ACLLM18}.
\end{example}

\subsection{Examples: type \texorpdfstring{$\bN$}{N}}

In the following examples, we sometimes define a matrix $V$ to specify an isomorphism of $\rho_M$ with $\rho_M^c$ (see Remark~\ref{remark: twisting construction1}); we also write $U$ for the matrix defining an invariant polarization (see Remark~\ref{remark: twisting construction induced polarization}).

\begin{example} \label{exa:type N 48 15 3}
Let $M=\Q(\zeta_3)$.  The maximal group $\stgroup{J_s(B(1,12))}$ of type $\bN$ with component group $\group{48}{15}$ 
occurs as the Sato--Tate group of $E_{-3} \times A_{-3[2]a}$ obtained as in 
Remark~\ref{remark: twisting construction2}, for a representation $\rho_M\colon G_M\to \GL_2(\cO_M)$ with image isomorphic to $\group{24}{10}$, generated by
\[
\begin{pmatrix}
\zeta_3 & 0 \\
0 & \zeta_3
\end{pmatrix},
\begin{pmatrix}
-1 & -2 \\
0 & 1
\end{pmatrix},
\begin{pmatrix}
-1 & 0 \\
1 & 1
\end{pmatrix}\qquad \mbox{with }
U = \begin{pmatrix} 2 & 2 \\ 2 & 4 \end{pmatrix}.
\]
(Note that $U$ must have its top-right entry divisible by 2 as per Remark~\ref{remark: twisting construction induced polarization}.)
 To exhibit an explicit example, we must find a Galois extension $L/\Q$ with
\begin{align*}
\Gal(L/\Q) &\simeq \dih 4 \rtimes \sym 3 \simeq \group{48}{15}, \\
\Gal(L/M) &\simeq \dih 4 \times \cyc 3 \simeq \group{24}{10}.
\end{align*}
We may obtain such an extension by taking the compositum of 
two Galois extensions $L_1/\Q, L_2/\Q$ containing $M$ with
\begin{gather*}
\Gal(L_1/\Q) \simeq \dih 8, \quad \Gal(L_1/M) \simeq \dih 4, \\
\Gal(L_2/\Q) \simeq \sym 3, \quad \Gal(L_2/M) \simeq \cyc 3.
\end{gather*}
These constraints may be satisfied by taking
\begin{align*}
L_1 &= \spl(\nfield[x^8 - 12x^6 + 12x^4 - 52x^2 - 3]{8.2.1087616581632.6}),\\
L_2 &= \spl(\nfield[x^3-2]{3.1.108.1}),\\
L &= L_1L_2 = \spl(x^{24} + 488x^{18} + 63384x^{12} - 5872x^6 + 1372),\\
K &= L^{\cyc{6}} = \spl(\nfield[x^4 - 4x^2 - 3]{4.2.37632.1}).
\end{align*}
There are 16 representations $\rho_M$ with kernel field $L$ and image $\rho_M(G_M)$ as above, comprising two complex conjugate $M$-equivalence classes.
Of these, 8 are $\cO_M$-equivalent (in fact equal) to~$\rho_M^{c'}$ as required by Remark~\ref{remark: twisting construction2a}.  These $\rho_M$ realize both $M$-equivalence classes and give rise to $(1,2)$-polarized abelian surfaces $A_{-3[2]a}$ over $\Q$ with Sato--Tate group~$\stgroup{D_{4,1}}$ and endomorphism field $K$ with $\Gal(K/\Q)\simeq \dih{4}$; they are distinguished by their $L$-polynomials at~13.
In  both cases the product $E_{-3}\times A_{-3[2]a}$ is an abelian threefold over $\Q$ with Sato--Tate group $\stgroup{J_s(B(1,12))}$ and endomorphism field $L$.
\end{example}

\begin{remark}
In Example~\ref{exa:type N 48 15 3}, based on the polarization type,
we expect that the abelian surface can be realized (up to a quadratic twist) as the Prym of a degree $2$ morphism from a genus $3$ curve to a genus $1$ curve, all defined over $\Q$.
\end{remark}

\begin{example} \label{exa:type N 48 15 4}
Let $M=\Q(i)$ and $k = \Q(\sqrt{3})$. 
The maximal group $\stgroup{J(B(3,4;4))}$ of type~$\bN$ with component group $\group{48}{15}$ 
occurs as the Sato--Tate group of $E_{-4,12-8\sqrt{3}} \times A_{-4[3]a}$ obtained as in 
Remark~\ref{remark: twisting construction3}, for a representation $\rho_M\colon G_M\to \GL_2(\cO_M)$ with image isomorphic to~$\group{24}{1}$, generated by
\[
\begin{pmatrix}
i & 0 \\
 0 & i
\end{pmatrix},
\begin{pmatrix}
-1-i & -3 \\
i & 1+i
\end{pmatrix},
\begin{pmatrix}
1 & 3 \\
-1 & -2
\end{pmatrix}
\qquad \mbox{with }
U = \begin{pmatrix} 6 & 3-3i \\ 3+3i & 6 \end{pmatrix}.
\]
(Note that $U$ must have its top-right entry divisible by 3 as per Remark~\ref{remark: twisting construction induced polarization}.)
To exhibit an explicit example, we must find a Galois extension $L/\Q$ not containing $k$ with
\[
\Gal(L/\Q) \simeq \dih 4 \rtimes \sym 3, \qquad \Gal(L/M) \simeq \cyc 3 \rtimes \cyc 8.
\]
We obtain such an extension as in \cite[\S 4.2.5]{FKS19} by taking the compositum $L$ of 
two Galois extensions $L_1/\Q, L_2/\Q$ such that
\begin{gather*}
\Gal(L_1/\Q) \simeq \dih 8, \quad \Gal(L_1/(L_1 \cap L_2)) \simeq \dih 4, \quad \Gal(L_1/M) \simeq \cyc 8, 
\\
\Gal(L_2/\Q) \simeq \sym 3, \quad \Gal(L_2/(L_1 \cap L_2)) \simeq \cyc 3.
\end{gather*}
We take
\begin{align*}
 L_1 &= \spl(\nfield[x^{8} - 7 x^{4} + 28]{8.0.13492928512.1}), \\
L_2 &= \spl(\nfield[x^3 - x^2 + 2x - 3]{3.1.175.1}),\\
K &= L_2M = \spl(\nfield[x^6 + x^4 + 2x^2 - 7]{6.2.13720000.2}),\\
L &= L_1L_2.
\end{align*}
There are 24 representations $\rho_M$ with kernel field $L$ and image $\rho_M(G_M)$ as above, comprising two $M$-equivalence classes.
Of these, 8 are $\cO_M$-equivalent (in fact equal) to $\rho_M^{c'}$ as required by Remark~\ref{remark: twisting construction2a}. These $\rho_M$ realize both $M$-equivalence classes and give rise to $(1,6)$-polarized abelian surfaces $A_{-4[3]a}$ over~$k$ with Sato--Tate group $\stgroup{D_{6,1}}$
and endomorphism field $kK$ with $\Gal(kK/k)\simeq \dih{6}\simeq \group{12}{4}$;
they are distinguished by their $L$-polynomials at 13.
In  both cases the product $E_{-4,12-8\sqrt{3}}\times A_{-4[3]a}$ is an abelian threefold over $k$ with Sato--Tate group $\stgroup{J(B(3,4;4))}$ and endomorphism field $kL$.
\end{example}

\begin{example} \label{exa:type N JB34}
The maximal group $\stgroup{J(B(3,4))}$ of type~$\bN$ with component group $\group{48}{38}$ occurs as the Sato--Tate group of $\Jac(C_3)$
where $C_3$ is the hyperelliptic curve
\[
y^2 = (x^2 + 2x - 1)(x^3 -3x^2 -3x - 3)(x^3 + 3x + 2).
\]
Over $\Q(\sqrt{2})$, this curve becomes isomorphic to
\[
C'_3: y^2 = (4-3\sqrt{2})x^7 + (4+3\sqrt{2}) x.
\]
Applying \cite[Lemma~5.6]{FS16} to the latter, we deduce that the endomorphism field of $\Jac(C_3)$ is given by
\[
L = \Q(\sqrt{2}, i, (-1-\sqrt{2})^{\nicefrac{1}{3}}, (-3)^{\nicefrac{1}{4}}) = \spl(\nfield[x^{12} - 3x^8 + 24x^4 - 48]{12.2.962938848411648.1}).
\]
This is a Galois extension with Galois group $\group{48}{38} \simeq \sym 3 \times \dih 4$.
Moreover, the isogeny factors of $\Jac(C_3)$ all have CM by $\Q(i)$,
and the Galois group of $L$ over $\Q(i)$ is $\group{24}{5}$.
The only option for the Sato--Tate group consistent with these observations is $\stgroup{J(B(3,4))}$.

This example can also be realized as a product of the form $E_{-4,2} \times A_{-4a}$, obtained as in 
Remark~\ref{remark: twisting construction1} with $M=\Q(i)$, for a representation $\rho_M\colon G_M\to \GL_2(\cO_M)$ with image isomorphic to $\group{24}{5}$, generated by
\[
\begin{pmatrix}
i & 0 \\
 0 & i
\end{pmatrix},
\begin{pmatrix}
0 & 1 \\
1 & 0
\end{pmatrix},
\begin{pmatrix}
0 & -1 \\
1 & -1
\end{pmatrix}
\qquad \mbox{with }
U = \begin{pmatrix} 2 & -1 \\ -1 & 2 \end{pmatrix}.
\]
There are 24 representations $\rho_M$ with kernel field $L$ and image $\rho_M(G_M)$ as above, comprising two $M$-equivalence classes.
All 24 are $\cO_M$-equivalent (in fact equal) to $\rho_M^c$.
These $\rho_M$ realize both $M$-equivalence classes and give rise to $(1,3)$-polarized abelian surfaces $A_{-4a}$ over~$\Q$ with Sato--Tate group $\stgroup{D_{6,1}}$
and endomorphism field $K=\spl(\nfield[x^6-3x^2-6]{6.2.4478976.1})$ with Galois group $\dih{6}\simeq \group{12}{4}$;
they are distinguished by their $L$-polynomials at 29:
\begin{align*}
&1 - 20T - 158T^2 - 580T^3 + 841T^4,\\
&1 + 20T - 158T^2 + 580T^3 + 841T^4.
\end{align*}
In  both cases the product $E_{-4,2}\times A_{-4a}$ is an abelian threefold over $\Q$ with Sato--Tate group $\stgroup{J(B(3,4))}$
and endomorphism field $L$; for the abelian surface $A_{-4a}$ with the second $L$-polynomial listed above, the product $E_{-4,2}\times A_{-4a}$ is isogenous to $\Jac(C_3)$.
\end{example}

\begin{example} \label{exa:type N 48 41}
Let $M=\Q(i)$ and $k = \Q(\sqrt{3})$. The maximal group $\stgroup{J_s(B(3,4))}$ of type~$\bN$ with component group $\group{48}{41}$ 
occurs as the Sato--Tate group of $E_{-4,12-8\sqrt{3}} \times A_{-4[3]b}$ obtained as in 
Remark~\ref{remark: twisting construction3}, for a representation $\rho_M\colon G_M\to \GL_2(\cO_M)$ with image isomorphic to~$\group{24}{5}$, generated by

\[
\begin{pmatrix}
i & 0 \\
 0 & i
\end{pmatrix},
\begin{pmatrix}
1 & 0 \\
-1 & -1
\end{pmatrix},
\begin{pmatrix}
1 & 3 \\
-1 & -2
\end{pmatrix}
\qquad \mbox{with }
U = \begin{pmatrix} 2 & 3 \\ 3 & 6 \end{pmatrix}.
\]
(Note that $U$ must have its top-right entry divisible by 3 as per Remark~\ref{remark: twisting construction induced polarization}.)
To exhibit an explicit example, we must find a Galois extension $L/\Q$ not containing $k$ with
\[
\Gal(L/\Q) \simeq \group{48}{41}, \qquad \Gal(L/M) \simeq \group{24}{5}.
\]
We obtain such an extension by taking the compositum of two Galois extensions $L_1/\Q, L_2/\Q$
such that
\begin{gather*}
\Gal(L_1/\Q) \simeq \group{16}{13}, \quad \Gal(L_1/M) \simeq \cyc 2 \times \cyc 4, \quad \Gal(L_1/(L_1 \cap L_2)) \simeq \group{8}{4}, \\
\Gal(L_2/\Q) \simeq \sym 3, \quad \Gal(L_2/(L_1 \cap L_2)) \simeq \cyc 3.
\end{gather*}
We take
\begin{align*}
L_1 &= \spl(\nfield[x^8 - 8x^4 + 25]{8.0.419430400.3}), \\
L_2 &= \spl(\nfield[x^3 - x^2 + 5x + 15]{3.1.1960.1}),\\
K &= L_2M = \spl(\nfield[x^6 - 2x^5 + 2x^4 + 4x^3 + 9x^2 - 30x + 50]{6.0.61465600.1}),\\
L &= L_1L_2 = \spl(x^{24}\! - 208x^{20}\! + 12434x^{16}\! - 161872x^{12}\! + 766481x^8\! - 5190400x^4\! + 10240000).
\end{align*}
There are 24 representations $\rho_M$ with kernel field $L$ and image $\rho_M(G_M)$ as above, comprising two $M$-equivalence classes.
Of these, 8 are $\cO_M$-equivalent (in fact equal) to $\rho_M^{c'}$ as required by Remark~\ref{remark: twisting construction2a}.  These $\rho_M$ realize both $M$-equivalence classes and give rise to principally polarized abelian surfaces $A_{-4[3]b}$ over~$k$ with Sato--Tate group $\stgroup{J(D_3)}$
and endomorphism field $kK$ with $\Gal(kK/k)\simeq \dih{6}\simeq \group{12}{4}$;
they are distinguished by their $L$-polynomials at 13.
In  both cases the product $E_{-4,12-8\sqrt{3}}\times A_{-4[3]b}$ is an abelian threefold over $k$ with Sato--Tate group $\stgroup{J_s(B(3,4))}$
and endomorphism field $kL$.
\end{example}

\begin{remark} \label{R:identify Jacobian}
In Example~\ref{exa:type N 48 41}, since $A_{-4[3]b}$ carries an indecomposable principal polarization, it must occur as the Jacobian of a genus 2 curve over $k$.
At this time, we have not attempted to identify this curve explicitly.
\end{remark}

\begin{example} \label{exa:type N 96}
The maximal group $\stgroup{J(B(O,1))}$ of type $\bN$ with component group $\group{96}{193}$
occurs as the Sato--Tate group of $E_{-8} \times \Jac(C_2)$
where $C_2$ is as in Example~\ref{exa:type J JO}; the abelian surface $\Jac(C_2)$ has Sato--Tate group $\stgroup{J(O)}$ and endomorphism field $K=\spl(\nfield[x^6+4x^4-22]{6.2.2725888.2})$.
From \cite[Table~4, Table~6]{FS14}, we see that the endomorphism field of $E_{-8} \times \Jac(C_2)$ is
\[
L = \spl(x^{16} - 44x^{12} - 308x^{10} - 990x^8 - 1936x^6 - 2662x^4 + 9196x^2 + 20449),
\]
which has the desired Galois group $\group{96}{193}$ and contains $K$ as a subfield of index 2.

This example can also be interpreted (up to isogeny) as a product of the form $E_{-8} \times A_{-8}$ obtained as in 
Remark~\ref{remark: twisting construction1} with $M=\Q(\sqrt{-2})$, 
for a representation $\rho_M\colon G_M\to \GL_2(\cO_M)$ with image isomorphic to the Shephard--Todd group $G_{12}\simeq \group{48}{29}$, generated by
\[
\begin{pmatrix}
-1 & 0 \\
1 - \sqrt{-2} & 1 \end{pmatrix},\ 
\begin{pmatrix}
-1 & -1  \\
1 & 0 \end{pmatrix}
\]
with
\[
U = \begin{pmatrix} 2 & 1+\sqrt{-2} \\
1-\sqrt{-2} & 2 \end{pmatrix}, \qquad
V = \begin{pmatrix} 0 & 1 \\
1 & 0 
\end{pmatrix}.
\]
There are 48 representations $\rho_M$ with kernel field $L$ and image $\rho_M(G_M)$ as above, comprising two $M$-equivalence classes.
Of these, 8 are $\cO_M$-equivalent to $\rho_M^c$ (via $V$). These $\rho_M$ realize both $M$-equivalence classes and 
give rise to principally polarized abelian surfaces $A_{-8}$ over $\Q$ with Sato--Tate group $\stgroup{J(O)}$
and endomorphism field $K$ with Galois group $\sym{4}\times \cyc{2}\simeq \group{48}{48}$;
they are distinguished by their $L$-polynomials at 17:
\begin{align*}
&1 - 8T + 32T^2 - 136T^3 + 289T^4,\\
&1 + 8T + 32T^2 + 136T^3 + 289T^4.
\end{align*}
The abelian surface  $A_{-8}$ with the second $L$-polynomial listed above is isogenous to $\Jac(C_2)$.  In 
both cases the product $E_{-8}\times A_{-8}$ is an abelian threefold over $\Q$ with Sato--Tate group $\stgroup{J(B(O,1))}$
and endomorphism field $L$.
\end{example}

\begin{example} \label{exa:type N 144 125}
Let $M=\Q(\zeta_3)$.
The maximal group $\stgroup{J_s(B(T,3))}$ of type $\bN$ with component group $\group{144}{125}$ 
occurs as the Sato--Tate group of $E_{-3} \times A_{-3}$ obtained as in 
Remark~\ref{remark: twisting construction1},  for a representation $\rho_M\colon G_M\to \GL_2(\cO_M)$ with image isomorphic to $\group{72}{25}$, generated by 
\[
\begin{pmatrix}
\zeta_3 & 0 \\
0 & \zeta_3
\end{pmatrix},
\begin{pmatrix}
0 & -1 \\
1 & 0 
\end{pmatrix},
\begin{pmatrix} 
1+\zeta_3 & -1 \\
0 & -\zeta_3
\end{pmatrix}
\]
with
\[
U = \begin{pmatrix} 3 & 1+2\zeta_3 \\ -1-2\zeta_3 & 3 \end{pmatrix}, \qquad
V = \begin{pmatrix} 0 & 1 \\ 1 & 0 \end{pmatrix}.
\]
To exhibit an explicit example, we must find a Galois extension $L/\Q$ with
\[
\Gal(L/\Q) \simeq \group{144}{125}, \qquad \Gal(L/M) \simeq \group{72}{25}.
\]
We obtain such an extension by taking the compositum of two Galois extensions
$L_1/\Q, L_2/\Q$ such that 
\begin{gather*}
\Gal(L_1/\Q) \simeq \GL(2,3), \quad
\Gal(L_1/\Q(\zeta_3)) \simeq \SL(2, 3), \\
\Gal(L_2/\Q) \simeq \sym 3, \quad \Gal(L_2/M) \simeq \cyc 3.
\end{gather*}
We take 
\begin{align*}
L_1 &= \spl(\nfield[x^8 - 6x^4 + 4x^2 - 3]{8.2.181398528.1}), \\
L_2 &= \spl(\nfield[x^3-2]{3.1.108.1}), \\
L &= L_1L_2, \\
K &= L_1^{\cyc{2}} = \spl(\nfield[x^4 - 2x^3 - 6x + 3]{4.2.3888.1}).
\end{align*}
There are 144 representations $\rho_M$ with kernel field $L$ and image $\rho_M(G_M)$ as above, comprising six $M$-equivalence classes.
Of these, 24 are $\cO_M$-equivalent to $\rho_M^c$ (via $V$).  These $\rho_M$ realize all six $M$-equivalence classes and give rise to $(1,6)$-polarized abelian surfaces $A_{-3}$ over~$\Q$ with Sato--Tate group $\stgroup{O_1}$
and endomorphism field $K$ with Galois group $\PGL(2,3)\simeq \sym{4}\simeq \group{24}{12}$;
they are distinguished by their $L$-polynomials at 7 and 13.
In every case the product $E_{-3}\times A_{-3}$ is an abelian threefold over $\Q$ with Sato--Tate group $\stgroup{J_s(B(T,3))}$
and endomorphism field $L$.
\end{example}

\begin{remark}
In Example~\ref{exa:type N 144 125}, we may also take $L_1$ to be the $3$-division field of any
elliptic curve over $\Q$ with surjective mod-3 Galois representation (e.g., curve \href{https://www.lmfdb.org/EllipticCurve/Q/24/a/5}{\texttt{24.a5}} in the \LMFDB{}).
\end{remark}

\begin{remark}
The abelian threefold in Example~\ref{exa:type N 144 125} is $2$-isogenous to another threefold
admitting an indecomposable polarization of type $(1,1,3)$.
\end{remark}

\begin{example} \label{exa:type N 144 127}
Let $M=\Q(\zeta_3)$.
The maximal group $\stgroup{J(B(T,3))}$ of type $\bN$ with component group $\group{144}{127}$ 
occurs as the Sato--Tate group of $E_{-3} \times A_{-3[2]b}$ obtained as in 
Remark~\ref{remark: twisting construction2}, for a representation $\rho_M\colon G_M\to \GL_2(\cO_M)$ with image isomorphic to $\group{72}{25}$, generated by 
\[
\begin{pmatrix}
\zeta_3 & 0 \\
0 & \zeta_3
\end{pmatrix},\ 
\begin{pmatrix}
-1 & -2 \\
1 & 1
\end{pmatrix},\ 
\begin{pmatrix} 
1+\zeta_3 & 2\zeta_3 \\
0 & -\zeta_3
\end{pmatrix}
\qquad \mbox{with }
U = \begin{pmatrix} 3 & 2-2\zeta_3^2 \\ 2-2\zeta_3^2 & 6 \end{pmatrix}.
\]
(Note that $U$ must have its top-right entry divisible by 2 as per Remark~\ref{remark: twisting construction induced polarization}.)
To exhibit an explicit example, we must find a Galois extension $L/\Q$ with
\[
\Gal(L/\Q) \simeq \group{144}{127}, \qquad \Gal(L/M) \simeq \group{72}{25}.
\]
We obtain such an extension as in \cite[\S 4.2.5]{FKS19} by taking the compositum of Galois extensions
$L_1/\Q$ and $L_2/\Q$ satisfying
\begin{gather*}
\Gal(L_1/\Q) \simeq \group{48}{33}, \quad \Gal(L_1/M) \simeq \SL(2,3), \\
\Gal(L_2/\Q) \simeq \sym 3, \quad \Gal(L_2/M) \simeq \cyc 3.
\end{gather*}
We may take
\begin{align*}
L_1 &= \spl(\nfield[x^{16} - 18x^{12} + 101x^{10} + 99x^8 - 1098x^6 + 3043x^4 - 738x^2 + 81]{16.0.12630731694101401542561.1}), \\
L_2 &= \spl(\nfield[x^3-3]{3.1.243.1}), \\
K &= L_1^{\cyc{2}} = \spl(\nfield[x^6 - 3x^5 + 10x^4 - 15x^3 - 41x^2 + 48x + 27]{6.4.4162464003.1}), \\
L &= L_1L_2.
\end{align*}
There are 144 representations $\rho_M$ with kernel field $L$ and image $\rho_M(G_M)$ as above, comprising six $M$-equivalence classes.
Of these, 16 are $\cO_M$-equivalent (in fact equal) to $\rho_M^{c'}$, as required by Remark~\ref{remark: twisting construction2}.  These $\rho_M$ realize two $M$-equivalence classes and give rise to $(1,3)$-polarized abelian surfaces $A_{-3[2]b}$ over $\Q$ with Sato--Tate group $\stgroup{J(T)}$ and endomorphism field $K$
with Galois group $\alt{4}\times \cyc{2}\simeq \group{24}{13}$;
they are distinguished by their $L$-polynomials at 7.
In both cases the product $E_{-3}\times A_{-3[2]b}$ is an abelian threefold over $\Q$ with Sato--Tate group $\stgroup{J(B(T,3))}$
and endomorphism field $L$.
\end{example}

\begin{example} \label{exa:type N 192 988}
The maximal group $\stgroup{J(B(O,2))}$ of type $\bN$ with component group $\group{192}{988}$
occurs as the Sato--Tate group of $E_{-4} \times \Prym(C_3/C_1)$, where $C_3\colon Y^4 = X^4 + 2X^2Z^2 + XZ^3$ is as in Example~\ref{exa:type J JO}
and $C_1$ is its quotient by the involution $(X:Y:Z) \mapsto (X:-Y:Z)$. 
To analyze this example, define $\alpha_j, a_j, b_j$ as in Example~\ref{exa:type J JO}, so that $C_{3,\Q(\alpha_j)}$ admits the model $Y^4 = X^3Z + a_j X^2Z^2 + b_j XZ^3$.
The endomorphism field $L$ of $E_{-4} \times \Prym(C_3/C_1)$
is the minimal number field for which
$\Hom(E_{-4,L}, \Prym(C_3/C_1)_L) =\Hom(E_{-4,\C}, \Prym(C_3/C_1)_\C)$; using Lemma~\ref{lemma: quadruple cover}, we identify
$L$ with the Galois closure of $\Q(\alpha_j, \sqrt{b_j}, (a_j-2\sqrt{b_j})^{\nicefrac{1}{4}})$.
For the field $K=\spl(\nfield[x^6-4x^4-59]{6.2.13144256.4})$ with $\Gal(K/\Q)\simeq \group{48}{48}$ from Example~\ref{exa:type J JO}, we obtain
\[
L=K\bigl(\bigl(a_1-2\sqrt{b_1}\bigr)^{\nicefrac{1}{4}}\bigr)=\spl(x^{24} - 40x^{16} + 430x^{12} - 240x^8 + 344x^4 - 59),
\]
with $\Gal(L/\Q)\simeq \group{192}{988}$.

This example can also be realized as a product of the form $E_{-4} \times A_{-4b}$,where $A_{-4b}$ is an abelian surface isogenous to $\Prym(C_3/C_1)$ obtained as in 
Remark~\ref{remark: twisting construction1} with $M=\Q(i)$, for a representation $\rho_M\colon G_M\to \GL_2(\cO_M)$ with image isomorphic to the Shephard--Todd group $G_{8}\simeq \group{96}{67}$, generated by
\[
\begin{pmatrix}
i & 0 \\ 
-i & 1 
\end{pmatrix},\ 
\begin{pmatrix}
1 & 1 \\
0 & i
\end{pmatrix}
\]
with
\[
U = \begin{pmatrix} 
2 & 1+i \\
1-i & 2
\end{pmatrix}, \qquad
V = \begin{pmatrix} 0 & 1 \\
1 & 0 \end{pmatrix}.
\]
There are 96 representations $\rho_M$ with kernel field $L$ and image $\rho_M(G_M)$ as above, comprising four $M$-equivalence classes.
Of these, 16 are $\cO_M$-equivalent to $\rho_M^c$ (via $V$).  These $\rho_M$ realize all four $M$-equivalence classes and
give rise to $(1,2)$-polarized abelian surfaces $A_{-4b}$ over~$\Q$ with Sato--Tate group $\stgroup{J(O)}$ and endomorphism field $K$; they are distinguished by their $L$-polynomials at 61:
\begin{align*}
&1 -22T +242T^2 -1342T^3 +3721T^4,\\
&1 +22T +242T^2 +1324T^3 +3721T^4,\\
&1 -2T +2T^2 -122T^3 +3721T^4,\\
&1 +2T +2T^2 -122T^3 +3721T^4.
\end{align*}
The abelian surface $A_{-4b}$ with the last of these $L$-polynomials is isogenous to $\Prym(C_3/C_1)$, and in every case the product $E_{-4}\times A_{-4b}$ is an abelian threefold over $\Q$ with Sato--Tate group $\stgroup{J(B(O,2))}$ and endomorphism field $L$. 
\end{example}

\begin{remark}
Note that Example~\ref{exa:type N 192 988} contradicts \cite[Table~6]{FG18}, which asserts that~$\stgroup{J(O)}$
cannot occur as the Sato--Tate group of an abelian surface with complex multiplication by~$\Q(i)$. In fact this is an omission in the table; it corresponds to a similar omission in \cite[Proposition~3.5(iv)]{FG18}, which fails to account for the fact that the binary octahedral group has a faithful $2$-dimensional representation with traces in $\Q(i)$. None of this affects the other results of \cite{FG18}.
\end{remark}

\begin{example} \label{exa:type N 192 956}
The maximal group $\stgroup{J(D(4,4))}$ of type $\bN$ with component group $\group{192}{956}$
occurs as the Sato--Tate group of
$\Res C_{1,3}$, where $C_{1,3}\colon y^2 = x^3-\alpha x$ and $\alpha$ is a root of the cubic polynomial $f(x)=x^3-x^2+x-2$.
The endomorphism field is
\[
K=\Q(\sqrt{-1},(\alpha_1/\alpha_2)^{\nicefrac{1}{4}}, (\alpha_2/\alpha_3)^{\nicefrac{1}{4}})=\spl(x^{12}+x^8+2x^4+4),
\]
where $\alpha_1,\alpha_2,\alpha_3$ are the three roots of $f(x)$.

Table~\ref{table: maximal ST groups} also includes an example of a Jacobian;
this is a twist of the Fermat quartic listed in \cite[Table~4, row 59]{FLS18} with
endomorphism field $\spl(x^{12} + 3x^4 - 1)$.

This example can also be realized as an isogeny twist of $E_{-4}^3$, obtained as in
Remark~\ref{remark: twisting construction1} with $M=\Q(i)$, for a representation $\rho_M\colon G_M\to \GL_3(\cO_M)$ with image isomorphic to the wreath product $\mu_4 \wr \sym 3\simeq \group{384}{5557}$,
generated by
\[
\begin{pmatrix}
0 & 1 & 0\\
0 & 0 & 1\\
1 & 0 & 0
\end{pmatrix},\ 
\begin{pmatrix}
0 & 1 & 0\\
1 & 0 & 0\\
0 & 0 & 1
\end{pmatrix},\ 
\begin{pmatrix}
i & 0 & 0\\
0 & 1 & 0\\
0 & 0 & 1
\end{pmatrix}.
\]
The kernel field of $\rho_M$ is
\[
L=\Q(\sqrt{-1},\alpha_1^{\nicefrac{1}{4}}, \alpha_2^{\nicefrac{1}{4}}, \alpha_3^{\nicefrac{1}{4}})=\spl(x^{12}-x^8+x^4-2),
\]
with Galois group $\group{768}{1088009}$; the endomorphism field $K$ is the
kernel field of the projective representation with Galois group $\group{192}{956}$.

There are 768 representations $\rho_M$ with kernel field $L$ and image $\rho_M(G_M)$ as above, comprising eight $M$-equivalence classes.
Of these, 192 are $\cO_M$-equivalent (in fact equal) to $\rho_M^c$.  These $\rho_M$ realize all eight $M$-equivalence classes and give rise to abelian threefolds defined over $\Q$ that are twists of $E_{-4}^3$ with Sato--Tate group $\stgroup{J(D(4,4))}$ and endomorphism
field~$K$.  These abelian threefolds are isogenous to the abelian threefolds $\Res \tilde C_{1,3}$ that arise for the eight twists $\tilde C_{1,3} \colon y^2 = x^3-\tilde\alpha$ of $C_{1,3}$ with
\[
\tilde\alpha\in \{\alpha,\, -\alpha,\, \alpha^3,\, -\alpha^3,\, 83^2\alpha,\, -83^2\alpha,\, 83^2\alpha^3,\, -83^2\alpha^3\},
\]
which are distinguished by their $L$-polynomials at 5 and 29.
\end{example}

\begin{example} \label{exa:type N 336 208}
The maximal group $\stgroup{J(E(168))}$ of type $\bN$ with component group $\group{336}{208}$
occurs as the Sato--Tate group of $\Jac(C_3)$ where $C_3$ is a twist of the Klein quartic,
taken from \cite[Table~5, row 14]{FLS18}, with endomorphism field $K=\spl(\nfield[x^8+4x^7+21x^4+18x+9]{8.2.153692888832.1})$.

This example can also be realized as an isogeny twist of $E_{-7}^3$ obtained as in 
Remark~\ref{remark: twisting construction1} with $M=\Q(\zeta_3)$, for a representation $\rho_M\colon G_M\to \GL_3(\cO_M)$ with image isomorphic to $\group{168}{42}$, generated by the matrices
\[
\begin{pmatrix}
-1 & 0 & \tfrac{1-\sqrt{-7}}{2} \\
0 & -1 & \tfrac{-1-\sqrt{-7}}{2} \\
0 & 0 & 1
\end{pmatrix},\ 
\begin{pmatrix}
2 & \tfrac{-1+\sqrt{-7}}{2} & \tfrac{-3+\sqrt{-7}}{2} \\
1 & 0 & 0 \\
\tfrac{1+\sqrt{-7}}{2} & -1 & -1
\end{pmatrix},
\]
with
\[
U = \begin{pmatrix} 
2 & \tfrac{-1+\sqrt{-7}}{2} & \tfrac{-3+\sqrt{-7}}{2} \\
\tfrac{-1-\sqrt{-7}}{2} & 2 & \tfrac{3+\sqrt{-7}}{2} \\
\tfrac{-3-\sqrt{-7}}{2} & \tfrac{3-\sqrt{-7}}{2} & 3 
\end{pmatrix},
\qquad
V = \begin{pmatrix}
0 & 1 & 0 \\
1 & 0 & 0 \\
0 & 0 & -1
\end{pmatrix}.
\]
This group is an index-2 subgroup of the Shephard--Todd group $G_{24}$.

There are 336 representations $\rho_M$ with kernel field $K$ and image $\rho_M(G_M)$ as above, comprising two $M$-equivalence classes.
Of these, 12 are $\cO_M$-equivalent to $\rho_M^c$ (via $V$).  These $\rho_M$ realize both $M$-equivalence classes and 
give rise to principally polarized abelian threefolds over $\Q$ that are twists of $E_{-7}^3$ with Sato--Tate group $\stgroup{J(E(168))}$ and endomorphism field $K$.  These twists are distinguished by their $L$-polynomials at 11:
\begin{align*}
&1 - 5T -3T^2 +57T^3  - 33T^4 - 605T^5 + 1331T^6,\\
&1 + 9T +53T^2 +211T^3 +583T^4 + 1089T^5 + 1331T^6.
\end{align*}
The twist corresponding to the second $L$-polynomial listed above is isogenous to $\Jac(C_3)$.
\end{example}

\begin{example} \label{exa:type N 432 523}
The maximal group $\stgroup{J(D(6,6))}$ of type $\bN$ with component group $\group{432}{523}$
occurs as the Sato--Tate group of
$\Res C_{1,3}$, where $C_{1,3}\colon y^2= x^3-\alpha$ with $\alpha$ a root of the cubic polynomial $f(x)=x^3-x^2+x-2$.
The endomorphism field is
\[
K=\Q((\alpha_1/\alpha_2)^{\nicefrac{1}{6}}, (\alpha_2/\alpha_3)^{\nicefrac{1}{6}}) = \spl(x^{36} + x^{30} + 27x^{24} - 42x^{18} + 28x^{12} - 384x^6 + 4096).
\]
This example can also be realized as an isogeny twist of $E_{-3}^3$ obtained as in 
Remark~\ref{remark: twisting construction1} with $M=\Q(\zeta_3)$ for a representation $\rho_M\colon G_M\to \GL_3(\cO_M)$ with image isomorphic to the wreath product $\mu_6 \wr \sym 3\simeq \group{1296}{1827}$,
generated by
\[
\begin{pmatrix}
0 & 1 & 0\\
0 & 0 & 1\\
1 & 0 & 0
\end{pmatrix},\ 
\begin{pmatrix}
0 & 1 & 0\\
1 & 0 & 0\\
0 & 0 & 1
\end{pmatrix},\ 
\begin{pmatrix}
\zeta_6 & 0 & 0\\
0 & 1 & 0\\
0 & 0 & 1
\end{pmatrix}.
\]
The kernel field of $\rho_M$ is the degree-2592 field
\[
L= \Q(\alpha_1^{\nicefrac{1}{6}},\alpha_2^{\nicefrac{1}{6}},\alpha_3^{\nicefrac{1}{6}}) = \spl(x^{18}-x^{12}+x^6-2)
\]
whose Galois group is the transitive group \href{https://www.lmfdb.org/GaloisGroup/18T396}{\texttt{18T396}}; $K$ is the kernel field of the projective representation, with Galois group $\group{432}{523}$.

There are 2592 representations $\rho_M$ with kernel field $L$ and image $\rho_M(G_M)$ as above, comprising twelve $M$-equivalence classes.
Of these, 288 are $\cO_M$-equivalent (in fact equal) to $\rho_M^c$.  These $\rho_M$ realize all twelve $M$-equivalence classes and give rise to abelian threefolds over~$\Q$ that are twists of $E_{-3}^3$ with Sato--Tate group $\stgroup{J(D(6,6))}$ and endomorphism field $K$.  These abelian threefolds are isogenous to the abelian threefolds $\Res \tilde C_{1,3}$ that arise for the~12 twists ${\tilde C}_{1,3} \colon y^2 = x^3-\tilde\alpha$ of $C_{1,3}$ with
\[
\tilde{\alpha} \in \{\alpha,2^2\alpha,2^4\alpha,3^383^3\alpha,2^23^383^3\alpha,2^43^383^3\alpha,\alpha^5,2\alpha^5,2^4\alpha^5,3^383^3\alpha^5,2^23^383^3\alpha^5,2^43^383^3\alpha^5\},
\]
which are distinguished by their $L$-polynomials at 13.
\end{example}

\begin{remark} \label{R:indistinguishable groups}
Similarly, the group $\stgroup{J(D(3,3))}$ occurs as the Sato--Tate group of the Weil restriction of the elliptic curve
$y^2 = x^3 - \alpha^2$ from $L_3$ to $\Q$. The base extensions of this abelian threefold to the fields
$\Q(\sqrt{-83})$ and $\Q(\sqrt{249})$ have Sato--Tate groups $\stgroup{J(C(3,3))}$ and $\stgroup{J_s(C(3,3))}$
which have nonisomorphic component groups, but give rise to identical distributions
(see Proposition~\ref{proposition: same measures}).
\end{remark}

\begin{example} \label{exa:type N 432 734}
Let $M=\Q(\zeta_3)$.  The maximal group $\stgroup{J(E(216))}$ of type $\bN$ with component group $\group{432}{734}$ 
occurs as the Sato--Tate group of $E_{-3}^3$ obtained as in Remark~\ref{remark: twisting construction1}, for a representation $\rho$ whose restriction to
$G_{\Q(\zeta_3)}$ has image equal to the Shephard--Todd group $G_{25}$, generated by
\[
\begin{pmatrix}
1 & \zeta_3^2 & 0 \\
0 & \zeta_3^2 & 0 \\
0 & 0 & 1
\end{pmatrix},
\begin{pmatrix}
\zeta_3 & 0 & 0 \\
\zeta_3 & 1 & 0 \\
\zeta_3 & 0 &  1 
\end{pmatrix},
\begin{pmatrix}
1 & 0 & \zeta_3^2 \\
0 & 1 & 0 \\
0 & 0 & \zeta_3^2
\end{pmatrix},
\]
with
\[
U = \begin{pmatrix} 
3 & \zeta_3^2-1 & \zeta_3^2 - 1 \\
\zeta_3 - 1 & 2 & 1 \\
\zeta_3 - 1 & 1 & 2 \end{pmatrix}, \qquad
V = \begin{pmatrix}
1 & 0 & 0 \\
1 & -1 & 0 \\
1 & 0 & -1
\end{pmatrix}.
\]
(The invariant polarization is of type $(1,1,3)$.)
To exhibit an explicit example,
following a suggestion of Noam Elkies, we construct $\rho$ by identifying $G_{25} \rtimes \cyc 2$ with $\Unitary(3, 2) \rtimes \Gal(\F_4/\F_2)$
and considering the $2$-torsion Galois representation of a generic Picard curve over $\Q$.

We verify that the 2-adic Galois representation for the Jacobian of the Picard curve $C: y^3 = x^4 + x + 1$ used in Example~\ref{exa:type B} has maximal mod-$2$ Galois image. Note that the $64$ theta characteristics form a principal homogeneous space for $\Jac(C)[2]$. The $28$ odd theta characteristics correspond to the bitangents of $C$; for a Picard curve, the line at infinity is a distinguished bitangent, so we have a Galois action on the other $27$. 
We compute this action by following a suggestion from \cite[\S 2]{PSV11}.
The bitangents are the lines $y = ax + b$ for which we have an equality of polynomials in $x$:
\[
(ax+b)^3 - (x^4 + x + 1) = -(x^2 + \kappa_1 x + \kappa_0)^2
\]
for suitable values of $\kappa_1, \kappa_0$. By comparing the coefficients of $x^3$ and $x^2$, we see that 
\[
\kappa_1 = -\tfrac{1}{2}a^3, \qquad \kappa_0 = -\tfrac{1}{8} a^6 + \tfrac{3}{2} a^2 b.
\]
Equating the remaining coefficients yields two additional equations:
\[
a^9 + 12a^5 b + 24ab^2 -8 = a^{12} + 24a^8b + 144a^4b^2 + 64b^3 - 64 = 0.
\]
Eliminating $b$ yields an equation in $A = \tfrac{1}{2} a^3$:
\[
A^9 + 168A^6 + 1080A^5 - 636A^3 + 864A^2 - 432A + 8 = 0.
\]
This polynomial in $A$ has Galois group $\group{432}{734}$, which is the projective image we seek.
Substituting for~$a$ yields a polynomial $f(a)$ with Galois group $\group{1296}{2891} \simeq G_{25}\rtimes\cyc 2$.
The kernel field $L$ of $\rho$ is the splitting field of $f(a)$, and the splitting field $K$ of $f(A)$ is the kernel field of the projective representation.

There are 1296 representations $\rho_M$ with kernel field $L$ and image as above, comprising 6 $M$-equivalence classes.
Of these, 36 are $\cO_M$-equivalent (via $V$) to $\rho_M^c$.  These $\rho_M$ realize all six $M$-equivalence classes and give rise to $(1,3)$-polarized abelian threefolds over $\Q$ that are isogeny twists of $E_{-3}^3$ with Sato--Tate group $\stgroup{J(E(216))}$ and endomorphism field~$K$.  They are distinguished by their $L$-polynomials at 19:
\begin{align*}
&1 +T -7T^2 -26T^3 -133T^4 +361T^5 + 6859T^6,\\
&1 +T +8T^2 -11T^3 +152T^4 +361T^5 + 6859T^6,\\
&1 +7T -7T^2 -182T^3 -133T^4 +2527T^5 + 6859T^6,\\
&1 +7T +56T^2 +259T^3 +1064T^4 +2527T^5 + 6859T^6,\\
&1 -8T +8T^2 +88T^3 +152T^4 -2888T^5 + 6859T^6,\\
&1 -8T +56T^2 -296T^3 +1064T^4 -2888T^5 + 6859T^6.\
\end{align*}
The twist giving rise to the last $L$-polynomial listed above corresponds to the representation $\rho$ defined by the Galois action on the bitangents of $C$.
\end{example}

\subsection{Numerical consistency check}
For each of the 33 abelian varieties with maximal Sato--Tate groups described above we used the methods in \S\ref{subsec:explicit L-functions} to compute $L$-polynomials for primes of norm up to $B=2^{30}$ (and in most cases up to $B=2^{32}$), and computed moment statistics and point densities over every subfield of the corresponding endomorphism field that we compared to the corresponding values predicted by the Sato--Tate group derived in \S\ref{sec:statistics}.  These moment statistics and code that compares them to their predicted values is available in our \GitHub{} repository \cite{FKS21}.  Of the 745 sets of statistics computed (covering all 410 Sato--Tate groups, some multiple times), in all but one case they match the predicted values to within one percent; the sole exception matches to within 1.3 percent.
\bigskip\bigskip\bigskip\bigskip\bigskip\bigskip\bigskip\bigskip\bigskip

\pagebreak

\section{Tables}
\label{sec:tables}

\begin{table}[ht]
\small
\[
\begin{array}{|c|c|c|c|c|c|c|}
\hline&&&&&&\\[-11pt]
\mbox{Type} & \mathrm{End}(A_K)_{\R} & \ST(A)^0  &\dim \ST(A)^0 & \mbox{Moduli} & \mbox{Extensions} & \mbox{Realizable} \\
\hline&&&&&&\\[-11pt]
\bA & \R & \stgroup[\USp(6)]{1.6.A.1.1a} & 21 & 6 & 1 & 1\\
\bB & \C & \stgroup[\Unitary(3)]{1.6.B.1.1a} &9 & 1 & 2 & 2\\
\bC & \R \times \R & \stgroup[\SU(2) \times \USp(4)]{1.6.C.1.1a} &13 & 4 & 1 & 1\\
\bD & \C \times \R & \stgroup[\Unitary(1) \times \USp(4)]{1.6.D.1.1a} &11 & 3 & 2 & 2 \\
\bE & \R \times \R \times \R & \stgroup[\SU(2) \times \SU(2) \times \SU(2)]{1.6.E.1.1a} &9 & 3 & 4 & 4\\
\bF & \C \times \R \times \R & \stgroup[\Unitary(1) \times \SU(2) \times \SU(2)]{1.6.F.1.1a} &7 & 2 & 5 & 5\\
\bG & \C \times \C \times \R & \stgroup[\Unitary(1) \times \Unitary(1) \times \SU(2)]{1.6.G.1.1a} &5 & 1 & 8 & 5\\
\bH & \C \times \C \times \C & \stgroup[\Unitary(1) \times \Unitary(1) \times \Unitary(1)]{1.6.H.1.1a} &3 & 0 & 33 & 13 \\
\bI & \R \times \M_2(\R) & \stgroup[\SU(2) \times \SU(2)_2]{1.6.I.1.1a} & 6 & 2 & 10 & 10\\
\bJ & \C \times \M_2(\R) & \stgroup[\Unitary(1) \times \SU(2)_2]{1.6.J.1.1a} & 4 & 1 &31 & 31 \\
\bK & \R \times \M_2(\C) & \stgroup[\SU(2) \times \Unitary(1)_2]{1.6.K.1.1a} & 3 & 1 & 32 & 32 \\
\bL & \C \times \M_2(\C) & \stgroup[\Unitary(1) \times \Unitary(1)_2]{1.6.L.1.1a} & 2 & 0 &122 & 122 \\
\bM & \M_3(\R) & \stgroup[\SU(2)_3]{1.6.M.1.1a} & 3 & 1 & 11 & 11\\
\bN & \M_3(\C) & \stgroup[\Unitary(1)_3]{1.6.N.1.1a} & 1 & 0 & 171 & 171\\
\hline&&&&&&\\[-11pt]
\mbox{Total} & & & && 433 & 410 \\
\hline
\end{array}
\]
\bigskip

\caption{Real endomorphism algebras and connected parts of Sato--Tate groups of abelian threefolds.
The column ``Moduli'' indicates the maximum dimension of a family whose (very) generic member has this form.
Extensions satisfying the Sato--Tate axioms are enumerated in \S\ref{section:STgroups} (types $\bA$--$\bM$) and \S\ref{section: unitary} (type $\bN$);
realizable extensions are enumerated in \S\ref{section: realizability}.
}
\label{table:connected ST groups}
\vspace{14pt}
\end{table}

\begin{table}[ht]
\small
\begin{tabular}{|c|rrrrrrrrrrrrrr|}
\hline&&&&&&&&&&&&&&\\[-11pt]
$\lambda$ & $\stgroup[\bA]{1.6.A.1.1a}$ & $\stgroup[\bB]{1.6.B.1.1a}$ & $\stgroup[\bC]{1.6.C.1.1a}$ & $\stgroup[\bD]{1.6.D.1.1a}$ & $\stgroup[\bE]{1.6.E.1.1a}$ & $\stgroup[\bF]{1.6.F.1.1a}$ & $\stgroup[\bG]{1.6.G.1.1a}$ & $\stgroup[\bH]{1.6.H.1.1a}$ & $\stgroup[\bI]{1.6.I.1.1a}$ & $\stgroup[\bJ]{1.6.J.1.1a}$ & $\stgroup[\bK]{1.6.K.1.1a}$ & $\stgroup[\bL]{1.6.L.1.1a}$ & $\stgroup[\bM]{1.6.M.1.1a}$ & $\stgroup[\bN]{1.6.N.1.1a}$ \\
\hline&&&&&&&&&&&&&&\\[-11pt]
$(0,0,0)$ & 1 & 1 & 1 & 1 & 1 & 1 & 1 & 1 & 1 & 1 & 1 & 1 & 1 & 1 \\
$(1,0,0)$ & 1 & 2 & 2 & 3 & 3 & 4 & 5 & 6 & 5 & 6 & 9 & 10 & 9 & 18 \\
$(1,1,0)$ & 1 & 3 & 3 & 4 & 7 & 9 & 12 & 16 & 14 & 18 & 26 & 34 & 34 & 82 \\
$(1,1,1)$ & 1 & 4 & 2 & 3 & 4 & 6 & 9 & 14 & 9 & 14 & 19 & 30 & 26 & 74 \\ 
$(2,0,0)$ & 1 & 4 & 3 & 6 & 6 & 10 & 17 & 27 & 15 & 23 & 43 & 61 & 45 & 153 \\
$(2,1,0)$ & 1 & 8 & 6 & 13 & 22 & 44 & 84 & 152 & 76 & 140 & 240 & 416 & 320 & 1280 \\
$(2,1,1)$ & 1 & 9 & 5 & 11 & 19 & 39 & 78 & 154 & 70 & 140 & 228 & 444 & 334 & 1450 \\
$(2,2,0)$ & 1 & 9 & 6 & 10 & 27 & 47 & 96 & 204 & 96 & 186 & 304 & 636 & 486 & 2250 \\
$(2,2,1)$ & 1 & 12 & 6 & 13 & 30 & 66 & 149 & 342 & 133 & 302 & 485 & 1122 & 810 & 4194 \\
$(2,2,2)$ & 1 & 10 & 3 & 6 & 10 & 22 & 52 & 132 & 44 & 110 & 170 & 446 & 300 & 1740 \\
$(3,0,0)$ & 1 & 6 & 4 & 10 & 10 & 20 & 45 & 92 & 35 & 68 & 147 & 264 & 164 & 848 \\
$(3,1,0)$ & 1 & 17 & 9 & 27 & 45 & 121 & 313 & 741 & 235 & 571 & 1099 & 2435 & 1539 & 9027 \\
$(3,1,1)$ & 1 & 22 & 8 & 24 & 45 & 124 & 333 & 852 & 251 & 656 & 1187 & 2924 & 1836 & 11376 \\
$(3,2,0)$ & 1 & 26 & 12 & 34 & 88 & 244 & 689 & 1886 & 535 & 1450 & 2603 & 6898 & 4325 & 28550 \\
$(3,2,1)$ & 1 & 40 & 16 & 50 & 140 & 420 & 1240 & 3600 & 940 & 2752 & 4768 & 13696 & 8448 & 59136 \\
$(3,2,2)$ & 1 & 24 & 8 & 24 & 57 & 174 & 537 & 1686 & 399 & 1262 & 2123 & 6734 & 4023 & 30654 \\
$(3,3,0)$ & 1 & 19 & 10 & 20 & 77 & 179 & 537 & 1695 & 449 & 1269 & 2205 & 6859 & 4191 & 31515\\
$(3,3,1)$ & 1 & 40 & 12 & 34 & 118 & 358 & 1177 & 3956 & 887 & 2880 & 4909 & 16500 & 9680 & 78320 \\
$(3,3,2)$ & 1 & 30 & 9 & 27 & 78 & 258 & 894 & 3210 & 642 & 2288 & 3824 & 14000 & 7920 & 69696 \\ 
$(3,3,3)$ & 1 & 20 & 4 & 10 & 20 & 62 & 221 & 862 & 155 & 590 & 965 & 3906 & 2101 & 20350 \\[1pt]
\hline
\end{tabular}
\bigskip

\caption{3-diagonals of character norms for connected Sato--Tate groups $G$ in $\USp(6)$.}
\label{table:3-diagonal}
\end{table}

\begin{table}[ht]
\small
\begin{tabular}{|c|c|c|c|c|}
\hline&&&&\\[-11pt]
$\Aut'(C)$ & Hyperelliptic & Plane quartic & $\Jac(C)$ & Type \\[1pt]
\hline&&&&\\[-11pt]
$\cyc 1$ & $P_8(x)$ & $P_4(X,Y,Z)$ & $A_3$ & $\bA$ \\
$\cyc 2$ & $xP_3(x^2)$ & None & $A_3[i]$ & $\bB$ \\
$\cyc 2$ & $P_4(x^2)$ & $Y^4 + Y^2 P_2(X,Z) + P_4(X,Z)$ & $E \times A_2$ & $\bC$ \\
$\cyc 3$ & None & $Y^3 Z + P_4(X,Z)$ & $A_3[\zeta_3]$ & $\bB$ \\
$\cyc 2 \times \cyc 2$ & $x^4 P_2(x^2+x^{-2})$ & $P_2(X^2,Y^2,Z^2)$ & $E \times E' \times E''$ & $\bE$  \\
$\cyc 6$ & None & $Y^3 Z + P_2(X^2,Z^2)$ & $E[\zeta_3] \times A_2[\mathcal{O}(6)]$ & $\bJ$ \\
$\cyc 7$ & $x^7-1\star$ & None & $A_3[\zeta_7]$ & $\bH$\\
$\cyc 9$ & None & $Y^3Z + X^4 + XZ^3\star$ & $A_3[\zeta_9]$ & $\bH$ \\
$\sym 3$ & $xP_2(x^3)$ & $X^3Z + Y^3Z + aX^2Y^2 + bXYZ^2 + cZ^4$ & $E^2 \times E'$ & $\bI$ \\
$\dih 4$ & $P_2(x^4)$ & $\mathrm{Cyc}(X^4) +  aX^2Y^2 + b(X^2+Y^2)Z^2$ & $E^2 \times E'$  & $\bI$ \\
$\dih 6$ & $x^7-x\star$ & None & $E[i]^3$ & $\bN$ \\
$\dih 8$ & $x^8 - 1\star$  & None & $E[\sqrt{-2}]^2 \times E'[i]$ & $\bL$ \\
$\group{16}{13}$ & None & $Y^4 + P_2(X^2,Z^2)$ & $E[i]^2 \times E'$ & $\bK$ \\
$\sym 4$ & $x^8 - 14x^4 + 1\star$  & $\mathrm{Cyc}(X^4) + c \mathrm{Cyc}(X^2Y^2)$ & $E^3$ & $\bM$ \\
$\group{48}{33}$ & None & $Y^4 + X^3Z + Z^4\star$ & $E[i]^2 \times E'[\zeta_3]$ & $\bL$ \\
$\group{96}{64}$ & None & $\mathrm{Cyc}(X^4)\star$ & $E[i]^3$ & $\bN$ \\
$\PSL_2(\F_7)$ & None & $\mathrm{Cyc}(X^3 Y)\star$ & $E\left[ \tfrac{1 + \sqrt{-7}}{2} \right]^3$ & $\bN$ \\[1pt]
\hline
\end{tabular}
\bigskip

\caption{Reduced automorphism groups of curves of genus 3.
In the models, $a,b,c$ are generic constants, $P_n$ is a generic polynomial of total degree $n$,
and $\mathrm{Cyc}$ is the sum over the cyclic permutations of $X,Y,Z$.
The stars denote isolated points in moduli. 
The column $\Jac(C)$ represents the isogeny decomposition of the Jacobian. See Proposition~\ref{P:automorphism groups} for further discussion.}
\label{table: types of generic automorphism groups}
\end{table}

\begin{table}[ht]
\vspace{-25pt}
\tiny
\begin{tabular}{|c|c|c|c|c|c|c|}
\hline&&&&&&\\[-8pt]
Type & ID & $k$ & Realization & $d$ & Curve $(C_{1,3}, C_2 ,C_3)$ & Ex. \\[1pt]
\hline&&&&&&\\[-8pt]
$\bA$ & $\stgroup[\langle 1,1 \rangle]{1.6.A.1.1a}$ & $\Q$ & $\Jac(C_3)$ & $1$ & $y^2 = x^7 - x + 1$ & \ref{exa:type A}\\[1pt]
\hline&&&&&&\\[-8pt]
$\bB$ & $\stgroup[\langle 2,1 \rangle]{1.6.B.2.1a}$ & $\Q$ & $\Jac(C_3)$ & $1$ & $y^2 = x^7 + 3x^5 + 4x^3 + 2x$ & \ref{exa:type B} \\[1pt]
\hline&&&&&&\\[-8pt]
$\bC$ & $\stgroup[\langle 1,1 \rangle]{1.6.C.1.1a}$ & $\Q$ & $E_0 \times \Jac(C_2)$ & $1$ & $y^2 = x^5 - x + 1$ & \ref{exa:type C D}\\
& & $\Q$ & $\Jac(C_3)$ & 1 & $y^2 = 4x^8 - 7x^2 + 4$ & \\[1pt]
\hline&&&&&&\\[-8pt]
$\bD$ & $\stgroup[\langle 2,1 \rangle]{1.6.D.2.1a}$ & $\Q$ & $E_{-3} \times \Jac(C_2)$ & $1$ & $y^2 = x^5 - x + 1$ & \ref{exa:type C D}\\
& & $\Q$ & $\Jac(C_3)$ & $1$ & $y^2 = x^8 -  x^6 - 3x^4 + x^2 - 1$ & \\[1pt]
\hline&&&&&&\\[-8pt]
$\bE$ & $\stgroup[\langle 6,1 \rangle]{1.6.E.6.1a}$ & $\Q$ & $\Res C_{1,3}$ & $1$ & $y^2 = x^3 - x + \alpha$ & \ref{exa:type E} \\
& & $\Q$ & $\Jac(C_3)$ & $1$ & $y^2=2x^7 + 4x^6 - 7x^4 + 4x^3 - 4x + 1$ & \\[1pt]
\hline&&&&&&\\[-8pt]
$\bF$ & $\stgroup[\langle 4,2 \rangle]{1.6.F.4.2a}$ & $\Q$ & $E_{-8} \times \Jac(C_2)$ & $1$ & $y^2 = x^6 + x^5 + x - 1$ & \ref{exa:type F} \\
& & $\Q$ & $\Jac(C_3)$ & $1$ & $y^2 = x^8 + 2x^6 + 4x^4 + 4x^2 + 4$ & \\[1pt]
\hline&&&&&&\\[-8pt]
$\bG$ & $\stgroup[\langle 4,1 \rangle]{1.6.G.4.1a}$ & $\Q$ & $E_0 \times \Jac(C_2)$ & $1$ & $y^2 = x^5 + 1$ & \ref{exa:type G C4} \\
& & $\Q$ & $\Jac(C_3)$ & $1$ & $y^2 = x^8 - 5x^6 + 10x^4 - 10x^2 + 5$ & \\
$\bG$ & $\stgroup[\langle 4,2 \rangle]{1.6.G.4.2a}$ & $\Q$ & $E_0 \times E_{-4} \times E_{-8}$ & $1$ & none & \ref{exa:type G}\\
& & $\Q$ & $\Jac(C_3)$ & $1$ & $y^2 = x^8 + 4x^6 - 2x^4 + 4x^2 + 1$ & \\[1pt]
\hline&&&&&&\\[-8pt]
$\bH$ & $\stgroup[\langle 6,2 \rangle]{1.6.H.6.2a}$ & $\Q$ & $\Jac(C_3)$ & $1$ & $y^2 = x^7 - 1$ & \ref{exa:type H}\\
$\bH$ & $\stgroup[\langle 8,2 \rangle]{1.6.H.8.2a}$ & $\Q$ & $E_{-3} \times \Jac(C_2)$ & $1$ & $y^2 = x^5 + 1$ &\ref{exa:type H}\\
& & $\Q$ & $\Jac(C_3)$ & $1$ & $y^2 = x^8 - 8x^6 + 20x^4 - 16x^2 + 2$&  \\
$\bH$ & $\stgroup[\langle 8,5 \rangle]{1.6.H.8.5a}$ & $\Q$ & $E_{-3} \times E_{-4} \times E_{-8}$ & $1$ & none & \ref{exa:type H}\\
& & $\Q$ & $\Jac(C_3)$ & $1$ & $3X^4 + 2Y^4 + 6Z^4 - 6X^2Y^2 +6X^2Z^2 - 12Y^2Z^2$ & \\[1pt]
\hline&&&&&&\\[-8pt]
$\bI$ & $\stgroup[\langle 8,3 \rangle]{1.6.I.8.3a}$ & $\Q$ & $E_0 \times \Jac(C_2)$ & $1$ & $y^2 = x^5 + x^3 + 2x$ & \ref{exa:type I JE4}\\
& & $\Q$ & $\Jac(C_3)$ & $1$ & $y^2 = x^8 + x^4 + 2$ & \\
$\bI$ & $\stgroup[\langle 12,4 \rangle]{1.6.I.12.4a}$ & $\Q$ & $E_0 \times \Jac(C_2)$ & $1$ & $y^2 = x^6 + x^3 - 2$ & \ref{exa:type I JE6}\\
& & $\Q$ & $\Jac(C_3)$ & $1$ & $ y^2 = x^8 + 8x^6 + 18x^4 + 16x^2 - 4$ & \\[1pt]
\hline&&&&&&\\[-8pt]
$\bJ$ & $\stgroup[\langle 16,11 \rangle]{1.6.J.16.11a}$ & $\Q$ & $E_{-3} \times \Jac(C_2)$ & $1$ & $y^2 = x^5 + x^3 + 2x$ & \ref{exa:type I JE4} \\
& & $\Q$ & $\Jac(C_3)$ & $1$ & $y^2 = x^8 + 2x^6 + 4x^2 - 4$ & \\
$\bJ$ & $\stgroup[\langle 24,14 \rangle]{1.6.J.24.14a}$ & $\Q$ & $E_{-4} \times \Jac(C_2)$ & $1$ & $y^2 = x^6 + x^3 - 2$ & \ref{exa:type I JE6} \\
& & $\Q$ & $\Jac(C_3)$ & $1$ & $2Y^4+4X^2Y^2-6Y^2Z^2+6X^3Z+XZ^3+3Z^4$ & \\[1pt]
\hline&&&&&&\\[-8pt]
$\bK$ & $\stgroup[\langle 24,14 \rangle]{1.6.K.24.14a}$ & $\Q$ & $E_0 \times \Jac(C_2)$ & $1$ & $y^2 = x^6 + 3x^5 + 10x^3 - 15x^2 + 15x - 6$ &  \ref{exa:type J JD6} \\
& & $\Q$ & $\Jac(C_3)$ & $1$ & $X^4 - 8X^2Y^2 + 24Y^4 + 24XY^2Z - 12XZ^3 - 9Z^4$ & \\
$\bK$ & $\stgroup[\langle 48,48 \rangle]{1.6.K.48.48a}$ & $\Q$ & $E_0 \times \Jac(C_2)$ & $1$ & $y^2 = x^6 - 5x^4 + 10x^3 - 5x^2 + 2x - 1$ & \ref{exa:type J JO} \\
& & $\Q$ & $\Jac(C_3)$ & $1$ & $X^4  - Y^4 + 2X^2Z^2 + XZ^3$ & \\[1pt]
\hline&&&&&&\\[-8pt]
$\bL$ & $\stgroup[\langle 48,51 \rangle]{1.6.L.48.51a}$ & $\Q$ & $E_{-7} \times \Jac(C_2)$ & $1$ & $y^2 = x^6 + 3x^5 + 10x^3 - 15x^2 + 15x - 6$ & \ref{exa:type J JD6} \\
$\bL$ & $\stgroup[\langle 96,226 \rangle]{1.6.L.96.226a}$& $\Q$ & $E_{-3} \times \Jac(C_2)$ & $1$ & $y^2 = x^6 - 5x^4 + 10x^3 - 5x^2 + 2x - 1$  & 
\ref{exa:type J JO} \\[1pt]
\hline&&&&&&\\[-8pt]
$\bM$ & $\stgroup[\langle 12,4 \rangle]{1.6.M.12.4a}$ & $\Q$ & $E_1 \times \Jac(C_2)$ & $1$ & $y^2 = x^6 + x^3 + 4$ & \ref{exa:type M D6}\\
& & $\Q$ & $\Jac(C_3)$ & $1$ & $X^3 Z + 4Y^3Z  + 3X^2Y^2 - Z^4$ & \\
$\bM$ & $\stgroup[\langle 24,12 \rangle]{1.6.M.24.12a}$ & $\Q$ & $\Res C_{1,3}$ & $1$ & $\alpha y^2 = x^3 - x + 1$  & \ref{exa:type M S4} \\
& & $\Q$ & $\Jac(C_3)$ & $1$ & $y^2 = x^8 + 2x^7 - 14x^5 - 49x^4 - 42x^3 - 14x^2 + 2x + 6$ & \\[1pt]
\hline&&&&&&\\[-9pt]
$\bN$ & $\stgroup[\langle 48,15\rangle]{1.6.N.48.15b}$ & $\Q$ & $E_{-3} \times A_{-3[2]a}$ & $2$ & none & \ref{exa:type N 48 15 3} \\
$\bN$ & $\stgroup[\langle 48,15 \rangle]{1.6.N.48.15a}$ & $\Q(\sqrt{3})$ & $E_{-4,12-8\sqrt{3}} \times\! A_{-4[3]a}$ & $6$ & none & \ref{exa:type N 48 15 4} \\
$\bN$ & $\stgroup[\langle 48,38 \rangle]{1.6.N.48.38b}$ & $\Q$ & $\Jac(C_3)$ &  $1$ & $y^2 = x^8\! - x^7\! - 7x^6\! - 7x^5\! - 35x^4\! - 35x^3\! - 21x^2\! + 3x + 6$ & \ref{exa:type N JB34} \\
& & $\Q$ & $E_{-4,2} \times\! A_{-4a}$  & $3$ & none & \\
$\bN$ & $\stgroup[\langle 48,41 \rangle]{1.6.N.48.41a}$ & $\Q(\sqrt{3})$ & $E_{-4,12-8\sqrt{3}} \times\! A_{-4[3]b}$ & $1$ & none & \ref{exa:type N 48 41}\\
$\bN$ & $\stgroup[\langle 96,193 \rangle]{1.6.N.96.193a}$ & $\Q$ & $E_{-8} \times \Jac(C_2)$ & $1$ & $y^2 = x^6 - 5x^4 + 10x^3 - 5x^2 + 2x - 1$ & \ref{exa:type N 96} \\
& & $\Q$ & $E_{-8} \times A_{-8}$ & $1$ & none & \\
$\bN$ & $\stgroup[\langle 144,125 \rangle]{1.6.N.144.125a}$ & $\Q$ &  $E_{-3} \times\! A_{-3}$ & $6$ & none & \ref{exa:type N 144 125} \\
$\bN$ & $\stgroup[\langle 144,127 \rangle]{1.6.N.144.127a}$ & $\Q$ &  $E_{-3} \times\! A_{-3[2]b}$ & $3$ & none & \ref{exa:type N 144 127} \\
$\bN$ & $\stgroup[\langle 192,988 \rangle]{1.6.N.192.988a}$ & $\Q$ & $E_{-4} \times \Prym(C_3/C_1)$ & $2$ & $X^4  - Y^4 + 2X^2Z^2 + XZ^3$ & \ref{exa:type N 192 988} \\
& & $\Q$ & $E_{-4} \times A_{-4b}$ & $2$ & none &  \\
\hline&&&&&&\\[-8pt]
$\bN$ & $\stgroup[\langle 192,956 \rangle]{1.6.N.192.956a}$ & $\Q$ & $\Res C_{1,3}$ & $1$ &  $y^2 = x^3 - \alpha x$ &\ref{exa:type N 192 956} \\
& & $\Q$ & $\twist(E_{-4}^3)$ & 1 & none & \\
& & $\Q$ & $\Jac(C_3)$ & $1$ & \parbox[c]{5.6cm}{$44X^4 + 120X^3Y + 36X^3Z + 60X^2YZ + 9X^2Z^2 - 200XY^3 + XZ^3 - 150Y^4 - 15Y^2Z^2$} & \\
$\bN$ & $\stgroup[\langle 336,208 \rangle]{1.6.N.336.208a}$ & $\Q$ & $\twist(E_{-7}^3)$ & $1$ & none & \ref{exa:type N 336 208} \\
& &  $\Q$ & $\Jac(C_3)$ & $1$ & $2X^3(Z\!-\!Y) \!+\! 3(X^2\!-\!Y^2)Z^2 \!+\! 2X(Z^3\!-\!Y^3)\!+\! 4Y^3Z\!+\!YZ^3$ & \\
$\bN$ & $\stgroup[\langle 432,523 \rangle]{1.6.N.432.523a}$ & $\Q$ & $\Res C_{1,3}$ & $1$ & $y^2 = x^3 - \alpha$& \ref{exa:type N 432 523} \\
& & $\Q$ & $\twist(E_{-3}^3)$ & 1 & none &\\
$\bN$ & $\stgroup[\langle 432,734 \rangle]{1.6.N.432.734a}$ & $\Q$ & $\twist(E_{-3}^3)$ & $3$ & none & \ref{exa:type N 432 734} \\[2pt]
\hline
\end{tabular}
\vspace{5pt}
\small
\caption{Explicit realizations of maximal Sato--Tate groups of abelian threefolds.
See \S\ref{subsec: realization techniques} for notation.\vspace{-8pt}}\label{table: maximal ST groups}
\end{table}

\begin{table}[ht]
\renewcommand{\arraystretch}{1.25}
\footnotesize
\begin{tabular}{|c|c|c|c|}
\hline
\!Type\! & ID & Endomorphism field(s) & Ex. \\
\hline
$\bA$ & $\stgroup[\langle 1,1 \rangle]{1.6.A.1.1a}$ & $\Q$ & \ref{exa:type A} \rule{0pt}{10pt}\!\!\\
\hline
$\bB$ & $\stgroup[\langle 2,1 \rangle]{1.6.B.2.1a}$ & $\Q(i)$ & \ref{exa:type B}  \rule{0pt}{10pt}\!\!\\
\hline
$\bC$ & $\stgroup[\langle 1,1 \rangle]{1.6.C.1.1a}$ & $\Q$ & \ref{exa:type C D} \rule{0pt}{10pt}\!\!\\
\hline
$\bD$ & $\stgroup[\langle 2,1 \rangle]{1.6.D.2.1a}$ & $\Q(\zeta_3)$ & \ref{exa:type C D} \rule{0pt}{10pt}\!\!\\
\hline
$\bE$ & $\stgroup[\langle 6,1 \rangle]{1.6.E.6.1a}$ & $\spl(\nfield[x^3-2]{3.1.108.1})$ & \ref{exa:type E}  \rule{0pt}{10pt}\!\!\\
\hline
$\bF$ & $\stgroup[\langle 4,2 \rangle]{1.6.F.4.2a}$ & $\Q(\zeta_{8})$ & \ref{exa:type F}  \rule{0pt}{10pt}\!\!\\
\hline
$\bG$ & $\stgroup[\langle 4,1 \rangle]{1.6.G.4.1a}$ & $\Q(\zeta_5)$ & \ref{exa:type G C4} \rule{0pt}{10pt}\!\!\\
$\bG$ & $\stgroup[\langle 4,2 \rangle]{1.6.G.4.2a}$ & $\Q(\zeta_8)$ & \ref{exa:type G} \\
\hline
$\bH$ & $\stgroup[\langle 6,2 \rangle]{1.6.H.6.2a}$ & $\Q(\zeta_7)$ & \ref{exa:type H} \rule{0pt}{10pt}\!\!\\
$\bH$ & $\stgroup[\langle 8,2 \rangle]{1.6.H.8.2a}$ & $\Q(\zeta_{15})$, $\Q(\zeta_{16})$ & \ref{exa:type H}\\
$\bH$ & $\stgroup[\langle 8,5 \rangle]{1.6.H.8.5a}$ & $\Q(\zeta_{24})$& \ref{exa:type H}\\
\hline
$\bI$ & $\stgroup[\langle 8,3 \rangle]{1.6.I.8.3a}$ & $\spl(\nfield[x^4-2]{4.2.2048.1})$& \ref{exa:type I JE4} \rule{0pt}{10pt}\!\!\\
$\bI$ & $\stgroup[\langle 12,4 \rangle]{1.6.I.12.4a}$ & $\spl(\nfield[x^6+2]{6.0.1492992.1}),\ \ \spl(\nfield[x^6-4x^3+1]{6.2.1259712.2})$ & \ref{exa:type I JE6}\\
\hline
$\bJ$ & $\stgroup[\langle 16,11 \rangle]{1.6.J.16.11a}$ & $\spl(\nfield[x^8 - 2x^4 + 4]{8.0.339738624.4}),\ \ \spl(\nfield[x^8 + 30x^4 + 9]{8.0.3057647616.6})$ & \ref{exa:type I JE4}  \rule{0pt}{10pt}\!\!\\
$\bJ$ & $\stgroup[\langle 24,14 \rangle]{1.6.J.24.14a}$ & $\spl(\nfield[x^{12} + 3x^8 + 27x^4 + 9]{12.0.320979616137216.14})$, & \ref{exa:type I JE6} \\
& & $\spl(x^{12} + 10x^{10} + 177x^8 + 244x^6 + 172x^4 + 96x^2 + 36)$ & \\
\hline
$\bK$ & $\stgroup[\langle 24,14 \rangle]{1.6.K.24.14a}$ & $\spl(\nfield[x^{12} - 4x^9 + 8x^6 - 4x^3 + 1]{12.0.8916100448256.1}), \ \ \spl(\nfield[x^{12}-4x^6+1]{12.4.6499837226778624.22})$ & \ref{exa:type J JD6}  \rule{0pt}{10pt}\!\!\\
$\bK$ & $\stgroup[\langle 48,48 \rangle]{1.6.K.48.48a}$ & $\spl(\nfield[x^6 + 4x^4 - 22]{6.2.2725888.2}),\ \ \spl(\nfield[x^6-4x^4-59]{6.2.13144256.4})$  & \ref{exa:type J JO} \\
\hline
$\bL$ & $\stgroup[\langle 48,51 \rangle]{1.6.L.48.51a}$ & $\spl(x^{24} - 660x^{18} + 12134x^{12} - 660x^6 + 1)$ & \ref{exa:type J JD6} \rule{0pt}{10pt}\!\! \\
$\bL$ & $\stgroup[\langle 96,226 \rangle]{1.6.L.96.226a}$ & $\spl(x^{12} - 9x^8 - 38x^6 - 9x^4 + 1 )$ & \ref{exa:type J JO} \\
\hline
$\bM$ & $\stgroup[\langle 12,4 \rangle]{1.6.M.12.4a}$ & $\spl(\nfield[x^6-2x^3+2]{6.0.186624.1})$ & \ref{exa:type M D6}\rule{0pt}{10pt} \\
$\bM$ & $\stgroup[\langle 24,12 \rangle]{1.6.M.24.12a}$ & $\spl(\nfield[x^4 - 2x^3 - 6x + 3]{4.2.3888.1})$ & \ref{exa:type M S4} \\
\hline 
$\bN$ & $\stgroup[\langle 48,15\rangle]{1.6.N.48.15b}$ & $\spl(x^{24} + 488x^{18} + 63384x^{12} - 5872x^6 + 1372)$ & \ref{exa:type N 48 15 3} \rule{0pt}{10pt}\!\! \\
$\bN$ & $\stgroup[\langle 48,15 \rangle]{1.6.N.48.15a}$ & $\spl(x^{24}-119x^{20}+3626x^{16}+44149x^{12}+10152x^8-93296x^4+21952)$ & \ref{exa:type N 48 15 4} \\
$\bN$ & $\stgroup[\langle 48,38 \rangle]{1.6.N.48.38b}$ & $\spl(\nfield[x^{12} - 3x^8 + 24x^4 - 48]{12.2.962938848411648.1})$ & \ref{exa:type N JB34} \\
$\bN$ & $\stgroup[\langle 48,41 \rangle]{1.6.N.48.41a}$ & $\spl(x^{24}\! - 208x^{20}\! + 12434x^{16}\! - 161872x^{12}\! + 766481x^8\! - 5190400x^4\! + 10240000)$ & \ref{exa:type N 48 41}\\
$\bN$ & $\stgroup[\langle 96,193 \rangle]{1.6.N.96.193a}$ & $\spl(x^{16} - 44x^{12} - 308x^{10} - 990x^8 - 1936x^6 - 2662x^4 + 9196x^2 + 20449)$ & \ref{exa:type N 96} \\
$\bN$ & $\stgroup[\langle 144,125 \rangle]{1.6.N.144.125a}$ & $\spl(x^{24} + 12x^{12} + 28x^6 - 12)$ & \ref{exa:type N 144 125} \\
$\bN$ & $\stgroup[\langle 144,127 \rangle]{1.6.N.144.127a}$ & $\spl(x^{16} - 18x^{12} + 101x^{10} + 99x^8 - 1098x^6 + 3043x^4 - 738x^2 + 81, x^3-3)$ & \ref{exa:type N 144 127} \\
$\bN$ & $\stgroup[\langle 192,988 \rangle]{1.6.N.192.988a}$ & $\spl(x^{24} - 40x^{16} + 430x^{12} - 240x^8 + 344x^4 - 59)$ & \ref{exa:type N 192 988}\\
\hline
$\bN$ & $\stgroup[\langle 192,956 \rangle]{1.6.N.192.956a}$ & $\spl(x^{12} + x^8 + 2x^4 + 4),\ \ \spl(x^{12} + 3x^4 - 1)$ & \ref{exa:type N 192 956} \rule{0pt}{10pt}\!\! \\
$\bN$ & $\stgroup[\langle 336,208 \rangle]{1.6.N.336.208a}$ & $\spl(\nfield[x^8 +4x^7 +21x^4 +18x +9]{8.2.153692888832.1})$ & \ref{exa:type N 336 208} \\
$\bN$ & $\stgroup[\langle 432,523 \rangle]{1.6.N.432.523a}$ & $\spl(x^{36} + x^{30} + 2x^{24} - 42x^{18} + 28x^{12} - 384x^6 + 4096)$ & \ref{exa:type N 432 523} \\
$\bN$ & $\stgroup[\langle 432,734 \rangle]{1.6.N.432.734a}$ & $\spl(x^9 + 168x^6 + 1080x^5 - 636x^3 + 864x^2 - 432x + 8)$ & \ref{exa:type N 432 734} \\
\hline
\end{tabular}
\bigskip

\caption{Endomorphism fields of the abelian threefolds listed in Table~\ref{table: maximal ST groups}.
In the two cases where the threefold is defined over $k \neq \Q$ (Examples~\ref{exa:type N 48 15 4} and~\ref{exa:type N 48 41}),
we give an extension of $\Q$ linearly disjoint from $k$ whose compositum with $k$ is the endomorphism field.}
\label{table: maximal endomorphism fields}
\end{table}

\end{document}